\theoremstyle{plain}
\newtheorem{thm}{Theorem}[section]
\newtheorem{prop}[thm]{Proposition}
\newtheorem{lm}[thm]{Lemma}
\newtheorem{cor}[thm]{Corollary}
\newtheorem{clm}{Claim}
\newtheorem{clmA}{Claim}
\theoremstyle{definition}
\newtheorem{rmk}[thm]{Remark}
\newtheorem{alg}[thm]{Algorithm}
\DeclareMathOperator{\isom}{Isom}
\DeclareMathOperator{\hull}{hull}
\DeclareMathOperator{\core}{core}
\DeclareMathOperator{\psl}{PSL}
\DeclareMathOperator{\stab}{Stab}
\DeclareMathOperator{\Int}{Int}
\DeclareMathOperator{\bolda}{\mathbf{a}}
\DeclareMathOperator{\boldb}{\mathbf{b}}
\definecolor{cof}{RGB}{219,144,71}
\definecolor{pur}{RGB}{186,146,162}
\definecolor{greeo}{RGB}{91,173,69}
\definecolor{greet}{RGB}{52,111,72}
\pgfplotsset{compat=1.14}
\titlespacing*{\paragraph}{0pt}{0pt}{1em}
\newcommand{\periodafter}[1]{#1.}
\titleformat{\paragraph}[runin]{\bfseries}{\theparagraph}{}{\periodafter}
\titleformat{\part}[hang]{\Large\bfseries}{Part \thepart: }{1ex}{}{}
\numberwithin{equation}{section}
\title{\bfseries Geodesic planes in a geometrically finite end and the halo of a measured lamination}
\author[1]{Tina Torkaman}
\author[2]{Yongquan Zhang}
\affil[1]{\small Department of Mathematics, University of Chicago}
\affil[2]{\small Institute for Mathematical Sciences, Stony Brook University}
\date{August 15, 2024}
\begin{document}

\maketitle

\begin{abstract}
Recent works \cite{MMO1,MMO2,acy_geom_finite,exotic_plane} have shed light on the topological behavior of geodesic planes in the convex core of a geometrically finite hyperbolic 3-manifold $M$ of infinite volume. In this paper, we focus on the remaining case of geodesic planes outside the convex core of $M$, giving a complete classification of their closures in $M$.

In particular, we show that the behavior is different depending on whether exotic roofs exist or not. Here an \emph{exotic roof} is a geodesic plane contained in an end $E$ of $M$, which limits on the convex core boundary $\partial E$, but cannot be separated from the core by a support plane of $\partial E$.

A necessary condition for the existence of exotic roofs is the existence of exotic rays for the bending lamination. Here an \emph{exotic ray} is a geodesic ray that has a finite intersection number with a measured lamination $\mathcal{L}$ but is not asymptotic to any leaf nor eventually disjoint from $\mathcal{L}$. We establish that exotic rays exist if and only if $\mathcal{L}$ is not a multicurve. The proof is constructive, and the ideas involved are important in the construction of exotic roofs.

We also show that the existence of geodesic rays satisfying a stronger condition than being exotic, phrased only in terms of the hyperbolic surface $\partial E$ and the bending lamination, is sufficient for the existence of exotic roofs. As a result, we show that geometrically finite ends with exotic roofs exist in every genus. Moreover, in genus $1$, when the end is homotopic to a punctured torus, a generic one (in the sense of Baire category) contains uncountably many exotic roofs.
\end{abstract}

\tableofcontents
\thispagestyle{empty}
\newpage

\clearpage
\pagenumbering{arabic}

\section{Introduction}
In this paper, we investigate two related problems. The first problem concerns
\begin{enumerate}[label={(\Roman*)}, topsep=0mm, itemsep=0mm]
    \item The topological behavior of geodesic planes outside the convex core of a geometrically finite hyperbolic three manifold $M$ of infinite volume.
\end{enumerate}
Our goal is to classify the closure of such geodesic planes, following a program initiated by recent breakthroughs \cite{MMO1,MMO2}. We answer this question completely (see Theorem~\ref{thm: minimal_roof} and Propositions~\ref{prop: atomic} and \ref{prop:several_comp}), and find a new and unexpected phenomenon that must be dealt with first. One of our main results is the following:
\begin{thm}\label{thm:main_intro}
There exists a quasifuchsian manifold with uncountably many exotic roofs.
\end{thm}
Here an \emph{exotic roof} is a geodesic plane disjoint from $\core(M)$, limiting on $\partial\core(M)$, yet cannot be separated from it by a support plane of $\partial\core(M)$. See Corollary~\ref{cor:higher_genus} for a more precise version.

The existence of exotic roofs relies on finer structures of the bending lamination $\mathcal{L}$ of the convex core boundary, and in particular is related to the existence of \emph{exotic rays} for $\mathcal{L}$. This leads to our second problem, which concerns
\begin{enumerate}[label={(\Roman*)}, topsep=0mm, itemsep=0mm, resume]
    \item The growth rate of the intersection number with a given measured geodesic lamination $\mathcal{L}$ along a geodesic ray on a hyperbolic surface $X$ of finite area.
\end{enumerate}
Exotic rays for $\mathcal{L}$ have the unexpected property that they have finite intersection number with $\mathcal{L}$, even though they meet $\mathcal{L}$ recurrently. Our main result in this regard is the following:
\begin{thm} \label{main1}
Provided that $\mathcal{L}$ is not a multicurve, there exists an exotic ray for $\mathcal{L}$.
\end{thm}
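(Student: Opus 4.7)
The plan is to build an exotic ray in the universal cover $\mathbb{H}^2$ by specifying its endpoint at infinity as a carefully controlled limit of leaf endpoints. First, I would reduce to the case that $\mathcal{L}$ is minimal and non-atomic: since $\mathcal{L}$ is not a multicurve, it admits a non-atomic minimal component $\mathcal{L}_0$, and an exotic ray for $\mathcal{L}_0$ that stays in a small tubular neighborhood of $|\mathcal{L}_0|$ (disjoint from the supports of the other components) is automatically exotic for $\mathcal{L}$. Write $X = \Gamma \backslash \mathbb{H}^2$ and let $\widetilde{\mathcal{L}}$ be the lift, with leaf endpoint set $E_0 \subset S^1$ and closure $E = \overline{E_0}$. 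The transverse measure of $\mathcal{L}$ gives a $\Gamma$-invariant, locally finite, non-atomic measure $M$ on the space of unordered endpoint pairs.

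The heart of the argument is an inductive chain construction. I would select leaves $\widetilde{\ell}_0, \widetilde{\ell}_1, \dots$ of $\widetilde{\mathcal{L}}$ with distinguished endpoints $\xi_k \in E_0$ approaching a point $\xi$, together with nested arcs $J_0 \supset J_1 \supset \cdots$ in $S^1$ shrinking to $\{\xi\}$, such that $\xi_k \in J_k \setminus J_{k+1}$ and the $M$-mass of leaves with one endpoint in $J_k \setminus J_{k+1}$ and the other endpoint in $S^1 \setminus J_0$ is at most $2^{-k}$. At each step, non-atomicity of $M$ lets us shrink $J_{k+1}$ enough to achieve this mass bound, and density of $E_0$ in $E$ lets us make the refinements at leaf endpoints. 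A diagonal argument against the countable set $E_0$ ensures $\xi \notin E_0$, and by refining from both sides we guarantee that $\xi$ is a two-sided accumulation point of $E_0$ (so it does not lie on the boundary of a complementary region).

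Let $r$ be the projection to $X$ of the geodesic ray $\widetilde{r}$ from a suitable base point $\widetilde{p}$ to $\xi$. Verification proceeds as follows. \emph{(i) Finite intersection:} Beyond a finite initial segment of $\widetilde{r}$, each leaf crossed has endpoints linking $\widetilde{p}$ with $\xi$; one can assign each such leaf to the unique index $k$ with one endpoint in $J_k \setminus J_{k+1}$ and the other in $S^1 \setminus J_0$, giving $I(\mathcal{L}, r) \le C + \sum_k 2^{-k} < \infty$. \emph{(ii) Not asymptotic to a leaf:} since $\xi \notin E_0$, no leaf of $\widetilde{\mathcal{L}}$ shares the endpoint $\xi$. \emph{(iii) Not eventually disjoint from $\mathcal{L}$:} because $\xi$ is a two-sided limit of $E_0$, the ray $\widetilde{r}$ must cross leaves whose endpoints approach $\xi$ at times $t_k \to \infty$, so $r$ meets $\mathcal{L}$ arbitrarily late.

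The main obstacle is justifying the mass-controlled shrinking in the inductive step; it is precisely here that the non-atomicity of $M$ enters. For a multicurve, $M$ is concentrated in atoms at the leaf endpoints, no such shrinking is possible, and the construction provably fails: any ray with finite $I(\mathcal{L}, r)$ can cross only finitely many leaves, so if it meets $\mathcal{L}$ recurrently it must asymptote to a single leaf. Non-atomicity of $\mathcal{L}$ is thus both necessary and exactly the feature that makes the chain construction go through.
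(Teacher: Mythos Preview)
Your approach has a critical gap at the key step. You claim that a diagonal argument against ``the countable set $E_0$'' ensures $\xi \notin E_0$, but $E_0$ is \emph{uncountable}: a minimal non-atomic lamination has uncountably many leaves, so its lift has uncountably many leaf endpoints. Avoiding $E_0$ is exactly the heart of the problem --- a ray ending at a leaf endpoint is asymptotic to that leaf and hence not exotic, while a ray ending outside $\overline{E_0}$ is eventually disjoint. The exotic endpoints must therefore lie in $\overline{E_0}\setminus E_0$, and producing a point there with the required mass control is the entire content of the theorem. Your nested-interval construction, as written, gives no mechanism for landing in this set. There is also a secondary gap in the mass accounting for (i): you bound only leaves with one endpoint in $J_k\setminus J_{k+1}$ and the other in $S^1\setminus J_0$, but a leaf with both endpoints inside $J_0$, one on each side of $\xi$, can cross $\widetilde r$ and is not counted.

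The paper's argument takes a quite different route that sidesteps both issues. It approximates $\mathcal{L}$ by a sequence of train tracks $\tau_n$ and codes train paths symbolically by words in the branches of $\tau_1$. The two key inputs are: (a) two infinite train paths converge at infinity iff their words share a tail; and (b) an entropy gap --- admissible words (subwords of leaf words) grow only polynomially in length, while closed train paths through a fixed branch grow exponentially --- so there exist \emph{inadmissible} closed paths with arbitrarily small transverse measure. Concatenating such inadmissible words yields a ray of finite transverse measure whose word contains inadmissible subwords in every tail; by (a) it is asymptotic to no leaf, and by construction it meets $\mathcal{L}$ infinitely often. The symbolic coding thus certifies $\xi\notin E_0$ combinatorially, without ever enumerating $E_0$.
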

Problems (I) and (II) are closely related: the existence of exotic rays for the bending lamination is necessary for the existence of exotic roofs, and the existence of a geodesic ray for which the growth in (II) is \emph{super-exponentially} slow is sufficient (see Theorem~\ref{thm: sufficient_intro}).

\paragraph{Geodesic planes in hyperbolic 3-manifolds}
We start by describing the motivational problem (I) in more detail. Let $M=\Gamma\backslash\mathbb{H}^3$ be a complete, oriented, hyperbolic 3-manifold of constant curvature $-1$, specified by a Kleinian group $\Gamma\subset\isom^+(\mathbb{H}^3)\cong\psl(2,\mathbb{C})$. Let $\Lambda\subset S^2\cong\hat{\mathbb{C}}$ be its limit set. The \emph{convex core} of $M$ is given by
$$\core(M)=\Gamma\backslash\hull(\Lambda)\subset M,$$
where $\hull(\Lambda)\subset\mathbb{H}^3$ is the convex hull of $\Lambda$. Equivalently, $\core(M)$ is the smallest closed convex subset of $M$ so that the inclusion induces a homotopy equivalence. A hyperbolic 3-manifold $M$ is said to be \emph{convex cocompact} if $\core(M)$ is compact, and \emph{geometrically finite} if the unit neighborhood of $\core(M)$ has finite volume.

A \emph{geodesic plane} in a hyperbolic 3-manifold $M$ is a totally geodesic isometric immersion $f:\mathbb{H}^2\to M$. We often identify $f$ with its image $P=f(\mathbb{H}^2)$ and call the latter a geodesic plane as well. The map $f$ lifts to a map $\tilde f:\mathbb{H}^2\to\mathbb{H}^3$, whose image is a totally geodesic plane in $\mathbb{H}^3$, which in turn determines a circle $C$ in $S^2$, its boundary at infinity. Different lifts of $f$ give the orbit of $C$ under $\Gamma$. Conversely, any oriented circle in $S^2$ determines a geodesic plane in $M$. We will make use of this correspondence repeatedly, and refer to $C$ as a \emph{boundary circle} of $P$.

We are interested in the topological behavior of geodesic planes in $M$. When $M$ has finite volume, any geodesic plane is either closed or dense, independently due to Ratner \cite{ratner} and Shah \cite{shah}. In particular, this is a consequence of Ratner's much more general theorem on orbit closures of unipotent subgroups. In the infinite volume setting, when $M$ is convex cocompact and \emph{acylindrical}, geodesic planes intersecting the interior of $\core(M)$ satisfy strong rigidity property in the spirit of Ratner: they are either closed or dense in $\Int(\core(M))$ \cite{MMO1, MMO2}. This is generalized in \cite{acy_geom_finite} to certain geometrically finite acylindrical manifolds as well.

A geodesic plane's behavior near the convex core boundary, on the other hand, is more subtle. In \cite{exotic_plane}, the second named author constructed an explicit example of a geodesic plane in a convex cocompact acylindrical hyperbolic 3-manifold that is closed in the interior of the convex core, but not in the whole manifold.

We also refer to \cite{khalil2019geodesic, apollonian_elem} for some recent developments in the geometrically finite setting.

\paragraph{Geodesic planes in a geometrically finite end}
In this paper, we focus on the final piece missing from the discussions above and study geodesic planes outside the convex core of a geometrically finite 3-manifold with incompressible boundary. An \emph{end} of $M$ is a connected component of $M\backslash\Int(\core(M))$. Since the planes of interest are contained entirely in an end, we may as well assume $M$ is a quasifuchsian manifold (see \S\ref{sec: quasifuchsian}). In this case, $M$ has two ends $E_\pm$, corresponding to the two components $\Omega_\pm$ of the domain of discontinuity. The boundary $X_\pm=\partial E_\pm$ is isometric to a hyperbolic surface of finite area, piecewise totally geodesic, bent along a \emph{bending lamination} $\mathcal{L}_\pm$. Note that we allow cusps for $X_\pm$.

Each complement of $E_\pm-\mathcal{L}_\pm$ is called a \emph{flat piece}, bordered by complete geodesics in $\mathcal{L}_\pm$. Geometrically, a flat piece is either an ideal polygon, or contains closed curves that are not homotopically trivial (see Figure~\ref{fig:flat_pieces}). We refer to \S\ref{sec: quasifuchsian} and \cite{bending} for more details on the geometry of convex core boundary.

\begin{figure}[htp]
    \centering
    \begin{minipage}{0.4\linewidth}
        \begin{subfigure}{\linewidth}
        \centering
          \includegraphics[width=0.7\textwidth]{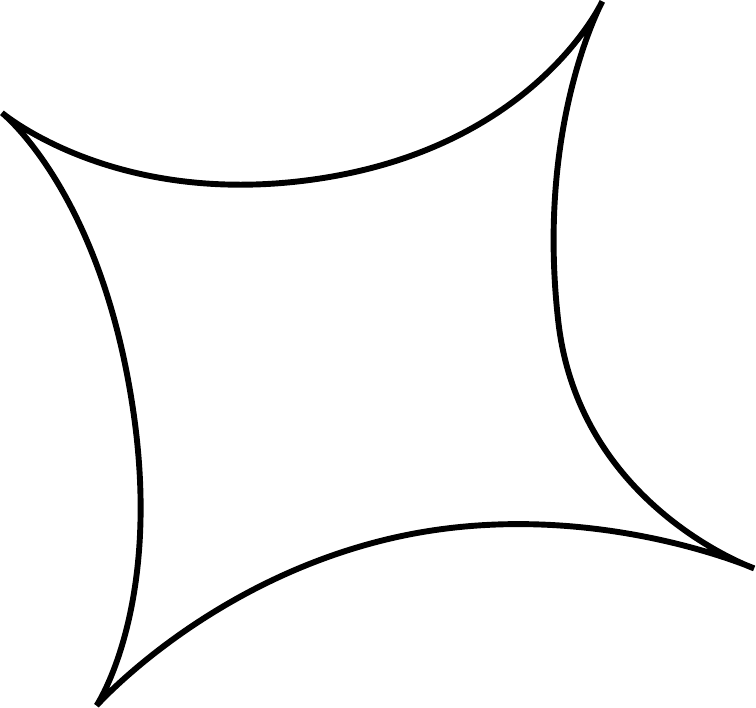}
          \caption{An ideal polygon}
    \end{subfigure}
    \end{minipage}
    \begin{minipage}{0.4\linewidth}
    \begin{subfigure}{\linewidth}
        \centering
          \includegraphics[width=\textwidth]{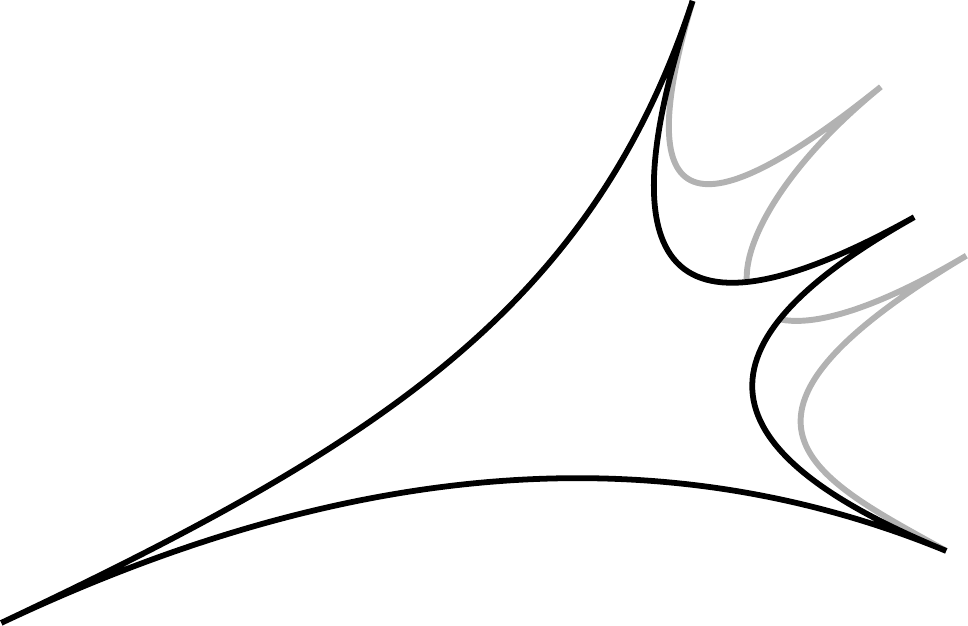}
          \caption{An punctured ideal polygon}
          \label{subfig:punctured_polygon}
    \end{subfigure}
    \end{minipage}
    \begin{minipage}{0.4\linewidth}
    \begin{subfigure}{\linewidth}
    \centering
          \includegraphics[width=0.7\linewidth]{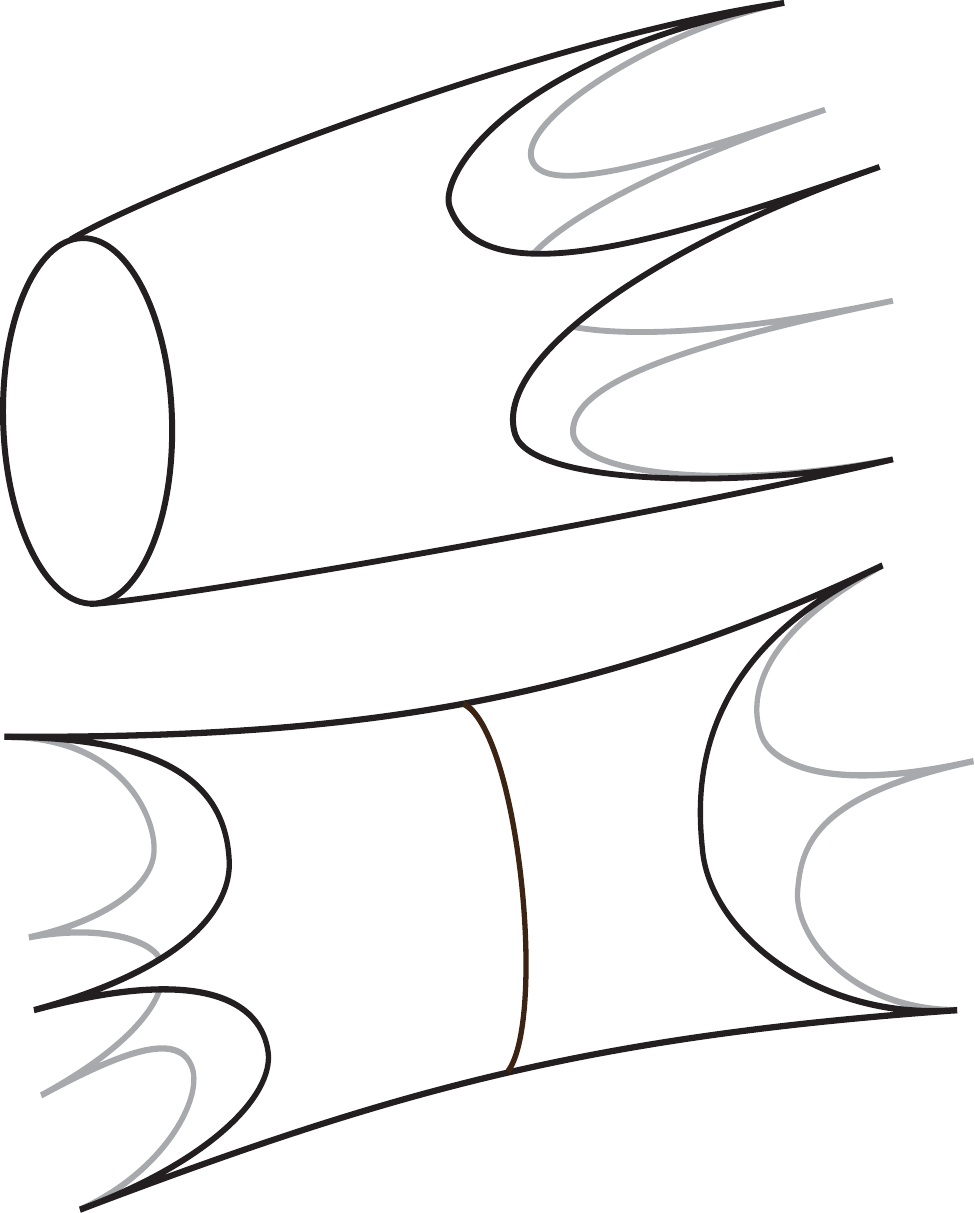}
          \caption{A crown or double crown}
          \label{subfig:crown_double_crown}
    \end{subfigure}
    \end{minipage}
    \begin{minipage}{0.4\linewidth}
    \begin{subfigure}{\linewidth}
    \centering
          \includegraphics[width=\linewidth]{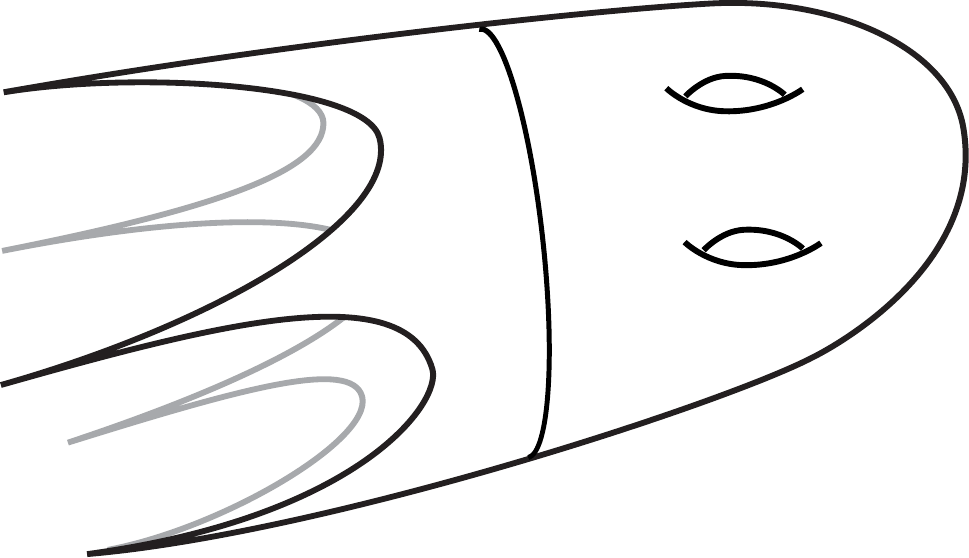}  
          \caption{A nonelementary surface, decorated with a crown along its geodesic boundary}
          \label{subfig:nonelementary}
    \end{subfigure}
    \end{minipage}
    \caption{A menagerie of possible shapes of flat pieces}
    \label{fig:flat_pieces}
\end{figure}

Let $P$ be a geodesic plane contained in $E_+$, and $C$ a boundary circle of $P$. Then $C$ is contained completely in $\overline{\Omega_+}$. We have the following possibilities:
\begin{enumerate}[topsep=0 mm, label={(\roman*)}, itemsep=0 mm]
\item\label{case: support} $P$ meets the convex core boundary, and $|C\cap\Lambda|\ge2$ (such a plane is called a \emph{support plane}, and the corresponding circle a \emph{support circle}). Note that a support plane meets the convex core boundary in a flat piece or a leaf of the bending lamination.
\item\label{case: horo} $P$ is disjoint from but accumulates on the convex core boundary, and $|C\cap\Lambda|=1$ (such a plane is called an \emph{asymptotic plane}, and the corresponding circle an \emph{asymptotic circle}).
\item\label{case: bounded} $P$ is bounded away from the convex core boundary, and $C$ is disjoint from the limit set.
\end{enumerate}

For Case~\ref{case: bounded}, since the action of $\Gamma$ is properly discontinuous on the domain of discontinuity, the orbit of $C$ only accumulates on the limit set. This implies that $P$ is closed in $M$.

Up to conformal conjugation, assume $\Omega_+$ is bounded in $\mathbb{C}$. Given a geodesic plane $P$ in $E_+$ and a boundary circle $C$ of $P$, a smaller circle $C'$ contained in the closed disk bounded by $C$ gives another geodesic plane $P'$. We say $P'$ is \emph{shadowed} by $P$ (and $C'$ is shadowed by $C$). We call $P'$ a \emph{roof} if it is not shadowed by any other geodesic plane. In particular, all support planes are roofs, and no planes in Case~\ref{case: bounded} are roofs. A roof is said to be \emph{exotic} if it is of Case~\ref{case: horo}. We call boundary circles of exotic roofs \emph{exotic circles}.

\paragraph{Classification of closure of geodesic planes} We are now ready to state our main results on geodesic planes outside convex cores. The picture is distinctly different for atomic bending laminations and non-atomic ones. Proposition~\ref{prop: atomic} below deals with the case where the bending lamination is purely atomic (i.e.\ it is a \emph{multicurve}, consisting of disjoint simple closed geodesics), and Theorem~\ref{thm: minimal_roof} deals with the case where it is minimal and non-atomic. Finally, Proposition~\ref{prop:several_comp} describes how to combine the two cases to obtain a classification for any general bending lamination.

We start with the purely atomic case:
\begin{prop}\label{prop: atomic}
Let $M$ be a quasifuchsian manifold, $\Lambda$ its limit set, and suppose that the bending lamination $\mathcal{L}$ of one of its ends $E_+$ is purely atomic. Let $f:\mathbb{H}^2\to M$ be a geodesic plane contained in $E_+$, $P$ its image, and $C$ a boundary circle of $P$. Then one of the following happens:
\begin{enumerate}[label=\normalfont{(\arabic*)}, topsep=0mm, itemsep=0mm]
\item $C\cap\Lambda=\varnothing$, and $P$ is closed.
\item\label{item:cylinder} $|C\cap\Lambda|=2$, and $P$ is closed. The map $f$ factors through a cylinder $\langle g\rangle\backslash\mathbb{H}^2$, where $g$ is a hyperbolic element generating $\stab_{\Gamma}(C)$.
\item\label{item:nonelementary} $C\cap\Lambda$ is a Cantor set, and $P$ is closed. The map $f$ factors through a nonelementary geometrically finite hyperbolic surface of infinite volume $\stab_{\Gamma}(C)\backslash\mathbb{H}^2$.
\item\label{item:asy_cylinder} $|C\cap\Lambda|=1$, and $P$ is shadowed by a support plane $P'$ of type \ref{item:cylinder} above. Moreover, $\overline{P}=P\cup P'$.
\item\label{item:asy_nonelem} $|C\cap\Lambda|=1$ where $p=C\cap\Lambda$ is not the fixed point of a parabolic element, and $P$ is shadowed by a support plane $P'$ (with a boundary circle $C'$) of type \ref{item:nonelementary} above. Moreover, $\overline{P}=\stab_\Gamma(C')\backslash H(C')$.
\item\label{item:asy_parabolic} $|C\cap\Lambda|=1$ where $p=C\cap\Lambda$ is the fixed point of a parabolic element in $\Gamma$, and $P$ is closed.
\end{enumerate}
\end{prop}
Here in \ref{item:asy_nonelem}, $H(C')$ is the half-space bounded by the hyperbolic plane determined by the circle $C'$ and contained in the lift of $E_+$. We also remark that the distinction between \ref{item:asy_nonelem} and \ref{item:asy_parabolic} is only necessary when $X_+=\partial E_+$ has cusps; otherwise $\Gamma$ contains no parabolic elements. These cases can be nicely illustrated in terms of the $\Gamma$-orbit of the boundary circle $C$; see Figure~\ref{fig:atomic_limit_set}.

\begin{figure}[htp]
\captionsetup{width=.9\textwidth}
    \centering
    \begin{minipage}{0.45\linewidth}
    \begin{subfigure}{\linewidth}
    \includegraphics[trim={1cm 7cm 1cm 7cm},clip,width=\textwidth]{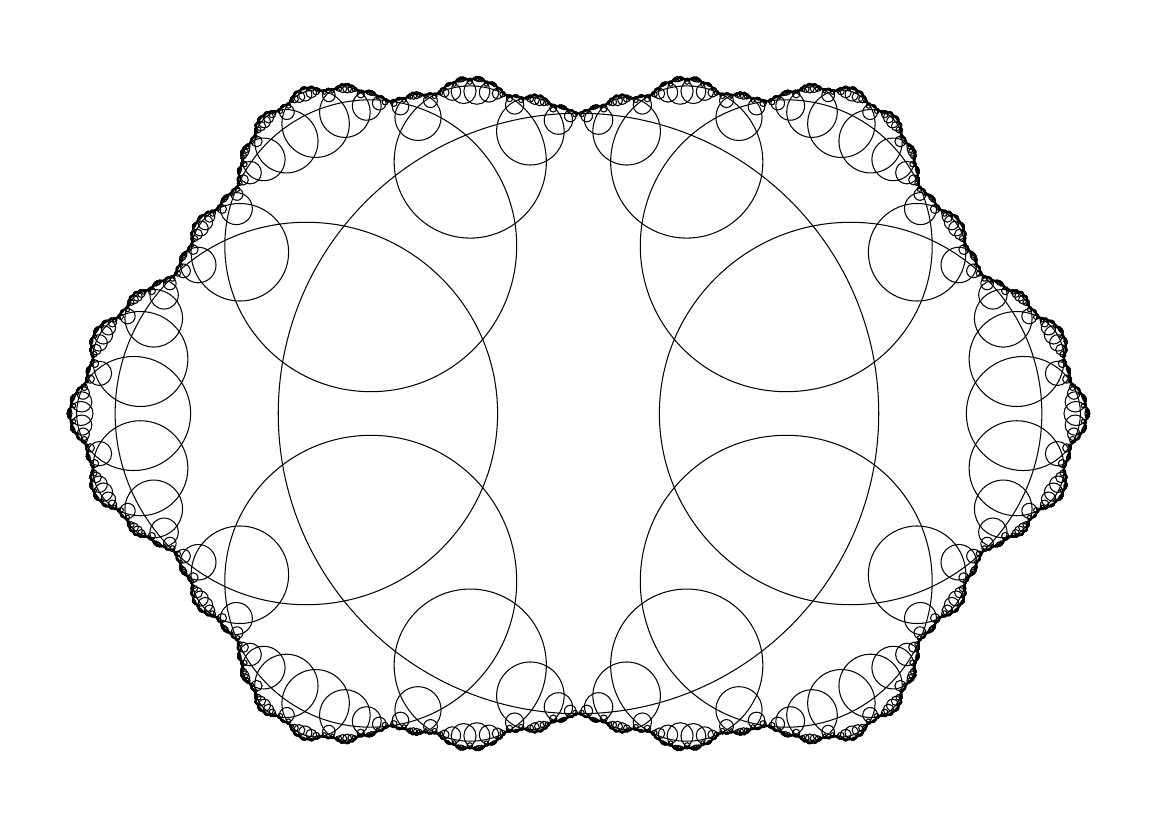}
    \caption{A support plane of type \ref{item:cylinder}}
    \label{subfig:cylinder}
    \end{subfigure}
    \end{minipage}
    \begin{minipage}{0.45\linewidth}
    \begin{subfigure}{\linewidth}
    \includegraphics[trim={1cm 7cm 1cm 7cm},clip,width=\textwidth]{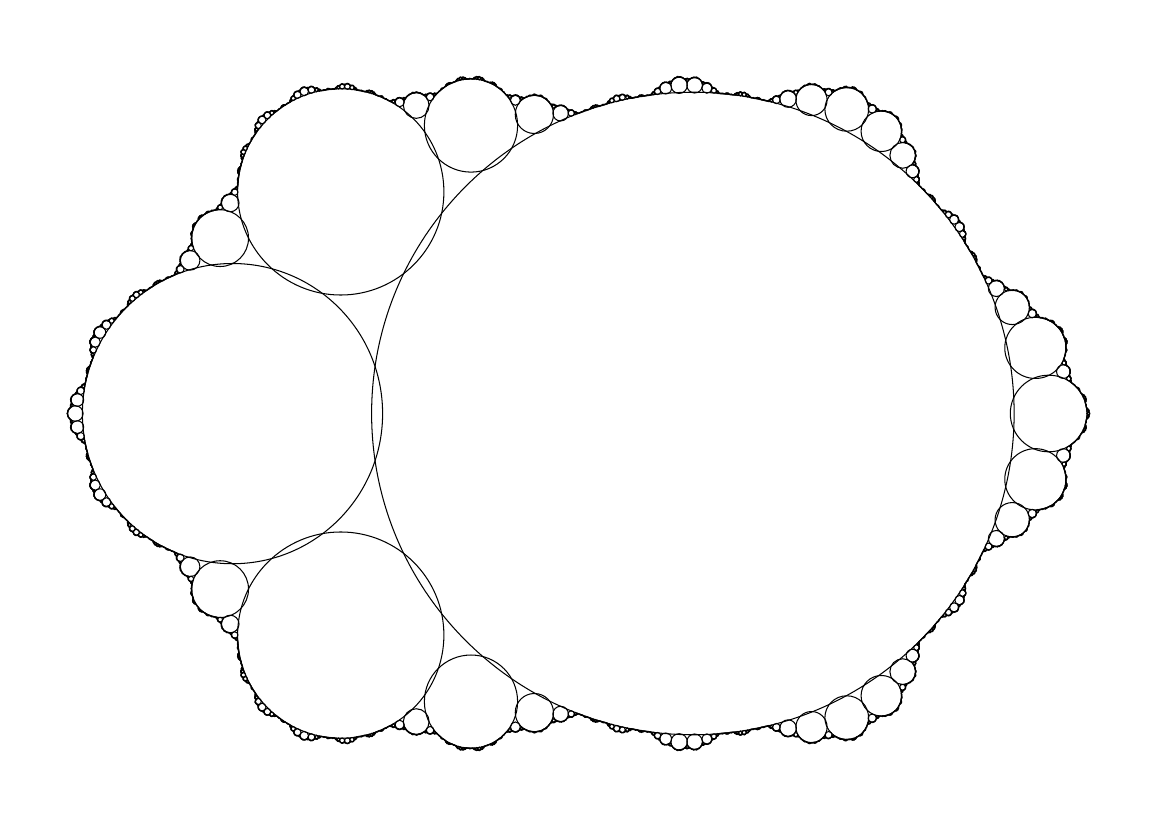}
    \caption{A support plane of type \ref{item:nonelementary}}
    \label{subfig:support}
    \end{subfigure}
    \end{minipage}
    
    \begin{minipage}{0.45\linewidth}
    \begin{subfigure}{\linewidth}
    \captionsetup{width=.6\textwidth}
    \includegraphics[trim={1cm 7cm 1cm 7cm},clip,width=\textwidth]{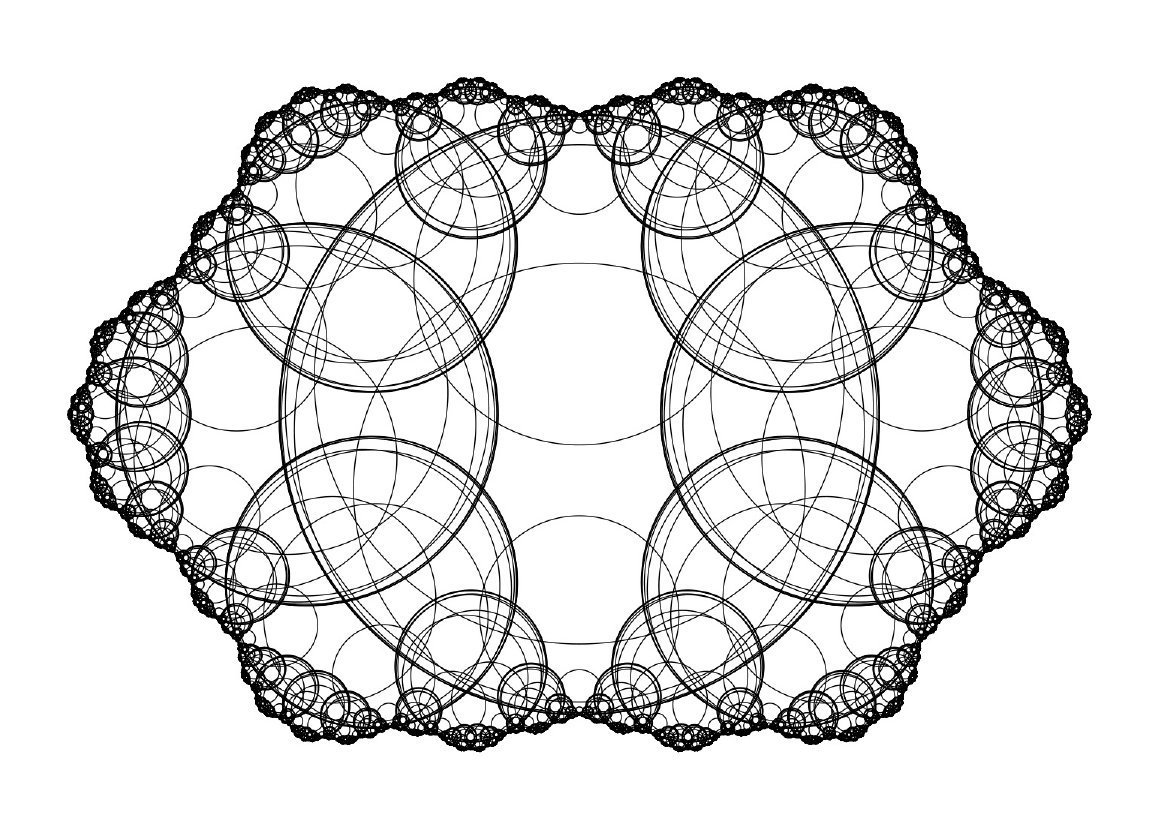}
    \caption{An asymptotic plane of type \ref{item:asy_cylinder}, shadowed by the example in Figure~\ref{subfig:cylinder}}
    \end{subfigure}
    \end{minipage}
    \begin{minipage}{0.45\linewidth}
    \begin{subfigure}{\linewidth}
    \captionsetup{width=.6\textwidth}
    \includegraphics[trim={1cm 7cm 1cm 7cm},clip,width=\textwidth]{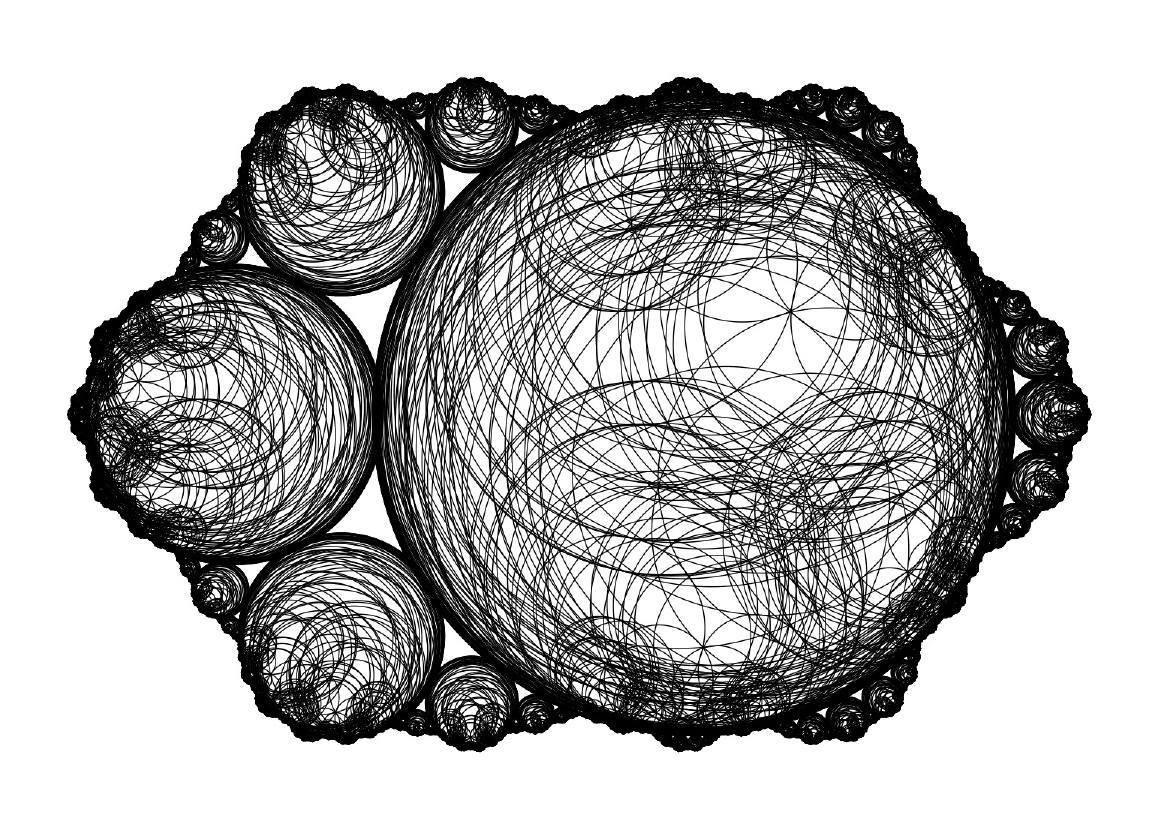}
    \caption{An asymptotic plane of type \ref{item:asy_nonelem}, shadowed by the example in Figure~\ref{subfig:support}}
    \end{subfigure}
    \end{minipage}
    \begin{minipage}{0.45\linewidth}
    \begin{subfigure}{\linewidth}
    \captionsetup{width=.6\textwidth}
    \includegraphics[trim={1cm 7cm 1cm 7cm},clip,width=\textwidth]{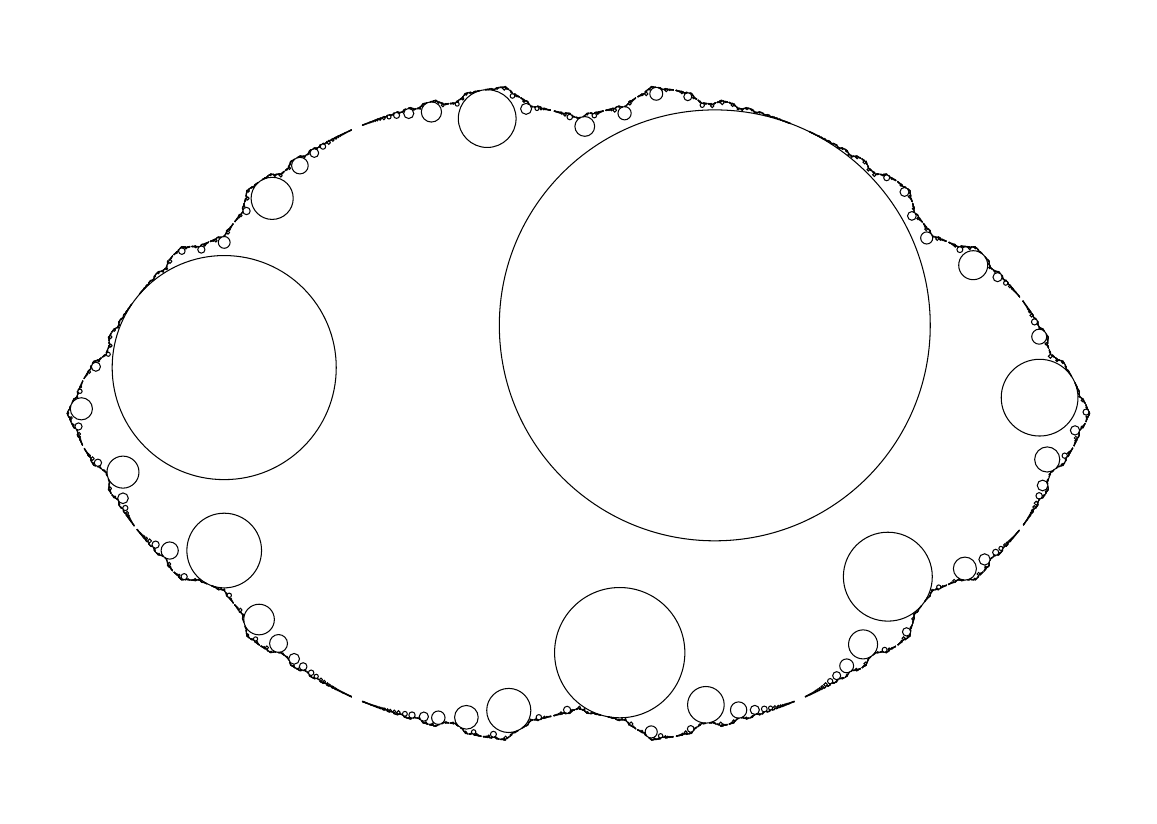}
    \caption{An asymptotic plane of type \ref{item:asy_parabolic}}
    \end{subfigure}
    \end{minipage}
    \caption{Illustrations of Cases (2)-(6) in Proposition~\ref{prop: atomic}. (a)-(d) are drawn in the same quasifuchsian manifold without cusps, and (e) is drawn in a quasifuchsian manifold with cusps}
    \label{fig:atomic_limit_set}
\end{figure}

We next consider minimal laminations without atom. Let $C'$ be an asymptotic circle shadowed by a support circle $C$. If $C'\cap\Lambda$ is the endpoint of a leaf, we refer to $C'$ as \emph{stemmed}, otherwise \emph{enclosed}. Note that enclosed asymptotic circles only exist for $C$ with nontrivial stabilizer in $\Gamma$. Indeed, when the stabilizer is trivial, the corresponding plane $\tilde P$ intersects the convex core boundary either in a leaf or an ideal polygon bounded by leaves (see the discussion above on the geometry of convex core boundary, or in greater detail \S\ref{sec: quasifuchsian} and \cite{bending}), and hence $C$ meets the limit set in finitely many points, which are all endpoints of leaves. 

On the other hand, when the stabilizer is nontrivial, by \cite[Thm.~5.1]{MMO2} and \cite[Lem.~3.7]{acy_geom_finite}, a point of $C\cap\Lambda$ either lies in the limit of $\stab_\Gamma(C)$, or is the tip of a crown. The former are not endpoints of leaves while the latter are, so any enclosed circle $C'$ shadowed by $C$ must meet the $\Lambda$ at a point in the limit set of $\stab_\Gamma(C)$.

We have the following classification in this case.
\begin{thm}\label{thm: minimal_roof}
With the same notation as the previous proposition, assume $\mathcal{L}$ is minimal and atom-free. Then we have the following possibilities:
\begin{enumerate}[label=\normalfont{(\arabic*)}, topsep=0mm, itemsep=0mm]
    \item $C\cap\Lambda=\varnothing$, and $P$ is closed;
    \item\label{item:thm:minimal_roof_parabolic} $|C\cap\Lambda|=1$ where $p=C\cap\Lambda$ is the fixed point of a parabolic element in $\Gamma$, and $P$ is closed;
    \item\label{item:thm:mimial_roof_dense} $|C\cap\Lambda|\ge1$, $C$ is not of the case above, and $\overline{P}=E_+$.
    
    Suppose further that $E_+$ contains no exotic roofs. Then we have:
\begin{itemize}
    \item If $|C\cap\Lambda|\ge 2$, then the closure of $\Gamma\cdot C$ is exactly the union $\mathcal{S}$ of all support circles and enclosed asymptotic circles;
    \item If $|C\cap\Lambda|=1$, then $\overline{\Gamma\cdot C}=\Gamma\cdot C\cup\mathcal{S}$.
\end{itemize}

Suppose otherwise that $E_+$ contains an exotic roof. Then we have:
\begin{itemize}
    \item If $|C\cap\Lambda|\ge1$, then $\overline{\Gamma\cdot C}$ consists of all support and asymptotic circles.
\end{itemize}
\end{enumerate}
\end{thm}
Note that in \ref{item:thm:mimial_roof_dense}, the two situations (with or without exotic roofs) are indistinguishable if we only look at the closures of planes in the 3-manifold, as we always have $\overline{P}=E_+$. Instead, we look at the corresponding orbits of circles, which provide finer details of the closure. Equivalently, we can look at the closure of tangent frames to the planes in the frame bundle over the manifold (see \cite{MMO1} for more details on circles, planes and frames). When exotic roofs exist, the closure of the set of frames tangent to a support or asymptotic plane is larger: it contains all possible frames lying over all support and asymptotic planes, including all stemmed asymptotic planes.

This highlights one key difference between stemmed and enclosed asymptotic circles / planes. As we will see in the proof, enclosed asymptotic circles are always included in the closure of the orbit $\Gamma\cdot C$ due to the dynamics of $\stab_\Gamma(C)$, as a consequence of Dal'bo's result on circles based at points in the limit set \cite{horocycle}. Stemmed asymptotic circles, on the other hand, are based at points disjoint from the limit set of $\stab_\Gamma(C)$, and hence the dynamics of the stabilizer is not enough to include these circles in the closure.

Finally, the case of a general bending lamination is a mix of the two cases above. Write $\mathcal{L}=\mathcal{L}_a\sqcup\mathcal{L}_m$ as the union of atomic parts $\mathcal{L}_a$ and non-atomic parts $\mathcal{L}_m$. Decompose $X_+\backslash\mathcal{L}_a=\bigsqcup_{t} X'_t$ into connected components, and set $X_t$ to be the closure of $X'_t$ in $X$. For each $t$, set $\mathcal{L}_t=\mathcal{L}_m\cap X_t$. Write $\mathcal{L}_t=\bigsqcup_\alpha\mathcal{L}_t^\alpha$ into minimal components. Let $X_t^\alpha \subset X_t$ be the smallest closed subsurface with geodesic boundary containing $\mathcal{L}_t^\alpha$, and set $X_t^0=\overline{X_t\backslash\cup_\alpha X_t^\alpha}$. We have

\begin{prop}\label{prop:several_comp}
\begin{enumerate}[label=\normalfont{(\arabic*)}, topsep=0mm, itemsep=0mm]
    \item If $s\neq t$, then any support plane or asymptotic plane for $X_s$ has closure disjoint from that for $X_t$;
    \item  For each $\alpha$, depending on whether $\mathcal{L}_t^\alpha$ exhibits an exotic roof or not, the closure of a support plane for $X_t$ either contains all support planes and asymptotic planes for $X_t^\alpha$, or all support planes and enclosed asymptotic planes for $X_t^\alpha$ respectively;
    \item The closure of a support plane for $X_t$ contains all support planes and enclosed asymptotic planes for $X_t^0$.
\end{enumerate}
\end{prop}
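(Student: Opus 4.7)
The decomposition $X_+=\bigsqcup_t X_t$ cut by the closed leaves $\mathcal{L}_a$, further refined as $X_t=X_t^0\cup\bigcup_\alpha X_t^\alpha$, is designed so that the closure analysis on each piece reduces to a case already treated: the non-atomic minimal case of Theorem~\ref{thm: minimal_roof} on each $X_t^\alpha$, and the purely atomic case of Proposition~\ref{prop: atomic} on $X_t^0$. The strategy is therefore (i) to show rigidity between pieces, so that taking limits of $\Gamma$-orbits of boundary circles cannot move a support or asymptotic plane from one piece to another; and (ii) to verify that inside each piece the arguments of the previous propositions localize to the subfamily of circles whose tangency locus lies in that piece.

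\paragraph{Proof of (1)}
Every closed leaf $\gamma$ of $\mathcal{L}_a$ that separates $X_s$ from $X_t$ has positive bending, so along any lift $\widetilde{\gamma}$ the two adjacent totally geodesic pieces of $\partial\hull(\Lambda)$ meet at a dihedral angle strictly less than $\pi$. The support plane through $\widetilde{\gamma}$ separates, on $\partial\hull(\Lambda)$, the collection of lifts of pieces over $X_s$ from those over $X_t$; at the level of $\hat{\mathbb{C}}$, the tangency points of circles associated with $\widetilde{X}_s$ and with $\widetilde{X}_t$ lie in disjoint closed arcs cut off by this separating circle. Combined with the proper discontinuity of $\Gamma$ on $\Omega_+$, this forces any convergent sequence of circles whose tangency lies in $X_t$ to limit to a circle whose tangency still lies in $X_t$. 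Iterating this observation over all leaves of $\mathcal{L}_a$ that separate $X_s$ from $X_t$ yields (1).

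\paragraph{Proofs of (2) and (3)}
Fix a piece $Y$, either $X_t^\alpha$ or $X_t^0$. A support plane $P$ tangent into $Y$ has its entire tangency locus inside the lift of $Y$, and its boundary circle meets $\Lambda$ either through the endpoints of leaves of $\mathcal{L}_t^\alpha$ (if $Y=X_t^\alpha$) or through the endpoints of the closed leaves in $\partial Y$ (if $Y=X_t^0$). On $X_t^\alpha$ the lamination $\mathcal{L}_t^\alpha$ is minimal and atom-free, so the recurrence-based argument used to establish Theorem~\ref{thm: minimal_roof} applies verbatim to the subfamily of support and asymptotic planes tangent into $X_t^\alpha$: the $\Gamma$-orbit of a support circle for $X_t^\alpha$ accumulates onto all support and asymptotic circles for $X_t^\alpha$, with ``asymptotic'' restricted to ``enclosed asymptotic'' exactly when no exotic roof for $\mathcal{L}_t^\alpha$ exists, giving (2). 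On $X_t^0$, there is no non-atomic bending in the interior, so the only bending is along the closed leaves of $\partial X_t^0\subset\mathcal{L}_a$; the closure argument is now the local version of Proposition~\ref{prop: atomic} and produces all support planes and enclosed asymptotic planes for $X_t^0$, giving (3). In both cases Part (1) rules out any other planes in the closure.

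\paragraph{Main obstacle}
The delicate step is the localization needed for (2). The proof of Theorem~\ref{thm: minimal_roof} uses global recurrence of the bending lamination and horocyclic-type moves in the frame bundle to manufacture new planes in the orbit closure, and one must check that this dynamics can be confined to the compact carrier $X_t^\alpha$ so that the limiting circles remain tangent only to $X_t^\alpha$, never crossing an atomic leaf of $\mathcal{L}_a$ into a neighboring piece. The dihedral-angle separation from Part (1) guarantees that such crossings are impossible in the limit, but this confinement has to be fed into each approximation step in the argument. Once this localization is checked, assembling the three statements is essentially bookkeeping.
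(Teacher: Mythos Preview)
Your approach is essentially the paper's: Part~(1) is the dihedral-angle/ridge-angle separation of Proposition~\ref{prop: separate}, and Parts~(2)--(3) are obtained by localizing Theorem~\ref{thm: minimal_roof} (via Proposition~\ref{prop: support_flat} and Lemma~\ref{lm: enclosed}) to each piece. The paper's one-paragraph proof singles out the \emph{crown} picture you flag in your ``main obstacle'': the complementary region of $\mathcal{L}_t^\alpha$ in $X_t^\alpha$ touching $\partial X_t^\alpha$ is a crown, and stemmed asymptotic planes for a lift of the crown can only be reached as limits from the $X_t^\alpha$ side, which is exactly the confinement you worry about.

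One inaccuracy worth fixing: you write ``the only bending is along the closed leaves of $\partial X_t^0\subset\mathcal{L}_a$''. This is not quite right. The boundary components of $X_t^0$ coming from $\partial X_t^\alpha$ are \emph{not} leaves of $\mathcal{L}$ and carry no bending at all; consequently the interior of $X_t^0$ lies in a single flat piece, and that flat piece extends beyond $X_t^0$ into the adjacent crowns of the $X_t^\alpha$. So Part~(3) is not really a ``local version of Proposition~\ref{prop: atomic}'' but rather the combination of Proposition~\ref{prop: support_flat} (to get the support plane of this flat piece into the closure) with Lemma~\ref{lm: enclosed} (to then get the enclosed asymptotic planes it shadows). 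This does not change the outcome, but it does change which earlier result you are invoking.
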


\paragraph{Dynamics vs geometry}
Before we move on, we make some remarks on techniques used in previous works, and the difficulties we face for the current problem. Previous works \cite{MMO1,MMO2,acy_geom_finite} made use of the structure of the limit set $\Lambda$ when $M$ is acylindrical, and in particular it is proved that for any boundary circle $C$ of a geodesic plane intersecting the interior of the convex core, $C\cap\Lambda$ contains a \emph{thick} cantor set. This then implies sufficiently frequent return of the unipotent flow to the convex core, via \emph{unipotent blowup}, and is enough for the ``closed or dense" dichotomy.

When the convex core boundary is totally geodesic, its fundamental group is a lattice in a conjugate of $\psl(2,\mathbb{R})$. The chaotic dynamics of its horocycle flow is also particularly useful in \cite{MMO1}.

On the other hand, for the current problem, the boundary circles of interest interact minimally with the limit set. Moreover, generically, the complement of the bending lamination in the convex core boundary consists of ideal polygons, so there do not exist nonelementary subgroups of some conjugate of $\psl(2,\mathbb{R})$ whose dynamics we can make use of. Therefore, instead of using dynamics, we study the corresponding geodesic planes via the geometry of the convex core boundary, in particular its bending lamination.

However, we make the following observation. let $M=\Gamma\backslash\mathbb{H}^3$ be a hyperbolic 3-manifold covered by a quaisfuchsian manifold $M_0=\Gamma_0\backslash\mathbb{H}^3$. Assume the boundary of an end $E$ of $M_0$ projects down to a properly immersed pleated surface $X$ in $M$. We have
\begin{prop}
   Assume $E$ contains an exotic roof, and the bending lamination on $\partial E$ is minimal. Suppose also that $X$ is not part of the convex core boundary of $M$. Then any geodesic plane $P$ limiting on $X$ is dense in $M$.
\end{prop}
\begin{proof}
    Since $\Gamma$ contains a Zariski dense subgroup $\Gamma_0$, it is itself Zariski dense. Since the closure of any support plane of $E$ in $M_0$ contains all support and asymptotic planes on that side, the corresponding circles sweep out an open set of the domain of discontinuity. The projection of these planes is contained in the closure of $P$. Moreover, these planes all meet the limit set of $M$, since $X$ is not part of the convex core boundary. Since the circles sweep out an open subset of the limit set of $\Gamma$, the result follows from \cite[Cor.~4.2]{MMO1}.
\end{proof}
In other words, pleated surfaces with exotic roofs could play the role of closed geodesic planes in previous works, in the sense that they \emph{scatter} any nearby plane to be dense. However, there might still exist geodesic planes accumulating on the convex core boundary that are not dense, even with an exotic roof (and we expect a concrete example could be constructed using methods from \cite{exotic_plane}).

\paragraph{Horocycles}
In \cite{mcmullen2016horocycles}, classification of the behavior of every geodesic plane in acylindrical, convex cocompact hyperbolic 3-manifolds with totally geodesic boundary is used to classify the behavior of every horocycle as well. In the survey \cite{Oh+2022+506+566}, it was observed that to generalize this to hyperbolic 3-manifolds with quasifuchsian boundary, it is necessary to study the behavior of support and asymptotic planes, which we have done here. We hope to address the behavior of horocycles in future works.

\paragraph{Exotic rays}
In Theorem~\ref{thm: minimal_roof}, the behavior of a geodesic plane is different depending on the existence of exotic roofs. It is not obvious whether exotic roofs exist. As discussed above, the answer depends on the bending lamination of the end. As a matter of fact, exotic roofs are closely related to the existence of \emph{exotic rays} for the bending lamination. Before describing their connections, we pivot to Problem (II), which is formulated completely in terms of surfaces.

Let $X:=\Gamma\backslash\mathbb{H}^2$ be a complete hyperbolic surface of finite area, and $\mathcal{L}$ a measured geodesic lamination on $X$. By a geodesic ray on $X$ we mean a geodesic isometric immersion $r:[0,\infty)\to X$.

Define the \emph{intersection number} $I({\mathcal{L}},r)$ as the transverse measure of $r$ with respect to $\mathcal{L}$. For almost every ray $r$, $I({\mathcal{L}},r)$ is infinite (see Theorem~\ref{thm:linear}). On the other hand, $I({\mathcal{L}},r)$ is finite when:
\begin{itemize}[topsep=0mm, itemsep=0mm]
    \item $r$ is asymptotic to a leaf of $\mathcal{L}$ (see Proposition~\ref{prop:point_at_infinity}); or
    \item $r$ is eventually disjoint from $\mathcal{L}$.
\end{itemize}
We call a ray \emph{exotic for $\mathcal{L}$} if $I(\mathcal{L},r)$ is finite but it belongs to neither of these cases. Our question is then the following: given a measured geodesic lamination $\mathcal{L}$, does there exist an exotic ray for $\mathcal{L}$?

If $\mathcal{L}$ is a multicurve, i.e.\ $\mathcal{L}=a_1\gamma_1+\dots+a_k\gamma_k$
where $a_i \in \mathbb{R}^+$ and $\gamma_i$'s are simple closed geodesics, then there is no exotic ray for $\mathcal{L}$; any ray with $I(r,\mathcal{L})<\infty$ must eventually be disjoint from $\mathcal{L}$. Our main result Theorem~\ref{main1} addresses the remaining cases.

\paragraph{The halo of a measured lamination}
We now put Theorem~\ref{main1} into a broader context. Let $\mathcal{M}$ be a measured geodesic lamination on $\mathbb{H}^2$. Let $\partial\mathcal{M}\subset S^1$ be the set of endpoints of geodesics in $\mathcal{M}$. We can define \emph{exotic rays} for $\mathcal{M}$ as above. The \emph{halo} of $\mathcal{M}$, denoted by $h\mathcal{M}$, is the set of endpoints of exotic rays for $\mathcal{M}$. By definition, $h\mathcal{M}\cap \partial\mathcal{M}=\varnothing$. Moreover, if $p\in h\mathcal{M}$, then any geodesic ray ending at $p$ is exotic (Proposition~\ref{prop:point_at_infinity}). When $\mathcal{M}$ is the lift $\widetilde{\mathcal{L}}$ of a measured lamination $\mathcal{L}$ on $X$ to $\mathbb{H}^2$, we simply refer to $h\widetilde{\mathcal{L}}$ as the halo of $\mathcal{L}$, and denote it by $h\mathcal{L}$.

We have the following stronger version of Theorem~\ref{main1} framed in terms of halos:
\begin{thm}\label{mainprime}
Let $\mathcal{L}$ be a measured geodesic lamination on a complete hyperbolic surface $X=\Gamma\backslash\mathbb{H}^2$ of finite area. Then the halo $h\mathcal{L}$ is either empty or uncountable. Moreover, it is uncountable if and only if $\mathcal{L}$ is not a multicurve.
\end{thm}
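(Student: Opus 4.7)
The plan is to establish both directions of the equivalence separately; together they also yield the dichotomy.

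For the multicurve case, suppose $\mathcal{L}=\sum_{i=1}^k a_i\gamma_i$ is a weighted sum of disjoint simple closed geodesics with $a_i>0$. A ray $r$ with $I(\mathcal{L},r)<\infty$ can cross $\bigcup_i\gamma_i$ only finitely many times, since each crossing contributes at least $\min_i a_i>0$ to the intersection number. Hence after some time $T$ the tail $r|_{[T,\infty)}$ lies in a single connected component of $X\setminus\mathcal{L}$, so $r$ is eventually disjoint from $\mathcal{L}$ and therefore not exotic. This yields $h\mathcal{L}=\varnothing$.

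For the converse, suppose $\mathcal{L}$ is not a multicurve; then $\mathcal{L}$ contains a minimal non-atomic sublamination $\mathcal{L}_0$, and by Theorem~\ref{main1} the halo is nonempty. The plan is to upgrade the existence of a single exotic ray to a Cantor family: I would replay the constructive proof of Theorem~\ref{main1} while introducing a binary choice at every stage, producing a map $\omega\mapsto r_\omega$ from $\{0,1\}^{\mathbb{N}}$ into the set of exotic rays based at a fixed point $o\in\mathbb{H}^2$, and then show that distinct parameters yield distinct endpoints in $S^1$.

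Schematically, an exotic ray is built inductively: at stage $n$, one crosses carefully chosen leaves of the lift $\widetilde{\mathcal{L}}_0$, contributing a prescribed small amount $\epsilon_n$ to the intersection number with $\sum_n\epsilon_n<\infty$, while simultaneously arranging the ray to reapproach $\widetilde{\mathcal{L}}_0$ (to prevent eventual disjointness) and to avoid accumulating on any single leaf (to prevent being asymptotic to one). Because $\mathcal{L}_0$ is minimal and non-atomic, any transversal meets $\widetilde{\mathcal{L}}_0$ in a Cantor set; this leaf-abundance means that at each stage one has at least two admissible leaves to cross consistent with the above constraints. Encoding $\omega\in\{0,1\}^{\mathbb{N}}$ in these binary choices yields the family $\{r_\omega\}$, and the uniform control on $\epsilon_n$ and on the recurrence/diversity requirements, across the choices, ensures every $r_\omega$ is exotic.

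For injectivity of the map sending $\omega$ to the endpoint of $r_\omega$, I would arrange the branching so that if $\omega$ and $\omega'$ first disagree at step $n$, then past that stage $r_\omega$ and $r_{\omega'}$ lie on opposite sides of a leaf $\ell$ of $\widetilde{\mathcal{L}}_0$, hence in two disjoint open half-planes of $\mathbb{H}^2$ whose closures in $\overline{\mathbb{H}^2}$ meet only at the two endpoints of $\ell$. Since by Proposition~\ref{prop:point_at_infinity} an endpoint of a leaf is not in $h\mathcal{L}$, the two rays must end at distinct points of $S^1$, yielding an injection $\{0,1\}^{\mathbb{N}}\hookrightarrow h\mathcal{L}$ and making $h\mathcal{L}$ uncountable. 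The main obstacle will be extracting from the proof of Theorem~\ref{main1} the quantitative geometric estimates needed to realize the binary branching uniformly in $\omega$ while maintaining both the summable weight budget and the non-asymptoticity condition at every level of the tree; in short, one must promote a one-time existence argument into a bounded, infinitely-nested construction stable under combinatorial perturbation.
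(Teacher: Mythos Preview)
Your treatment of the multicurve direction is fine and matches the paper.

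For the non-multicurve direction, your proposal diverges substantially from the paper's argument, and the part you flag as the ``main obstacle'' is exactly where the work lies. The paper does not build exotic rays by freehand leaf-crossing with binary choices. Instead it sets up a sequence of train tracks $\tau_n$ approximating the minimal non-atomic component, with a symbolic coding of train paths by words in the branches of $\tau_1$, and proves (Theorem~\ref{thm:tail2}) that two train paths converge at infinity if and only if their words share the same tail. It then exhibits, for each $n$, a closed \emph{inadmissible} train path $l_n$ (a word not appearing in any leaf) of transverse measure $<1/2^n$, obtained by an entropy argument: admissible words grow polynomially while closed train paths through a fixed branch grow exponentially. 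Concatenating the $l_n$ gives one exotic ray. Uncountability comes not from binary branching but from choosing arbitrary subsequences of $\{l_n\}$: after arranging $I(\mathcal{L},l_n)/3>\sum_{k>n}I(\mathcal{L},l_k)$, two subsequences with different tails yield words whose tails have different transverse measure, hence different tails as words, hence (by Theorem~\ref{thm:tail2}) different endpoints at infinity.

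Your leaf-separation injectivity idea is appealing, but as written it is only a plan: you have not shown how to realize, at every stage and uniformly in $\omega$, two admissible continuations that (i) each keep the cumulative transverse measure summable, (ii) each guarantee the ray is neither eventually disjoint from nor asymptotic to a leaf, and (iii) are separated by a fixed leaf for all future time. Point (iii) in particular is delicate: the ray must recurrently meet $\widetilde{\mathcal{L}}$, so it will cross many leaves after stage $n$, and you need the separating leaf at stage $n$ never to be crossed again by either branch. Without a concrete mechanism (such as the paper's inadmissible-word and train-track framework) this is an assertion rather than a construction. The paper's combinatorial route sidesteps these geometric difficulties entirely by reducing the endpoint question to a statement about word tails.
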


\paragraph{A necessary condition for exotic roofs}
We now return to the study of exotic roofs. Let $\Omega$ be a Jordan domain in $\hat{\mathbb{C}}=\mathbb{C}\cup\{\infty\}$, and let $\Lambda$ be its boundary. Let $\hull(\Lambda)$ be the convex hull of $\Lambda$ in $\mathbb{H}^3$. Unless $\Lambda$ is a round circle, $\partial\hull(\Lambda)$ consists of two connected components, each isometric to $\mathbb{H}^2$ in the metric inherited from $\mathbb{H}^3$ and bent along a bending lamination (see e.g.\ \cite{bending}). Note that by taking $\Omega=\Omega_+$, the case of a quasifuchsian manifold considered above is included in the discussion.

Suppose $H$ is the component of $\partial\hull(\Lambda)$ on the side of $\Omega$, and $\mathcal{M}$ its bending lamination. The isometry $H\cong\mathbb{H}^2$ extends to a homeomorphism between $\Lambda$ and $S^1$. In particular, we may treat the halo $h\mathcal{M}$ as either a subset of $S^1$ or $\Lambda$. Exotic roofs and circles for $\Omega$ can also be defined analogously in this context. See Figure~\ref{fig:ellipse} for an example that does not arise as the limit set of a quasifuchsian manifold.
\begin{figure}[htp]
    \centering
    \captionsetup{width=.8\linewidth}
    \includegraphics{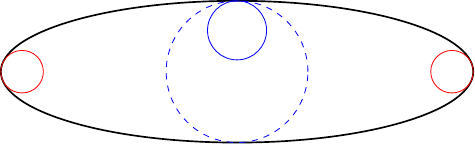}
    \caption{A domain bounded by an ellipse, its exotic circles (in red), and a non-exotic circle (in solid blue) shadowed by the dotted circle. The bending lamination $\mathcal{M}$ gives a foliation of $\mathbb{H}^2$ by geodesics, and $h\mathcal{M}=h'\mathcal{M}$ consists of two points.}
    \label{fig:ellipse}
\end{figure}

We define
$$h'\mathcal{M}:=\{p\in\Lambda:p=C\cap\Lambda\text{ for some exotic circle }C\}.$$
Using Gauss-Bonnet theorem, we can show:
\begin{thm}\label{prop:exotic_circle_necessary}
If $C$ is an exotic circle in $\Omega$, then the point $p=C\cap\Lambda$ is contained in the halo of the bending lamination $\mathcal{M}$. In other words, $h'\mathcal{M}\subset h\mathcal{M}$.
\end{thm}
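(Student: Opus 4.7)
The plan is to show that every geodesic ray $r$ on $H$ ending at $p$ is exotic for $\mathcal{M}$, which by definition gives $p \in h\mathcal{M}$. This means verifying: (a) $I(\mathcal{M}, r) < \infty$; (b) $r$ is not asymptotic to any leaf of $\mathcal{M}$; and (c) $r$ is not eventually disjoint from $\mathcal{M}$.

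I dispose of (b) and (c) first via a shadowing argument that does not require Gauss-Bonnet. The key observation is that no support circle $C_0$ can pass through $p$. Indeed, such a $C_0$ would be tangent to $\Lambda$ at $p$ (since $\Lambda$ stays on one side of $C_0$ while passing through $p$), hence internally tangent to $C$ at $p$, and so $C_0$ and $C$ are nested. If $C$ lies in the closed disk bounded by $C_0$, then $C_0$ shadows $C$, contradicting that $C$ is exotic. If instead $C_0$ lies in the closed disk $\overline{D_C}$ bounded by $C$, then since $\overline{D_C} \subset \bar\Omega$ and $C \cap \Lambda = \{p\}$ we have $\overline{D_C} \cap \Lambda = \{p\}$, forcing $C_0 \cap \Lambda \subset \{p\}$ and contradicting $|C_0 \cap \Lambda| \geq 2$. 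Consequently: no leaf of $\mathcal{M}$ can have endpoint $p$ (any support plane containing such a leaf would have boundary circle through $p$), giving (b); and no flat piece of $H$ can have $p$ in its ideal closure (for similar reasons, as the corresponding support plane's circle would contain $p$), giving (c).

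For (a), I apply Gauss-Bonnet on the pleated surface $H$. Fix a horoball $B$ at $p$ in $\mathbb{H}^3$ and consider a region $R_T \subset H \cap B$ bounded by the portion of $r$ from its entry into $B$ out to arclength $T$, a piece of the horocycle $H \cap \partial B$, and a closing geodesic segment. Intrinsically, $H$ is hyperbolic and Gauss-Bonnet uniformly bounds $\operatorname{area}(R_T)$ in $T$ (as $R_T$ approaches an ideal region with one ideal vertex at $p$). Extrinsically, the exotic plane $P$ serves as a reference tangent plane of $H$ at $p$: the tangent planes $T_{r(t)}$ to $H$ along $r$ are constrained by the fact that $C$ is not shadowed by any support plane, which prevents them from accumulating on a plane separating $P$ from $H$. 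This controls the dihedral angle swept by tangent planes along $r$, and, combined with the intrinsic Gauss-Bonnet area bound, yields a uniform bound on $I(\mathcal{M}, r|_{[0, T]})$, hence $I(\mathcal{M}, r) < \infty$.

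The main obstacle is (a), specifically converting the infinitesimal tangency of $C$ with $\Lambda$ at $p$ (an asymptotic condition in $\hat{\mathbb{C}}$) into a quantitative bound on the transverse bending measure along $r$ (an integral on $H$). This requires delicate bookkeeping between intrinsic Gauss-Bonnet on $H$ and extrinsic dihedral-angle accounting in $\mathbb{H}^3$, using $P$ as the asymptotic support plane of $H$ at $p$ dictated by the exoticness of $C$.
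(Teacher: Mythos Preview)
Your arguments for (b) and (c) are correct and essentially match what the paper needs (the paper uses ``since $p$ is not the endpoint of any leaf'' and the osculating property in exactly this way).

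The gap is in (a). You apply Gauss--Bonnet \emph{intrinsically} on the pleated surface $H$ to bound the area of a region $R_T$, but the bending measure $I(\mathcal{M},r)$ is an \emph{extrinsic} quantity: in the intrinsic metric on $H$ the ray $r$ is a genuine geodesic with zero geodesic curvature, so intrinsic Gauss--Bonnet does not see the bending at all. A bound on $\operatorname{area}(R_T)$ carries no information about the transverse measure of $\partial R_T$. Your closing sentence about ``dihedral angle swept'' combined with the area bound is not an argument; it names the two ingredients without explaining any mechanism by which they interact.

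The paper's approach is quite different from yours. It slices by an auxiliary plane $\tilde P'$ through $p$ that \emph{separates} $\Lambda$, and applies Gauss--Bonnet in $\tilde P'$ to the finite-area triangle bounded by $l=\tilde P\cap\tilde P'$, the piecewise-geodesic curve $l'=\tilde P'\cap\partial\hull(\Lambda)$, and a short closing segment. In $\tilde P'$ the curve $l'$ is \emph{not} geodesic, and its total turning is exactly the total ridge-angle bending, so Gauss--Bonnet bounds it. Even then there is a second subtlety you have not addressed: the bending of $l'$ in the slice can be strictly smaller than the transverse measure of the underlying path on $H$ (it depends on the angle at which the slice crosses the leaves). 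The paper closes this gap by showing that the support lines in $\tilde P'$ must converge to $l$ itself---this uses crucially that $C$ is osculating and that $p$ is not a leaf endpoint---so that eventually the transverse measure along $l'$ is dominated by a single fixed dihedral angle between one chosen support plane $\tilde Q$ and $\tilde P$. Your proposal contains neither the slicing idea nor this convergence step.
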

Therefore exotic circles and roofs do not exist when the halo is empty. In particular, in the case of a quasifuchsian manifold $M$, if the bending lamination on one end is a multicurve, exotic circles do not exist on that side. This is compatible with the simple classification detailed in Proposition~\ref{prop: atomic}.

\paragraph{Leaf approximations and a sufficient condition}
In the case when the bending lamination $\mathcal{L}=\mathcal{L}_+$ is not a multicurve, Thoerem~\ref{mainprime} guarantees $h\mathcal{L}\neq\varnothing$. However, \emph{a priori}, $h'\mathcal{L}$ is only a subset of $h\mathcal{L}$. Our method of proving $h\mathcal{L}\neq\varnothing$ does not directly translate to show $h'\mathcal{L}\neq\varnothing$, so knowing only the former is not sufficient to conclude the latter at this point.

On the other hand, we are able to show $h'\mathcal{L}\neq\varnothing$ for laminations that are very well approximated by closed geodesics in some sense. More precisely, we introduce the notion of \emph{good leaf approximations} for laminations on a hyperbolic surface $X$. Essentially, a good sequence of leaf approximations is a sequence of closed curves $\gamma_n$ consisting of a long leaf segment of length $d_n$, whose end is connected by a short transverse segment of transverse measure $o(e^{-d_n})$. Such an approximating sequence is called \emph{exotic} if concatenating segments give an exotic ray. We have
\begin{thm}\label{thm: sufficient_intro}
Suppose the bending lamination $\mathcal{L}$ of an end $E_+$ of a quasifuchsian manifold has an exotic and good sequence of leaf approximations. Then $E_+$ contains uncountably many exotic roofs. Equivalently, the corresponding component $\Omega_+$ of the domain of discontinuity contains uncountably many orbits of exotic circles.
\end{thm}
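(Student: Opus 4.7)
The plan is to promote the good exotic leaf approximation to a convergent sequence of round circles in $\hat{\mathbb{C}}$, verify that the limit is asymptotic to $\Lambda$ and unshadowed, and then parametrize the construction to extract uncountably many $\Gamma$-orbits. Throughout, I identify the pleated component $H=\partial\hull(\Lambda)\cap\overline{\Omega_+}$ isometrically with $\mathbb{H}^2$, with circle at infinity identified with $\Lambda$; this carries the bending lamination to a lamination $\widetilde{\mathcal{L}}$ on $\mathbb{H}^2$. The exotic leaf approximation $\{\gamma_n\}$ lifts to a sequence of concatenated arcs in $\mathbb{H}^2$ whose limit is a geodesic ray $\tilde r$ with a well-defined endpoint $p\in\Lambda$.

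To build the candidate circle I would start from the round circle $C_0\subset\hat{\mathbb{C}}$ that supports the initial long leaf segment of $\tilde r$, i.e., the boundary of a geodesic plane containing that leaf segment and flush with an adjacent flat piece of $H$. Each short transverse segment of $\gamma_n$ crosses finitely many leaves carrying small total transverse measure; an individual crossing of a leaf $\ell$ of mass $\theta$ is realized by rotating the current supporting circle through angle $\theta$ about the ideal endpoints of $\ell$. After processing all crossings encountered up to the $n$-th approximation, one obtains a circle $C_n$ that supports the terminal long leaf segment of $\gamma_n$. The motivational estimate recalled just before the theorem gives that such a bend of angle $\theta$ at hyperbolic distance $d$ from the top of the current circle perturbs the circle by $O(\theta e^{-d})$ in Hausdorff distance; the good condition $\theta_n=o(e^{-d_n})$ then forces the total perturbation to be summable, so $C_n\to C_\infty$, a circle of positive radius meeting $\Lambda$ at exactly the single point $p$.

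The next step is to show $C_\infty$ is not shadowed by any support plane, using the exoticness of $\tilde r$. Suppose for contradiction that a support circle $C^s$ through $p$ shadows $C_\infty$. Then on $H$, the support geodesic of $C^s$ (with one endpoint at $p$) cuts off a half-plane that eventually contains $\tilde r$; the defining property of a support geodesic forces $\tilde r$ to be ultimately asymptotic to a leaf of $\widetilde{\mathcal{L}}$ or eventually disjoint from $\widetilde{\mathcal{L}}$. Both possibilities contradict the construction of $\tilde r$ as an exotic ray, so $C_\infty$ is an exotic circle.

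For the uncountability conclusion I would exploit the flexibility of the construction: within each $\gamma_n$, the starting and terminal positions of the long leaf segment can be chosen within a window of length large compared to the next transverse mass, and each such choice produces a distinct exotic ray with a distinct endpoint in $h\mathcal{L}$. This mirrors the uncountable parameter space behind Theorem~\ref{mainprime}, giving uncountably many distinct $p$ and hence uncountably many circles $C_\infty$; since $\Gamma$ acts countably on $\Lambda$, this produces uncountably many $\Gamma$-orbits of exotic circles. The main obstacle I anticipate is the combined convergence-plus-tangency analysis for $C_\infty$: the good decay $o(e^{-d_n})$ has to be used simultaneously to guarantee a well-defined limit circle, to prevent any accidental secondary tangency with $\Lambda$, and to pair cleanly with the non-shadowing contradiction. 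Balancing these three demands on the decay rate, rather than settling for mere summability of bends, is where the proof is delicate.
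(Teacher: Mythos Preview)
Your overall strategy matches the paper's: build a nested sequence of support circles following the concatenated leaf approximations, show the radii stay bounded below so that a limiting circle $C_\infty$ of positive radius exists, check that $C_\infty\cap\Lambda=\{p\}$ with $p\in h\mathcal{L}$, and conclude the roof at $p$ is exotic (your non-shadowing argument is essentially the right one: any support circle through $p$ would force $p$ to be the endpoint of a bending line, contradicting $p\in h\mathcal{L}$). The uncountability step is also the same idea.

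There is, however, a genuine conceptual gap in your bending step. You write that the short transverse segment ``crosses finitely many leaves'' and that ``an individual crossing of a leaf $\ell$ of mass $\theta$ is realized by rotating \dots\ through angle $\theta$''. For a non-atomic bending lamination this is false: the transverse arc meets a Cantor set of leaves, each of measure zero, so there is no finite list of rotations to perform. The paper circumvents this by replacing the whole crossing with a \emph{single} bend: the two flat pieces at the endpoints of $l_n$ have support planes $\tilde P_0,\tilde P_n$ meeting in a ridge line, and the ridge angle $\beta_n$ is shown (via an ``infinitesimal approximation of bending measures'' lemma) to satisfy $\beta_n/\alpha_n\to1$. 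Only after this reduction does one get a clean recursion for the Euclidean radius of the successive support spheres.

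A second, more technical point: your summability estimate $O(\theta e^{-d})$ ignores that after each bend the starting point for the next leaf segment is no longer at the top of the current sphere, so the effective distance to the next bend acquires an accumulating offset. The paper handles this not by proving summability over the whole sequence but by \emph{inductively passing to a subsequence}: at step $k$ one chooses $n_k$ large enough (depending on the offset already accrued) so that $r_{k+1}>r_1/2$. Without this subsequence choice the ``good'' decay $o(e^{-d_n})$ alone does not obviously control the product of radius ratios.
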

In \S\ref{sec:sufficient}, we actually introduce the more general \emph{CESAG} sequences of leaf approximations and show that the existence of such a sequence is sufficient to generate exotic roofs. See \S\ref{sec:sufficient} and Theorem~\ref{thm: sufficient} for details. We remark that in the proof, it is really necessary to check the exotic property, for otherwise we may only get an asymptotic plane shadowed by a support plane.

The conditions on leaf approximations are somewhat competing: the exotic property requires the transverse segments to be small, which means that leaf segments have to be long; on the other hand, leaf segments cannot be too long compared to the transverse measure, as explained above. Balancing these two sides proves to be difficult. However, in the special case where the boundary of the end $X_+$ is isometric to a punctured torus, this is sometimes possible using rational approximation of irrational numbers, as we will explain now.

\paragraph{The case of punctured torus}
Let $X$ be a complete hyperbolic torus with one puncture. A measured lamination on $X$ is determined up to scale by its \emph{slope} $\theta\in\mathbb{P}H^1(X,\mathbb{R})\cong\mathbb{RP}^1$. Closed curves are identified with the rational points $\mathbb{QP}^1$. The correspondence is apparent in the flat picture: for a flat torus $\mathbb{T}_\tau:=\mathbb{C}/(\mathbb{Z}\oplus\mathbb{Z}\tau)$ (where $\tau\in\mathbb{H}$), a measured lamination of slope $\theta$ is foliated by straight lines in the direction of $1+\theta\tau$.

A fundamental domain of the action of $\pi_1(X)$ on $\mathbb{H}^2$ can be chosen as an ideal quadrilateral $Q$, as in Figure~\ref{fig:symbolic_coding}. Using the theory of boundary expansion (see e.g.~\cite{boundary_expansion, simple}), geodesics on $X$ can be coded in terms of a set of generators of $\pi_1(X)$. Among them, simple geodesics (i.e leaves of laminations) can be coded by Sturmian words (see e.g.\ \cite{sturmian1, sturmian2}, and for an exposition \cite{sturmian}). We refer to \S\ref{sec:tori} for more details.
\begin{figure}[htp]
    \centering
    \includegraphics[trim={0cm 0.5cm 0cm 0.5cm}, clip, width=0.4\linewidth]{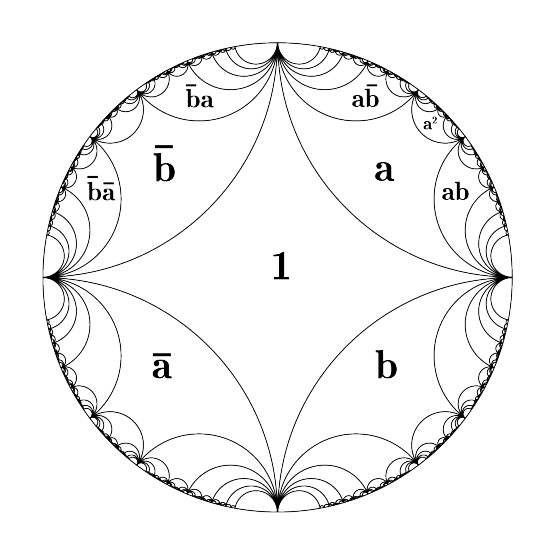}
    \caption{Symbolic coding with the quadrilateral $Q$ labelled $1$}
    \label{fig:symbolic_coding}
\end{figure}

Let $r=[c_0; c_1, c_2, \ldots]$ be an irrational number in terms of its continued fraction, and $p_k/q_k=[c_0; c_1, \ldots, c_k]$ its $k$-th convergent. We say $r$ is \emph{well-approximated} if there exists an increasing sequence $\{k_n\}$ of natural numbers so that for any constant $C>0$ we have $\exp(Cq_{k_n})/q_{k_n+1}\to 0$. Such irrational numbers exist; for example, we may take inductively $c_k=\lceil e^{q_{k-1}^2}\rceil$. For simplicity, we also call the corresponding lamination well-approximated. We have
\begin{thm}\label{thm:sewa}
Any well-approximated lamination on a punctured torus exhibits an exotic and good sequence of leaf approximations.
\end{thm}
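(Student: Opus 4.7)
The plan is to build the leaf approximations $\gamma_n$ directly from the continued-fraction convergents $p_{k_n}/q_{k_n}$ of the slope $r$, and to exploit the fact that the well-approximation hypothesis exactly matches the Sturmian combinatorics of $\mathcal{L}$. Using the symbolic coding from \S\ref{sec:tori}, a leaf of $\mathcal{L}$ corresponds to the bi-infinite Sturmian word $w_r$ of slope $r=[c_0;c_1,\ldots]$, and the convergent $p_k/q_k$ corresponds to a simple closed geodesic $\gamma_k^{\ast}$ represented by a hyperbolic element $g_k\in\Gamma$ whose axis shadows any leaf of $\mathcal{L}$. For each index $k_n$ from the well-approximating subsequence, I fix a leaf $\tilde\ell_n\subset\mathbb{H}^2$, take the sub-arc $\tilde\sigma_n\subset\tilde\ell_n$ of combinatorial Sturmian length $q_{k_n}$, and close it up with the short geodesic $\tilde\tau_n$ connecting the endpoint of $\tilde\sigma_n$ to $g_{k_n}$ applied to its start; projecting to $X$ gives $\gamma_n=\sigma_n\cup\tau_n$.

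Next I would estimate the length $d_n=\ell(\sigma_n)$ and transverse measure $\mu_n$ of $\tau_n$. Each crossing of the ideal quadrilateral $Q$ in the Sturmian word $w_r$ contributes bounded hyperbolic length in the thick part of $X$, with cuspidal contributions controlled by the partial quotients $c_j$, $j\le k_n$; summing over $q_{k_n}$ crossings gives $d_n\le C_1 q_{k_n}$ for a constant $C_1=C_1(X)$. The best-approximation inequality $|q_{k_n}r-p_{k_n}|<1/q_{k_n+1}$ produces a transverse displacement of order $1/q_{k_n+1}$ realized by $\tilde\tau_n$, so $\mu_n\le C_2/q_{k_n+1}$. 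The good condition then follows:
\begin{equation*}
\mu_n\,e^{d_n}\;\le\;C_2\,\frac{e^{C_1 q_{k_n}}}{q_{k_n+1}}\;\longrightarrow\;0
\end{equation*}
by the well-approximation hypothesis applied with $C=C_1$.

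For the exotic condition I would consider the piecewise-geodesic arc obtained by concatenating the $\tilde\sigma_n$'s via the $\tilde\tau_n$'s in $\mathbb{H}^2$. Its straightening is a geodesic ray $r_\infty$ with a well-defined endpoint $p_\infty\in S^1$, because each break contributes an angular defect at infinity of order $e^{-d_n}$, hence summable. The total intersection with $\mathcal{L}$ is bounded by $\sum_n\mu_n<\infty$, and $r_\infty$ is not eventually disjoint from $\mathcal{L}$ since every $\tau_n$ meets a leaf transversely and the $\tau_n$'s accumulate in $X$. The remaining point — that $r_\infty$ is not asymptotic to any leaf of $\mathcal{L}$, i.e.\ $p_\infty\notin\partial\widetilde{\mathcal{L}}$ — I would attack via the Farey alternation: consecutive convergents $p_{k_n}/q_{k_n}$ lie on opposite sides of $r$, so the displacements introduced by consecutive $\tilde\tau_n$'s push the concatenation into a definite complementary region of the Cantor set $\partial\widetilde{\mathcal{L}}$, trapping $p_\infty$ there.

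The main obstacle I anticipate is this final topological step. The quantitative bounds are routine consequences of continued-fraction arithmetic and standard hyperbolic geometry on the punctured torus; the content of the argument lies in turning the Farey alternation into a quantitative lower bound on the visual distance from $p_\infty$ to $\partial\widetilde{\mathcal{L}}$, thereby making rigorous the claim that $p_\infty$ sits in a gap rather than on the Cantor set. This is where the specifics of the Sturmian alphabet, the structure of $\partial\widetilde{\mathcal{L}}$, and the action of the successive deck transformations $g_{k_n}$ on $S^1$ all have to be brought together.
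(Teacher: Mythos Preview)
Your estimates for the good condition are essentially the paper's: leaf segments of combinatorial length $q_{k_n}$ have hyperbolic length $d_n\le C_1 q_{k_n}$, the closing arc has transverse measure $O(1/q_{k_n+1})$, and the well-approximation hypothesis gives $e^{d_n}\mu_n\to 0$. That part is fine.

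The gap is in the exotic step. First, your ``Farey alternation'' premise is simply false for the subsequence at hand: the definition of well-approximated only supplies \emph{some} increasing sequence $\{k_n\}$ with $\exp(Cq_{k_n})/q_{k_n+1}\to 0$, and nothing prevents all $k_n$ from having the same parity (indeed the paper works with a single parity throughout). So consecutive $p_{k_n}/q_{k_n}$ need not lie on opposite sides of $r$, and the alternating-displacement picture collapses. Second, and more seriously, even if you could arrange alternation, you have given no mechanism that forces the straightened ray off the lamination. Your $\sigma_n$ is an \emph{admissible} Sturmian block of length $q_{k_n}$; concatenating admissible blocks via tiny transversals can perfectly well yield a ray whose word has the same tail as a leaf word (Sturmian words are built recursively out of exactly such convergent blocks). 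A vague ``gap in the Cantor set'' heuristic does not rule this out.

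The paper's fix is combinatorial rather than topological: it chooses the leaf segment so that, once the closing arc and unit crossing are appended, the resulting word contains a subword $\bolda w_k'$ that is provably $\theta$-\emph{inadmissible} (obtained from the $p_k/q_k$-word $w_k$ by changing the last block from $n{+}1$ to $n$ or vice versa). Since the concatenated ray then contains inadmissible subwords in every tail, the tail-equivalence criterion (two rays are asymptotic iff their words share a tail) shows it is not asymptotic to any leaf. This is the missing idea in your proposal; the visual-distance/Farey argument you sketch would, at best, have to reprove this inadmissibility in geometric disguise.
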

As a consequence, we have:
\begin{cor}
Let $M$ be a quasifuchsian manifold whose convex core boundary is isometric to a pair of punctured tori $X_\pm$. If the bending lamination $\mathcal{L}_\pm$ on $X_\pm$ is given by a well-approximated irrational number, then there exist uncountably many exotic roofs in the corresponding end.
\end{cor}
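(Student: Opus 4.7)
The corollary is set up so that the hard analytic work has already been done by Theorem~\ref{thm:sewa} and the sufficient-condition theorem preceding it; my plan is simply to verify that their hypotheses are satisfied and chain them together. First, I would note that a well-approximated irrational slope $r = [c_0; c_1, c_2, \ldots]$ is by definition irrational, so the lamination $\mathcal{L}_\pm$ it determines on the punctured torus $X_\pm$ has irrational slope. This rules out $\mathcal{L}_\pm$ being a multicurve (only rational slopes give closed curves on $X_\pm$), so in fact $\mathcal{L}_\pm$ is minimal and atom-free, placing us in the regime of Theorem~\ref{thm: minimal_roof} rather than Proposition~\ref{prop: atomic}.

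Next, I would invoke Theorem~\ref{thm:sewa} directly on the punctured torus end in question: the hypothesis ``well-approximated'' is exactly what is required there, and the conclusion is that $\mathcal{L}_\pm$ admits an exotic and good sequence of leaf approximations $\{\gamma_n\}$ on $X_\pm$. I would then feed this sequence into the sufficient-condition theorem immediately preceding Theorem~\ref{thm:sewa}, whose hypothesis is precisely the existence of an exotic and good sequence of leaf approximations for the bending lamination of an end of a quasifuchsian manifold, and whose conclusion is that the end contains uncountably many exotic roofs (equivalently, uncountably many $\Gamma$-orbits of exotic circles in $\Omega_\pm$).

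There is essentially no obstacle: the work is entirely bookkeeping to align the data of the two theorems. The only point deserving a short sentence of justification is that the corollary speaks of ``the corresponding end,'' which matches the convention used in the sufficient-condition theorem (the end whose bending lamination admits the good/exotic approximation is the one containing the exotic roofs). If one wants both ends to contain exotic roofs, the argument is applied twice, once to $\mathcal{L}_+$ on $E_+$ and once to $\mathcal{L}_-$ on $E_-$, under the hypothesis that each is well-approximated. This completes the plan.
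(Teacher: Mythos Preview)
Your proposal is correct and matches the paper's approach: the corollary is stated as an immediate consequence of chaining Theorem~\ref{thm:sewa} with the sufficient-condition theorem, and the paper gives no separate proof beyond the words ``As a consequence, we have.'' Your extra remarks (irrational slope $\Rightarrow$ atom-free, and applying the argument separately to each end) are sound but not needed for the bare statement.
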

We note that every lamination on a punctured torus can be realized as the bending lamination of some quasifuchsian manifold, by \cite[Th\'eor\`eme 1]{bonahon2004laminations}.

In Appendix~\ref{app:generic}, we show that well-approximated irrational numbers are generic in the sense of Baire category. Together with the corollary above, we have
\begin{cor}
Generically (in the sense of Baire category), a quasifuchsian manifold homotopic to a punctured torus has exotic roofs in both ends.
\end{cor}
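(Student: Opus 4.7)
The plan is to leverage Bonahon's parametrization of the punctured-torus quasifuchsian space $QF(T)$ by its bending laminations, transport the Baire genericity of well-approximated slopes on $\mathbb{RP}^1$ up to $QF(T)$, and invoke the preceding corollary to conclude. Let $ML(T)$ denote the space of measured laminations on a once-punctured torus $T$. Every nonzero $\lambda\in ML(T)$ has a \emph{slope} $\theta(\lambda)\in\mathbb{RP}^1$, and the slope map $\theta:ML(T)\setminus\{0\}\to\mathbb{RP}^1$ is a continuous open surjection.

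By \cite[Th\'eor\`eme 1]{bonahon2004laminations}, the bending map $\beta:QF(T)\to ML(T)\times ML(T)$ is continuous and surjects onto the open subset $\mathcal{U}\subset ML(T)\times ML(T)$ consisting of pairs $(\mathcal{L}_+,\mathcal{L}_-)$ that are not projectively equivalent. Moreover, on the punctured torus one has enough control, via dimension count together with injectivity results, to conclude that $\beta$ is in fact a homeomorphism onto $\mathcal{U}$. Composing with the slope map on each factor then yields a continuous \emph{open} map
\[
\Phi:QF(T)\longrightarrow(\mathbb{RP}^1\times\mathbb{RP}^1)\setminus\Delta,\qquad M\mapsto\bigl(\theta(\mathcal{L}_+(M)),\,\theta(\mathcal{L}_-(M))\bigr).
\]

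Let $W\subset\mathbb{RP}^1$ be the set of well-approximated irrational slopes. By Appendix~\ref{app:generic}, $W$ is a dense $G_\delta$ in $\mathbb{RP}^1$, so $(W\times W)\setminus\Delta$ is a dense $G_\delta$ in $(\mathbb{RP}^1\times\mathbb{RP}^1)\setminus\Delta$. Continuity of $\Phi$ makes $\Phi^{-1}(W\times W)$ a $G_\delta$ in $QF(T)$; openness of $\Phi$ makes it dense, since any open $V\subset QF(T)$ has an open image which necessarily meets $W\times W$. For each $M$ in this generic set, both bending laminations $\mathcal{L}_\pm(M)$ have well-approximated irrational slopes, so by Theorem~\ref{thm:sewa} and the preceding corollary each end of $M$ contains uncountably many exotic roofs.

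The hard part is verifying the openness (equivalently, the local homeomorphism property) of $\Phi$, which is precisely the content of Bonahon's parametrization of $QF(T)$ by the bending data. Some additional care is needed along the Fuchsian locus (where both bendings vanish) and at rational slopes (where a bending lamination degenerates to a simple closed curve), but both loci form countable unions of lower-dimensional real-analytic subsets of $QF(T)$ and can therefore be discarded without affecting the Baire-genericity conclusion.
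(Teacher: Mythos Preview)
The paper offers essentially no proof here: it simply says that Appendix~\ref{app:generic} (genericity of well-approximated irrationals in $\mathbb{R}$) together with the previous corollary yields the statement, leaving entirely implicit how genericity of slopes translates into genericity in the quasifuchsian space $QF(T)$ itself. Your proposal is therefore not a different route so much as an attempt to supply the argument the paper omits, and the overall strategy---pull back a dense $G_\delta$ set of slopes along the bending map---is the natural one.

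That said, there is a genuine gap. You need $\Phi$ (or $\beta$) to be open, and you assert that $\beta$ is a homeomorphism onto $\mathcal{U}$, justifying this by ``dimension count together with injectivity results'' and by invoking ``Bonahon's parametrization.'' But \cite[Th\'eor\`eme~1]{bonahon2004laminations}, the result actually cited in the paper, is a \emph{realizability} (surjectivity) statement only: it says which pairs of measured laminations arise as bending data, not that the bending map is injective or a local homeomorphism. For openness you need either the uniqueness theorem of Bonahon--Otal (which, combined with continuity, surjectivity, and invariance of domain on the $4$-dimensional complement of the Fuchsian locus, would give what you want), or the explicit pleating-coordinate results of Keen--Series for the punctured torus. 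Either would close the gap, but neither is what you cite, and the sentence ``which is precisely the content of Bonahon's parametrization'' conflates existence with uniqueness. You should state and cite the relevant uniqueness/local-homeomorphism result explicitly; once that is in place, your handling of the Fuchsian locus and rational slopes as meager exceptional sets is fine, and the pullback argument goes through.
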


\paragraph{Four-punctured spheres}
Similar results also hold for a four-punctured sphere, on which a measured lamination is again determined up to scale by its slope. As a matter of fact, the map $z\mapsto -z$ descends to an involution $\iota$ on a flat torus $\mathbb{T}_\tau$, and the quotient gives a sphere with four singular points. A lamination of slope $\theta$ on $\mathbb{T}_\tau$ then descends to a lamination of slope $\theta$ on the quotient. Essentially the same constructions in the proof of Theorem~\ref{thm:sewa} then give the analogous result for four-punctured spheres.

\paragraph{Higher genus}
The results stated above remain valid for any one-holed torus or four-holed sphere. Any compact surface of genus $g\ge2$ contains a one-holed torus (or a four-holed sphere) as a subsurface. By considering any well-approximated measured lamination supported on this one-holed torus (and note that again by \cite[Th\'eor\`eme 1]{bonahon2004laminations}, such a lamination is realizable as a bending lamination of some quasifuchsian manifold), we immediately have
\begin{cor}\label{cor:higher_genus}
For any $g\ge2$, there exist uncountably many quasifuchsian manifolds homotopic to a surface of genus $g$ containing uncountably many exotic roofs in one end. 
\end{cor}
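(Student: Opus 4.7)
My plan follows the outline in the paragraph preceding the statement: reduce to the one-holed torus case and invoke the results already established. First, for any $g\ge 2$, I would pick an incompressible embedded one-holed torus $T\subset \Sigma_g$ (which always exists; e.g.\ cut along a suitable non-separating simple closed curve and fill in a genus-one piece). Using the continued-fraction recipe recalled just before Theorem~\ref{thm:sewa} — for instance, inductively setting $c_k=\lceil e^{q_{k-1}^2}\rceil$ — I would produce a well-approximated irrational slope and use it to build a measured geodesic lamination $\mathcal{L}$ whose support lies in $\Int(T)$. Viewed on $\Sigma_g$, $\mathcal{L}$ is then a bona fide measured geodesic lamination in any hyperbolic structure making $\partial T$ a simple closed geodesic.

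Next, by \cite[Th\'eor\`eme~1]{bonahon2004laminations}, any measured lamination on $\Sigma_g$ is realized as the bending lamination of one end of some quasifuchsian manifold homotopy equivalent to $\Sigma_g$. Applying this to $\mathcal{L}$, I obtain a quasifuchsian $M$ with an end $E_+$ whose convex core boundary $X_+=\partial E_+$ is a hyperbolic surface of genus $g$ containing $T$ isometrically as a subsurface with geodesic boundary, and whose bending lamination is precisely $\mathcal{L}$.

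Finally, Theorem~\ref{thm:sewa} — applied inside the one-holed torus $T$ — yields an exotic and good sequence of leaf approximations for $\mathcal{L}$ on $T$. Since the leaf-segment lengths and the transverse intersection measures that enter the definitions of \emph{exotic} and \emph{good} depend only on $\mathcal{L}$, which is supported in $T$, these sequences remain exotic and good when $T$ is regarded as a subsurface of $X_+$. Invoking the sufficiency criterion (Theorem~\ref{thm: sufficient}) then produces uncountably many exotic roofs in $E_+$, completing the proof.

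The step I expect to require the most care — and therefore the main (if modest) obstacle — is verifying that the conditions of Theorem~\ref{thm: sufficient} are genuinely local along leaves of $\mathcal{L}$, so that an exotic and good approximating sequence constructed on $T$ retains these properties after the embedding $T\hookrightarrow X_+$. Granting this, Corollary~\ref{cor:higher_genus} follows as an immediate combination of Theorem~\ref{thm:sewa}, Bonahon's realization theorem, and the sufficient condition from \S\ref{sec:sufficient}.
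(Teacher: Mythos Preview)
Your proposal is correct and follows essentially the same route as the paper: embed a one-holed torus in $\Sigma_g$, place a well-approximated lamination on it, realize this as a bending lamination via Bonahon's theorem, and then invoke Theorems~\ref{thm:sewa} and~\ref{thm: sufficient}. The paper addresses precisely the subtlety you flag---that the leaf-approximation conditions must survive the change of ambient hyperbolic structure---in its closing remarks in \S\ref{sec:tori}, observing that the only estimate at stake is the comparability of word length and hyperbolic length, which holds for any hyperbolic structure on the one-holed torus.
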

As a matter of fact, any compact surface can be cut along simple closed curves into a union of one-holed tori and four-holed spheres. Each of these contains a well-approximated lamination. Moreover, with a different decomposition into tori and spheres and choices of well-approximated laminations, we can obtain two measured laminations that fill the surface (see Figure~\ref{fig:decomp}). By \cite[Th\'eor\`eme 1]{bonahon2004laminations} again, we have a quasifuchsian manifold that contains exotic roofs in both ends. Consequently, for such a quasifuchsian manifold, the closure of any support or asymptotic plane consists of all support and asymptotic planes in the same end.
\begin{figure}[htp]
    \centering
    \captionsetup{width=0.8\linewidth}
    \includegraphics[width=0.8\linewidth]{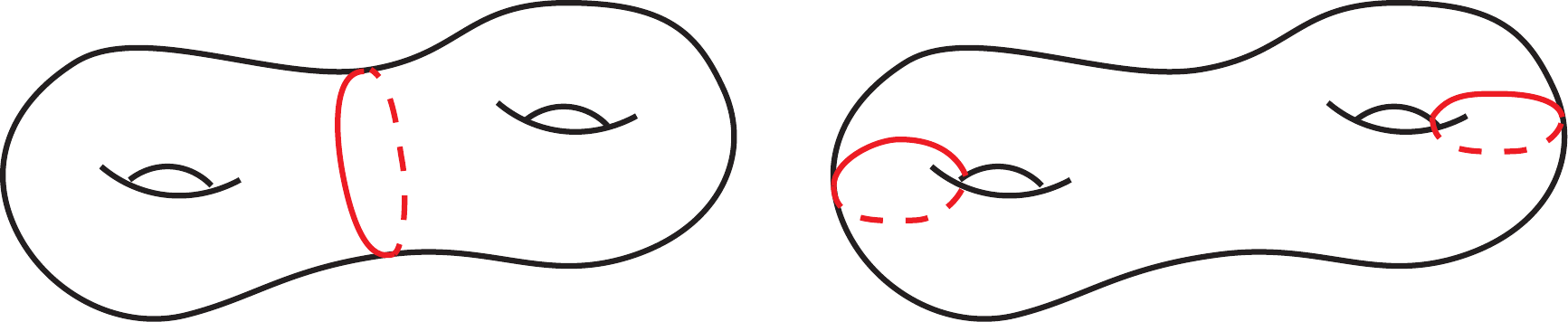}
    \caption{Two different decompositions of a genus 2 surface into one-holed tori and 4-holed spheres, so that choosing an irrational lamination on each component gives a pair of laminations whose union fills the surface.}
    \label{fig:decomp}
\end{figure}

We want to stress here that while we are able to give examples of quasifuchsian manifolds with exotic roofs in every genus, it remains unknown whether there is an example of non-atomic bending lamination without any exotic roof.

\paragraph{Questions}
Here are some unanswered questions that naturally arise from our discussions.
\begin{enumerate}[label=\normalfont{\arabic*.}, topsep=0mm, itemsep=0mm]
    \item Our construction of exotic rays makes use of the recurrence property of measured laminations on surfaces. In general, a measured lamination on $\mathbb{H}^2$ does not have such a property. For what measured laminations on $\mathbb{H}^2$ do exotic rays exist?
    \item How \emph{exotic} are exotic roofs? Or more precisely, does every end with an irrational bending lamination contains an exotic roof? We have shown the existence for well-approximated laminations, and such laminations are generic in the sense of Baire category for punctured tori. Does genericity hold in higher genera? Do exotic planes exist for \emph{badly} approximated laminations? For example, does the lamination with slope $\gamma=(\sqrt{5}+1)/2=[1;1,1,\cdots]$ (arguably the \emph{worst} approximated irrational number) on the punctured torus exhibit an exotic roof?
    \item More generally, for a bending lamination $\mathcal{M}$, how different are the halo $h\mathcal{M}$ and its subset $h'\mathcal{M}$ (the set of points where an exotic circle intersects the Jordan curve)? Question 2 above asks whether it is possible to have $h'\mathcal{M}=\varnothing$ while $h\mathcal{M}\neq\varnothing$, but it is interesting to understand the difference even when both are nonempty.
\end{enumerate}

\paragraph{Notes and references}
We list some previous works on the shape of convex core boundary and its bending lamination \cite{bending,bridgeman1998average,epstein2004quasiconformal,bridgeman2003bounds,bridgeman2005bounding,bridgeman2016improved}.

In \S\ref{sec:complementary} we also consider $I(\mathcal{L},r(t))$ as a function of $t$ for a geodesic ray $r:[0,\infty)\to \infty$ in general. We show that $I(\mathcal{L},r(t))$ is always sublinear (Proposition~\ref{Prop: linr}). Moreover, there exists a constant $c_{\mathcal{L}}$ such that for almost every geodesic ray we have $I(\mathcal{L},r(t))\asymp c_{\mathcal{L}}t$ (for more details see Theorem~\ref{thm:generic}). Finally, given any sublinear function $f$, we show there is a ray $r$ such that $I(\mathcal{L},r(t)) \asymp f(t)$ (see Theorem~\ref{thm: sublinear}).

The theory of Sturmian words has been generalized to the regular octagon and even all regular $2n$-gons, see \cite{octagon}. Our discussion in the case of punctured tori is also inspired and guided by \cite{simple}, where properties of simple words of a punctured hyperbolic surface are studied.

\paragraph{Outline of the paper}
The paper is organized into two parts. Part~\ref{part:halo} is focused on the side of surfaces. In \S\ref{sec:background}, we give an exposition of some closely related geometric and combinatorial objects on surfaces, including measured laminations, geodesic currents, and train tracks. In \S\ref{sec:entropy}, we study train paths and their symbolic coding, and show that two train paths are asymptotic at infinity if and only if their words share the same tail. For a measured lamination carried by a train track, we then construct closed train paths of arbitrarily small transverse measures whose words do not appear in the words for any leaves. Concatenating such words then gives exotic rays in \S\ref{sec:general_proof}. Here we also discuss some general properties of the halo. Finally, in \S\ref{sec:complementary}, we discuss the \emph{generic} behavior of the intersection number of a geodesic ray with a measured lamination, as a complement to \emph{exotic} behavior that has been the focus of the paper.

Part~\ref{part:qf} is focused on the side of quasifuchsian manifolds. In \S\ref{sec: quasifuchsian}, we review the description of convex core boundary of quasifuchsian manifolds following \cite{bending}. In \S\ref{sec:geodesic_planes_atomic} and \S\ref{sec:geodesic_planes_irrational}, we prove Proposition~\ref{prop: atomic} and Theorem~\ref{thm: minimal_roof} respectively, describing the behavior of geodesic planes outside convex cores. Parts of Theorem~\ref{thm: minimal_roof} assume the existence of exotic roofs, so before getting to the proof, we relate their existence to that of exotic rays. In \S\ref{sec: roof}, we show that the existence of exotic rays is necessary for the existence of exotic roofs, and in \S\ref{sec:sufficient}, we give a sufficient condition for existence, in terms of the hyperbolic surface underlying the convex core boundary and its bending lamination. Finally, in \S\ref{sec:tori}, we check that this condition is satisfied for any punctured torus with a well-approximated irrational lamination.

\paragraph{Acknowledgements}
We would like to thank C.~McMullen for his continuous support, enlightening discussions, and suggestions. Figures~\ref{fig:atomic_limit_set} and \ref{fig:symbolic_coding} were produced using his program \textbf{lim}. We would also like to thank K.~Winsor for helpful comments on a previous draft, and A.~Nolte for pointing out a mistake in the proof of Theorem~\ref{thm:generic}. Thanks also go to the anonymous referees, whose detailed comments and suggestions greatly improved the clarity of the exposition. Y.~Z.\ would like to acknowledge the support of Max Planck Institute for Mathematics, where part of the research was done.

\part{The halo of a measured lamination}\label{part:halo}
\section{Background on surfaces}\label{sec:background}
In this section, we briefly introduce some objects that will be used later in the proofs.

\paragraph{Measured laminations}
 Given a complete hyperbolic surface $X$ of finite area, a \emph{measured (geodesic) lamination} $\mathcal{L}$ is a compact subset of $X$ foliated by simple geodesics, together with a transverse invariant measure, which assigns a measure for any arc transverse to the lamination. The total mass of a transverse arc $\Lambda$ with respect to this measure is the \emph{intersection number} of the arc with $\mathcal{L}$, which we denote by $I(\mathcal{L},\Lambda)$.

A measured lamination $\mathcal{L}$ is called \emph{minimal} if every leaf is dense in $\mathcal{L}$. In general, $\mathcal{L}$ consists of finitely many minimal components. For a multicurve, every minimal component is a simple closed geodesic; the measure of any transverse arc is then simply a weighted count of intersections with $\mathcal{L}$.

\paragraph{Geodesic currents}
For the proof of Theorem~\ref{thm:linear}, we need some basic facts about geodesic currents. We refer to \cite{Bon.gc, Bon.Tch} for details.

Given a hyperbolic surface $X$, a \emph{geodesic current} is a locally finite measure on $T_1(X)$, invariant under the geodesic flow $\phi_t$ and the involution $\iota$ flipping the direction of the tangent vector.
As an example, the \emph{Liouville measure} $\lambda$ is a geodesic current, which locally decomposes as the product of the area measure on $X$ and the uniform measure of total mass $\pi/2$ in the bundle direction\footnote{We adopt the same normalization of $\lambda$ as \cite{Bon.gc}, so that $i(C,\lambda)=\ell(C)$ for any geodesic current $C$.}. Closed geodesics also give examples of geodesic currents: for a closed geodesic $\gamma$ of length $T$, given by two arclength parametrizations with opposite orientation $r_\pm:[0,T]\to X$, the associated geodesic current is then $((r'_+)_*dt+(r'_-)_*dt)/2$, the average of the pushforward of the Lebesgue measure $dt$ on $[0,T]$ under $r'_\pm:[0,T]\to T_1(X)$.

Let $\mathcal{C}(X)$ be the set of all geodesic currents on $X$. We can define notions of length and intersection number for geodesic currents, which extend the usual length and intersection number for closed geodesics. Indeed, the \emph{length} of a geodesic current $C$ is defined to be the total $C-$mass of $T_1(X)$ and is denoted by $\ell(C)$. The \emph{intersection number} of two geodesic currents $C_1$ and $C_2$, denoted by $i(C_1,C_2)$, is harder to define succinctly; for the precise definition see \cite{Bon.gc}. Here are some properties of $i(\cdot,\cdot)$ needed in the proof of Theorem~\ref{thm:generic}:
\begin{itemize}[topsep=0mm, itemsep=0mm]
    \item For any closed geodesics $\gamma_1$ and $\gamma_2$, $i(\gamma_1,\gamma_2)$ gives the number of times they intersect on $X$ with multiplicity.
    \item For any geodesic current $C$, $i(C,\lambda)=\ell(C)$.
    \item $i(\lambda,\lambda)=\ell(\lambda)=\pi^2(2g-2+n)$, and so $\lambda/(\pi^2(2g-2+n))$ is a probability measure on $T_1(X)$.
\end{itemize}

One fundamental fact concerning the intersection number is continuity. Let $K$ be a compact subset of $X$. We denote by $\mathcal{C}_K(X)$ the collection of geodesic currents whose support in $T_1(X)$ projects down to a set contained in $K$. Then we have \cite[\S4.2]{Bon.gc}
\begin{thm}[\cite{Bon.gc}]
For any compact set $K\subset X$, the intersection number
$$i:\mathcal{C}(X)\times\mathcal{C}_K(X)\to\mathbb{R}$$
is continuous.
\end{thm}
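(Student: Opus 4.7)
The plan is to express the intersection number as the mass of a suitable ``fundamental'' region in $G(\widetilde X) \times G(\widetilde X)$ and then invoke the Portmanteau theorem for weak convergence, using the compact-support hypothesis on one factor to rule out escape of mass.

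First, I would set up the standard framework. Lift $C_1, C_2$ to $\pi_1(X)$-invariant Radon measures $\tilde C_1, \tilde C_2$ on the space $G = G(\widetilde X)$ of unoriented geodesics in $\widetilde X = \mathbb{H}^2$. Fix a fundamental polygon $D$ for $\pi_1(X)$ acting on $\widetilde X$ and consider
$$F = \{(\ell_1, \ell_2) \in G \times G : \ell_1 \text{ and } \ell_2 \text{ meet transversely at a point of } D\}.$$
Then $F$ is a fundamental domain for the diagonal $\pi_1(X)$-action on the open subset $DG \subset G \times G$ of transversely intersecting pairs, and $i(C_1, C_2) = (\tilde C_1 \times \tilde C_2)(F)$ by definition.

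Second, the hypothesis $C_2 \in \mathcal{C}_K(X)$ provides the essential precompactness. Every geodesic in the support of $\tilde C_2$ passes through the compact preimage $\widetilde K \subset \widetilde X$; together with the constraint $\ell_1 \cap \ell_2 \in D$, this forces each pair $(\ell_1, \ell_2)$ in $F$ carrying mass to have both $\ell_1$ and $\ell_2$ pass through the compact set $D \cap \widetilde K$. Hence the portion of $F$ seen by $\tilde C_1 \times \tilde C_2$ lies in a relatively compact subset of $G \times G$. Now suppose $C_1^{(n)} \to C_1$ in $\mathcal{C}(X)$ and $C_2^{(n)} \to C_2$ in $\mathcal{C}_K(X)$. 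Weak-$\ast$ convergence of currents lifts to weak-$\ast$ convergence of the $\tilde C_i^{(n)}$ on $G$, yielding weak convergence of the product measures $\tilde C_1^{(n)} \times \tilde C_2^{(n)}$ on the relevant relatively compact set. The boundary $\partial F$ inside $G \times G$ consists of tangent pairs (lying outside $DG$) together with pairs intersecting on $\partial D$. Since a geodesic current has no atom on any single geodesic, both loci are null for $\tilde C_1 \times \tilde C_2$, provided $D$ is chosen generically so $\partial D$ avoids atomic leaves. The Portmanteau theorem then gives $i(C_1^{(n)}, C_2^{(n)}) \to i(C_1, C_2)$.

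The main obstacle is verifying the vanishing $(\tilde C_1 \times \tilde C_2)(\partial F) = 0$, particularly along the tangent locus: one must use that geodesic currents assign zero mass to any individual geodesic, a consequence of local finiteness combined with $\pi_1$-invariance (each geodesic's orbit is infinite, so an atom would force infinite total mass on any transversal crossing one of its translates in $D$). The compact-support hypothesis on $C_2$ cannot be dropped: without it, the contributing portion of $F$ need not be relatively compact, and mass could escape to infinity in the weak-$\ast$ limit, defeating continuity.
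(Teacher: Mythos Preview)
The paper does not supply its own proof of this statement; it is quoted directly from Bonahon \cite[\S4.2]{Bon.gc} and used as a black box. Your outline is essentially Bonahon's own argument, so in spirit it matches the source the paper invokes.

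That said, there is a genuine gap in your boundary analysis. The assertion that ``a geodesic current has no atom on any single geodesic'' is false: a weighted simple closed geodesic is itself a geodesic current, and its lift is a locally finite sum of Dirac masses on the lifts of that geodesic. Your justification (infinite orbit forces infinite transversal mass) fails because only finitely many translates of a given lift meet any compact subset of $G(\widetilde X)$, so local finiteness is not violated. You even implicitly concede this two lines later when you speak of choosing $D$ ``so $\partial D$ avoids atomic leaves''. If both $C_1$ and $C_2$ share a closed-geodesic atom, the diagonal in $G\times G$ carries positive product mass and your Portmanteau step, as written, breaks.

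The correct lemma operates one level up: for any geodesic current $\mu$ on a finite-area hyperbolic surface and any point $p\in S^1_\infty$, the \emph{pencil} $\{\ell\in G:\;p\in\partial\ell\}$ has $\mu$-measure zero. One way to see this is to pass to the associated flow-invariant Radon measure on $T^1X$: the pencil corresponds to a weak-stable leaf, and flow invariance forces every strong-stable slice of a flow box to be null, hence the leaf itself is null on compacta. Granting this, Fubini shows that the locus of pairs sharing an endpoint is $(\tilde C_1\times\tilde C_2)$-null, which is what the boundary estimate actually requires. Your treatment of the $\partial D$ portion and of the compact-support hypothesis on the second factor is fine.
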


Finally, we remark that measured laminations also give geodesic currents. For a closed geodesic $\gamma$ and a measured lamination $\mathcal{L}$, two definitions of intersection number (total transverse measure $I(\mathcal{L},\gamma)$ and $i(\mathcal{L},\gamma)$) agree. Moreover, the set of measured laminations is equal to the \emph{light cone}, i.e.\ the set of geodesic currents with zero self-intersection number \cite[Prop 4.8]{Bon.gc}.

\paragraph{Measured train tracks}
A \emph{train track} $\tau$ is an embedded $1$-complex on $X$ consisting of the set of vertices $V_{\tau}$ (which we call \emph{switches}) and the set of edges $E_{\tau}$ (which we call \emph{branches}), satisfying the following properties:
\begin{itemize}[topsep=0mm, itemsep=0mm]
    \item Each branch is a smooth path on $X$; moreover, branches are tangent at the switches.
    \item Every connected component of $\tau$ that is a simple closed curve has a unique switch of degree two. All other switches have degree at least three. At each switch $v$ if we fix a compatible orientation for branches connected to $v$, there is at least one incoming and one outgoing branch.
    \item For each component $C$ of $X-\tau$, the surface obtained from doubling $C$ along its boundary $\partial C$, has negative Euler characteristic if we treat non-smooth points on the boundary as punctures.
\end{itemize}
A \emph{measured train track} $(\tau,\omega)$ is a train track $\tau$ and a weight function on edges $\omega:E_\tau\to\mathbb{R}_{\ge0}$ satisfying the following equation for each switch $v\in V_\tau$:
$$\omega(e_1)+\dots\omega(e_i)=\omega(e'_1)+\dots+\omega(e'_j)$$
where $e_1,\dots,e_i$ are incoming branches at $v$ and $e'_1, \dots, e'_j$ are outgoing branches, for a fixed compatible orientation of branches connected to $v$. Note that the equation does not change if we choose the other compatible orientation of the branches.

It is well-known that each measured train track $(\tau,\omega)$ corresponds to a measured lamination $\lambda_{\tau}$ on the hyperbolic surface $X$; see e.g.\ \cite[Construction~1.7.7]{com.tr}.  

Let $\mathcal{I}$ be an interval. A \emph{train path} is a smooth immersion $r:\mathcal{I}\to\tau\subset X$ starting and ending at a switch. We say a ray (or a multicurve, or a train track) $\gamma$ is \emph{carried} by $\tau$ if there exists a $C^1$ map $\phi: X \rightarrow X$ homotopic to the identity so that $\phi(\gamma)\subset\tau$ and the differential $d\phi_p$ restricted to the tangent line at any $p$ on the ray (or a multicurve, or a train track) is nonzero. In the case of a ray or an oriented curve, the image under $\phi$ is a train path of $\tau$, and in the case of a train track $\tau'$, $\phi$ maps a train path of $\tau'$ to a train path of $\tau$. We say a geodesic lamination $\mathcal{L}$ is carried by $\tau$ if every leaf of $\mathcal{L}$ is carried by $\tau$.

From the constructions in \S1.7 of \cite{com.tr}, we can approximate a minimal non-atomic measured lamination $\mathcal{L}$ by a sequence of birecurrent measured train tracks $(\tau_n, \omega_n)$. These train tracks are deformation retracts of smaller and smaller neighborhoods of $\mathcal{L}$.

Moreover, we can choose a sequence of approximating train tracks so that they are connected and satisfy the following properties: 
\begin{itemize}[topsep=0mm, itemsep=0mm]
\item They are not simple closed curves;
\item $\tau_{n+1}$ is carried by $\tau_n$ for $n \geq 1$;
\item $\omega_n$ of each branch is a positive number less than $1/{2^n}$;
\item $\mathcal{L}$ is carried by $\tau_n$ for $n \geq 1$.
\end{itemize}
For more information on train tracks, see \cite{com.tr}.

\section{Train tracks and symbolic coding}\label{sec:entropy}
In this section, we study train paths carried by a train track. These train paths are naturally coded by the branches it passes through. In particular, we give a criterion in terms of the coding when two train paths are asymptotic at infinity. We also construct finite train paths of arbitrarily small transverse measures whose coding words do not appear in the leaf of a fixed measured lamination.

\subsection{Symbolic coding of points at infinity}
In this subsection, we use symbolic coding of train tracks to prove Theorem~\ref{thm:tail2}, which gives a criterion for the convergence of two train paths. The result is likely known (or at least unsurprising) to experts. However, we were unable to pinpoint a specific reference, so we included a detailed proof here.

Let $\tau=(V_\tau,E_\tau)$ be a train track carrying $\mathcal{L}$. Suppose $|E_\tau|=d$ and label the branches of $\tau$ by $b_1^{\tau}, \dots, b_d^{\tau}$. By listing the branches it traverses, we can describe a (finite or infinite) train path $r$ of $\tau$ by a (finite or infinite) word $w_{\tau}(r)$ with letters in the alphabet $B^\tau:=\{ b_i^{\tau} , 1 \leq i \leq d\}$. We can also assign such a word to a ray or a curve carried by $\tau$ if we consider the corresponding train path. 

Assume further $\tau$ is connected and birecurrent. Let $\tilde\tau=\pi^{-1}(\tau)\subset\mathbb{H}^2$. We say a point $p$ on the circle at infinity $S^1_\infty$ is \emph{reached} by a train path of $\tau$ if some lift of the path to $\tilde\tau$ converges to $p$. Two infinite train paths are said to converge at infinity if some lifts of the paths to $\tilde\tau$ reach the same point at infinity. We have
\begin{thm}\label{thm:tail2}
Two train paths of a train track $\tau$ converge at infinity if and only if the corresponding words have the same tail. 
\end{thm}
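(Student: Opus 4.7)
The plan is to prove the two implications separately, with the reverse direction being the substantive content.

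For the forward direction (common tail implies convergence), the key observation is that a train path in $\tau$ is uniquely determined by its starting switch, starting direction, and word of branches subsequently traversed. Given $r_1, r_2$ whose words $w_\tau(r_1), w_\tau(r_2)$ share a tail from positions $N_1, N_2$, I would lift $r_1$ arbitrarily to $\tilde{r}_1$, then lift $r_2$ so that the tail of $\tilde{r}_2$ at position $N_2$ begins at the same switch of $\tilde{\tau}$, with the same direction, as the tail of $\tilde{r}_1$ at $N_1$. Since the two tails then trace the same sequence of lifted branches in $\tilde{\tau}$, they literally coincide as half-infinite paths. This common tail is a piecewise smooth curve which, using cocompactness of the $\Gamma$-action on $\tilde{\tau}$ together with the $C^1$ gluing at switches imposed by the train-track structure, is a uniform quasi-geodesic ray in $\mathbb{H}^2$; it therefore converges to a single point $p \in S^1_\infty$, to which both $\tilde{r}_1$ and $\tilde{r}_2$ converge.

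For the reverse direction, let $\tilde{r}_1, \tilde{r}_2$ be lifts converging to the same $p \in S^1_\infty$. Both being uniform quasi-geodesics (same argument as above), the Morse lemma in $\mathbb{H}^2$ implies that for every $\epsilon > 0$ there exist $t_1, t_2$ such that the subrays $\tilde{r}_1|_{[t_1,\infty)}$ and $\tilde{r}_2|_{[t_2,\infty)}$ are within Hausdorff distance $\epsilon$. The next step is to convert this geometric closeness into combinatorial coincidence: restricting $\tilde{\tau}$ to the thick part of $X$ and using cocompactness of the $\Gamma$-action, distinct branches of $\tilde{\tau}$ have a uniform positive separation $\delta > 0$ away from switches. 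So for $\epsilon < \delta$, whenever both tails lie in the thick part and away from switches, they lie on the same lifted branch of $\tilde{\tau}$. To propagate this coincidence, once the two tails share a branch they continue along it until a switch, and the fellow-traveling hypothesis forces them to choose the same outgoing branch. Iterating, they trace identical sequences of branches, yielding identical tail words.

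The main obstacle is handling the thin parts near cusps of $X$, where distinct lifts of branches can come arbitrarily close and the separation estimate above fails. To address this, I would argue that an infinite train path converging to a boundary point $p$ that is not fixed by any parabolic element of $\Gamma$ must return infinitely often to the thick part, which is enough to carry the branch-coincidence argument through. An alternative route, which sidesteps the cusp analysis, is to equip $\tilde{\tau}$ with its intrinsic path metric, show it is Gromov hyperbolic with Gromov boundary mapping injectively to $S^1_\infty$, and invoke the general fact that two infinite non-backtracking paths in such a space with the same limit at infinity must eventually coincide.
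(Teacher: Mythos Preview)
Your forward direction is fine and matches what the paper dismisses as ``obvious.''

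The reverse direction has a genuine gap at the propagation step. You correctly deduce from the quasi-geodesic property and the Morse lemma that the two lifted tails are Hausdorff $\epsilon$-close, and your separation estimate for distinct branches of $\tilde\tau$ away from switches is also correct. (The cusp concern you raise is actually moot: $\tau$ is a compact $1$-complex in $X$, hence lies in the thick part, so distinct lifts of branches \emph{are} uniformly separated.) But the step ``fellow-traveling forces them to choose the same outgoing branch at a switch'' is where the argument breaks down. At a switch $\tilde v$, two distinct outgoing branches are tangent and remain within any prescribed $\epsilon$ for a definite length; beyond that each path passes through further switches, and nothing in your argument prevents the two paths from remaining $\epsilon$-close indefinitely while threading through different branches. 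Ruling this out amounts to excluding an ideal bigon bounded by the two paths, with one vertex at $\tilde v$ and the other at the common endpoint $p\in S^1_\infty$. That exclusion does not follow from fellow-traveling alone; it is precisely where the combinatorics of the train-track axioms must enter.

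The paper's approach is different and supplies exactly this missing piece. It invokes the no-embedded-bigon property for train paths in $\mathbb{H}^2\cup S^1_\infty$ (from Penner--Harer) and argues by contradiction: assuming the two lifts $\tilde r_1,\tilde r_2$ never merge, they are disjoint (else a finite bigon); one connects their starting points by a finite edge-path $\overline{P_1P_2}$ in $\tilde\tau$, and then shows infinitely many branches must be attached to one of the $\tilde r_i$ on the side facing the other. Extending each such branch to a train path inside the region bounded by $\tilde r_1$, $\tilde r_2$, and $\overline{P_1P_2}$, the no-bigon property forces all these extensions to terminate on the finite segment $\overline{P_1P_2}$---impossible.

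Your alternative route also fails: the claimed ``general fact'' that two non-backtracking rays with the same boundary limit in a Gromov hyperbolic graph must eventually coincide is false (take the two sides of a bi-infinite ladder graph, which is $0$-hyperbolic), and $\tilde\tau$ is not a tree.
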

\begin{proof}
One direction is obvious. For the other direction, let $X_1$ be the smallest subsurface containing $\tau$. Suppose $r_1$ and $r_2$ are two converging train paths. By definition, there exist lifts $\widetilde{r_1}$ and $\widetilde{r_2}$ of $r_1$ and $r_2$ to $\widetilde\tau$ so that they converge to the same point $Q$ at infinity. Assume the starting point of $\widetilde{r_i}$ is $P_i$ for $i=1,2$. We view $\widetilde{r_i}$ as an \emph{oriented} path from $P_i$ to $Q$. 

 It is easy to see that, in fact, $\widetilde{r_1}$ and $\widetilde{r_2}$ are contained in a single connected component $\widetilde{X_1}$ of $\pi^{-1}(\Int(X_1))$. We aim to prove that $\widetilde{r_1}$ and $\widetilde{r_2}$ share the same branches after a point. 
\begin{figure}[htp]
    \centering
    \includegraphics[width=0.46\linewidth]{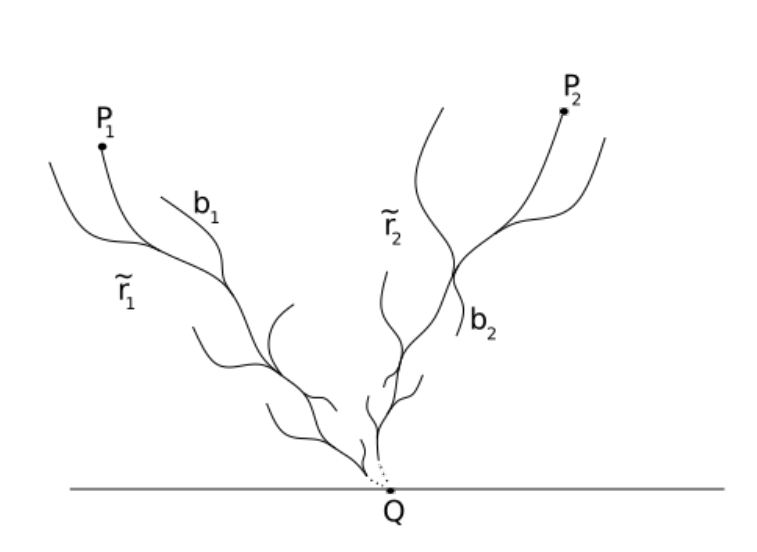}
    \caption{Converging train paths}
    \label{tr1}
\end{figure}

Suppose otherwise. We will repeatedly use the following fact: there is no embedded bigon in $\mathbb{H} \cup S_{\infty}^1$ whose boundary is contained in $r_1 \cup r_2$ \cite[Prop.\ 1.5.2]{com.tr}. We prove the following sequence of claims:
\begin{clmA} 
$\widetilde{r_1}$ and $\widetilde{r_2}$ are disjoint.
\end{clmA}
Indeed, between any intersection and $Q$, $\widetilde{r_1}$ and $\widetilde{r_2}$ bound a nonempty bigon. Therefore, we may assume $\widetilde{r_2}$ is on the left of $\widetilde{r_1}$ as we go toward $Q$ (see Figure~\ref{tr1}).

\begin{clmA}
We can connect $P_1$ and $P_2$ in $\widetilde{X_1}$ by a sequence of branches of $\widetilde{\tau}$.
\end{clmA}
Let $\widetilde{\tau}_*$ be the connected component of $\widetilde\tau$ containing $P_1$. The geodesic segment $\overline{P_1P_2}$ projects to a geodesic segment contained in $X_1$. Each complementary region of $\tau$ in $X_1$ is a disk, a cylinder with one boundary component in $\partial X_1$, or a cylinder with two boundary components formed by branches of $\tau$, at least one of them non-smooth. If the projection of $\overline{P_1P_2}$ intersects the core curve of a cylinder of this last type, then $Q$ must be the endpoint of a lift of the core curve. Since we can represent train paths by smooth curves whose geodesic curvature is uniformly bounded above by a small constant, we may assume $\widetilde{r_1}$ and $\widetilde{r_2}$ are within bounded distance from each other. Now both $r_1$ and $r_2$ are recurrent, so we obtain an immersed cylinder in $X_1$ whose boundary components are train paths. But this lifts to a bigon with both vertices at infinity.

Therefore, we can homotope the projection of $\overline{P_1P_2}$ rel endpoints to a (possibly non-smooth) path formed by the branches of $\tau$. This lifts to a path between $P_1$ and $P_2$ formed by branches of $\widetilde{\tau}_*$. Note that this may not necessarily be a train path. From now on, by $\overline{P_1P_2}$ we mean this path consisting of branches.

Let $b$ be a branch attached to $\widetilde{r_i}$. Then $b$ is called an \emph{incoming branch} (resp.\ \emph{outgoing branch}) if it is smoothly connected to the tail (resp.\ head) of a branch of $\widetilde{r_i}$ (recall that $\widetilde{r_i}$ is oriented, and so are its branches). In Figure~\ref{tr1}, for example, $b_1$ is an incoming branch and $b_2$ is an outgoing branch. 

\begin{clmA} \label{inf}
There are either infinitely many branches attached to $\widetilde{r_1}$ on its left, or infinitely many branches attached to $\widetilde{r_2}$ on its right.
\end{clmA}
Suppose otherwise. Then there is no branch on the left of $\widetilde{r_1}$ nor on the right of $\widetilde{r_2}$ after a point. This implies that the projection of $\widetilde{r_i}$ to $X_1$ after that point is a closed curve $\gamma_i$. Moreover, $\gamma_1$ and $\gamma_2$ must be homotopic. The region bounded between them is a cylinder with smooth boundary, which cannot appear for a train track.

Without loss of generality, we may assume there are infinitely many branches attached to $\widetilde{r_1}$ on its left. Let $T \subset \mathbb{H} \cup S_{\infty}^1$ be the region bounded by $\widetilde{r_1}, \widetilde{r_2}$ and $\overline{P_1P_2}$. Given a branch $b$ inside $T$ attached to $\widetilde{r_1}$, we can extend it (from the end not attached to $\widetilde{r_1}$) to a train path $r_b$ until we hit the boundary of $T$. We have the following two cases.

\noindent\textbf{Case 1.}\quad There are infinitely many incoming branches. 
    
Given an incoming branch $b$, if $r_{b}$ hits $\widetilde{r_2}$, then $r_b$ and portions of $\widetilde{r_1}$ and $\widetilde{r_2}$ form a bigon with $Q$ as a vertex (see Figure~\ref{bgn1.a}), which is impossible. Moreover, given two incoming branches $b_1, b_2$, we must have $r_{b_1} \cap r_{b_2}= \varnothing$, for otherwise we would have a bigon (see Figure~\ref{bgn1.b}). 
\begin{figure}[htp]
\begin{subfigure}{.33\textwidth}
  \centering
  \includegraphics[width=.8\linewidth]{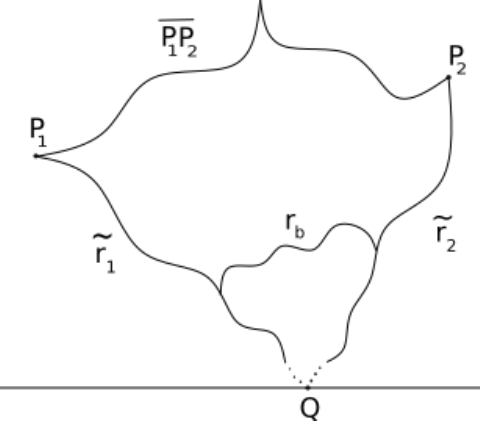}  
  \caption{}
  \label{bgn1.a}
\end{subfigure}
\begin{subfigure}{.33\textwidth}
  \centering
  \includegraphics[width=.8\linewidth]{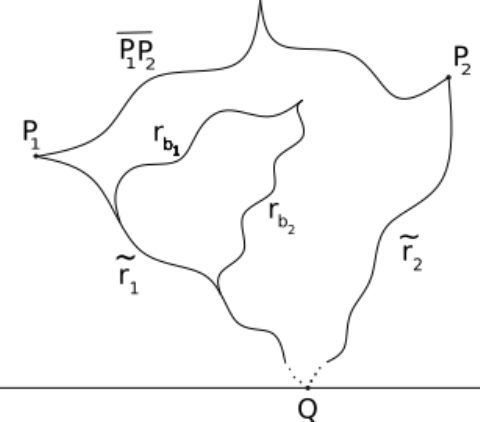}  
  \caption{}
  \label{bgn1.b}
\end{subfigure}
\begin{subfigure}{.33\textwidth}
  \centering
  \includegraphics[width=.8\linewidth]{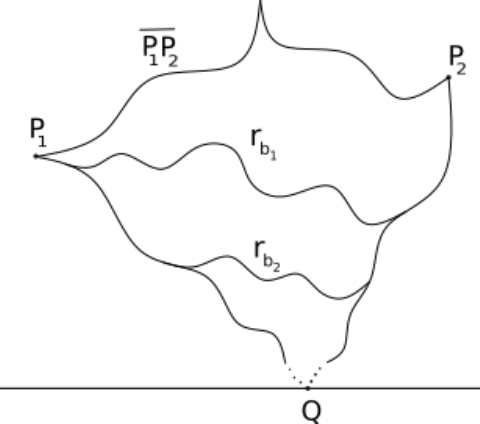}  
  \caption{}
  \label{bgn2}
\end{subfigure}
\caption{Different types of bigons}
\label{bgn1}
\end{figure}

Therefore all the extended train paths hit $\overline{P_1P_2}$. But this is impossible, as $\overline{P_1P_2}$ consists of finitely many branches.

\noindent\textbf{Case 2.}\quad There are infinitely many outgoing branches.

The arguments are similar. Given two outgoing branches $b_1, b_2$, we must have $r_{b_1} \cap r_{b_2}= \varnothing$, for otherwise we have a bigon. Moreover, if $r_{b_1}$ and $r_{b_2}$ both hit $\widetilde{r_2}$ then either they bound a bigon (see Figure~\ref{bgn2}) or one of them bounds a bigon with $Q$ as a vertex. Therefore, similar to the previous case, all the extended train paths hit $\overline{P_1P_2}$, again impossible.
\end{proof}
Consequently, for every point $Q$ at infinity that is reachable by $\tau$, we may assign a symbolic coding by choosing any train path reaching $Q$. This coding is only well-defined up to the equivalence relation of having the same tail and is $\pi_1(X)$-invariant (i.e.\ the equivalence class of words we can assign for $Q$ is the same as that of $\gamma\cdot Q$ for any $\gamma\in\pi_1(X)$). This is very much reminiscent of the classical cutting sequences for the modular surface, or boundary expansion in general (see e.g.\ \cite{simple}).

\subsection{Entropy and closed curves carried by a train track}\label{sec:carry}
In this subsection, we show that there exist inadmissible words with arbitrarily small transverse measure.

Let $\mathcal{L}$ be a minimal non-atomic measured lamination and  $\{\tau_n\}$ a sequence of train tracks approximating $\mathcal{L}$ as described in \S\ref{sec:background}. Note that each branch of $\tau_n$ has positive transverse measure $<1/2^n$. For each $n$, we fix a homotopy $\phi_n$ sending $\tau_n$ to a subset of $\tau_1$. Via $\phi_n$, each closed train path carried by $\tau_n$ determines a closed train path of $\tau_1$. The itinerary of the path gives, up to cyclic reordering, a word with letters in $B^{\tau_1}$, which we call the word of the path. Words of leaves can be defined similarly. A word (and the corresponding train path) is said to be \emph{admissible} if it is a subword of the word of some leaf of $\mathcal{L}$. Otherwise, it is called \emph{inadmissible}.

We emphasize that to make sure words of train paths from different train tracks are comparable, we always \emph{take the path to $\tau_1$ and record the itinerary there}, with letters in $B^{\tau_1}$.

Let $\mathcal{A}_\tau(T)$ be the set of admissible words of length at most $T$. We have:

\begin{prop}\label{prop:ent.poly}
The number $|\mathcal{A}_\tau(T)|$ of admissible words of length $\le T$ has polynomial growth in $T$. More precisely, $$|\mathcal{A}_\tau(T)|\le 4|E_{\tau}|^2T^{|E_{\tau}|-|V_\tau|+2}.$$
\end{prop}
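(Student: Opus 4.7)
\medskip

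\noindent\textbf{Proof proposal.} The plan is to translate the counting of admissible words into counting atoms of a partition of a transverse interval, which lets us bound growth linearly in $n$ per length $n$; the stated polynomial then follows by summing over lengths.

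First, I would set up transverse coordinates: for each branch $b \in E_\tau$ choose a transverse arc $I_b$ of length $\omega(b)$ meeting $b$ once, and let $I = \bigsqcup_b I_b$. The lamination $\mathcal{L}$ induces a first-return map $R \colon I \setminus D \to I$, where $D$ is the finite set of discontinuity points; each point of $D$ corresponds to a half-leaf of $\mathcal{L}$ hitting a cusp of a complementary region of $\tau$. Minimality and non-atomicity of $\mathcal{L}$ ensure $R$ is a well-defined piecewise isometry with finite $|D|$, and that no atom is wasted (every $R$-cylinder has positive measure). An admissible word $w = b_{i_1}\cdots b_{i_n}$ of length $n$ then corresponds bijectively to the cylinder
\[
[w] = \{\, x \in I_{b_{i_1}} : R^{j-1}(x) \in I_{b_{i_j}} \text{ for } 1\le j \le n \,\},
\]
which is a (nonempty) subinterval of $I_{b_{i_1}}$. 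Thus the number of admissible words of length exactly $n$ equals the number of atoms $|\mathcal{P}_n|$ of the partition $\mathcal{P}_n$ of $I$ into these cylinders.

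Next, I would estimate $|\mathcal{P}_n|$ inductively. The refinement from $\mathcal{P}_{n-1}$ to $\mathcal{P}_n$ is obtained by subdividing each atom at the preimages under $R^{n-1}$ of $D$, adding at most $|D|$ new endpoints. This yields the linear bound $|\mathcal{P}_n| \le |E_\tau| + (n-1)\,|D|$, and hence
\[
|\mathcal{A}_\tau(T)| = \sum_{n=1}^{T} |\mathcal{P}_n| \;\le\; T\,|E_\tau| + \tfrac{T(T-1)}{2}\,|D|,
\]
a quadratic upper bound in $T$.

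Finally, I would bound $|D|$ topologically. Each discontinuity of $R$ corresponds to a distinguished half-leaf emerging from a non-smooth cusp of a complementary region of $\tau$; a Gauss--Bonnet / Euler-characteristic count of these cusps yields a bound linear in $|E_\tau|-|V_\tau|$ (together with a bounded additive constant). Combining this with the quadratic bound above and checking constants gives the stated inequality, since for any connected non-circle train track one has $|E_\tau|-|V_\tau|+2\ge 2$, so the quadratic estimate is dominated by $4|E_\tau|^2 T^{|E_\tau|-|V_\tau|+2}$, and the circle case ($|E_\tau|=|V_\tau|=1$) is handled directly since there is then exactly one admissible word per length.

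The main obstacle is the topological estimate on $|D|$: one needs a clean way to bound the number of singular half-leaves in terms of $|E_\tau|-|V_\tau|$ alone, using that each complementary cusp contributes controlled data. I expect this to use that the doubled complementary components have negative Euler characteristic (one of the defining properties of a train track), so that the total cusp count is controlled by $-\chi(\tau) = |E_\tau|-|V_\tau|$ up to a small constant. With this in hand, the rest is bookkeeping; the stated exponent $|E_\tau|-|V_\tau|+2$ is generous and leaves room for the prefactor $4|E_\tau|^2$ to absorb all base cases and constants.
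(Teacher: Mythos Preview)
Your approach is correct and genuinely different from the paper's. The paper encodes an admissible path by the multiplicity function $f:E_\tau\to\mathbb{Z}_{\ge 0}$ recording how many times each branch is traversed, together with endpoint data (which side of which branch, and which of the $\le T$ parallel strands). Since $f$ must satisfy the switch equations up to a bounded error at the endpoints, and these equations are independent, one gets at most $T^{|E_\tau|-|V_\tau|}$ choices for $f$; combined with the $O(|E_\tau|^2 T^2)$ endpoint choices this gives the stated bound directly. Your argument instead realizes admissible words as cylinders of the first-return system on a transversal and invokes the linear complexity bound for such interval-exchange-type maps, yielding a quadratic bound $O(|D|\,T^2)$ that is in fact \emph{sharper} than the paper's exponent $|E_\tau|-|V_\tau|+2$; you then observe this is absorbed by the stated bound. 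The cusp count you need is $|D|\le 2(|E_\tau|-|V_\tau|)$, obtained by summing $\deg(v)-2$ over switches.

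One technical point to watch: your first-return map $R$ presumes an orientation along leaves. If $\mathcal{L}$ (or $\tau$) is non-orientable, $R$ is not globally well-defined as a single map on $I$; you should either pass to the orientation double cover (doubling $|E_\tau|$, $|V_\tau|$, and $|D|$, which is harmless for the conclusion) or work with the two-sided transversal and a linear involution in place of an IET. The linear complexity bound survives in either formulation, so this does not affect the outcome.
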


\begin{proof}
Consider a small neighborhood of $\tau$ on the surface. Then a simple train path of $\tau$ can be represented by a path in this neighborhood with no self-intersection. From this representation, we can describe each simple train path $\gamma$ of $\tau$ by the following information:
\begin{itemize}[topsep=0mm, itemsep=0mm]
    \item A function $f: E_{\tau} \to \mathbb{N}\cup\{0\}$ that assigns to each branch $e$ the number of times $\gamma$ passes through $e$;
    \item Two branch endpoints $P,Q$. These points present the endpoints of $\gamma$. $P$ specifies a vertex and a branch connecting to that vertex. The branch is the one that contains the subarc of $\tau$ with endpoint $P$. Similarly, $Q$ is chosen.
    \item Numbers $0<s,k\le T$. Consider subarcs of $\gamma$ in one branch's neighborhood. Since the subarcs are not intersecting, we can number them from $1$ to $r$ (r is the number of subarcs in the branch). Based on their order in the rectangular neighborhood of the branch. The numbers $s,k$ are assigned to the endpoints of $\gamma$.
\end{itemize}   
We can uniquely construct $\gamma$ from these data (if any $\gamma$ exists with these properties) as follows. For each $e$, we consider $f(e)$ parallel arcs in the neighborhood of $e$. At each vertex, we connect all incoming and outgoing arcs, except the two endpoints $P,Q$ of $\gamma$, without making any intersection. Therefore, there is a unique way to connect them. See Figure \ref{fig: traintrack.connect.arcs}. 

\begin{figure}[htp]
    \centering
    \includegraphics{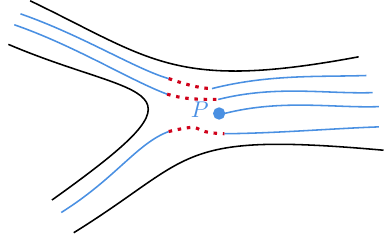}
    \captionsetup{width=.8\linewidth}
    \caption{Function $f$ assigns numbers $1,2,4$ to these arcs. $P$ is on the third arc inside the right branch, so $s=3$. The dotted lines show how uniquely we can connect arcs with no self-intersection. }
    \label{fig: traintrack.connect.arcs}
\end{figure}

Therefore, the map from $\mathcal{A}_{\tau}(T)$ to the set of multiples $(f, P,Q,s,k)$ is injective. The cardinality of this set of multiples is at most (the number of functions $f$)$\times 2|E_\tau|\times 2|E_\tau| \times T \times T$. 

The function $f$ satisfies the following equation for each switch $v\in V_\tau$:
\begin{equation}\label{f.eq}
f(e_1)+\dots+f(e_i)-f(e'_1)-\dots-f(e'_j)=m
\end{equation}
where $e_1,\dots,e_i$ are incoming branches at $v$ and $e'_1, \dots, e'_j$ are outgoing branches, for a fixed compatible orientation of branches connected to $v$ and $m \in \{ -2,-1,0,1,2 \}$ depending on the number of endpoints we have on these branches (on the side connected to $v$).

Note that the value of $f$ on each branch is less than $T$. Equations \ref{f.eq} are independent (see \cite[Lamma 2.1.1]{com.tr}). Therefore, the number of functions $f$ satisfying these properties is at most $T^{|E_{\tau}|-|V_{\tau}|}$.
\end{proof}

 Let $b_n$ be a branch of $\tau_n$, and $\mathcal{K}_{b_n}$ the set of finite words of closed train paths containing $b_n$. Its subset $\mathcal{K}_{b_n}(T) \subset \mathcal{K}_{b_n}$ contains words of length $<T$.
 \begin{lm}\label{lm:2admisible}
 There are at least two admissible words in $\mathcal{K}_{b_n}$ whose train paths are first return paths from $b_n$ to $b_n$.
 \end{lm}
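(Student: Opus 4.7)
The plan is to analyze the first-return dynamics of $\mathcal{L}$ on the transversal $b_n$, and rule out that it can be given by a single combinatorial route.

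First I would observe that every leaf of $\mathcal{L}$ crosses $b_n$ infinitely often: $\tau_n$ is connected and carries the minimal lamination $\mathcal{L}$, and $\omega_n(b_n)>0$ guarantees that the transverse measure of $\mathcal{L}$ at $b_n$ is positive, so minimality and recurrence give infinitely many returns. Parameterizing $b_n$ by the transverse measure $\mu$ of $\mathcal{L}$, the first-return map $R\colon b_n\to b_n$ is thus defined $\mu$-a.e., and its continuity components are in bijection with the first-return paths of $\tau_n$: two points on $b_n$ lie in the same component exactly when the leaves through them trace out the same sequence of branches before returning to $b_n$. The lemma therefore reduces to showing that $R$ has at least two continuity components, plus checking distinctness and admissibility of the resulting words in $\tau_1$.

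For the main step, suppose for contradiction that there is a single first-return path $\gamma$. At each branch traversed by $\gamma$, the leaf correspondence between the transverse coordinates at the two ends is monotone and $\mu$-preserving (this is precisely the switch condition on $\omega_n$). Composing these one-branch maps, $R$ becomes a monotone, measure-preserving bijection of the interval $(b_n,\mu)$ onto itself, hence an isometry---either the identity or a reflection. Either alternative produces a fixed point of $R$, which corresponds to a closed leaf of $\mathcal{L}$; but a minimal lamination containing a closed leaf is that leaf (with some weight) and is therefore atomic, contradicting our standing hypothesis. Consequently $R$ has at least two continuity components, yielding distinct first-return paths $\gamma_1,\gamma_2$ in $\tau_n$. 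Their images under $\phi_n$ are distinct train paths in $\tau_1$ (since $\phi_n$ is a $C^1$ homotopy that sends branches of $\tau_n$ to train paths of $\tau_1$), hence give distinct words in $\mathcal{K}_{b_n}$; admissibility is automatic, as each $\gamma_i$ is traversed by every leaf of $\mathcal{L}$ starting in the corresponding continuity component of $b_n$.

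The main obstacle is the rigidity assertion---that ``only one first-return path'' forces $R$ to be an isometry and thus produces a closed leaf. A slightly more geometric alternative to this step is to observe that if all leaves share the same combinatorial first-return journey $\gamma$, then closing $\gamma$ by the sub-arc of $b_n$ between $p$ and $R(p)$ produces a curve freely homotopic to every leaf of $\mathcal{L}$; iterating shows that $\mathcal{L}$ would be supported on a single simple closed curve, again contradicting minimality together with non-atomicity.
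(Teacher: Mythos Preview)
Your argument is correct and takes a genuinely different route from the paper. The paper's proof assumes all admissible first-return words coincide with a single $w$, observes that every leaf then has infinite word $www\cdots$, and invokes Theorem~\ref{thm:tail2} to conclude that all leaves converge at infinity to the closed geodesic coded by $w$ --- impossible for a minimal non-atomic lamination. You instead analyze the first-return map $R$ on the transversal $b_n$ directly: a single combinatorial route forces $R$ to be a monotone measure-preserving self-bijection of an interval, hence an isometry with a fixed point, producing a closed leaf. Your approach is more elementary in that it is self-contained and bypasses the symbolic-coding Theorem~\ref{thm:tail2}; the paper's approach is shorter precisely because that theorem is already available and does the heavy lifting.

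One step in your write-up is slightly under-argued: the assertion that $\phi_n(\gamma_1)$ and $\phi_n(\gamma_2)$ are \emph{distinct} train paths (hence words) in $\tau_1$. That $\phi_n$ is $C^1$ and carries branches to train paths is not by itself enough. The clean justification is the no-bigon property: distinct train paths in $\tau_n$ with common endpoints are non-homotopic rel endpoints (else they would bound a bigon), $\phi_n$ is homotopic to the identity so preserves this, and in $\tau_1$ non-homotopic train paths again have distinct itineraries by the same no-bigon property. It would strengthen your exposition to make this explicit. Your ``slightly more geometric alternative'' at the end is closer in spirit to the paper's argument, though phrased loosely (leaves are not closed curves, so ``freely homotopic'' should be replaced by an asymptotic statement).
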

 
\begin{proof}
Equivalently, we want to show that admissible words in $\mathcal{K}_{b_n}$ are not generated by a single word $w$. Assume by contradiction that they are. Then the only possible infinite admissible word is a concatenation of copies of $w$, since every leaf of $\mathcal{L}$ is recurrent and comes back to $b_n$ infinitely many times. By Theorem~\ref{thm:tail2}, all leaves converge to the closed geodesic that corresponds to $w$, a contradiction! 
\end{proof}

\begin{prop} \label{prop:ent.exp}
 The number $|\mathcal{K}_{b_n}(T)|$ of closed train paths containing $b_n$ with length $<T$ has exponential growth in $T$.
\end{prop}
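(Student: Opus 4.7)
The plan is to build exponentially many distinct elements of $\mathcal{K}_{b_n}(T)$ by concatenating the two first-return words provided by Lemma~\ref{lm:2admisible}. First I would fix distinct admissible first-return words $\alpha, \beta \in \mathcal{K}_{b_n}$ at $b_n$, and set $L := \max(|\alpha|, |\beta|)$. The crucial feature of $\alpha$ and $\beta$ is the first-return property: each traverses the branch $b_n$ exactly at its beginning and end and nowhere in between.

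For each $k \ge 1$ and each sequence $\underline{i} = (i_1, \ldots, i_k) \in \{\alpha, \beta\}^k$, the first-return property makes the concatenation $w_{\underline{i}} := w_{i_1} w_{i_2} \cdots w_{i_k}$ the word of a legitimate closed train path through $b_n$, of length at most $kL$; hence $w_{\underline{i}} \in \mathcal{K}_{b_n}(T)$ whenever $kL < T$. The main step is to show that $\underline{i} \mapsto w_{\underline{i}}$ is injective, and this is where the first-return hypothesis pays off: the letter $b_n$ occurs in $w_{\underline{i}}$ precisely at the block boundaries, so from the word one can recover the block lengths $|w_{i_1}|, \ldots, |w_{i_k}|$ and then read off the content of each block to decide whether it is $\alpha$ or $\beta$. (Here one also uses that $\alpha$ cannot be a proper prefix of $\beta$, or vice versa, because both stop precisely at their first return to $b_n$.) Injectivity then yields
$$|\mathcal{K}_{b_n}(T)| \;\ge\; 2^{\lfloor T/L \rfloor - 1}$$
for $T > L$, which is exponential growth.

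I expect the only delicate point to be the injectivity argument, but it is handled cleanly by the first-return property from Lemma~\ref{lm:2admisible}: without it, two different block decompositions could in principle produce the same concatenated word. A minor bookkeeping issue is whether elements of $\mathcal{K}_{b_n}$ are taken to be linear words based at $b_n$ or cyclic words; identifying cyclic rotations loses at most a factor of $kL \le T$, so the exponential lower bound survives either way. This lower bound complements the polynomial upper bound of Proposition~\ref{prop:ent.poly} on admissible words and shows that most of $\mathcal{K}_{b_n}(T)$ consists of inadmissible words, which is the key input for constructing inadmissible closed train paths of arbitrarily small transverse measure later in the argument.
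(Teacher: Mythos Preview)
Your proof is correct and follows essentially the same approach as the paper's: both take the (at least two) admissible first-return words from Lemma~\ref{lm:2admisible} as building blocks and concatenate them freely to produce exponentially many closed train paths through $b_n$. You are more explicit about the injectivity of the block-sequence-to-word map than the paper, which simply asserts $|\mathcal{K}_{b_n}(T)| > k^{cT}$ without further comment; one small caveat is that the words in $\mathcal{K}_{b_n}$ are recorded in the alphabet $B^{\tau_1}$ while $b_n$ is a branch of $\tau_n$, so your parsing argument should really be carried out at the level of train paths in $\tau_n$ rather than letters in $\tau_1$, but this does not affect the conclusion.
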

\begin{proof}
Let $a_1, \dots, a_k$ be the words of admissible first return paths from $b_n$ to $b_n$ in $\tau_n$. Note that $k\ge2$ by Lemma~\ref{lm:2admisible}. Any words constructed from these blocks are in $\mathcal{K}_{b_n}$. Therefore, $|\mathcal{K}_{b_n}(T)|> k^{cT}$ for some constant $c$.
\end{proof}

\begin{cor} \label{cor:inadmissible}
There is an inadmissible word $w \in \mathcal{K}_{b_n}$ with transverse measure $< 1/2^n$. 
\end{cor}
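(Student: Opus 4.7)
The strategy is a pigeonhole (or rather counting) argument that directly pits Proposition~\ref{prop:ent.exp} against Proposition~\ref{prop:ent.poly}: the set $\mathcal{K}_{b_n}(T)$ grows exponentially in $T$, while the set $\mathcal{A}_{\tau_1}(T)$ of admissible words of length $\le T$ grows only polynomially. So once $T$ is large enough, $\mathcal{K}_{b_n}(T)$ is too large to inject into the admissible locus, and at least one word $w \in \mathcal{K}_{b_n}(T)$ must be inadmissible. The work is in carefully setting up the comparison, since the length used for $\mathcal{K}_{b_n}$ is measured in $\tau_n$ while admissibility is defined by subwords in $\tau_1$'s alphabet.

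In detail: I would fix the carrying homotopy $\phi_n$ sending $\tau_n$ into $\tau_1$. Because $\phi_n$ takes each branch of $\tau_n$ to a fixed concatenation of finitely many branches of $\tau_1$, there is a constant $C_n$ such that every closed train path in $\tau_n$ of combinatorial length $\le T$ yields, via $\phi_n$, a closed train path in $\tau_1$ of length $\le C_n T$. The word $w \in B^{\tau_1}$ associated to a path in $\mathcal{K}_{b_n}(T)$ therefore has length at most $C_n T$. If every such $w$ were admissible, we would have an injection (up to a bounded multiplicity from cyclic reordering and orientation)
\[
\mathcal{K}_{b_n}(T) \hookrightarrow \mathcal{A}_{\tau_1}(C_n T).
\]
But Proposition~\ref{prop:ent.exp} gives $|\mathcal{K}_{b_n}(T)| \ge k^{cT}$ with $k \ge 2$, while Proposition~\ref{prop:ent.poly} gives $|\mathcal{A}_{\tau_1}(C_n T)| \le 4|E_{\tau_1}|^2 (C_n T)^{|E_{\tau_1}|-|V_{\tau_1}|+2}$. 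For $T$ sufficiently large the first quantity dominates, producing the desired inadmissible $w \in \mathcal{K}_{b_n}$.

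For the transverse measure bound, I would use that every train path in $\mathcal{K}_{b_n}$ contains the branch $b_n$, so the corresponding closed curve on $X$ can be realized by running along the train path and closing up through a short transverse arc $\sigma$ sitting entirely inside $b_n$. Since $\mathcal{L}$ is carried by $\tau_n$ with weights $\omega_n$ and $\omega_n(b_n) < 1/2^n$, the transverse measure $I(\mathcal{L}, \sigma)$ is at most $\omega_n(b_n) < 1/2^n$, giving the claimed bound. The only place I expect a nuisance is the bookkeeping at the polynomial-versus-exponential step: one must be careful that the constants $C_n$, and the multiplicities coming from passing between a closed train path and the cyclic word it represents, are absorbed into the inequality rather than being hidden in the growth exponent—but since the discrepancy between the two bounds is genuinely exponential versus polynomial, no matter what fixed constants we pick up the argument goes through for all large $T$.
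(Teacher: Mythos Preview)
Your counting argument for producing an inadmissible word is exactly the one the paper uses and is fine. The gap is in the transverse measure bound.

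You write that the closed curve ``can be realized by running along the train path and closing up through a short transverse arc $\sigma$'', and that only $\sigma$ contributes. But ``running along the train path'' does not, by itself, give a curve with zero intersection with $\mathcal{L}$. A closed train path carried by $\tau_n$ can have positive geometric intersection number with $\mathcal{L}$ (already for two simple closed curves carried by the same track on a punctured torus, say), and that is a lower bound for $I(\mathcal{L},\cdot)$ of \emph{any} realization in the homotopy class. So you cannot push all the transverse measure into a single arc inside $b_n$ without further structure on the word.

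The paper supplies exactly that structure. The exponential lower bound in Proposition~\ref{prop:ent.exp} is obtained from concatenations of the admissible first-return blocks $a_1,\dots,a_k$, so the inadmissible word you find can be taken to be of the form $a_{i_1}\cdots a_{i_s}$. Now pick one with the \emph{fewest} blocks. Then $a_{i_1}\cdots a_{i_{s-1}}$ is admissible and hence can be realized as a single segment of a leaf of $\mathcal{L}$ (transverse measure zero); the last block $a_{i_s}$ is admissible by definition and is likewise a leaf segment. Joining these two leaf segments creates a single crossing, and that crossing sits in the branch $b_n$ of $\tau_n$, so its transverse measure is at most $\omega_n(b_n)<1/2^n$. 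Without this minimality step you would have $s$ leaf segments and up to $s$ crossings, giving only a bound of order $s/2^n$, which is useless since $s$ grows with $T$.
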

\begin{proof}
Let $a_1, \dots, a_k$ be as above. By Propositions~\ref{prop:ent.poly} and \ref{prop:ent.exp}, there must be an inadmissible word among words constructed from the blocks $a_1,\ldots,a_k$.
 
Let $w_n=a_{i_1}a_{i_2}\dots a_{i_s}$ be an inadmissible word constructed from the blocks with the smallest number of blocks. Then $a_{i_1}a_{i_2}\dots a_{i_{s-1}}$ is admissible, and thus can be represented by a segment of a leaf. The word $a_{i_s}$ can also be represented by a segment of a leaf; connecting these two segments possibly creates a crossing over the branch $b_n$. Thus what we obtain is a segment $l_n$ representing $w_n$ with transverse measure less than $1/2^n$, as the crossing happens in a branch of $\tau_n$.
\end{proof}

\paragraph{Interpretation by entropy}
As a punchline for the results in this section, we note that the entropy of leaves of a measured lamination is zero while the entropy of train paths is positive. More precisely, let $\mathcal{A}$ be the set of bi-infinite words of leaves, and $\mathcal{W}_n$ the set of bi-infinite words constructed from $a_1,\ldots,a_k$ as in the proof of Proposition~\ref{prop:ent.exp}. Then together with the shift map $\sigma$ on words, $\mathcal{A}$ and $\mathcal{W}_n$ are subshifts of finite type. The entropy of a subshift of finite type is given by the exponential growth rate in $T$ of the number of words of length $T$ (see e.g.\ \cite[Prop.~3.2.5]{katok1997introduction}). Thus $(\mathcal{A},\sigma)$ has zero entropy while $(\mathcal{W}_n,\sigma)$ has positive entropy $h_n>0$ by Propositions~\ref{prop:ent.poly} and \ref{prop:ent.exp} respectively. Moreover, in fact, $h_n\to 0$ as $n \to \infty$, since $\mathcal{A}=\bigcap_{n=1}^\infty\mathcal{W}_n$.

\section{The halo of a measured lamination}\label{sec:general_proof}

In this section, we discuss some properties of the halo of a measured lamination, and give a proof of the main result Theorem~\ref{mainprime}.

The following lemma showcases the difficulty of distinguishing exotic rays from leaves: asymptotically, the exotic ray stays very close to a leaf for longer and longer time.
\begin{lm}\label{lem:angle}
    Let $X$ be a hyperbolic surface of finite area, and $\mathcal{L}$ a measured lamination on $X$. Suppose $r:[0,\infty)\to X$ is an exotic ray for $\mathcal{L}$. If $r$ intersects a leaf at the point $r(t)$, let $\theta(t)$ be the angle formed by the ray and the leaf. Then $\theta(t)\to0$ as $t\to\infty$.
\end{lm}
Note that by definition, $r$ intersects the leaves recurrently, so the statement above is not vacuous.
\begin{proof}
    Fix $\epsilon>0$. Let $\mathcal{E}_\epsilon$ be the set of all unit tangent vectors based at a point on a leaf of $\mathcal{L}$ forming an angle $\ge\epsilon$ with the tangent direction of the leaf. Since $\mathcal{L}$ is compact, this set is also compact.
    
    For any $v\in T_1(X)$, let $\phi_v:[-1,1]\to X$ be a geodesic segment with $\phi_v'(0)=v$. Then the transverse measure of the segment $I(v):=I(\mathcal{L},\phi_v([-1,1]))$ is continuous in $v$. Hence $I(v)$ reaches its minimum on $\mathcal{E}_\epsilon$. Clearly for any vector $v$ in $\mathcal{E}_\epsilon$, $I(v)>0$, so we conclude that the minimum of $I(v)$ over $\mathcal{E}_\epsilon$ is strictly positive.

    If $\theta(t)$ does not tend to zero as $t\to\infty$, then there exists an $\epsilon>0$, a infinite sequence $t_i\to\infty$ so that $r$ intersects a leaf at $r(t_i)$ and $r'(t_i)\in\mathcal{E}_\epsilon$. But then the total transverse measure of $r$ goes to infinity, a contradiction!
\end{proof}

Recall that given a measured lamination $\mathcal{M}$ of $\mathbb{H}^2$, the halo of $\mathcal{M}$, denoted by $h\mathcal{M}$ is the subset of $S^1$ consisting of endpoints of exotic rays. The following proposition states that \emph{any} geodesic ray ending in $h\mathcal{M}$ is in fact exotic:

\begin{prop}\label{prop:point_at_infinity}
Suppose $r_1:[0,\infty)\to \mathbb{H}$ is a piecewise smooth ray and $r_2$ a geodesic ray. Assume further $r_1$ and $r_2$ reach the same point at infinity. If $I(\mathcal{M},r_1)<\infty$ then $I(\mathcal{M},r_2)<\infty$.
\end{prop}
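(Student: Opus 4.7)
The plan is to exploit the fact that $r_2$ is a geodesic in $\mathbb{H}^2$, so it meets each leaf of $\mathcal{M}$ (itself a complete geodesic) in at most one point. A short separation argument will then force any leaf crossing $r_2$ to cross $r_1$ as well, from which $I(\mathcal{M},r_2)<\infty$ drops out of $I(\mathcal{M},r_1)<\infty$ up to a compact correction.

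First I would normalize the basepoints. Let $\sigma$ be a geodesic segment from $r_2(0)$ to $r_1(0)$ and set $r_1':=\sigma\cup r_1$. Since $\sigma$ is compact, $I(\mathcal{M},\sigma)<\infty$, so $I(\mathcal{M},r_1')\le I(\mathcal{M},r_1)+I(\mathcal{M},\sigma)<\infty$. Now both $r_1'$ and $r_2$ emanate from $x_0:=r_2(0)\in\mathbb{H}^2$ and converge to the same point $p\in S^1_\infty$.

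Next comes the key geometric observation: in the closed disk $\overline{\mathbb{H}^2}=\mathbb{H}^2\cup S^1_\infty$, each leaf $\ell$ of $\mathcal{M}$ is a chord that separates the disk into two components. Since $\ell$ and $r_2$ are distinct complete geodesics they meet in at most one point, and $\ell$ meets $r_2$ if and only if the endpoints of $\ell$ strictly separate $x_0$ from $p$; in particular, if $\ell$ had $p$ as an endpoint then $\ell$ and $r_2$ would be asymptotic and hence disjoint in $\mathbb{H}^2$. Whenever such a separation occurs, the continuous curve $r_1'$ from $x_0$ to $p$ must also cross $\ell$ at least once. Therefore, for every leaf $g$ of $\mathcal{M}$ we have $|r_2\cap g|\in\{0,1\}$ and $|r_2\cap g|\le|r_1'\cap g|$.

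Finally, projecting each crossing of $r_2$ with $\mathcal{M}$ along its leaf to a crossing of $r_1'$ with $\mathcal{M}$ and invoking the invariance of the transverse measure under this sliding yields $I(\mathcal{M},r_2)\le I(\mathcal{M},r_1')<\infty$, as desired. The main obstacle I anticipate is making this last step fully rigorous: one has to verify that the leafwise projection is well-defined on a set of full transverse measure on $r_2$, that its pushforward is dominated by (rather than equal to) the transverse measure on $r_1'$ because leaves may cross $r_1'$ multiply, and handle the degenerate case in which a segment of $r_2$ lies along a leaf of $\mathcal{M}$, where $I(\mathcal{M},r_2)$ contributes only from the discrete transverse crossings (and is trivially finite if $r_2$ is itself a leaf).
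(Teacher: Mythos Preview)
Your proposal is correct and follows essentially the same approach as the paper: introduce the geodesic segment $r_0$ between $r_2(0)$ and $r_1(0)$, observe that since $r_2$ is a geodesic each leaf meets it at most once, and use the separation argument to conclude that any leaf meeting $r_2$ must meet $r_1\cup r_0$, yielding $I(\mathcal{M},r_2)\le I(\mathcal{M},r_1)+I(\mathcal{M},r_0)<\infty$. The paper's proof is a bit more terse and does not agonize over the ``leafwise projection'' step you flag at the end; the inequality on transverse measures follows directly from the observation that the leaves crossing $r_2$ form a subset (in the space of leaves) of those crossing $r_1\cup r_0$, so your worries there are largely unnecessary.
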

\begin{proof}
Let $r_0$ be the (finite) geodesic segment between $r_2(0)$ and $r_1(0)$. Consider leaves intersecting $r_2$ but not $r_1$. Since $r_2$ is geodesic, each of these leaves intersects $r_2$ only once, and must also intersect $r_0$. In particular, $I(\mathcal{M},r_2)\le I(\mathcal{M},r_1)+I(\mathcal{M},r_0)<\infty$, as desired.
\end{proof}

As a consequence, if $r_1$ and $r_2$ are asymptotic geodesic rays, then $I(\mathcal{M},r_1)<\infty$ if and only if $I(\mathcal{M},r_2)<\infty$. In particular, any geodesic ray asymptotic to a leaf of $\mathcal{M}$ has finite intersection number, as any leaf has intersection number $0$.

When $\mathcal{M}=\widetilde{\mathcal{L}}$, the lift of a measured lamination $\mathcal{L}$ on $X$ to $\mathbb{H}^2$, we have the following consequence of the proposition above:
\begin{cor}\label{cor:dense}
The set of exotic vectors in $T_1(X)$ is empty or dense.
\end{cor}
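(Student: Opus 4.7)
The plan is to reduce the question to the action of the lattice $\Gamma$ on the circle at infinity and invoke the minimality of that action. The key preparatory observation, already noted in the paper as a consequence of Proposition~\ref{prop:point_at_infinity}, is that a geodesic ray in $\mathbb{H}^2$ is exotic for $\widetilde{\mathcal{L}}$ if and only if its forward endpoint lies in the halo $h\widetilde{\mathcal{L}}\subset S^1$. Consequently, the set of exotic vectors in $T_1(X)$ is precisely the projection of the set of unit vectors in $T_1(\mathbb{H}^2)$ whose forward geodesic terminates in $h\widetilde{\mathcal{L}}$.

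First I would verify the dichotomy at infinity: the halo $h\widetilde{\mathcal{L}}$ is either empty or dense in $S^1$. Since $\widetilde{\mathcal{L}}$ is $\Gamma$-invariant, so is $h\widetilde{\mathcal{L}}$. If non-empty, pick $p\in h\widetilde{\mathcal{L}}$; then $\Gamma\cdot p\subset h\widetilde{\mathcal{L}}$. Because $X$ has finite area, $\Gamma$ is a lattice in $\mathrm{PSL}(2,\mathbb{R})$ whose limit set is all of $S^1$, so every $\Gamma$-orbit in $S^1$ is dense. Hence $h\widetilde{\mathcal{L}}$ is dense in $S^1$ as soon as it is non-empty.

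Now assume the set of exotic vectors in $T_1(X)$ is non-empty. Lifting one such vector produces a point of $h\widetilde{\mathcal{L}}$, so the halo is non-empty and therefore dense. To deduce density of exotic vectors in $T_1(X)$, take any $v\in T_1(X)$, lift to $\tilde v\in T_1(\mathbb{H}^2)$ with basepoint $\tilde x$ and forward endpoint $q\in S^1$, pick $p_n\in h\widetilde{\mathcal{L}}$ with $p_n\to q$, and let $\tilde v_n$ be the initial vector of the geodesic ray from $\tilde x$ to $p_n$. Then $\tilde v_n\to \tilde v$ in $T_1(\mathbb{H}^2)$ and each $\tilde v_n$ is exotic, so the projections give a sequence of exotic vectors in $T_1(X)$ converging to $v$.

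The only substantive point is the endpoint characterization of exoticness, which is essentially already handled by Proposition~\ref{prop:point_at_infinity}: the finite intersection condition is an endpoint property by the proposition, being asymptotic to a leaf is equivalent to the forward endpoint being an endpoint of some leaf of $\widetilde{\mathcal{L}}$, and being eventually disjoint from $\widetilde{\mathcal{L}}$ is also an endpoint property because two geodesic rays to a common boundary point differ, in their families of transversally crossed leaves, by at most the finite-measure set of leaves separating their basepoints. Once this endpoint reduction is granted, the rest is minimality of the action of a lattice on $S^1$ together with a standard approximation argument in $T_1(\mathbb{H}^2)$, so I do not expect any real obstacle.
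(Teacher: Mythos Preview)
Your proof is correct and follows essentially the same approach as the paper: both argue that the halo is $\Gamma$-invariant and hence dense in $S^1$, and then observe that at any basepoint the vectors pointing toward halo points are dense in the fiber. Your version is more explicit about why exoticness depends only on the forward endpoint, which the paper leaves implicit in the sentence preceding the corollary.
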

\begin{proof}
The halo $h\mathcal{L}$ is $\pi_1(X)$ invariant, so if it is nonempty, it is also dense. Assume it is nonempty. For any $p\in X$, choose a covering map $\mathbb{H}^2\cong\mathbb{D}\to X$ so that a lift of $p$ is at the origin in the unit disk $\mathbb{D}$. The set of tangent vectors at the origin pointing towards a point in $h\mathcal{L}$ is dense. This is enough for our desired result.
\end{proof}
Another consequence is that varying the hyperbolic structure will not change the fact that $I(\mathcal{L},r)<\infty$, after straightening $r$ in the new hyperbolic structure. In particular, we may define the \emph{halo} of a measured foliation, an entirely topological object, by endowing the surface with any hyperbolic structure.

We are now in a position to prove the main result Theorem~\ref{mainprime}:
\begin{proof}[Proof of Theorem~\ref{mainprime}]
If $\mathcal{L}$ is a multicurve, it is easy to see that $h\mathcal{L}=\varnothing$. On the other hand, if $\mathcal{L}$ is not a multicurve, then it has an infinite minimal component. An exotic ray for this component that is also bounded away from the other components is clearly an exotic ray for $\mathcal{L}$. To prove Theorem~\ref{mainprime}, it is thus sufficient to construct exotic rays that remain in a small neighborhood of this minimal component of $\mathcal{L}$. Let $\tau_n$ be a sequence of train tracks approximating this minimal component of $\mathcal{L}$ as described in \S\ref{sec:background}. We may further assume $\tau_1$ is disjoint from the other components of $\mathcal{L}$. The rays we construct come from train paths of $\tau_1$ and are thus disjoint from the other components. From now on, for simplicity, we may safely assume $\mathcal{L}$ is minimal.

We construct an exotic ray by gluing a sequence of inadmissible words of $\tau_1$, the sum of whose transverse measures is finite. We fix a homotopy $\phi_n$ which sends $\tau_n$ to a subset of $\tau_1$ and $\phi_n(\tau_n) \subset \phi_{n-1}(\tau_{n-1})$.

Choose a branch $b_n \in E_{\tau_n}$, so that under the map from $\tau_n$ to $\tau_{n-1}$, a portion of $b_n$ is mapped to $b_{n-1}$.
From Corollary \ref{cor:inadmissible}, there is a finite inadmissible train path of $\tau_n$ starting and ending at $b_n$, which can be represented by a segment $l_n$ of transverse measure $<1/2^n$.

If we glue the segments $l_n$ for $n\ge1$ together, we have a ray of finite transverse measure. Indeed, since both $l_n$ and $l_{n-1}$ can be represented by train paths starting and ending in $b_{n-1}$, connecting them creates a transverse measure of at most $1/2^{n-1}$. So the total transverse measure $<2\sum1/2^{n}<\infty$. This ray is exotic. Indeed, by construction the ray intersects the lamination infinitely many times; moreover, its word $w_1w_2w_3\cdots$ contains inadmissible subwords in any tail, and by Theorem~\ref{thm:tail2}, this ray does not converge to any leaf of $\mathcal{L}$.

Finally, by passing to a subsequence, we may assume the transverse measures of $l_n$ satisfy $I(\mathcal{L},l_n)/3>\sum_{n+1}^\infty I(\mathcal{L},l_k)$. For any subsequence $\{n_k\}$ of $\mathbb{N}$, we may glue the segments $l_{n_k}$ instead. Note that $w_{n_1}w_{n_2}\cdots$ and $w_{n'_1}w_{n'_2}\cdots$ have the same tail if and only if the sequences $\{n_k\}$ and $\{n'_k\}$ have the same tail. Indeed, our assumption $I(\mathcal{L},l_n)/3>\sum_{n+1}^\infty I(\mathcal{L},l_k)$ means that if the two sequences have different tails, the tails of the corresponding words have different transverse measure. Given any subsequence $\{n_k\}$, there are at most countably many subsequences of $\mathbb{N}$ having the same tail. So there are uncountably many of subsequences not having the same tail, implying that there are uncountably many exotic rays with different endpoints.
\end{proof}
\begin{rmk}
It is certainly possible to directly construct an exotic ray using \emph{leaf approximations} to be introduced in Part~\ref{part:qf}. However, the coding from train tracks provides a systematic way of determining asymptotic behavior, which is useful for proving uncountability above.
\end{rmk}

\section{Complementary results}\label{sec:complementary}
In this section, we study the general behavior of intersection of a geodesic ray with $\mathcal{L}$ and prove Theorems \ref{thm:linear} and \ref{thm: sublinear}.

Given a geodesic ray $r:[0,\infty)\to X$, let $I_{\mathcal{L},r}(t)$ be the intersection number of the geodesic segment $r([0,t])$ with $\mathcal{L}$. Clearly, $I_{\mathcal{L},r}(t)$ is a non-negative and non-decreasing continuous function of $t$. Given two non-negative functions $f,g$ defined on $[0,\infty)$, we write $f=O(g)$ if there exist constants $t_0,c>0$ so that $f(t)\le cg(t)$ for all $t\ge t_0$, and write $f\asymp g$ if $f=O(g)$ and $g=O(f)$. We have:
\begin{thm}\label{thm:linear}
For any measured lamination $\mathcal{L}$ and geodesic ray $r$ on $X$, $I_{\mathcal{L},r}(t)=O(t)$. Moreover, for almost every choice of initial vector $r'(0)$ with respect to the Liouville measure, $I_{\mathcal{L},r}(t)\asymp t$. More precisely, $\lim_{t \rightarrow \infty} I_{\mathcal{L},r}(t)/{t}= \ell(\mathcal{L})/(\pi^2(2g-2+n))$. 
\end{thm}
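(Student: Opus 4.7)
The overall approach is to treat the orbit segment $r([0,t])$ as (approximately) a geodesic current of total mass $t$, so that continuity of the intersection pairing $i$ on $\mathcal{C}(X)\times\mathcal{C}_K(X)$ (with $K$ any compact neighborhood of $\mathrm{supp}(\mathcal{L})$) reduces the theorem to identifying the weak limit of these currents. Concretely, for each unit vector $v=r'(0)$ and each $t>0$, define the positive Borel measure $\mu_{v,t}$ on $T_1(X)$ of total mass $t$ by $\mu_{v,t}(\phi)=\int_0^t\phi(\phi_s(v))\,ds$. After symmetrizing by the flip involution and, along an unbounded sequence of return times, closing the orbit up by a bounded-length geodesic arc, $\mu_{v,t}$ becomes a genuine geodesic current $\bar\mu_{v,t}$ with $i(\bar\mu_{v,t},\mathcal{L})=I_{\mathcal{L},r}(t)+O(1)$, the $O(1)$ error coming from the bounded closing arc.

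For the uniform upper bound $I_{\mathcal{L},r}(t)=O(t)$, I would give a direct geometric argument valid for every ray. Since $\mathcal{L}$ is compactly supported, only the portion of $r$ in a compact neighborhood of $\mathrm{supp}(\mathcal{L})$ contributes. Cover this neighborhood by finitely many flow-box charts adapted to $\mathcal{L}$, in each of which the lamination has local product structure $[0,1]\times C_i$ with transverse measure $\nu_i$ of finite mass; each transverse crossing of a chart by $r$ contributes at most $\nu_i(C_i)$, while requiring at least a uniform minimum time in the leaf direction. Hence the number of transverse crossings up to time $t$ is $O(t)$, and the total transverse measure is $O(t)$.

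For the generic statement, apply Birkhoff's ergodic theorem to the geodesic flow on $T_1(X)$ with respect to Liouville measure, which is ergodic for any finite-area hyperbolic surface (Hopf/Moore). For $\lambda$-a.e.\ $v$, the normalized orbit measures $\mu_{v,t}/t$ converge weakly to the normalized Liouville measure; translated into geodesic currents, this yields $\bar\mu_{v,t}/t\to\lambda/\ell(\lambda)$ in $\mathcal{C}(X)$. Continuity of intersection then gives
\[
\lim_{t\to\infty}\frac{I_{\mathcal{L},r}(t)}{t}
=\lim_{t\to\infty}\frac{i(\bar\mu_{v,t},\mathcal{L})}{t}
=i\!\left(\frac{\lambda}{\ell(\lambda)},\mathcal{L}\right)
=\frac{\ell(\mathcal{L})}{\ell(\lambda)}
=\frac{\ell(\mathcal{L})}{\pi^{2}(2g-2+n)},
\]
using the background identities $i(\lambda,\mathcal{L})=\ell(\mathcal{L})$ and $\ell(\lambda)=\pi^{2}(2g-2+n)$ recalled in Section~\ref{sec:background}.

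The main obstacle is the clean identification of the non-flow-invariant, non-$\iota$-symmetric orbit measure $\mu_{v,t}$ with a bona fide geodesic current, so that Bonahon's continuity of intersection is applicable. Closing the orbit into a loop by a bounded-length arc (at a recurrence time to a fixed compact transversal) resolves this up to an $O(1)$ error in intersection, harmless after dividing by $t$; the key subtlety is arranging these closing arcs to have \emph{uniformly} bounded length, which holds for $\lambda$-a.e.\ initial vector by Poincar\'e recurrence. Individual orbits may perform arbitrarily long cusp excursions, but $\mathcal{L}$ is disjoint from cusps, so such excursions contribute nothing to intersection and disturb neither the uniform bound nor the limit.
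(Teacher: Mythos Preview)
Your strategy coincides with the paper's: both parts rest on compactness for the $O(t)$ bound, and on Birkhoff ergodicity of the geodesic flow together with an Anosov closing / recurrence construction and Bonahon's continuity of $i(\cdot,\mathcal{L})$ for the almost-everywhere statement. The paper's $O(t)$ argument is phrased slightly differently (the transverse measure of any unit-length arc meeting the compact part $X^\circ$ is uniformly bounded, then sum over $\lceil t\rceil$ unit subsegments), but this is equivalent in spirit to your flow-box count.

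There is, however, an organizational gap in your a.e.\ argument. In the paper, Birkhoff is first applied to the scalar function $f(v)=$ (transverse measure of the unit geodesic arc with initial vector $v$): this gives $\tfrac{1}{T}\int_0^T f(\phi_s v)\,ds\to c$ for \emph{all} $T\to\infty$ and a.e.\ $v$, and a separate claim shows $\int_0^T f(\phi_s v)\,ds = I_{\mathcal{L},r}(T)+O(1)$. Thus the existence of the full limit $\lim_{t\to\infty}I_{\mathcal{L},r}(t)/t$ is secured before any closing; the closing lemma is then invoked only along a subsequence $T_i$ of return times to identify the constant $c$ via continuity of $i$. In your arrangement the closed-up currents $\bar\mu_{v,t}$ are defined only at the return times $t_n$, so continuity of $i$ yields $I_{\mathcal{L},r}(t_n)/t_n\to\ell(\mathcal{L})/\ell(\lambda)$ only along that subsequence, and the displayed $\lim_{t\to\infty}$ is not yet justified. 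The fix is easy: either insert the paper's function $f$ to get the full limit first, or observe that a.e.\ the return times to a fixed compact transversal satisfy $t_{n+1}/t_n\to 1$ (ergodic theorem applied to its indicator) and then interpolate using monotonicity of $t\mapsto I_{\mathcal{L},r}(t)$.
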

Here $\ell(\mathcal{L})$ is the \emph{length} of $\mathcal{L}$; for a precise definition see \S \ref{sec:background}. This theorem implies that the set of exotic vectors (i.e.\ initial vectors of exotic rays) has measure zero. However, recall that it is dense in $T_1(X)$ by Corollary~\ref{cor:dense}.

We also show that any growth rate between finite and linear is achieved:
\begin{thm}\label{thm: sublinear}
Suppose the measured lamination $\mathcal{L}$ is not a multicurve. Given a continuous increasing function $f:\mathbb{R}_+\to\mathbb{R}_+$ such that $f(0)=0$ and $\lim_{t\to\infty}f(t)/t=0$, there exists a geodesic ray $r:\mathbb{R}_+\to X$ such that $I_{\mathcal{L},r}\asymp f$.
\end{thm}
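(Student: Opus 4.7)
The plan is to modify the construction of Theorem~\ref{mainprime}: instead of choosing the inadmissible train-path pieces of Corollary~\ref{cor:inadmissible} with geometrically decreasing transverse measure (which gives a ray of finite total intersection), one inserts pieces of fixed positive transverse measure at prescribed times along an arbitrarily long concatenation of leaf segments, calibrating the lengths so that the partial sums of transverse measures track $f(t)$. As in the proof of Theorem~\ref{mainprime}, since $\mathcal{L}$ is not a multicurve it has a minimal non-atomic component, and a ray confined to a small neighbourhood disjoint from the other components suffices; so I may assume $\mathcal{L}$ is itself minimal and non-atomic.

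Fix a connected birecurrent train track $\tau$ carrying $\mathcal{L}$ with a distinguished branch $b\in E_\tau$, and by Corollary~\ref{cor:inadmissible} fix once and for all an inadmissible closed train path $l$ through $b$ of bounded length $L_0$ and positive transverse measure $\mu_0$. I first treat the case $f(t)\to\infty$. Choose $0<t_1<t_2<\cdots$ with $f(t_n)=n\mu_0$; continuity and unboundedness of $f$ make this possible, and sublinearity ensures $t_n-t_{n-1}\to\infty$. By minimality of $\mathcal{L}$, leaves return to $b$ with uniformly bounded gaps in arclength, so for each $n$ there is an admissible train path $\alpha_n$ from $b$ to $b$ of length in $[t_n-t_{n-1}-L_0-R,\,t_n-t_{n-1}-L_0]$ for a uniform constant $R$. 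Form the piecewise smooth ray
\[
r_*:=\alpha_1\cdot l\cdot \alpha_2\cdot l\cdot \alpha_3\cdot l\cdots.
\]
Each $\alpha_n$ is carried by a leaf of $\mathcal{L}$ and contributes $0$ to the transverse measure, while each copy of $l$ contributes $\mu_0$. Therefore $I_{\mathcal{L},r_*}(t_n)=n\mu_0=f(t_n)$, and the uniformly bounded per-step increment $\mu_0$ gives $|I_{\mathcal{L},r_*}(t)-f(t)|\le\mu_0$ for all $t$; in particular $I_{\mathcal{L},r_*}\asymp f$. The remaining case, where $f$ is bounded with limit $L$, is handled by a minor variant: use the full sequence $\{l_n\}$ from Theorem~\ref{mainprime} with $\sum\mu(l_n)=L$ interleaved with leaf segments whose lengths are calibrated against $f$.

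Finally, I would straighten $r_*$ to a true geodesic ray $r$ with the same endpoint at infinity; the endpoint exists by Theorem~\ref{thm:tail2} applied to the symbolic coding of $r_*$ in $\tau$. Exactly the argument of Proposition~\ref{prop:point_at_infinity} shows that $I_{\mathcal{L},r}(t)$ and $I_{\mathcal{L},r_*}(t')$ differ by $O(1)$ whenever $r(t)$ and $r_*(t')$ lie within bounded distance, since any leaf counted by one but not the other must cross either the initial or the terminal short connecting geodesic. The main obstacle is controlling the parameter correspondence $t\leftrightarrow t'$: $r_*$ is smooth with bounded geodesic curvature, hence a quasi-geodesic in the universal cover, and by the Morse lemma stays within bounded Hausdorff distance of~$\widetilde{r}$, but the bi-Lipschitz parameter distortion could a priori spoil the $\asymp f$ relation for pathological $f$. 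To overcome this, I would refine the construction so that the net bending at each copy of $l$ is small and consistently oriented along the asymptotic direction, and the $\alpha_n$'s are taken much longer than $L_0$; then the bending-per-unit-length of $r_*$ tends to zero, so $r_*$ is eventually a $(1,C)$-quasi-geodesic, the parameter correspondence reduces to $t'=t+O(1)$, and $|I_{\mathcal{L},r}(t)-f(t)|=O(1)$. Since $f(t)\to\infty$ in the unbounded case and is bounded below by a positive constant for large $t$ in the bounded case, this additive error is swallowed by $\asymp$, and $I_{\mathcal{L},r}\asymp f$ follows.
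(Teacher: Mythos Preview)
Your overall strategy---interleave long leaf segments (zero transverse measure) with short transverse pieces of fixed measure, and calibrate the segment lengths against $f$---is exactly the idea of the paper's proof. Where you diverge is in the tooling: you work through train tracks and the inadmissible path of Corollary~\ref{cor:inadmissible}, whereas the paper works directly in hyperbolic geometry and builds the transverse pieces by hand via Lemma~\ref{lm:jump}, which produces at any point on a leaf a short perpendicular ``connecting segment'' to another leaf, of uniformly bounded length and transverse measure, available on \emph{either} side, and with the cross-ratio of the two leaves uniformly bounded away from $0$ and $\infty$.

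The genuine gap in your argument is the straightening step, which you yourself flag. Your claim that ``bending-per-unit-length tends to zero'' implies $r_*$ is eventually a $(1,C)$-quasi-geodesic, hence $t'=t+O(1)$, is not justified: a broken geodesic with long edges and bounded turning angles is a $(K,C)$-quasi-geodesic for some $K>1$, but to get $K=1$ (which you need, since $f$ can grow arbitrarily slowly) you must control the \emph{direction} of the turns, not just their size. If successive turns go the same way the path can spiral and accumulate multiplicative parameter distortion. This is precisely why the paper's construction always jumps to the same side (left at the start of each connecting segment, right at the end), and why Part~(2) of Lemma~\ref{lm:jump}---the cross-ratio bound---is proved: it is the input to an explicit upper-half-plane computation (Figure~\ref{fig:construction}) showing that the straightened geodesic crosses each leaf within \emph{bounded} hyperbolic distance of the piecewise path, which yields the additive parameter control. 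Your fixed inadmissible piece $l$ has no such orientation or cross-ratio control built in, and your proposed ``refinement'' does not explain how to obtain it. If you want to salvage the train-track route, you would need an analogue of Lemma~\ref{lm:jump} for your connecting pieces; otherwise it is cleaner to follow the paper and abandon train tracks for this theorem.
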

We remark that this does not imply Theorem~\ref{main1}, as here we only guarantee the existence of a geodesic ray with a specific growth rate, \emph{without} any restriction on its endpoint.

We start with the first part of Theorem~\ref{thm:linear}:
\begin{prop}\label{Prop: linr}
For any geodesic ray $r:[0,\infty)\to X$, $I_{\mathcal{L},r}(t)=O(t)$.
\end{prop}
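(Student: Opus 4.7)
The plan is to work in the universal cover $\mathbb{H}^2$ and reduce the estimate to a uniform bound on the transverse mass that accumulates in a ball of fixed radius. First I would lift $r$ to $\tilde r\colon [0,\infty)\to\mathbb{H}^2$ and lift $\mathcal{L}$ to a $\Gamma$-invariant measured lamination $\widetilde{\mathcal{L}}$, which I view as a locally finite Borel measure on the space $\mathcal{G}(\mathbb{H}^2)$ of unoriented complete geodesics, supported on leaves of the lift. Writing $G(A):=\{g\in\mathcal{G}(\mathbb{H}^2):g\cap A\neq\varnothing\}$ for any subset $A\subset\mathbb{H}^2$, the identity $I_{\mathcal{L},r}(t)=\widetilde{\mathcal{L}}(G(\tilde r([0,t])))$ reformulates the problem as bounding a measure of geodesics meeting a hyperbolic segment.

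Next I would fix a small $r_0>0$ and cover $\tilde r([0,t])$ by $N=\lceil t/r_0\rceil+1$ closed balls $B_1,\dots,B_N$ of radius $r_0$, centered at equally spaced points along $\tilde r$. Since $G(\tilde r([0,t]))\subset\bigcup_i G(B_i)$, we obtain
\[
I_{\mathcal{L},r}(t)\;\le\;\sum_{i=1}^N\widetilde{\mathcal{L}}(G(B_i)),
\]
so it suffices to produce a uniform bound $\widetilde{\mathcal{L}}(G(B))\le M$ over all radius-$r_0$ balls $B\subset\mathbb{H}^2$. A complete geodesic meeting an open convex ball $B$ enters and exits along a chord, crossing $\partial B$ in exactly two points (tangencies form a measure-zero subset of $\mathcal{G}(\mathbb{H}^2)$), hence $\widetilde{\mathcal{L}}(G(B))=\tfrac{1}{2}\,I(\widetilde{\mathcal{L}},\partial B)$. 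The problem is thus reduced to uniformly bounding the transverse measure of a hyperbolic circle of fixed circumference $2\pi\sinh(r_0)$.

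This last uniform bound is where I expect the main obstacle to lie: a priori, such an estimate could fail near a cusp of $X$, where a transversal of fixed length might carry arbitrarily large transverse mass. The key point that rules this out is that $\mathcal{L}$, as a measured lamination on the finite-area surface $X$, is a compact subset of $X$ and so stays a definite distance from the cusps. Consequently, for any ball $B$ whose projection to $X$ lies outside a fixed compact neighborhood of $\mathcal{L}$, no leaf of $\widetilde{\mathcal{L}}$ meets $B$ and $\widetilde{\mathcal{L}}(G(B))=0$; for the remaining balls, the $\Gamma$-invariance of $\widetilde{\mathcal{L}}$ lets me translate $B$ into a single compact region, where the transverse measure of $\partial B$ is uniformly bounded by a standard compactness argument (finiteness of $\ell(\mathcal{L})$ plus compactness of the support). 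Combining these estimates yields $I_{\mathcal{L},r}(t)\le MN=O(t)$, as required.
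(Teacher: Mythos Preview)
Your argument is correct and follows essentially the same strategy as the paper: chop the ray into pieces of bounded length and use that $\mathcal{L}$ lies in a compact part of $X$ (away from the cusps) to get a uniform bound on the transverse measure of each piece by compactness. The paper carries this out a bit more directly on $X$---splitting $r$ into length-$a$ segments and observing that the set of unit-speed segments of length $a$ meeting the compact region $X^\circ$ is parametrized by $T_1(X^\circ)$, hence compact---whereas you pass to the universal cover and detour through balls and the identity $\widetilde{\mathcal{L}}(G(B))=\tfrac12 I(\widetilde{\mathcal{L}},\partial B)$, which is unnecessary but harmless.
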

\begin{proof}
If $X$ has cusps, there exists a cuspidal neighborhood for each cusp so that the measured lamination is contained in $X^\circ$, the complement of these neighborhoods. Note that any geodesic segment contained in these neighborhoods has zero transverse measure. If $X$ has no cusp, we set $X^\circ =X$.

The set of geodesic segments on $X$ of length $a$ and intersecting $X^\circ$ is homeomorphic to $T_1(X^\circ)$ and is hence compact. Therefore, the transverse measure of such a segment is less than a constant $b$. If we split $r$ into segments of length $a$, then each segment is either contained in a cuspidal neighborhood or intersects $X^\circ$. Therefore
$$I_{\mathcal{L},r}(t) < b \lceil \frac{t}{a} \rceil.$$
This implies $I_{\mathcal{L},r}(t)=O(t)$, as desired.
\end{proof}

It would be interesting to see what initial vectors give exactly linear growth. It is obvious that if the image of $r$ is a closed curve, then the growth is linear (or constantly zero if the curve is disjoint from $\mathcal{L}$). Moreover, we have
\begin{prop}\label{prop:leaf}
Assume $r:[0, \infty) \rightarrow X$ is an eventually simple geodesic ray; more precisely, there exists $t>0$ such that $r:[t, \infty) \to X$ does not intersect itself on $X$. Then $I_{\mathcal{L},r}(t)\asymp t$ or $I_{\mathcal{L},r}(t)\asymp 1$.
\end{prop}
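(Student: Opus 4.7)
The plan is to split by whether $I(\mathcal{L},r):=\lim_{t\to\infty}I_{\mathcal{L},r}(t)$ is finite or infinite. After discarding the bounded initial segment where $r$ may self-intersect (which contributes $O(1)$ to $I_{\mathcal{L},r}$), I may assume $r$ is simple throughout. If $I(\mathcal{L},r)<\infty$, then the nondecreasing function $I_{\mathcal{L},r}(t)$ converges to a finite limit and hence $I_{\mathcal{L},r}\asymp 1$ (with $I\equiv 0$ as a trivial subcase). The remaining content is to show $I_{\mathcal{L},r}\asymp t$ whenever $I(\mathcal{L},r)=\infty$.

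Under this hypothesis, I will analyze the $\omega$-limit set of $r'\subset T_1(X)$. If $r$ escapes into a cusp then $r$ is eventually disjoint from $\mathcal{L}$ and $I<\infty$, contradiction; so the $\omega$-limit set is nonempty, compact, and invariant under the geodesic flow. The key rigidity point is that by simplicity of $r$, any two accumulation geodesics are either disjoint or coincide on $X$: a transverse crossing of two accumulation geodesics would, after flowing forward a small amount, produce two nearby pieces of $r$ crossing each other transversely, contradicting simplicity. Hence the $\omega$-limit set projects to a geodesic lamination on $X$; passing to a minimal component $\mathcal{M}_0$ on which $r$ accumulates, the ray eventually enters every neighborhood of $\mathcal{M}_0$. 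I then rule out the configurations where $\mathcal{M}_0$ has no transverse intersection with $\mathcal{L}$: if $\mathcal{M}_0\cap\mathcal{L}=\varnothing$, a tubular neighborhood of $\mathcal{M}_0$ avoids $\mathcal{L}$ and eventually contains $r$, forcing $I<\infty$; and if $\mathcal{M}_0\subseteq\mathcal{L}$, then in a small neighborhood $U$ the lamination $\mathcal{L}$ coincides with $\mathcal{M}_0$, and a further application of the shadowing/self-intersection argument forces $r$ to asymptote to a single leaf of $\mathcal{M}_0\subseteq\mathcal{L}$, whereupon Proposition~\ref{prop:point_at_infinity} yields $I<\infty$. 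Both alternatives contradict our standing assumption, so $\mathcal{M}_0$ must cross $\mathcal{L}$ transversally at some leaf.

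With a transverse intersection in hand, the linear lower bound comes from uniform recurrence. Fix a short transversal $T$ to $\mathcal{L}$ through a transverse crossing of $\mathcal{M}_0$ with $\mathcal{L}$ chosen so that $\mathcal{L}(T)>0$; by minimality of $\mathcal{M}_0$ and compactness, there is a uniform $L>0$ such that every leaf segment of $\mathcal{M}_0$ of length $L$ meets $T$. Uniform continuity of the geodesic flow on compact sets provides $\epsilon$-shadowing, so for $t$ large the arc $r([t,t+L])$ shadows a leaf of $\mathcal{M}_0$ closely enough to cross $T$. Hence $r([0,t])$ meets $T$ at least $\lfloor t/L\rfloor-O(1)$ times, each crossing contributing a definite quantum of $\mathcal{L}$-measure determined by the local density of $\mathcal{L}$ at $T$. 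Combined with the upper bound $I_{\mathcal{L},r}=O(t)$ from Proposition~\ref{Prop: linr}, this yields $I_{\mathcal{L},r}\asymp t$. The main obstacle is the shadowing step, which converts the purely topological statement that $r$ accumulates on $\mathcal{M}_0$ into a quantitative return rate; it requires combining uniform flow continuity with the uniform recurrence of the minimal lamination $\mathcal{M}_0$. A related delicate point is the subcase $\mathcal{M}_0\subseteq\mathcal{L}$ in the previous paragraph, where a direct shadowing argument does not immediately suffice and one must instead argue that $r$ asymptotes to a leaf of $\mathcal{L}$ in order to invoke Proposition~\ref{prop:point_at_infinity}.
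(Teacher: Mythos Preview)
Your overall strategy matches the paper's: pass to the limit lamination $\mathcal{M}_0$ of the simple ray, split into the cases $\mathcal{M}_0\cap\mathcal{L}=\varnothing$, $\mathcal{M}_0\subseteq\mathcal{L}$, and $\mathcal{M}_0$ transverse to $\mathcal{L}$, and in the transverse case extract a uniform lower bound on the $\mathcal{L}$-measure of length-$L$ segments. The paper phrases this as ``we may assume $r$ is a leaf of a minimal lamination $\mathcal{L}_1$'' and then argues directly on leaf segments, but the skeleton is the same.

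There is, however, a genuine slip in your treatment of the case $\mathcal{M}_0\subseteq\mathcal{L}$. You assert that the self-intersection argument ``forces $r$ to asymptote to a single leaf of $\mathcal{M}_0$'', and then invoke Proposition~\ref{prop:point_at_infinity}. This is false when $\mathcal{M}_0$ is not a simple closed curve: an isolated half-leaf spiraling onto a minimal non-atomic lamination is \emph{not} asymptotic to any individual leaf (think of a boundary leaf of a crown). The correct and simpler argument is the one you almost have: since $\overline{r}$ is a lamination containing $\mathcal{M}_0$, the ray $r$ never crosses any leaf of $\mathcal{M}_0$; and since $\mathcal{L}$ is measured (hence a union of minimal components), in a neighborhood of $\mathcal{M}_0$ one has $\mathcal{L}=\mathcal{M}_0$. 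Thus $r$ is eventually disjoint from $\mathcal{L}$ and $I(\mathcal{L},r)<\infty$, without any appeal to Proposition~\ref{prop:point_at_infinity}.

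A second, smaller issue: your transversal bookkeeping in the linear-lower-bound step is muddled. You take $T$ transverse to $\mathcal{L}$ and then use minimality of $\mathcal{M}_0$ to say every length-$L$ leaf segment of $\mathcal{M}_0$ meets $T$; but ``meeting $T$'' does not by itself deposit $\mathcal{L}$-measure on $r$, and if $T$ is tangent to $\mathcal{M}_0$ the recurrence statement fails. The paper bypasses this by a direct compactness argument: one shows there is $C>0$ such that every length-$C$ leaf segment of $\mathcal{M}_0$ has $I(\mathcal{L},\cdot)>0$ (else a limit half-leaf lies in a complementary region of $\mathcal{L}$, contradicting transversality), and then compactness upgrades this to a uniform $c>0$. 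Your shadowing step then transfers this bound to $r$ cleanly. I would recommend replacing the transversal $T$ with this direct argument.
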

\begin{proof}
If $r$ converges to a closed curve then the growth rate is linear. Otherwise, $r$ converges to a leaf of a lamination $\mathcal{L}_1$. So we might as well assume $r$ is part of a leaf of a minimal lamination $\mathcal{L}_1$.

If $\mathcal{L}_1$ is a minimal component of $\mathcal{L}$, we already know $I_{\mathcal{L},r}(t)\asymp 1$. If $\mathcal{L}_1$ and $\mathcal{L}$ are disjoint, then clearly $I(\mathcal{L},r)<\infty$ so $I_{\mathcal{L},r}(t)\asymp1$ as well. Thus we may assume $\mathcal{L}$ and $\mathcal{L}_1$ intersect transversely.

We claim that there exists a constant $C$ so that for any segment $L$ of length $C$ on a leaf of $\mathcal{L}_1$, $I(\mathcal{L},L)>0$. Suppose otherwise. Then there exists $C_i\to\infty$ so that a segment $L_i$ of length $C_i$ on a leaf of $\mathcal{L}_1$ is contained in a complementary region. We may as well assume the starting point $p_i$ of $L_i$ lies on a leaf of $\mathcal{L}$. Taking a subsequence if necessary, we may assume $p_i\to p$. Moreover, since each $L_i$ lies on a leaf of $\mathcal{L}_1$ and the length of $L_i$ goes to $\infty$, $p$ lies on a leaf of $\mathcal{L}_1$ and the half leaf starting at $p$ is contained entirely in a complementary region of $\mathcal{L}$. So either this half leaf of $\mathcal{L}_1$ is asymptotic to a leaf of $\mathcal{L}$, or is contained entirely in a subsurface disjoint from $\mathcal{L}$. Either contradicts our assumption that $\mathcal{L}$ and $\mathcal{L}_1$ intersect transversely.

By compactness, there exists $c>0$ so that any segment $L$ of length $C$ on a leaf of $\mathcal{L}_1$ has $I(\mathcal{L},L)>c$. Dividing the geodesic ray $r$ into segments of length $C$, we conclude that $I_{\mathcal{L},r}(t)>ct$ with a possibly smaller constant $c>0$. Together with Proposition~\ref{Prop: linr}, we have $I_{\mathcal{L},r}(t)\asymp t$.
\end{proof}

It turns out that linear growth is typical with respect to the Liouville measure $\lambda$:

\begin{thm}\label{thm:generic}
For a.e.\ geodesic ray $r$, we have $I_{\mathcal{L},r}(t) \asymp t$. Moreover,
$$\lim_{t \rightarrow \infty}\frac{I_{\mathcal{L},r}(t)}t= \frac{1}{\pi^2(2g-2+n)}\ell(\mathcal{L}).$$
\end{thm}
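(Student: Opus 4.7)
The plan is to combine Birkhoff's ergodic theorem for the geodesic flow with Bonahon's continuity of the intersection pairing. The Liouville measure $\lambda$ on $T_1(X)$ is invariant under the geodesic flow $\phi_s$, and the renormalized probability measure $\lambda/\ell(\lambda)$ is ergodic (in fact mixing), since $X$ has finite area; this is classical for finite-area hyperbolic surfaces.

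The first step is to produce, for $\lambda$-a.e.\ starting vector $v=r'(0)$, a sequence of closed geodesics $\gamma_k$ with lengths $\ell(\gamma_k)=t_k+O(1)$ and $t_k\to\infty$, obtained by closing $r|_{[0,t_k]}$ via a bounded geodesic segment. I would fix a compact subset $U\subset T_1(X)$ of positive Liouville measure containing the canonical lift of $\mathrm{supp}(\mathcal{L})$. By Birkhoff, for a.e.\ $v$ the orbit returns to $U$ with positive density, so one can choose $t_k$ with $t_{k+1}-t_k$ bounded and with $\phi_{t_k}(v)$ arbitrarily close to $v$; the Anosov closing lemma (applicable in a uniform neighborhood of $U$, away from the cusps) then produces $\gamma_k$ shadowing $r|_{[0,t_k]}$. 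Birkhoff further implies $\gamma_k/\ell(\gamma_k)\to\lambda/\ell(\lambda)$ weakly in $\mathcal{C}(X)$, since pairing with a compactly supported test function on $T_1(X)$ reduces to a time average along the shadowed orbit.

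Since $\mathcal{L}\in\mathcal{C}_K(X)$ for a suitable compact $K$, the continuity of $i:\mathcal{C}(X)\times\mathcal{C}_K(X)\to\mathbb{R}$ recalled in \S\ref{sec:background} yields
$$\frac{i(\gamma_k,\mathcal{L})}{\ell(\gamma_k)}\;\longrightarrow\;i\!\left(\tfrac{\lambda}{\ell(\lambda)},\mathcal{L}\right)=\frac{\ell(\mathcal{L})}{\pi^2(2g-2+n)},$$
using the identity $i(C,\lambda)=\ell(C)$. The closing segment has bounded length, hence bounded intersection with $\mathcal{L}$ by Proposition~\ref{Prop: linr}, so $|I(\mathcal{L},\gamma_k)-I_{\mathcal{L},r}(t_k)|=O(1)$. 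Dividing by $t_k$ gives the limit along the subsequence $\{t_k\}$; the bounded-gap property $t_{k+1}-t_k=O(1)$ together with the monotonicity of $I_{\mathcal{L},r}$ and the linear bound from Proposition~\ref{Prop: linr} then sandwiches the ratio to produce the full limit $I_{\mathcal{L},r}(t)/t\to\ell(\mathcal{L})/(\pi^2(2g-2+n))$ as $t\to\infty$.

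The main obstacle is the non-uniform hyperbolicity of the geodesic flow near the cusps, which obstructs a naive application of the closing lemma. I would circumvent this by restricting to return times to the compact set $U$, on which the flow is uniformly hyperbolic; for a.e.\ $v$ the cusp excursions of the ray have length $o(t)$ (by the logarithm law for geodesics) and contribute nothing to the intersection with $\mathcal{L}$, whose support lies in the thick part of $X$.
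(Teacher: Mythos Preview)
Your proposal shares the paper's key ingredients—Birkhoff ergodicity, the Anosov closing lemma, and Bonahon's continuity of the intersection pairing—but you organize them differently, and this produces a genuine gap.

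The paper first applies Birkhoff's theorem to the $L^1$ function $f(v):=I(\mathcal{L},\text{unit geodesic arc from }v)$, and shows via a short Fubini argument that $\int_0^T f(\phi_t v)\,dt$ differs from $I_{\mathcal{L},r}(T)$ by a bounded amount. This already yields the \emph{full} limit $I_{\mathcal{L},r}(T)/T\to c$ for a.e.\ $v$. Only afterwards does the paper invoke closing and continuity of $i(\cdot,\cdot)$, and solely to identify the value of $c$; for that purpose a single sequence $T_i\to\infty$ with $\phi_{T_i}(v)\to v$ (which exists by Poincar\'e recurrence) suffices, with no gap control whatsoever.

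You try to skip the function $f$ and deduce the full limit from the subsequential limit along closing times $t_k$, sandwiching via monotonicity. For that sandwich you need $t_{k+1}/t_k\to1$, which you derive from ``$t_{k+1}-t_k$ bounded.'' This claim is not justified: positive density of returns to a compact set $U$ does \emph{not} give bounded gaps (cusp excursions are arbitrarily long), and the Anosov closing lemma requires $\phi_{t_k}(v)$ close to $v$ itself, not merely in $U$. Close returns to $v$ are far sparser—their frequency is $\asymp\lambda(B_\epsilon(v))$, which tends to $0$ with $\epsilon$—and there is no reason for them to satisfy $t_{k+1}/t_k\to1$. The logarithm law you cite controls individual cusp excursions and hence gaps between returns to the thick part, but not gaps between $\epsilon$-close returns to a single point.

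A secondary point: the estimate $|I(\mathcal{L},\gamma_k)-I_{\mathcal{L},r}(t_k)|=O(1)$ is not simply ``the closing arc has bounded length.'' The closed geodesic $\gamma_k$ produced by the closing lemma is a \emph{different} geodesic that $\epsilon$-shadows $r|_{[0,t_k]}$; comparing their transverse measures with $\mathcal{L}$ up to $O(1)$ requires an argument, and the natural one is essentially the paper's Fubini claim in disguise.
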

\begin{proof}
Recall that we denote the geodesic flow on $T_1(X)$ by $\phi_t$, which is ergodic with respect to the Liouville measure $\lambda$ \cite{erg.g.f}. Moreover, $\lambda/(\pi^2(2g-2+n))$ is a probability measure on $T_1(X)$. Define a function $f$ on $T_1(X)$ as follows: $f(v)$ is the transverse measure (with respect to $\mathcal{L}$) of the geodesic segment of unit length with initial tangent vector $v$. By continuity of $f$ and Birkhoff's ergodic theorem, for almost every $v \in T_1(X)$ we have:
\begin{equation}\label{erg}
\lim_{T \rightarrow \infty} \frac{1}{T} \int_{0}^{T} f(\phi_t(v)) dt= c, 
\end{equation}
where $c$ is a constant equal to $1/(\pi^2(2g-2+n)) \int_{T_1(X)} f d\lambda$.

We write $f \simeq g$ if the difference between $f$ and $g$ is less than a constant.

\begin{clm}\label{int.f}
The integral of $f$ along a geodesic segment is roughly the total transverse measure of the segment. In other words, we have:
$$\int_{0}^{T} f(\phi_t(v)) dt \simeq I_{\mathcal{L},r}(T).$$
\end{clm}
\begin{proof}
It is enough to prove the claim for each minimal component of $\mathcal{L}$.

If a component is a simple closed geodesic $\gamma$, $I_{\mathcal{L},r}(T)$ gives the number of intersections of $r([0,T])$ with $\gamma$, while the integral can be larger if $r([T,T+1])$ also intersects $\gamma$. Since the number of intersections of a unit length geodesic segment with $\gamma$ is bounded above, we have the desired relation.

If a component is not a simple closed geodesic, we can divide the ray into segments of length $1/n$ (for a natural number $n$ large enough), and note that
$$n\cdot I(\mathcal{L},r([1,T]))\le\sum_{k=0}^{\lfloor T/n\rfloor}f(\phi_{k/n}(v))\le n\cdot I_{\mathcal{L},r}(T+1).$$
This is because the transverse measure of each small segment of length $1/n$ is counted $n$ times, except for those included in the initial segment of length $1$ and an additional segment of length $1$ at the end. Since $f$ is continuous, hence integrable, dividing by $n$ and letting $n\to\infty$, we conclude that
$$I(\mathcal{L},r([1,T]))\le\int_0^Tf(\phi_t(v))dt\le I_{\mathcal{L},r}(T+1).$$
Since the transverse measure of geodesic segments of length at most one is uniformly bounded, we have
$$\int_{0}^{T} f(\phi_t(v)) dt \simeq I_{\mathcal{L},r}(T),$$
as desired.
\end{proof}

Claim~\ref{int.f} and Equation~\ref{erg} imply $\lim_{t\to\infty}I_{\mathcal{L},r}(t)/t=c$. In order to find $c$ we show for a sequence $T_i \rightarrow \infty$, $\lim \limits_{i \rightarrow \infty} I_{\mathcal{L},r}(T_i)/T_i=\ell(\mathcal{L})$.

Let $\alpha_v^T=\frac1T(\phi_t(v))_*dt$ be $1/T$ of the pushforward of the Lebesgue measure $dt$ on $[0,T]$ via $\phi_t(v)$. From Equation~\ref{erg}, we can see that for a generic $v\in T_1(X)$, $\alpha_v^T$ converges to $\lambda/(\pi^2(2g-2+n))$ when $T \rightarrow \infty$. Moreover, if $v$ is generic, the geodesic ray $\phi_t(v)$ is recurrent, and so there exists a sequence $T_i\to\infty$ such that $\phi_{T_i}(v)\to v$. By Anosov closing lemma \cite{erg.g.f}, there is a closed geodesic $\gamma_i$ very close to the geodesic segment from $v$ to $\phi_{T_i}(v)$. Therefore, $\frac{1}{T_i}\gamma_i$ converges to $\lambda/(\pi^2(2g-2+n))$ as a sequence of geodesic currents. Moreover, we have
$$\frac{1}{T_i} \int_{0}^{T_i} f(\phi_t(v)) dt \simeq \frac{1}{T_i} i(\gamma_i,\mathcal{L})$$
when $T_i$ is large enough.
On the other hand, by continuity of the intersection number we have: 
$$
\lim_{T_i \rightarrow \infty} \frac{1}{T_i}i(\gamma_i, \mathcal{L})=i(\lambda, \mathcal{L})/(\pi^2(2g-2+n))= \ell(\mathcal{L})/(\pi^2(2g-2+n)).
$$
Therefore $I_{\mathcal{L},r}(t) \simeq t\ell(\mathcal{L})/(\pi^2(2g-2+n))$.
\end{proof}
\begin{rmk}
The statement of Theorem~\ref{thm:generic} can be interpreted as an extension of Crofton formula to measured laminations. Crofton formula relates the expected number of times a random line interesting a curve $\gamma$ on Euclidean plane to the length of $\gamma$ (see \cite{intgeobook}). 
\end{rmk}
Proposition~\ref{Prop: linr} and Theorem~\ref{thm:generic} imply Theorem~\ref{thm:linear}. From this result, we conclude the collection of exotic vectors has measure zero, even though it is dense by Corollary~\ref{cor:dense}.

Finally, we give a proof of Theorem~\ref{thm: sublinear}, using some elementary hyperbolic geometry. Note that we may assume $\mathcal{L}$ contains no atoms, for otherwise we can replace $\mathcal{L}$ with its non-atomic part, and the argument below produces a geodesic ray disjoint from the atoms of $\mathcal{L}$.

We construct $r$ in the following way. Start from a point on $\tilde{\mathcal{L}}$ and go along the leaves. At some points, we take a small jump (by taking a segment transverse to $\tilde{\mathcal{L}}$) to the right of the leaf which we are on, then we land on another point of $\tilde{\mathcal{L}}$ and continue going along the leaves. If we choose the times to jump and segments properly then we can obtain a ray with the growth rate of transverse measure $\asymp f$. In preparation for a rigorous argument, we need the following lemma:
\begin{lm}\label{lm:jump}
\begin{enumerate}[label=\normalfont{(\arabic*)}, topsep=0mm, itemsep=0mm]
    \item\label{item: constants} For any $M>0$, there exists positive constants $\ell_1<\ell_2$, $\tau_1<\tau_2$ satisfying the following: for any point $x$ on a leaf of $\tilde{\mathcal{L}}$, there exists a point $x_0$ on the same leaf with $d(x_0,x)\le M$ so that on either side of the leaf, there exists a geodesic segment $s$ satisfying:
    \begin{enumerate}[label=\normalfont{(\alph*)}, topsep=0mm, itemsep=0mm]
        \item segment $s$ starts at $x_0$, orthogonal to the leaf and ends on another leaf;
        \item length of $s$ is in $(\ell_1,\ell_2)$;
        \item transverse measure of $s$ is in $(\tau_1,\tau_2)$.
    \end{enumerate}
    \item\label{item: cross_ratio} Furthermore, for each segment $s$ in \emph{(1)}, the endpoints of the two leaves containing $\partial s$ has cross ratio bounded away from $0$ and $\infty$.
\end{enumerate}
\end{lm}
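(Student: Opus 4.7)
We may assume without loss of generality that $x$ lies on a leaf of a minimal non-atomic component $\mathcal{L}_0$ of $\mathcal{L}$, which exists because $\mathcal{L}$ is not a multicurve (as in the hypothesis of Theorem~\ref{thm: sublinear}). The plan is to build a universal ``model jump'' at one reference point and then transport it to an arbitrary $x$ using topological minimality of $\mathcal{L}_0$.

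First I would fix a reference point $y$ on a leaf $m$ of $\mathcal{L}_0$, chosen in the interior of the support of $\mathcal{L}_0$ and away from the ideal vertices of any complementary ideal polygon. Starting at $y$ and running perpendicular to $m$, the geodesic on each side meets another leaf of $\widetilde{\mathcal{L}_0}$ at finite distance; let $\sigma^{\pm}$ denote the resulting segments, of length $d^{\pm}>0$ and transverse measure $\tau^{\pm}>0$.

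Next I would exploit uniform recurrence. Since $\mathcal{L}_0$ is minimal and $T_1\mathcal{L}_0$ is compact, the geodesic flow on $T_1\mathcal{L}_0$ is topologically minimal, so for any $\epsilon>0$ there is a uniform $M(\epsilon)$ such that every forward orbit meets the $\epsilon$-ball around the unit tangent vector $v_y$ within time $M(\epsilon)$. Fixing $\epsilon$ sufficiently small in terms of $d^{\pm}$ and $\tau^{\pm}$ and setting $M \ge M(\epsilon)$, for any $x$ on a leaf of $\mathcal{L}_0$ I can find $x_0$ on the same leaf within distance $M$ whose unit tangent is $\epsilon$-close to $v_y$. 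Lifting $x_0$ and $y$ to $\mathbb{H}^2$ with their tangent frames $\epsilon$-close, continuity of the geodesic flow and of the transverse measure on transverse arcs (\cite{Bon.gc}) imply that the perpendicular geodesic at $x_0$, traversed up to a length near $d^{\pm}$, tracks $\sigma^{\pm}$ and picks up transverse measure near $\tau^{\pm}$. Since $\mathcal{L}_0$ is non-atomic, we may snap the endpoint to lie on an actual leaf of $\widetilde{\mathcal{L}}$ by moving within $O(\epsilon)$, so the resulting segment $s$ has length in a predetermined interval $(d_1,d_2)$ and transverse measure in $(\tau_1,\tau_2)$, giving part~\ref{item: constants}. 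For part~\ref{item: cross_ratio}, the two leaves of $\widetilde{\mathcal{L}}$ containing $\partial s$ share a common perpendicular whose length is bounded above and below by $d_1,d_2 \in (0,\infty)$, forcing the cross ratio of their four endpoints in $S^1$ to lie in a compact subset of $(0,\infty)$.

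The main obstacle I anticipate is the snapping step: we must place the endpoint of $s$ exactly on a leaf while keeping length and transverse measure inside prescribed open intervals, uniformly over $x$. I would handle this by choosing the reference segment $\sigma^{\pm}$ so that its endpoint lies at a transverse accumulation point of $\widetilde{\mathcal{L}_0}$ from both sides (possible because the transverse section of a non-atomic minimal lamination is perfect), so that any nearby perpendicular automatically meets $\widetilde{\mathcal{L}_0}$ close to the target, with all relevant quantities varying continuously in $\epsilon$.
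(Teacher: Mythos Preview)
Your approach to part~\ref{item: constants} via a reference configuration plus uniform recurrence is workable and differs from the paper's direct compactness argument (which identifies the finitely many ``bad'' feet of perpendiculars into complementary regions and excludes short intervals around them). One quantifier mismatch to flag: the lemma asserts the conclusion for \emph{every} $M>0$, with constants depending on $M$, whereas your recurrence argument only produces $x_0$ once $M\ge M(\epsilon)$. For small $M$ a single reference point does not suffice; you would need, say, a finite $\epsilon$-net of reference points, or to revert to the paper's argument.

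Your argument for part~\ref{item: cross_ratio}, however, has a genuine gap. The segment $s$ is perpendicular only to the \emph{first} leaf, so it is not the common perpendicular of the two leaves, and its length gives no lower bound on the common perpendicular. Concretely: put the first leaf on the imaginary axis, take $s$ perpendicular at $x_0$ of length $d_1$, and let the second leaf be any geodesic through the endpoint of $s$ disjoint from the imaginary axis. Rotating the second leaf about that endpoint drives one of its ideal endpoints toward $0$, sending the cross ratio to $0$ (or $\infty$) while the length of $s$ stays fixed. So length bounds on $s$ alone cannot control the cross ratio. The paper's argument instead uses the transverse-measure lower bound from part~(1): if some sequence of such pairs had cross ratio tending to $0$ or $\infty$, a subsequential limit would give two leaves of $\widetilde{\mathcal{L}}$ sharing an endpoint at infinity, hence bounding a complementary region, and the limiting segment would lie in that region with transverse measure $0$, contradicting $\tau_1>0$. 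Your construction already supplies $I(\mathcal{L},s)\ge\tau_1$, so this fix is available to you, but it is the essential step and is missing from your proposal.
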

\begin{proof}
For \ref{item: constants}, if $x$ lies on a leaf that is not the boundary of a component of $\mathbb{H}^2-\tilde{\mathcal{L}}$, then any segment starting at $x$ and perpendicular to the leaf has a positive transverse measure. Otherwise, the segment may lie completely inside a complementary region.

Consider a leaf $\tilde{l}$ bounding a component $\mathcal{P}$ of $\mathbb{H}^2-\tilde{\mathcal{L}}$ that is an ideal polygon. Then there are only finitely many points $x'\in\tilde{l}$ where the geodesic orthogonal to $x'$ lie completely inside $\mathcal{P}$, see Figure~\ref{subfig:avoidance_ideal}. Choose disjoint closed intervals $I(x')\subset\tilde{l}$ so that each $x'$ lies in the interior of $I(x')$. Note that by construction, for any point $x\in\tilde{l}-\cup_{x'}I(x')$, the geodesic orthogonal to $x$ going into $\mathcal{P}$ will intersect the lamination again, and since we take away a definite interval containing $x'$, we have an upper bound $\ell_{\tilde{l}}$ on the length of the shortest orthogonal geodesic meeting $\tilde{\mathcal{L}}$ again.

Consider a leaf $l$ on the boundary of a non-simply-connected complementary region $\mathcal{R}$. We can find a simple closed geodesic $\gamma$ of $\mathcal{R}$ so that cutting along $\gamma$ we obtain a crown whose boundary contains $l$ (see Figure~\ref{subfig:crown_double_crown} and \ref{subfig:nonelementary}). A lift $\tilde{\mathcal{R}}$ of $\mathcal{R}$ is shown in Figure~\ref{subfig:avoidance_cylinder}, where the lift $\tilde\gamma$ of the geodesic $\gamma$ is marked in blue.
\begin{figure}[htp]
    \centering
    \begin{minipage}{0.44\linewidth}
        \centering
        \begin{subfigure}[htp]{\linewidth}
         \captionsetup{width=.8\linewidth}
        \centering
        \includegraphics[width=\linewidth]{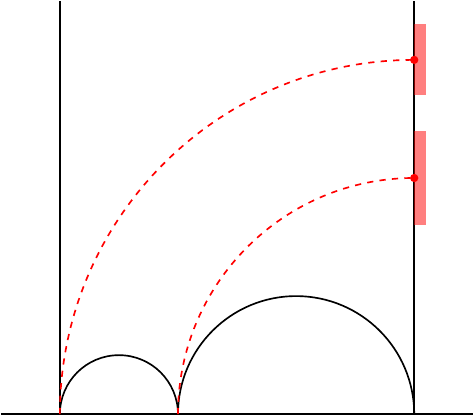}
    \caption{Avoidance regions for a boundary geodesic of an ideal polygon}
    \label{subfig:avoidance_ideal} 
    \end{subfigure}
    \end{minipage}
    \begin{minipage}{0.55\linewidth}
        \centering
        \begin{subfigure}[htp]{\linewidth}
        \captionsetup{width=.8\linewidth}
        \centering
        \includegraphics[width=\linewidth]{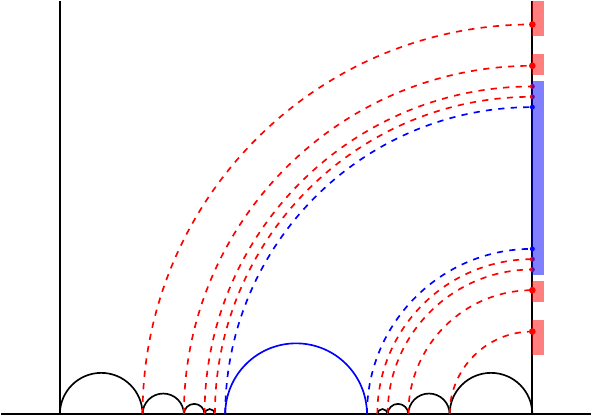}
        \caption{Avoidance regions for a boundary geodesic of a non-simply connected complementary region}
        \label{subfig:avoidance_cylinder} 
    \end{subfigure}
    \end{minipage}
\caption{}
\label{fig:avoidance}
\end{figure}

Consider a lift $\tilde{l}$ of $l$ on the boundary of $\tilde{\mathcal{R}}$. There are infinitely many points $x'\in\tilde{l}$ where the geodesic orthogonal to $x'$ and going into $\tilde{\mathcal{R}}$ lie completely inside $\tilde{\mathcal{R}}$; they accumulate on two points $x_1$ and $x_2$ so that the orthogonal geodesics there are asymptotic to the lift $\tilde\gamma$, see Figure~\ref{subfig:avoidance_cylinder}.

Choose a closed interval $I(x_1,x_2)\subset\tilde{l}$ containing both $x_1$ and $x_2$ in its interior, so that its endpoints are not among the points $x'$ mentioned in the previous paragraph. Then there are only finitely many $x'$ on $\tilde{l}$ not in $I(x_1,x_2)$. Choose disjoint intervals $I(x')$ (also disjoint from $I(x_1,x_2)$ so that each of these finitely many $x'$ lies in the interior of $I(x')$. As in the case of ideal polygon, for any point outside of $I(x_1,x_2)$ and finitely many of $I(x')$, the geodesic orthogonal to $x$ going into $\tilde{\mathcal{R}}$ intersects the lamination again, and we also have an upper bound $\ell_{\tilde l}$ on the length of the shortest orthogonal geodesic meeting $\tilde{\mathcal{L}}$ again.

Now for each $\tilde l$ bounding a complementary component, we obtain an upper bound $\ell_{\tilde l}$ as explained above. The construction is invariant under the action of the fundamental group of the surface on $\mathbb{H}$. Since there are only finitely many complementary regions on the surface, and each complementary region is bounded by finitely many leaves, we can thus choose a uniform $\ell_2>\ell_{\tilde{l}}$ for any $\tilde l$.

The intervals constructed above map down to the surface, and they are disjoint and embedded on the surface by construction. Again, we only have finitely many such intervals $I$ on the surface. For each interval $I$, choose a nearby segment of leaf of roughly the same length to form two opposite sides of a thin quadrilateral. By choose the segment close enough (and we can always choose an arbitrarily close one as we can assume $\mathcal{L}$ is non-atomic), these quadrilaterals can be chosen to be disjoint and embedded. The lifts of the quadrilaterals to $\mathbb{H}$ are also disjoint. Some of these quadrilaterals are colored in red or blue in Figure~\ref{fig:avoidance}.

By construction, segments of leaves contained in each quadrilateral have bounded length. Since there are only finitely many quadrilaterals (up to the action of the fundamental group), we can find $M>0$ so that any leaf segment contained in a quadrilateral has length $\le 2M$.

Now for any point $x$ on a leaf of $\tilde{\mathcal{L}}$, we can always find $x_0$ not contained in any quadrilateral on the same leaf distance $\le M$ away from $x$ so that any geodesic segment starting at $x_0$, perpendicular to the leaf on either side, and of length $\ell_2$ has positive transverse measure. Indeed, if we cannot find any such point, then the leaf is covered by quadrilaterals. But they are disjoint, a contradiction.

By compactness, orthogonal geodesic segments of length $\ell_2$ have positive transverse measure $\le\tau_2$. We cannot find a sequence of orthogonal segments with arbitrarily small transverse measure, since the starting points have to enter the quadrilaterals otherwise. So we have a lower bound $\tau_1$ on the transverse measure as well.

Finally, by cutting off the end part contained completely in a complementary region, we can assume the endpoint is on a leaf. The lengths of these modified segments are bounded below by $\ell_1>0$, as the transverse measure is bounded below. This concludes \ref{item: constants}.

For \ref{item: cross_ratio}, suppose otherwise. Then there exists a sequence of points $x_i$ with geodesic segments $L_i$ starting at $x_i\in\tilde{\mathcal{L}}$ and ending at $y_i\in\tilde{\mathcal{L}}$ so that the cross ratio tends to $0$ or $\infty$. Passing to a subsequence and replacing $x_i$ by an element in the orbit $\pi_1(X)\cdot x_i$ if necessary, we may assume $x_i\to x_0\in\mathcal{L}$, and $y_i\to y_0\in\mathcal{L}$. It is clear the corresponding cross ratio also tends to the limit cross ratio. In particular, by assumption, the leaves which $x_0$ and $y_0$ lie on share a point at infinity. But this means that the geodesic segment $x_0y_0$ is contained in a complementary region and thus has transverse measure $0$. This contradicts the fact that the transverse measure of these segments is bounded below.
\end{proof}
For simplicity, we will call a geodesic segment described in the previous lemma a \emph{connecting segment}. We are now in a position to give a proof of Theorem~\ref{thm: sublinear}:
\begin{proof}[Proof of Theorem~\ref{thm: sublinear}]
Let $\{d_n\}$ be a sequence of positive numbers so that $f(d_1+\cdots+d_n)=n$. By assumption $d_n\to\infty$ as $n\to\infty$. Start with any point $p$ on a leaf of the lamination $\tilde{\mathcal{L}}$. Choose a direction and go along the leaf for a distance of $d_1$, and then turn to the left and then go along a connecting segment predicted by Lemma~\ref{lm:jump}. One may need to go forward or back along the leaf for a distance $\le M$ first, as per Lemma~\ref{lm:jump}. At the end of the segment, turn right and go along the new leaf for a distance of $d_2$, and repeat the process; See the red trajectory in Figure~\ref{fig:construction} for a schematic picture.
\begin{figure}[htp]
    \centering
    \includegraphics[width=0.8\linewidth]{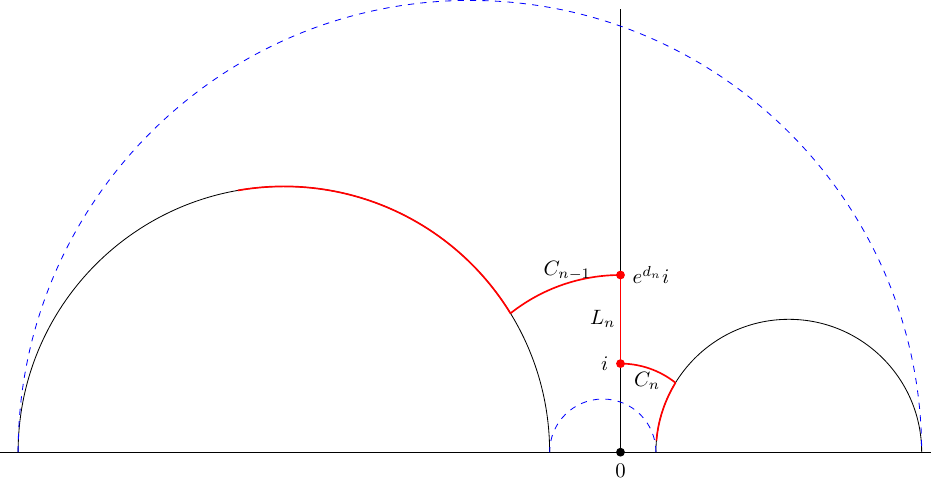}
    \caption{Construction of the geodesic ray}
    \label{fig:construction}
\end{figure}

It is clear that the transverse measure of this piecewise geodesic ray has the desired growth rate. It remains to show that straightening the geodesic does not change this growth rate; since we have:

$$
\frac{\tau_1}{2} =\frac{(n/2)\tau_1}{n} \leq \frac{\textit{transverse measure after length }d_1+\dots+d_n}{f(d_1+\dots+d_n)} \leq \frac{n\tau_2}{n}=\tau_2.
$$

Note that we use the following fact: if the continuous increasing sublinear functions $f_1,f_2$ satisfy $f_1(a_n)=f(b_n)=n$ and $a_n\asymp b_n$, then we have $f_1\asymp f_2$.

We use the upper half plane model. Assume a leaf is the imaginary axis, and one endpoint of a connecting segment is at $i$, then the leaf containing the other endpoint is a half circle whose endpoints are bounded away from $0$ and $\infty$ by Part \ref{item: cross_ratio} of Lemma~\ref{lm:jump}, and also bounded away from each other as the length of the connecting segment is bounded above.
Choose constants $0<m_1<m_2$ so that the endpoints of the circles in either description above lie between distance $m_1$ and $m_2$ from the origin.

Now return to our construction. Denote the $n$-th segment on the leaf by $L_n$, and $n$-th connecting segment by $C_n$. Note that the length of $L_n$ is roughly $d_n$, up to a bounded constant $M$. Put $L_n$ on the imaginary axis, ending at $i$; see Figure~\ref{fig:construction}. The starting point of $L_n$ is at $e^{d_n}i$. Hence the circle on the left half plane (i.e. the leaf $L_{n-1}$ lies on) has both ends contained in the interval $[-m_2e^{d_n},-m_1e^{d_n}]$. After straightening, the geodesic will cross both circles in Figure~\ref{fig:construction}, and intersect the imaginary axis at $(0,y_n)$. The extreme cases are:
\begin{itemize}[topsep=0mm, itemsep=0mm]
\item The straightened geodesic intersects the circle on the left very far into the left endpoints, and the circle on the right very far into the right endpoints (see the larger dotted blue circle in Figure~\ref{fig:construction}), and hence $y_n\le\sqrt{e^{d_n}m_2\cdot m_2}\le e^{d_n}m_2$.
\item The straightened geodesic intersects the circle on the left very far into the right endpoints, and the circle on the right very far into the left endpoints (see the smaller dotted blue circle in Figure~\ref{fig:construction}), and hence $y_n\ge\sqrt{e^{d_n}m_1\cdot m_1}\ge m_1$.
\end{itemize}
Therefore the straightened geodesic intersects the leaf containing $L_n$ within a bounded distance ($\le\max\{|\log m_2|,|\log m_1|\}$) of $L_n$.

The leaves containing $L_n$'s and the connecting segments cut the straightened geodesic into segments. Each of these segments starts or ends on a leaf. 

These segments have the same transverse measure as the subset of the constructed path to which they correspond (each of them is a subset of $L_n$ and $C_n$). Therefore, it suffices to show that these segments and their corresponding subsets of the constructed path have roughly the same length too.

We have the following possibilities
\begin{enumerate}[topsep=0mm, itemsep=0mm]
    \item The segment starts at a point on $L_n$ close to the endpoint of $L_n$. This segment then must end on $C_n$, or a point on the leaf containing $L_{n+1}$, outside $L_{n+1}$ in the backward direction along the leaf. Either way this segment has bounded length, so we can safely ignore the segment for the purpose of growth rates.
    \item The segment starts on the leaf containing $L_n$ sufficiently away from the endpoint of $L_n$. Then we have four cases pictured in Figure~\ref{fig:triangle_inequality}, where the segment is denoted by $\Pi$, and using triangle inequality, one can show that $l(\Pi)\asymp l(L_n')$ where $L_n'=L_n$ if $\Pi$ starts at a point not on $L_n$ (first and second in Figure~\ref{fig:triangle_inequality}), and $L_n'$ is the portion between the starting point of $\Pi$ and endpoint of $L_n$ if $\Pi$ starts at a point on $L_n$ (third and fourth in Figure~\ref{fig:triangle_inequality}). Indeed, for the first one, recall that the starting point of $\Pi$ lies within bounded distance $m=\max\{|\log m_2|,|\log m_1|\}$ of $L_n$. So we have by triangle inequality
    $$l(\Pi)\le m+l(L_n)+l(C_n)\le l(L_n)+m+\ell_2\le Cl(L_n)$$
    for some constant $C>1$ independent of $n$, since $l(L_n)\asymp d_n\to\infty$. On the other hand, clearly $l(\Pi)\ge l(L_n)$. The remaining four cases can be dealt with similarly.
    \item The segment ends at a point on $L_n$ close to the starting point of $L_n$. Similarly, This segment then must start on $C_{n-1}$, or a point on the leaf containing $L_{n-1}$, outside $L_{n-1}$ in the forward direction along the leaf. Again this segment has a bounded length.
    \item The segment ends on the leaf containing $L_n$ sufficiently away from the starting point of $L_n$. This case can be dealt with entirely similarly to Case 2.
\end{enumerate}
\begin{figure}[htp]
\begin{minipage}{0.245\linewidth}
    \begin{subfigure}[htp]{\linewidth}
        \includegraphics[width=\linewidth]{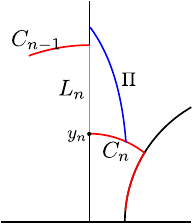}
    \end{subfigure}
\end{minipage}
\begin{minipage}{0.245\linewidth}
    \begin{subfigure}[htp]{\linewidth}
        \includegraphics[width=\linewidth]{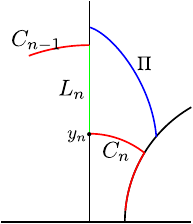}
    \end{subfigure}
\end{minipage}
\begin{minipage}{0.245\linewidth}
    \begin{subfigure}[htp]{\linewidth}
        \includegraphics[width=\linewidth]{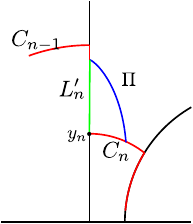}
    \end{subfigure}
\end{minipage}
\begin{minipage}{0.245\linewidth}
    \begin{subfigure}[htp]{\linewidth}
        \includegraphics[width=\linewidth]{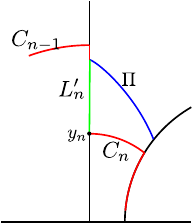}
    \end{subfigure}
\end{minipage}
\caption{The starting point of the blue segment is sufficiently far away from the endpoint $y_n$ of $L_n$. In all cases, the blue segment has length comparable to the green segment}
\label{fig:triangle_inequality}
\end{figure}
From these observations, we conclude that the straightened geodesic ray also has the prescribed growth rate, as desired.
\end{proof}

\part{Geodesic planes in a geometrically finite end}\label{part:qf}

\section{Convex core boundary of quasifuchsian manifolds}\label{sec: quasifuchsian}
\paragraph{Quasifuchsian manifolds}
Let $X=\Pi\backslash\mathbb{H}^2$ be a complete hyperbolic surface of finite area, possibly with punctures. We may view $\Pi\subset\psl(2,\mathbb{R})\subset\psl(2,\mathbb{C})$ as a Kleinian group. Let $M_X=\Pi\backslash\mathbb{H}^3$ be the corresponding 3-manifold. Note that the convex core of $M_X$ is a hyperbolic surface isometric to $X$.

A quasifuchsian manifold is a hyperbolic 3-manifold $M=\Gamma\backslash\mathbb{H}^3$ quasi-isometric to $M_X$. Note that this definition is independent of the choice of $X$, since $M_X$ and $M_Y$ are quasi-isometric for any pair of finite-area complete hyperbolic surfaces $X,Y$ of the same topological type. The manifold $M$ is geometrically finite, and its convex core boundary consists of two hyperbolic surfaces (homeomorphic to $X$) bent along a bending lamination. See below for details.

The space of quasifuchsian manifolds quasi-isometric to $M_X$ (up to isometry) is biholomorphic to $\mathcal{T}(X)\times\mathcal{T}(\bar{X})$, which corresponds to the two conformal structures (with opposite orientations) at infinity given by the action of $\Gamma$ on the two components $\Omega_\pm$ of the domain of discontinuity.

\paragraph{Convex core boundary}
We now describe the convex core boundary of a quasifuchsian manifold, following \cite{bending}. Proofs are omitted, but we quote the relevant results in \cite{bending} for those who wish to refer to them.

Let $\Lambda\subset S^2$ be a closed subset of the sphere at infinity, and let $\hull(\Lambda)$ be its convex hull in $\mathbb{H}^3$. A \emph{support plane} $P$ of a closed convex subset $E$ of $\mathbb{H}^3$ is an isometrically embedded totally geodesic hyperbolic plane such that $P\cap E\neq\varnothing$ but $P\cap\Int(E)=\varnothing$. Given any point $x\in\partial E$, there exists a support plane $P$ containing $x$ \cite[Lemma II.1.4.5]{bending}. When $E=\hull(\Lambda)$, let $P$ be a support plane for $E$ with boundary circle $C$; then $P\cap E=\hull(C\cap\Lambda)$ \cite[Lemma II.1.6.2]{bending}. In particular, $P\cap E$ is either a complete geodesic (when $|C\cap\Lambda|=2$), or a 2-dimensional submanifold of $P$, bounded by geodesics with endpoints in $C\cap\Lambda$ (when $|C\cap\Lambda|>2$) \cite[Cor. II.1.6.3]{bending}. We call the former a \emph{bending line}, and the latter a \emph{flat piece}. A geodesic boundary component of a flat piece is also called a bending line, although there may or may not exist a support plane whose intersection with $E$ is precisely that geodesic. Note that bending lines are disjoint and hence form a geodesic lamination on $\partial E$.

Let $l$ be a bending line. The set of geodesic planes containing $l$ is naturally identified with $S^1$, by fixing a plane to be the base and parametrizing the other planes by rotating the base plane. It is clear that support planes for $l$ form a closed connected subset of $S^1$. We call a geodesic plane corresponding to a boundary point of this closed subset an \emph{extreme support plane}. In particular, either $l$ has a unique extreme support plane (and thus has a unique support plane), or two extreme support planes.

Given a point $x$ on a bending line $l$, there exists an open neighborhood $U$ in $\partial E$ containing $x$ so that if two bending lines $l_1,l_2$ meet $U$, any support plane to $l_1$ meets any support plane to $l_2$ \cite[Lemma II.1.8.3]{bending}, and we call any such intersection a \emph{ridge line}. The exterior dihedral angle\footnote{One of the four angles formed by two intersecting support planes contains $E$, and either of its two complementary angles is an exterior dihedral angle.} formed by the support planes at a ridge line is called a \emph{ridge angle}.

If a ridge line meets a bending line then they are equal \cite[Lemma II.1.9.2]{bending}. Otherwise, there is a ``gap" between the ridge line and $\partial E$, and we can add a support plane to fill part of the gap; see Figure~\ref{fig: raising_the_roof}. The original ridge angle is replaced by two ridge angles. We can apply this process inductively to obtain better and better finite approximations of $\partial E$ locally. Note that each of these approximations consists of a chain of intersecting support planes, and the bending angle of this chain is the sum of all ridge angles formed by neighboring support planes in the chain.
\begin{figure}[htp]
    \centering
    \captionsetup{width=.8\linewidth}
    \includegraphics[width=0.4\linewidth]{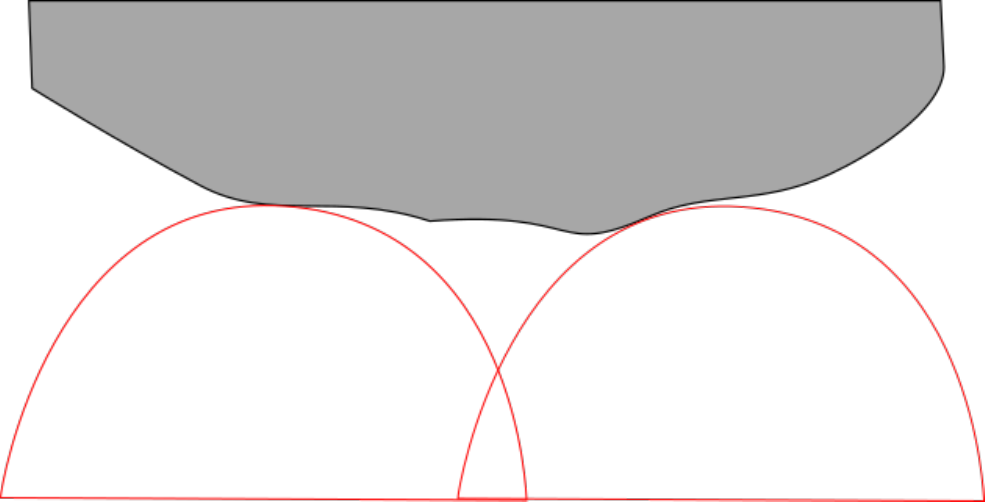}
    \hspace{0.05\linewidth}
    \includegraphics[width=0.4\linewidth]{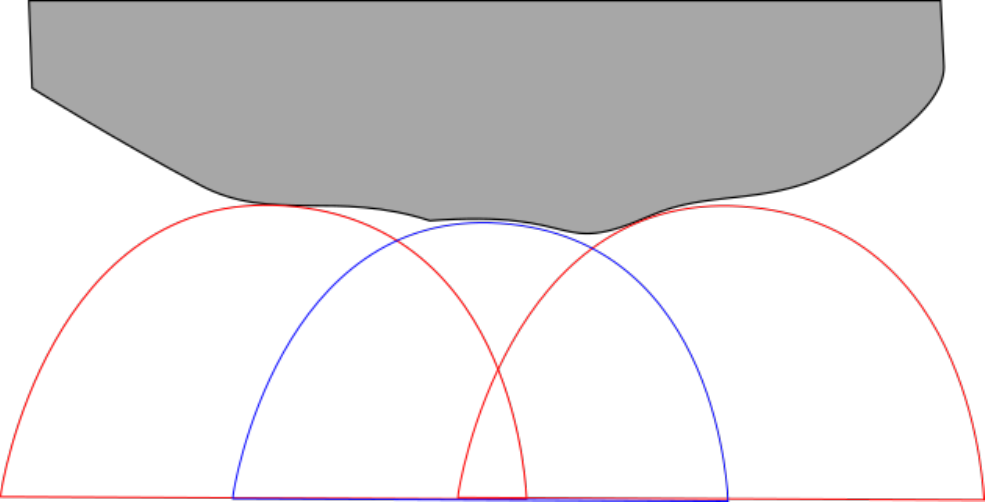}
    \caption{Filling the gap, or raising the roofs. If there is a gap between the convex core boundary and two support planes, we can fill in a third to cover part of the gap. This can be done inductively to get a better and better approximation of the convex core boundary.}
    \label{fig: raising_the_roof}
\end{figure}

Let $I$ be a small segment contained in $U$ so that the endpoints $x_1,x_2$ of $I$ lie on flat pieces and $I$ intersects the bending lines transversely. Let $P_1$ and $P_2$ be two support planes containing $x_1$ and $x_2$ respectively. They intersect in a ridge line. We then have a sequence $\{S_n\}$ of chains of support planes, where $S_n$ is obtained from $S_{n-1}$ by ``filling the gaps" described above. Assume that $S_n\cap I$ becomes dense as $n\to\infty$. The \emph{bending measure} of $I$ is then defined as
$$\mu(I)=\lim_{n\to\infty}(\text{bending angle of }S_n).$$
This is well defined and gives a transverse measure on the geodesic lamination of all bending lines \cite[Thm II.1.11.3]{bending}.

We remark that $\partial E$ has a hyperbolic metric induced from $\mathbb{H}^3$; as a matter of fact, each component of $\partial E$ is a complete hyperbolic $2$-manifold, isometrically covered by $\mathbb{H}^2$ with respect to this metric.

Finally, note that when $\Lambda$ is the limit set of a quasifuchsian manifold $M\cong\Gamma\backslash\mathbb{H}^3$, the boundary of $\hull(\Lambda)$ and its bending lamination are invariant under $\Gamma$, and thus give a pair of pleated hyperbolic surfaces that bound $\core(M)$.

\paragraph{Behavior of support planes}
We now prove some propositions about the behavior of support planes that are straightforward consequences of the general description of the convex core boundary above. More detailed and precise descriptions are presented in later sections.

Throughout this part, let $P$ be a support plane for an end $E_+$ of the quasifuchsian manifold $M\cong\Gamma\backslash\mathbb{H}^3$, $\Lambda$ its limit set, $\Omega_+$ the corresponding component of domain of discontinuity. Also let $\tilde E_+$ be the preimage of $E_+$ under the covering map $\mathbb{H}^3\to M$, $\tilde P$ a lift of $P$ to $\mathbb{H}^3$, and $C$ the corresponding boundary circle. Suppose that the component of $\partial\core(M)$ corresponding to the end $E_+$ is isometric to a hyperbolic surface $X$, bent along a geodesic lamination $\mathcal{L}\subset X$ with bending measure $\mu$. We write $\mathcal{L}=\mathcal{L}_a\cup\mathcal{L}_m$, where $\mathcal{L}_a$ consists of simple closed geodesics in $\mathcal{L}$ (the \emph{atomic} part of $\mathcal{L}$), and $\mathcal{L}_m$ consists of non-atomic components of $\mathcal{L}$.

\begin{prop}\label{prop: support_flat}
Suppose that $\tilde P\cap\hull(\Lambda)$ is a flat piece. Then $P$ intersects precisely one connected component $X'$ of $X-\mathcal{L}_a$, and the closure of $P$ contains $\cup_{\text{Plane } Q\text{ supports }X'}Q$.
\end{prop}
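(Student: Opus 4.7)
The plan is to establish the two claims separately. For the first claim, the flat piece $F := \tilde P \cap \hull(\Lambda)$ is a two-dimensional region of $\tilde P$ whose interior is disjoint from all bending lines. It projects to a connected region $F_0 \subset \partial E_+ \cong X$ lying entirely in a single complementary component of $\mathcal{L}$; since $\mathcal{L}_a \subset \mathcal{L}$, this component sits inside a unique connected component $X'$ of $X \setminus \mathcal{L}_a$. Any other lift of $P$ is a translate $\gamma \tilde P$ whose intersection with $\hull(\Lambda)$ is $\gamma F$, projecting to the same $F_0$. Hence $P \cap (X \setminus \mathcal{L}_a) = F_0 \subset X'$.

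For the second claim, write $\mathcal{L}' := \mathcal{L}_m \cap X'$. If $\mathcal{L}' = \varnothing$, then $X'$ itself is a single flat piece and $P$ coincides (up to the $\Gamma$-action) with the unique support plane of $X'$, so the claim is trivial. Otherwise, $\mathcal{L}'$ decomposes into minimal non-atomic components, each having dense leaves. Let $Q$ be any support plane of $X'$; fix a lift $\tilde Q$ whose intersection with $\hull(\Lambda)$---either a flat piece $F'$ or a bending line $l'$---lies in a fixed lift $\widetilde{X'}$ accessible from $F$. The goal is to produce a sequence $\{\gamma_n\} \subset \stab_\Gamma(\widetilde{X'})$ so that $\gamma_n \tilde P \to \tilde Q$ in the space of geodesic planes in $\mathbb{H}^3$; projecting this convergence to $M$ yields $Q \subset \overline{P}$, and ranging over $Q$ gives the second claim.

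To build the sequence, note that each bending line bounding $F'$ (respectively the line $l'$) is a leaf of $\widetilde{\mathcal{L}'}$. By density of leaves in each minimal component of $\mathcal{L}'$, any such leaf can be approximated by $\stab_\Gamma(\widetilde{X'})$-translates of a fixed boundary leaf of $F$. Applying the corresponding group elements to $\tilde P$ and invoking the continuous dependence of the convex core boundary on its pleating data established in \cite{bending}, the translates $\gamma_n \tilde P$ converge to $\tilde Q$.

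The main obstacle lies in the last step: translating the combinatorial recurrence of leaves into Hausdorff convergence of support planes in $\mathbb{H}^3$. This requires controlling not only the position of the approximating bending lines but also the bending angles across them, so that the chain of support planes reached by the approximating translates converges to the correct extreme support plane in the case when $\tilde Q$ corresponds to a bending line (which has a circle's worth of candidate support planes). This is handled by the continuity of the bending measure, which constrains the limiting angle on either side of $l'$.
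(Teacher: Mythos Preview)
Your approach is essentially the paper's, but there are two places where it needs tightening.

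\textbf{Multiple minimal components.} You write that ``any such leaf can be approximated by $\stab_\Gamma(\widetilde{X'})$-translates of a fixed boundary leaf of $F$.'' This is false as stated when $\mathcal{L}'=\mathcal{L}_m\cap X'$ has more than one minimal component: density of leaves holds \emph{within} each minimal component, so translates of a boundary leaf of $F$ can only approximate leaves in the component of $\mathcal{L}'$ that $F$ actually borders. The paper resolves this by first treating the case where $X'$ carries a single minimal component, and then, for the general case, cutting $X'$ along disjoint simple closed geodesics into subsurfaces each containing one minimal component. Adjacent subsurfaces share a flat piece containing the separating geodesic, so the single-component argument chains across the decomposition by connectedness. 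Your proof needs this step (or an equivalent).

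\textbf{Convergence to a bending-line support plane.} Your final paragraph over-complicates the issue. You worry about ``controlling the bending angles across'' the approximating lines and invoke ``continuity of the bending measure.'' The paper's argument is cleaner and avoids this: since each $\gamma_n\tilde P$ supports a \emph{flat piece}, it is the unique (hence extreme) support plane at any point $x_n$ on the boundary of that flat piece. Choosing $x_n\to x\in l'$ and applying the convergence lemma for extreme support planes (\cite[Lemma~II.1.7.3]{bending}) gives $\gamma_n\tilde P\to\tilde Q$ directly. No separate angle control is needed---the extremality does the work. Your vaguer appeal to ``continuous dependence of the convex core boundary on its pleating data'' should be replaced by this specific lemma.
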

\begin{proof}
First assume that $X'$ contains only one minimal component of $\mathcal{L}_m$. Let $Q$ be any plane that supports $X'$. Then either the lift $\tilde Q$ intersects $\hull(\Lambda)$ in a bending line, or a flat piece. In the former case, there exists a sequence of points $x_i$ on the boundary of flat pieces $\gamma_i\tilde P\cap\hull(\Lambda)$, such that $x_i\to x\in \tilde Q\cap\hull(\Lambda)$ (this is because a geodesic in a component of $\mathcal{L}_m$ is dense in that component). Hence $\gamma_iP\to Q$ as each piece $\gamma_iP$ has to be the extreme supporting plane at $x_i$ and we can apply \cite[Lemma II.1.7.3]{bending}. In the latter case, either $Q=\gamma P$ for some $\gamma\in\Gamma$, or $Q$ supports a flat piece in the same connected component. Since the flat piece on $Q$ is bounded by leaves, we can choose $x\in\partial (\tilde Q\cap\hull(\Lambda))$ and $x_i$ on the boundary of some sequences of flat pieces $\gamma_i\tilde P\cap\hull(\Lambda)$ so that $x_i\to x$ similarly as above. Then the same argument shows that $Q$ is in the orbit closure as well.

For the general case, we may divide $X'$ into subsurfaces using disjoint simple closed geodesics, so that each subsurface contains one minimal component. Two subsurfaces sharing a simple closed geodesic also share a flat piece containing that geodesic. By connectedness and repeatedly using the argument above, we have the desired result.
\end{proof}
\begin{prop}\label{prop: support_leaf}
Suppose $\tilde P\cap\hull(\Lambda)$ is a bending geodesic. Then either $P$ is closed (when the geodesic corresponds to an atom in $\mathcal{L}_a$), or the closure of $P$ contains a support plane to a flat piece (when the geodesic corresponds to a leaf in a minimal component of $\mathcal{L}_m$).
\end{prop}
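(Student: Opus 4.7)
The two cases use rather different mechanisms. The atomic case rests on proper discontinuity of $\Gamma$ and the discreteness of lifts of a simple closed geodesic, while the non-atomic case uses minimality of the lamination to locate a flat piece to limit to, combined with continuity of extreme support planes \cite[Lemma II.1.7.3]{bending} and the crucial observation that a non-atomic bending line carries a \emph{unique} support plane (its two extreme support planes coincide because the bending atom at the leaf vanishes). The main obstacle will be the identification step in the non-atomic case: without this uniqueness, the approximating planes could converge to the ``wrong'' extreme support plane at the limit leaf and fail to equal the target flat piece's plane.

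\emph{Atomic case.} Suppose $\tilde\ell = \tilde P \cap \hull(\Lambda)$ projects to a simple closed geodesic $\gamma \in \mathcal{L}_a$, and let $g_\gamma \in \Gamma$ be the loxodromic element corresponding to $\gamma$. Since $g_\gamma$ preserves the pleated structure of $\hull(\Lambda)$, it preserves each of the two extreme support planes at $\tilde\ell$; being a Möbius transformation that fixes two distinct circles through its axis endpoints, $g_\gamma$ is therefore purely hyperbolic and fixes every support plane at $\tilde\ell$. In particular, $\stab_\Gamma(\tilde P) = \stab_\Gamma(\tilde\ell) = \langle g_\gamma \rangle$. To show $P$ is closed in $M$, I would show that $\Gamma \cdot \tilde P$ is discrete as a family of planes in $\mathbb{H}^3$. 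If $\gamma_n \tilde P \to \tilde Q$ along some subsequence, then $\gamma_n \tilde\ell = \gamma_n \tilde P \cap \hull(\Lambda)$ is a sequence of lifts of $\gamma$, and because $\gamma$ admits an embedded tubular neighborhood in $M$, the lifts of $\gamma$ form a discrete family of geodesics in $\mathbb{H}^3$. Hence $\gamma_n \tilde\ell$ must eventually equal a fixed lift $\tilde\ell^\ast$, forcing $\gamma_n$ into a single coset of $\stab_\Gamma(\tilde\ell) = \stab_\Gamma(\tilde P)$, so $\gamma_n \tilde P$ takes only one value for large $n$ and $\tilde Q \in \Gamma \cdot \tilde P$.

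\emph{Non-atomic case.} Suppose instead that $\tilde\ell$ projects to a leaf of a minimal non-atomic component $\mathcal{L}_m^\alpha$. First I locate a flat piece to limit to: because a minimal non-atomic lamination cannot fill its carrier subsurface (its complement in the smallest subsurface carrying it is a disjoint union of ideal polygons), there exists a flat piece $\tilde F \subset \partial \hull(\Lambda)$ whose projection to $X$ is such a complementary region. Fixing a boundary leaf $\tilde\ell' \subset \partial \tilde F$, minimality of $\mathcal{L}_m^\alpha$ implies that the projection of $\tilde\ell$ is dense in $\mathcal{L}_m^\alpha$, so I can find $\gamma_n \in \Gamma$ and points $x_n \in \gamma_n \tilde\ell$ with $x_n \to x \in \tilde\ell'$. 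After passing to a subsequence, $\gamma_n \tilde P \to \tilde Q$ for some plane $\tilde Q$, and I claim $\tilde Q$ is the plane containing $\tilde F$. Indeed, each $\gamma_n \tilde\ell$ is a non-atomic bending line and thus carries a unique (extreme) support plane, so $\gamma_n \tilde P$ is that support plane at $x_n$; by \cite[Lemma II.1.7.3]{bending}, $\tilde Q$ is an extreme support plane at $x$. But $\tilde\ell'$ is itself a non-atomic leaf, so it also carries a unique support plane, which must be the plane containing $\tilde F$ (that plane already supports $\hull(\Lambda)$ along all of $\tilde F$ and passes through $\tilde\ell'$). Hence $\tilde Q$ supports the flat piece $\tilde F$, as required.
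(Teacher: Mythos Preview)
Your non-atomic case is correct and in fact more carefully argued than the paper's: you make explicit the key point that a non-atomic bending line carries a \emph{unique} support plane, which is what forces the limit plane to coincide with the plane of the flat piece. The paper compresses this into a single citation of \cite[Lemma~II.1.7.3]{bending}.

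In the atomic case, however, there is a genuine gap. From $\gamma_n\tilde P\to\tilde Q$ you want to conclude that the axes $\gamma_n\tilde\ell$ eventually stabilize, using discreteness of the lifts of $\gamma$. But discreteness only gives that conclusion once you know the sequence $\gamma_n\tilde\ell$ is \emph{precompact in the space of geodesics}, i.e.\ that the two endpoints $a_n,b_n\in\gamma_nC\cap\Lambda$ do not collapse to a single point. Nothing in your argument rules this out: convergence of the planes $\gamma_n\tilde P$ says nothing about where the bending geodesic sits inside each plane, and it is perfectly consistent with $a_n,b_n\to p$ for a single $p\in\Lambda$, in which case $\tilde Q$ would be an asymptotic plane rather than an element of $\Gamma\cdot\tilde P$.

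The paper avoids this by working on the boundary circle instead of the axis. Since $C\cap\Lambda$ consists of just the two axis endpoints, both open arcs of $C$ lie in $\Omega_+$, and a fundamental domain $F\subset C$ for $\langle g_\gamma\rangle$ is a compact subset of $\Omega_+$. If $\gamma_nC\to C'$, pick $x'\in C'\cap\Omega_+$ and $x_n\in\gamma_nC$ with $x_n\to x'$; writing $x_n=\gamma_n g_\gamma^{m_n}y_n$ with $y_n\in F$, proper discontinuity of $\Gamma$ on $\Omega_+$ forces $\gamma_n g_\gamma^{m_n}$ to be eventually constant, hence $C'\in\Gamma\cdot C$. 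This is exactly the step that handles the degeneration your argument leaves open; you should either import it or give an independent reason why the endpoints of $\gamma_n\tilde\ell$ cannot coalesce.
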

\begin{proof}
For the former case, a boundary circle $C$ of $P$ is contained in the closure of $\Omega_+$. This circle is stabilized by a hyperbolic element $\gamma$. Choose a fundamental domain $F$ in $C$ for the action of this hyperbolic element, consisting of two circular arcs. Note that $F$ is contained in $\Omega_+$, where the action of $\Gamma$ is properly discontinuous. Let $\{\gamma_i\}\subset\Gamma$ be a sequence so that $\gamma_iC\to C'$ for some circle $C'$. Fix $x\in C'\backslash\Lambda$. Then we can find $y_i\in C\backslash\Lambda$ so that $\gamma_iy_i\to x$. As $F$ is a fundamental domain for the action of $\gamma$ on $C$, there exist points $x_i\in F$ and integers $n_i$ so that $\gamma^{n_i}x_i=y_i$. Thus $\gamma_i\gamma^{n_i}x_i\to x\in C'\backslash\Lambda$. By proper discontinuity, $\gamma_i\gamma^{n_i}=\delta\in\Gamma$ for all $i$ sufficiently large. In particular, $C'=\delta C$, and hence the orbit $\Gamma\cdot C$ is indeed closed.

For the latter case, note that every minimal component contains a leaf that bound a flat piece. Since the leaf is dense in the corresponding minimal component, there exists a sequence of points $x_i\in\gamma_iP\cap\hull{(\Lambda)}$ such that $x_i\to x$, where $x$ is a point on the boundary of a flat piece. Hence by \cite[Lemma II.1.7.3]{bending}, $\gamma_iP$ limits to the plane supporting that flat piece.
\end{proof}

As a corollary, we have the first statement of Part~\ref{item:thm:mimial_roof_dense} of Theorem~\ref{thm: minimal_roof}.
\begin{cor}\label{cor: support_closure}
Suppose $\mathcal{L}_a=\varnothing$. Let $P$ be a support plane. Then $\overline{P}=E_+$
\end{cor}
\begin{proof}
By the two previous propositions, in this case, $\overline{P}$ contains all support planes. Let $C_1$ and $C_2$ be boundary circles of two support planes $\tilde P_1$ and $\tilde P_2$ of $\partial\tilde E_+$ in the universal cover. Choose any points $p_1\in\tilde P_1\cap\partial \tilde E_+$ and $p_2\in\tilde P_2\cap\partial\tilde E_+$, and let $r:[0,1]\to\partial\tilde E_+$ be any continuous path between them. For each $t\in[0,1]$, there exists a unique support plane $\tilde P_t$ containing $r(t)$. The corresponding boundary circle $C_t$ varies continuously with respect to $t$. Hence the support planes sweep through the end, and hence the union of all of them is $E_+$.
\end{proof}

\begin{prop}\label{prop: separate}
For $k=1,2$, let $X_k$ be either a connected component of $X-\mathcal{L}_a$ or a simple closed geodesic in $\mathcal{L}_a$, and $P_k$ be a support plane so that $P_k\cap\core(M)\subset X_i$. If $X_1\neq X_2$, then the closure of $P_1$ does not contain $P_2$.
\end{prop}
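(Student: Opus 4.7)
The plan is by contradiction. Suppose $P_2\subset\overline{P_1}$ in $M$. A standard frame-bundle description of immersed totally geodesic planes (each being a $\psl(2,\mathbb{R})$-orbit) upgrades this set-theoretic containment to the existence of $\gamma_n\in\Gamma$ such that $\gamma_n\tilde P_1\to\tilde P_2$ as geodesic planes in $\mathbb{H}^3$, equivalently $\gamma_n\tilde C_1\to\tilde C_2$ as circles in $S^2$. Passing to a subsequence, the restrictions $\gamma_n|_{\tilde C_1}$ converge to a M\"obius isomorphism $\phi\colon\tilde C_1\to\tilde C_2$, and the closedness and $\Gamma$-invariance of $\Lambda$ give the injection
\[
\phi(\tilde C_1\cap\Lambda)\subset\tilde C_2\cap\Lambda.
\]
This single inclusion drives the entire argument.

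If $X_1$ is an atomic simple closed geodesic, then Proposition~\ref{prop: support_leaf} makes $P_1$ closed in $M$, so $\overline{P_1}=P_1$ and $P_2\subset P_1$ as subsets. A connected totally geodesic plane in $\mathbb{H}^3$ cannot be covered by a countable union of pairwise distinct totally geodesic planes without equaling one of them, forcing $\tilde P_2=\gamma_0\tilde P_1$ for some $\gamma_0\in\Gamma$ and hence $P_1=P_2$ in $M$. But then $P_1\cap\core(M)$ would sit inside $X_1\cap X_2=\varnothing$, contradicting that $P_1$ supports $\hull(\Lambda)$.

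If instead $X_1$ is a component of $X\setminus\mathcal{L}_a$, then $\tilde P_1\cap\hull(\Lambda)$ is either a flat piece $F_1$ (giving $|\tilde C_1\cap\Lambda|\ge3$) or a bending geodesic $\tilde\ell_1$ covering a leaf $\ell_1$ of $\mathcal{L}_m\cap X_1$ (giving $|\tilde C_1\cap\Lambda|=2$). In the bending-geodesic case, each $\gamma_n\tilde\ell_1$ is a lift of a leaf of $\mathcal{L}_m$, and by minimality of the component $\mathcal{L}_m^\alpha\subset X_1$ containing $\ell_1$ we have $\overline{\ell_1}=\mathcal{L}_m^\alpha$ in $X$; thus the limit geodesic $\tilde\ell^\infty\subset\tilde P_2\cap\hull(\Lambda)$ satisfies $\pi(\tilde\ell^\infty)\subset\mathcal{L}_m^\alpha\subset X_1$. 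Combined with the hypothesis $\pi(\tilde P_2\cap\hull(\Lambda))\subset X_2$, this gives $\pi(\tilde\ell^\infty)\subset X_1\cap X_2=\varnothing$, contradiction.

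Finally, suppose $\tilde P_1\cap\hull(\Lambda)=F_1$ is a flat piece. If $X_2$ is atomic then $|\tilde C_2\cap\Lambda|=2$ while $|\phi(\tilde C_1\cap\Lambda)|\ge3$, impossible; the same numerical inequality rules out the case where $X_2$ is a component and $\tilde P_2$ supports a bending geodesic. If $X_2$ is a component and $\tilde P_2$ also supports a flat piece $F_2$, then $F'=\hull(\phi(\tilde C_1\cap\Lambda))$ is a two-dimensional subset of $F_2$, and Hausdorff convergence of convex hulls in $\overline{\mathbb{H}^3}$ yields $\gamma_n F_1\to F'$. Projecting, $\pi(F')\subset\pi(F_2)\subset\overline{X_2}$ and $\pi(F')\subset\pi(F_1)\subset\overline{X_1}$, so $\pi(F')\subset\overline{X_1}\cap\overline{X_2}$; for distinct components of $X\setminus\mathcal{L}_a$ this intersection is at most one-dimensional (empty or a shared atomic boundary leaf), which cannot contain a two-dimensional set; contradiction. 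The main technical hurdle is the first step, upgrading set-theoretic closure to plane-level convergence; after that, the M\"obius injection $\phi$ plus case analysis by the supports $\tilde P_k\cap\hull(\Lambda)$ mechanically dispatches every configuration.
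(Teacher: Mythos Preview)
Your handling of the case where $X_1$ is an atomic closed geodesic (via Proposition~\ref{prop: support_leaf}) is fine and matches the paper. The gap is in the remaining cases, at the step where you assert that after passing to a subsequence the restrictions $\gamma_n|_{\tilde C_1}$ converge to a M\"obius isomorphism $\phi\colon\tilde C_1\to\tilde C_2$. This is false in general: convergence $\gamma_n\tilde C_1\to\tilde C_2$ of circles does not prevent $\gamma_n$ from diverging in $\psl(2,\mathbb{C})$, and the restrictions may then degenerate (collapsing all but one point of $\tilde C_1$ to a single point of $\tilde C_2$). Concretely, within a \emph{single} component $X'$, Proposition~\ref{prop: support_flat} already shows that a support plane for a flat piece ($|\tilde C_1\cap\Lambda|\ge3$) can limit onto a support plane for a bending line of $\mathcal{L}_m$ ($|\tilde C_2\cap\Lambda|=2$); no injection $\tilde C_1\cap\Lambda\hookrightarrow\tilde C_2\cap\Lambda$ exists there. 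Since your construction of $\phi$ nowhere uses the hypothesis $X_1\ne X_2$, it cannot be correct as stated. The same degeneration issue undermines your bending-geodesic subcase: the two endpoints of $\gamma_n\tilde\ell_1$ may converge to a single point of $\tilde C_2\cap\Lambda$, so ``the limit geodesic $\tilde\ell^\infty$'' need not exist. All of your subsequent cardinality and dimension counts rest on this unestablished non-degeneracy.

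The paper's argument is different and avoids this entirely by using the bending measure directly. Any path in the pleated surface from $P_1\cap\partial\core(M)\subset X_1$ to $P_2\cap\partial\core(M)\subset X_2$ must cross an atom of $\mathcal{L}_a$, so its bending measure is bounded below by the weight of that atom. Since the bending measure of a transversal is the limit of the total ridge angle along approximating chains of support planes, every lift of $P_1$ that meets a fixed lift $\tilde P_2$ does so at a ridge angle uniformly bounded away from zero, which immediately rules out $\gamma_n\tilde P_1\to\tilde P_2$. The atomic barrier in $\mathcal{L}_a$ is exactly the missing ingredient that would have let you exclude degeneration in your approach.
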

\begin{proof}
If $X_1$ is a simple closed geodesic, the statement follows from the previous proposition. Fix a lift $\tilde{P_2}$ of $P_2$. Any lift of $P_1$ is either disjoint from $\tilde{P_2}$ or intersects $\tilde{P_2}$ with a ridge angle bounded below. Indeed, any path from $P_1$ to $P_2$ must pass through an atom in $\mathcal{L}_a$, and the bending measure is defined as the infimum of the sum of the ridge angles of any chain of support planes along the path. In particular, the orbit of a lift of $P_1$ does not limit to $\tilde{P_2}$.
\end{proof}

\section{Geodesic planes outside convex core I: purely atomic bending}\label{sec:geodesic_planes_atomic}
In this section, we completely classify the behavior of geodesic planes outside the convex core when the bending lamination is purely atomic. That is, we prove Proposition~\ref{prop: atomic}, as a combination of Propositions~\ref{prop:horodisks}, \ref{prop: atomic_support_flat}, \ref{prop: atomic_horocycle_geodesic}, and \ref{prop: atomic_horocycle_flat}.

We start with a discussion of the behavior of geodesic planes whose boundary circles meet the limit set exactly at a parabolic fixed point (i.e. the fixed point of a parabolic element in $\Gamma$). Note that such a plane only exists if the quasifuchsian manifold of interest is quasi-isometric to $M_X$ for some hyperbolic surface $X$ with punctures. For simplicity, such a geodesic plane is called a \emph{parabolic plane} (and its boundary circles are called \emph{parabolic circles}).

Note that every parabolic circle is shadowed by a support circle. Indeed, parabolic fixed points correspond to cusps of the quasifuchsian manifold, and on the boundary $\partial E_+$, each cusp has a neighborhood contained in a flat piece; see Figure~\ref{subfig:punctured_polygon}.

Our first goal is to show that a parabolic plane is always closed, regardless of the bending lamination. This gives Part~\ref{item:asy_parabolic} of Proposition~\ref{prop: atomic} and Part~\ref{item:thm:minimal_roof_parabolic} of Theorem~\ref{thm: minimal_roof}. We adopt the notations from the last section.
\begin{prop}\label{prop:horodisks}
    Let $\tilde P\subset\tilde E_+$ be a geodesic plane, and $C$ its boundary circle. Suppose $C\cap\Lambda$ is a parabolic fixed point. Then the corresponding plane $P$ is closed in $E_+$
\end{prop}
\begin{proof}
    Let $\gamma$ be the parabolic element whose fixed point is $p=C\cap\Lambda$. The plane $\tilde P$ contains horocycles based at $p$, which are invariant under the action of the $\gamma$. Each horocycle bounds a horodisk, and it follows easily from the description of cuspidal neighborhoods of hyperbolic 3-manifolds that the projection of any horodisk $\mathcal{D}$ under $\pi:\mathbb{H}^3\to M$ is closed (see e.g. \cite[Thm.~2.24]{hyperbolic_manifolds}).

    Now a fundamental domain for $\tilde{P}-\mathcal{D}$ is compact in $\mathbb{H}^3\cup\Omega$, on which the action of $\Gamma$ is properly discontinuous (see Figure~\ref{fig:lune}). An argument similar to that of Proposition~\ref{prop: support_leaf} implies that $\pi(\tilde{P}-\mathcal{D})$ in $M$ is closed. Hence $P=\pi(\tilde P-\mathcal{D})\cup\pi(\mathcal{D})$ is closed.
\end{proof}
\begin{figure}[htp]
    \captionsetup{width=0.6\linewidth}
    \centering
    \includegraphics[width=0.4\linewidth]{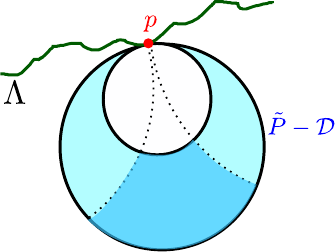}
    \caption{A schematic picture for the proof of Proposition~\ref{prop:horodisks}. The region $\tilde P-\mathcal{D}$ has a crescent shape, and a fundamental domain for the action of $\gamma$ is shaded darker. Note that the fundamental domain is compact and bounded away from $\Lambda$.}
    \label{fig:lune}
\end{figure}
\begin{rmk}\label{rmk:parabolic_support}
    We remark that a parabolic circle $C$ is always shadowed by a support circle. Indeed, if $p=C\cap\Lambda$ is a parabolic fixed point, then it corresponds to a cusp of the surface $\partial E_+$. The bending lamination on $\partial E_+$ is contained in a compact set, and thus the cusp always belongs to a flat piece (see e.g.~Figure~\ref{subfig:punctured_polygon} in the introduction). The corresponding support plane has a boundary circle $C'$ meeting $\Lambda$ at $p$, which must be tangent to $C$ at $p$ (cf.\ Lemma~\ref{lm:angle_means_atom} below).
\end{rmk}

We next consider the behavior of support planes when the bending lamination is purely atomic.
\begin{prop}\label{prop: atomic_support_flat}
If the bending lamination is purely atomic, i.e. $\mathcal{L}_m=\varnothing$, then any support plane $P$ to a flat piece is closed.
\end{prop}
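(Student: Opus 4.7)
The plan is to show that the $\Gamma$-orbit of the boundary circle $C = \partial\tilde P$ is closed in the space of circles in $S^2$, which is equivalent to $P$ being closed in $M$. Let $F = \tilde P \cap \hull(\Lambda)$ be the 2-dimensional flat piece and $H = \stab_\Gamma(\tilde P) \cong \pi_1(X')$ be the stabilizer, where $X'$ is the component of $X - \mathcal{L}_a$ corresponding to $F$. I will argue by contradiction: suppose there exist $\gamma_i \in \Gamma$ in pairwise distinct cosets $\gamma_i H$ with $\gamma_i C \to C'$ in the Hausdorff topology on $S^2$.

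Since $\Gamma$ is quasifuchsian, $\Lambda$ is a Jordan curve, and both $C$ and $C'$ lie in $\overline{\Omega_+}$. In particular, $C$ (resp.\ $C'$) bounds an open disk $D$ (resp.\ $D'$) on the side opposite $\hull(\Lambda)$ whose interior is contained in $\Omega_+$. I will pick $z$ in $D'$ lying in the thick part $\Omega_+^{\text{thick}}$ of $\Omega_+$, i.e.\ the complement of standard horoball neighborhoods at the cusps of $\Gamma$. For $i$ large, $z$ lies in $\gamma_i D$, so $\gamma_i^{-1} z \in D$; moreover $\gamma_i^{-1} z \in \Omega_+^{\text{thick}}$ by the $\Gamma$-invariance of the thick part.

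The key step will be to use the $H$-action on $D$ to move $\gamma_i^{-1} z$ into a fixed compact subset of $\Omega_+$ without altering the coset $\gamma_i H$. The group $H$ acts properly discontinuously on $D$ with fundamental domain $D_H \subset D$; this $D_H$ is compact when $H$ is convex cocompact, and has cusp regions at the parabolic fixed points of $H$ otherwise. Crucially, because $X'$ is an incompressible subsurface of $X$, the parabolic fixed points of $H$ coincide with the parabolic fixed points of $\Gamma$ that lie on $C$, so the standard horoballs defining $\Omega_+^{\text{thick}}$ match the cusp regions of $D_H$ and the intersection $D_H \cap \Omega_+^{\text{thick}}$ is compact. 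Choosing $h_i \in H$ so that $h_i\gamma_i^{-1} z \in D_H$, we obtain $h_i\gamma_i^{-1} z \in D_H \cap \Omega_+^{\text{thick}}$, a fixed compact set of $\Omega_+$. Setting $\delta_i = \gamma_i h_i^{-1}$ (still in the coset $\gamma_i H$), the proper discontinuity of the $\Gamma$-action on $\Omega_+$ forces $\{\delta_i\}$ to be finite, contradicting distinctness of the cosets $\{\gamma_i H\}$. The main obstacle I expect is the parabolic case: identifying the cusps of $H$ with those of $\Gamma$ on $C$, and coordinating the horoball choices so that the thick part of $\Omega_+$ and the compact part of $D_H$ are compatible, is essential for the argument to close.
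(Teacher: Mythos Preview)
Your approach is essentially the same as the paper's: both reduce closedness of $\Gamma \cdot C$ to proper discontinuity of $\Gamma$ on $\mathbb{H}^3 \cup \Omega$ (or on $\Omega_+$, in your version on the disk $D$) together with compactness of a fundamental domain for $H = \stab_\Gamma(\tilde P)$. The paper phrases this more tersely by asserting that $H$ is a convex cocompact nonelementary Fuchsian group of the second kind and then invoking the argument of Proposition~\ref{prop: support_leaf}; your explicit handling of possible parabolics in $H$ via the thick part is exactly the adjustment needed when the component $X'$ of $X\setminus\mathcal{L}_a$ carries cusps of $X$, and is the correct way to read the paper's proof in that generality.
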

\begin{proof}
Since the bending lamination is purely atomic, each component of $\partial E_+-\mathcal{L}_a$ is a nonelementary hyperbolic surfaces with geodesic boundary. The stabilizer $\stab_\Gamma(\tilde P)$, being isomorphic to the fundamental group of one such component, is a geometrically finite nonelementary Fuchsian group of second kind, and is actually convex cocompact if the flat piece does not contain any cusps (see e.g.\ \cite[Thm.~5.1]{MMO2} and \cite[Lem.~3.7]{acy_geom_finite}).

In the convex cocompact case, a fundamental domain $F$ of $\stab_\Gamma(\tilde P)$ in $\tilde P\cup C$ is compact in $\mathbb{H}^3\cup\Omega$, on which the action of $\Gamma$ is properly discontinuous. An argument similar to the first case of Proposition~\ref{prop: support_leaf} then gives the result: if $\gamma_i C\to C'$, then we can find $x_i\in F\cap C$ and $\tau_i\in\stab_\Gamma(\tilde P)$ so that $\gamma_i\tau_ix_i\to x\in C'\backslash\Lambda$, and thus by proper discontinuity, $\gamma_i\tau_i=\delta\in\Gamma$ for all $i$ sufficiently large, and hence $C'=\delta C$.

More generally, we can take away from $\tilde P$ finitely many orbits of small enough horodisks (so that these horodisks project down to disjoint cuspidal neighborhoods on $\partial E_+$) similar to the proof Propostion~\ref{prop:horodisks}. The remaining portion has a compact fundamental domain for the action of $\stab_\Gamma(\tilde P)$. We can then argue as above to obtain the desired result.
\end{proof}
Since in this case, a support plane either supports a flat piece or an atom in $\mathcal{L}_a$, this proposition and Proposition~\ref{prop: support_leaf} imply that all support planes are closed, giving \ref{item:cylinder} and \ref{item:nonelementary} of Proposition~\ref{prop: atomic}. It remains to consider the case when $P$ is an asymptotic plane, or equivalently $|C\cap\Lambda|=1$.

The following lemma is true for an arbitrary bending lamination (not necessarily purely atomic):
\begin{lm}\label{lm:angle_means_atom}
Let $C_1,C_2$ be two non-tangent circles contained in $\overline{\Omega_+}$ such that $C_1\cap\Lambda=C_2\cap\Lambda=p$, then $p$ is a fixed point of a hyperbolic element in $\Gamma$ corresponding to an atom in $\mathcal{L}_a$.
\end{lm}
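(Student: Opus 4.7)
The plan is to work in the upper half-space model of $\mathbb{H}^3$ with $p$ placed at $\infty$. Then $C_1, C_2$ become non-parallel lines $L_1, L_2$ in $\mathbb{C}$ meeting at a unique finite point, and each $P_i$ is the vertical half-plane above $L_i$. The hypotheses become: each $L_i$ is contained in $\overline{\Omega_+}$ and meets $\Lambda$ only at $\infty$.

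My first step is to establish a sector containment. Since $\Omega_-$ is an open connected subset of $\hat{\mathbb{C}}$ disjoint from $L_1\cup L_2$, it lies in a single one of the four open sectors cut out by these lines; call this sector $S$, whose opening angle is strictly less than $\pi$. The set $\Lambda\cap\mathbb{C}=\partial\Omega_-\cap\mathbb{C}$ is also disjoint from $L_1\cup L_2$ (since $\Lambda\cap L_i=\{\infty\}$), so it too lies in the open sector $S$. In particular, $\Omega_-$ has angular opening at $\infty$ strictly less than $\pi$. A first consequence is that $p$ cannot be a parabolic fixed point: a parabolic $\gamma\in\Gamma$ fixing $\infty$ would act as $z\mapsto z+v$, making $\Lambda\cap\mathbb{C}$ invariant under translation by $v$ and hence containing an orbit extending to infinity in both the $+v$ and $-v$ directions, incompatible with lying in an open sector of opening less than $\pi$.

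The heart of the argument is to interpret the sector containment as forcing a positive atomic bending contribution at $p$. The two tangent directions of the Jordan arc $\Lambda\setminus\{\infty\}$ at $\infty$ lie in $\overline{S}$, and the angular opening $\beta$ of $\Omega_-$ at $\infty$ measured between them is bounded above by the opening of $S$, so $\beta<\pi$. Hence $\Lambda$ has a convex corner at $p$ of positive turning angle $\theta=\pi-\beta>0$ on the $\Omega_-$ side. By the structure theory of pleated surfaces developed in \cite{bending}, this corner angle equals the total atomic weight of $\mathcal{L}$ concentrated at the corresponding point $\tilde p\in S^1_\infty(\tilde H)$: non-atomic bending contributions produce no corner in the limit set, so only atomic leaves ending at $\tilde p$ can generate one. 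Consequently there is a leaf $\alpha\in\mathcal{L}_a$ with $\tilde p$ as an endpoint; since $\alpha$ is a simple closed geodesic on $X$, its lift through $\tilde p$ is the axis of a hyperbolic element $\gamma\in\Gamma$ fixing $p$ under the identification $\Lambda\cong S^1_\infty(\tilde H)$, yielding the required element corresponding to an atom.

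The main obstacle is making rigorous the correspondence between a corner of $\Lambda$ at a boundary point and the atomic weight of $\mathcal{L}$ at the corresponding point on $S^1_\infty(\tilde H)$. This will require the continuity of the bending map in the transverse measure together with an analysis of the tangent cone of the pleated surface at a boundary point at infinity, drawing on the techniques from pleated surface theory in \cite{bending}.
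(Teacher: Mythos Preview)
Your setup is fine: placing $p$ at $\infty$ and observing that $\Lambda\setminus\{p\}$ lies in an open sector of opening $<\pi$ is exactly the right geometric picture. But the argument breaks down at the step you yourself flag as the ``main obstacle.'' The assertion that the angular opening of $\Omega_-$ at $\infty$ equals (or is bounded by) the total atomic weight of $\mathcal{L}$ at the corresponding boundary point is not a result one can simply cite from \cite{bending}; it is essentially a reformulation of the lemma itself. You have traded the original statement for an equivalent one about corners of $\Lambda$ versus atoms of the bending measure, without supplying the mechanism that links them. There is also a subsidiary issue: you speak of ``the two tangent directions of the Jordan arc $\Lambda\setminus\{\infty\}$ at $\infty$,'' but a quasicircle need not have tangent directions, so the quantity $\beta$ is not obviously well defined.

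The paper avoids both problems with a short direct construction. Take a small circle $C'\subset\overline{\Omega_+}$ through $p$ whose tangent direction at $p$ is distinct from those of $C_1$ and $C_2$ (such a $C'$ exists because the sector has nonempty interior). Now enlarge $C'$ continuously, keeping the tangent direction at $p$ fixed. Because $C'$ is not tangent to $C_1$ or $C_2$, the enlarged circles eventually escape the region between them and hence cannot remain in $\overline{\Omega_+}$; so at some finite stage the circle first meets $\Lambda$ at a second point. This produces a support circle through $p$, so $p$ is the endpoint of a bending line. Since this can be done for a range of tangent directions at $p$, one obtains support circles through $p$ that are not mutually tangent there, which forces a positive bending angle at a leaf through $p$ and hence an atom. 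This enlarging argument is precisely the missing link between your sector observation and the atomic conclusion; your ruling out of a parabolic fixed point, while correct, is then unnecessary, as the atom automatically supplies the hyperbolic element.
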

\begin{proof}
Indeed, note that $\overline{\Omega_+}$ contains a circle $C'$ such that $C'\cap\Lambda=p$ and $C'$ is tangent to neither $C_1$ nor $C_2$ (see Figure~\ref{fig:tangent circles}). Moreover, $C'$ must be shadowed by a support plane; if we enlarge the circle $C'$ (but make sure that it remains tangent to $C'$ at $p$), then it must first meet the limit set elsewhere (note that \emph{a priori}, it is not immediately clear that it meets the limit set elsewhere at only one point). For the corresponding plane, we have $P'\cap\partial E_+=\hull(C'\cap\Lambda)$ by \cite[Lemma II.1.6.2]{bending}. As $p$ is an isolated point in $C'\cap\Lambda$, it must be the end point of a leaf.

Note that by rotating the circle $C'$ slightly around $p$, we find another support plane containing a leaf ending at $p$. By the description of the shape of the convex core boundary in the last section, if $p$ is the end point of a lift of a leaf in a minimal component of $\mathcal{L}_m$, then there is only one unique support circle meeting $\Lambda$ at $p$.

Hence $p$ is the endpoint of some lift of an atom, as desired.
\end{proof}
\begin{figure}[htp]
\centering
\includegraphics{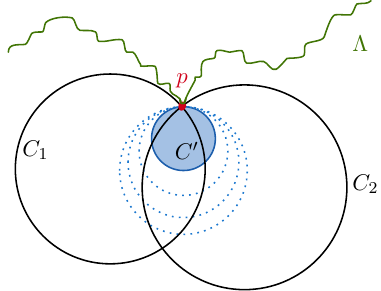}
\caption{It is possible to enlarge $C'$ without crossing the limit set}
\label{fig:tangent circles}
\end{figure}

\begin{prop}\label{prop: atomic_horocycle_geodesic}
Let $p$ be a point satisfying the conditions of the previous lemma, and let $C$ be a circle touching the limit set only at $p$. Let $\gamma\in\Gamma$ be a hyperbolic element with $p$ as the repelling fixed point. Suppose $\gamma^iC\to C'$ as $i\to\infty$ in $\mathcal{C}$. Then $\overline{\Gamma\cdot C}=\Gamma\cdot C\cup\Gamma\cdot C'$.
\end{prop}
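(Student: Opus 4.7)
The plan is to establish each inclusion separately. The inclusion $\Gamma\cdot C\cup\Gamma\cdot C'\subset\overline{\Gamma\cdot C}$ follows from the observation that for any $\delta\in\Gamma$ the sequence $(\delta\gamma^i)C=\delta(\gamma^iC)$ lies in $\Gamma\cdot C$ and converges to $\delta C'$, so $\delta C'\in\overline{\Gamma\cdot C}$.

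Before tackling the reverse inclusion, I would record two geometric facts about $C'$: it passes through $p$ and $q$ (the other fixed point of $\gamma$), and it is tangent to $C$ at $p$ as circles. Passage through $q$ follows because $\gamma^iz\to q$ for every $z\neq p$, so $q$ is a Hausdorff limit point of $\gamma^iC$. Tangency holds because $\gamma$ preserves each support plane through its axis, forcing $\mu:=\gamma'(p)\in\mathbb R^+$; then the tangent line to $\gamma^iC$ at $p$ equals the tangent line to $C$ for every $i$, and hence coincides with the tangent line to $C'$ in the limit.

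For the reverse inclusion, take $\gamma_nC\to C''$ and pass to subsequences. If $\{\gamma_n\}$ stays bounded in $\psl(2,\mathbb C)$, discreteness of $\Gamma$ forces $\gamma_n$ to be eventually constant and $C''\in\Gamma\cdot C$. Otherwise $\gamma_n\to\infty$, and after a further subsequence there exist $\alpha,\beta\in\hat{\mathbb C}$ with $\gamma_n\to\alpha$ uniformly on compacta of $\hat{\mathbb C}\setminus\{\beta\}$. For $\gamma_nC$ to have a non-degenerate circle limit $\beta$ must lie on $C$, and the crux of the argument is to show $\beta=p$: otherwise $\beta\in C\setminus\{p\}\subset\Omega_+$, and taking any $x_0\in\Omega_+\setminus\{\alpha\}$ gives $\gamma_n^{-1}(x_0)\to\beta$ with $\gamma_n^{-1}(x_0)\in\Omega_+$, which by proper discontinuity of $\Gamma$ on $\Omega_+$ forces $\beta\in\Lambda$---a contradiction.

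With $\beta=p$ secured, uniform convergence yields $\gamma_nq\to\alpha\in\Lambda$, and $C''$ contains the distinct points $p^*:=\lim\gamma_np$ and $\alpha$. Since the circles $\gamma_nC'$ are all contained in the bounded region $\overline{\Omega_+}$ and $\Gamma\cdot C'$ is closed by Propositions~\ref{prop: support_leaf} and~\ref{prop: atomic_support_flat} (as $C'$ is a support circle, either to the atomic leaf or, in principle, to an adjacent flat piece), a further subsequence gives $\gamma_nC'\to\delta C'$ for some $\delta\in\Gamma$, with $p^*,\alpha\in\delta C'$ by Hausdorff convergence of $\gamma_np$ and $\gamma_nq$. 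M\"obius-invariance of the tangency between $C$ and $C'$ at $p$ passes to the limit, so $C''$ is tangent to $\delta C'$ at $p^*$; uniqueness of a circle through two given points with a prescribed tangent line at one of them then forces $C''=\delta C'\in\Gamma\cdot C'$, completing the proof.
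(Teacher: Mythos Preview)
Your overall architecture is sound and the convergence-group analysis (showing $\beta\in C\cap\Lambda=\{p\}$ via proper discontinuity) is correct, but there is a genuine gap at the very end: you assert that $p^*:=\lim\gamma_np$ and $\alpha$ are \emph{distinct}, and your uniqueness step (``a circle through two given points with a prescribed tangent at one of them is unique'') hinges on this. Nothing you have written rules out $p^*=\alpha$. Indeed, $\beta=p$ means the convergence $\gamma_n\to\alpha$ is only uniform on compacta of $\hat{\mathbb C}\setminus\{p\}$, so $\gamma_np$ is completely uncontrolled; and from your argument one only knows $p^*,\alpha\in\delta C'\cap\Lambda=\{\delta p,\delta q\}$, which does not force them to be different. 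If $p^*=\alpha$, you are left with a one-parameter family of circles in the closed disk bounded by $\delta C'$, tangent to $\delta C'$ at $p^*$, and no way to single out $\delta C'$ among them.

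The fix, and the paper's approach, is to use more than the mere closedness of $\Gamma\cdot C'$. The proof of Proposition~\ref{prop: support_leaf} actually shows that $\Gamma\cdot C'$ is \emph{discrete}: since a fundamental domain for $\langle\gamma\rangle$ on $C'\setminus\{p,q\}$ is compact in $\Omega_+$, proper discontinuity forces $\gamma_n\gamma^{m_n}=\delta$ for some integers $m_n$ and all large $n$. This immediately gives $p^*=\delta p$ and $\alpha=\delta q$ (so they are indeed distinct), and, more to the point, renders the tangency argument unnecessary: one has $\gamma_nC=\delta\gamma^{-m_n}C$, and the limit is $\delta\gamma^{-m}C\in\Gamma\cdot C$ if $m_n$ is bounded and $\delta C'$ otherwise. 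The paper's route is thus both shorter and avoids the gap.
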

\begin{proof}

It suffices to show that $\overline{\Gamma\cdot C}\subset\Gamma\cdot C\cup\Gamma\cdot C'$. We may assume $\Omega_+$ is a bounded domain. Then $C$ is contained in the disk bounded by $C'$. Suppose $\gamma_i C\to C_{\lim}$. Then $\gamma_iC'$ also limits to a circle $C_{\lim}'$ (passing to a subsequence if necessary), and $C_{\lim}$ is contained in the disk bounded by $C_{\lim}'$. By Proposition~\ref{prop: support_leaf}, when $i$ is sufficiently large, $\gamma_i\gamma^{n_i}=\delta$ for some integer $n_i$, and $C_{\lim}'=\delta C'$. It then follows that either $C_{\lim}=\delta\gamma^nC\in\Gamma\cdot C$ for some integer $n$ (when $n_i$ is bounded above), or $C_{\lim}=\delta C'\in\Gamma\cdot C'$, as desired.
\end{proof}

On the other hand, if $p$ is not the endpoint of a lift of an atom, any two circles $C_1$ and $C_2$ contained in $\overline{\Omega_+}$ meeting the limit set only at $p$ are tangent to each other at $p$. In particular, if we assume $\Omega_+$ is a bounded domain (which we may without loss of generality by applying a M\"obuis transform), there exists such a circle of the largest radius. We will call this the \emph{osculating circle} at $p$. Note that in the introduction, we call the geodesic plane corresponding to such a circle a roof. We will discuss more about roofs and osculating circles in the next section. In the case of purely atomic bending, we have:
\begin{lm}\label{lm: atomic_roof}
Suppose $\mathcal{L}_m=\varnothing$. Then every roof is a support plane.
\end{lm}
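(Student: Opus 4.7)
The plan is to show that if $P$ is a roof with boundary circle $C$ and $|C\cap\Lambda|\le 1$, we reach a contradiction. The case $C\cap\Lambda=\varnothing$ is immediate: $C$ then has positive distance from $\Lambda$, and slightly enlarging $C$ yields a circle in $\overline{\Omega_+}$ that shadows it. So we may assume $C\cap\Lambda=\{p\}$, and split on whether $p$ is an endpoint of a lift of an atom in $\mathcal{L}=\mathcal{L}_a$.

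Suppose first that $p$ is the endpoint of a lift $\tilde\ell$ of an atom, with other endpoint $q\in\Lambda$ and bending angle $\theta>0$. The support planes through $\tilde\ell$ give a one-parameter family of boundary circles $\{D_s\}_{s\in[0,\theta]}$ all passing through $\{p,q\}$, whose tangent directions at $p$ sweep continuously across the wedge cut out by the two tangent directions of $\Lambda$ at the kink $p$. Since $C\subset\overline{\Omega_+}$ is tangent to $\Lambda$ at $p$ from the inside, its tangent direction at $p$ lies in this wedge, so by the intermediate value theorem some $D_{s^*}$ matches the tangent direction of $C$ at $p$. These two circles are internally tangent at $p$, as both bounded disks lie on the $\overline{\Omega_+}$-side of $\Lambda$. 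A simple-connectedness argument shows $\Lambda\setminus\{p\}$ lies in the component of $S^2-C$ containing $\Omega_-$, so $q$ sits outside the bounded disk of $C$. This forces the bounded disk of $D_{s^*}$ to strictly contain that of $C$, and hence $D_{s^*}$ shadows $C$, contradicting the roof property.

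Now suppose $p$ is not an endpoint of any atom lift. By the preceding lemma, every circle in $\overline{\Omega_+}$ touching $\Lambda$ only at $p$ shares the same tangent direction at $p$ as $C$. In the purely atomic case, $p$ must lie either in the uncountable intersection $K=D\cap\Lambda$ of some support circle $D$ of a flat piece, or be a parabolic fixed point. In the first subcase, the accumulation of $K$ on $D$ at $p$ forces $D$'s tangent direction at $p$ to coincide with the common tangent direction, so $C$ and $D$ are internally tangent at $p$. If $D$'s bounded disk contains $C$, then $D$ shadows $C$ and we are done. Otherwise $C$'s bounded disk contains $D$, so $K\setminus\{p\}\subset D\setminus\{p\}$ would lie in the open bounded disk of $C$; but this disk is contained in $\Omega_+$ and disjoint from $\Lambda$, forcing $K=\{p\}$, impossible since $K$ is uncountable. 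In the parabolic subcase, the maximal horocycle at $p$ is a support circle that shadows every smaller tangent horocycle, including $C$.

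The main obstacle is the tangency-matching in the Cantor subcase, namely showing that $D$'s tangent direction at $p$ equals the common tangent direction of circles touching $\Lambda$ only at $p$; this rests on the accumulation of $K$ along $D$ near $p$ together with the constraint from the previous lemma. A secondary technical point is the simple-connectedness argument placing $\Lambda\setminus\{p\}$ in the component of $S^2-C$ containing $\Omega_-$, which follows from $\Omega_-$ being a Jordan domain that avoids the bounded disk of $C$.
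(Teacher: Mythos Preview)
Your case analysis in the ``$p$ not an atom endpoint'' branch is incomplete, and this is a genuine gap. You assert that in the purely atomic case such a $p$ must lie either on a flat-piece support circle $D$ or be a parabolic fixed point, but this dichotomy is false. Under the homeomorphism $\partial\tilde E_+\cong\mathbb{H}^2$, $\Lambda\cong S^1$, a point $p$ lies on some flat-piece support circle exactly when the geodesic ray in $X$ heading toward $p$ is eventually disjoint from $\mathcal{L}_a$ (so it is eventually trapped in one component $X_i$). But a generic point of $\Lambda$ corresponds to a ray that crosses the atoms of $\mathcal{L}_a$ infinitely often; such a $p$ is neither an atom endpoint, nor parabolic, nor on any support circle at all. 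Your argument says nothing about why the osculating circle at such a $p$ should meet $\Lambda$ elsewhere, and this is precisely the case that carries the content of the lemma.

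The paper's proof is designed exactly for this missing case. It slices by an auxiliary plane $\tilde P'$ through $p$ that separates $\Lambda$, and looks at the piecewise-geodesic curve $l'=\tilde P'\cap\partial\hull(\Lambda)$ in the upper half-plane model of $\tilde P'$ with $p$ at the origin. When the ray toward $p$ crosses atoms infinitely often, $l'$ consists of infinitely many geodesic arcs coming from distinct flat-piece support planes; discreteness forces the Euclidean radii of these arcs to tend to zero, while the condition $p\in\Lambda$ forces all arcs to lie on one side of the imaginary axis. These two constraints force $l'$ eventually below $l=\tilde P\cap\tilde P'$, contradicting convexity. Your treatment of the two easy subcases (atom endpoint; $p$ already on a flat-piece support circle) is essentially correct, but without an argument covering the infinitely-crossing case the proof does not close.
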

\begin{proof}
Equivalently, every osculating circle meets the limit set at $2$ or more points. Let $C$ be an osculating circle at $p$, and $\tilde P$ the corresponding geodesic plane in $\mathbb{H}^3$. Take a circle $C'$ passing through $p$ and separate the limit set (this means that the limit set intersects both components of $S^2-C'$), and $\tilde P'$ the corresponding geodesic plane. Let $l=\tilde P\cap \tilde P'$ and $l'$ be the intersection of $\tilde P'$ with the boundary of $\hull(\Lambda)$ on this end.
\begin{figure}[htp]
\centering
\includegraphics{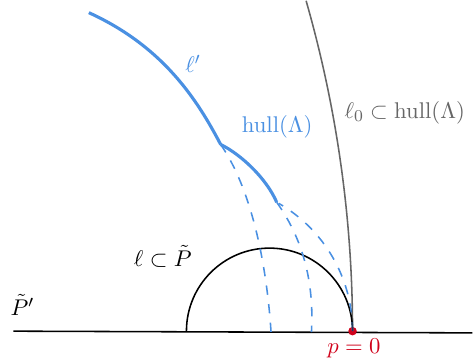}
\caption{A vertical slice of the convex core}
\label{fig:core_slice}
\end{figure}

Note that $l'$ consists of countably many geodesic arcs that come from the intersection of $\tilde P'$ with support planes. We map $\tilde P'$ to the standard upper half plane so that $p$ is the origin, $l$ is a half circle to the left of the imaginary axis, and $l'$ is a piecewise geodesic above $l$ (see Figure~\ref{fig:core_slice}).

As mentioned after the proof of Proposition~\ref{prop: atomic_support_flat}, when $\mathcal{L}_m=\varnothing$, the $\Gamma$-orbit of every support plane is discrete, since the corresponding plane in the manifold is closed (for the relation between closed orbits and discreteness, see \cite[\S2]{MMO1}). If $l'$ consists of infinitely many geodesic segments, then by the discreteness of the orbit of the supporting planes, the radii of the half circles of these geodesic segments tend to zero. On the other hand, these half circles all lie to the left of the imaginary axis, for otherwise $p$ is contained in the domain of discontinuity. But this is impossible, as then $l'$ would eventually lie below $l$. 
\end{proof}
In the language we introduced, this means that every asymptotic plane is shadowed by a support plane when the bending lamination is purely atomic. If the support plane intersects the convex core boundary in a simple closed geodesic, Proposition~\ref{prop: atomic_horocycle_geodesic} implies that the closure of the asymptotic plane is the union of itself and the support plane. If the support plane intersects the convex core boundary in a flat piece, we have
\begin{prop}\label{prop: atomic_horocycle_flat}
Let $\mathcal{L}_m=\varnothing$. Suppose $C\subset\overline{\Omega_+}$ and $C\cap\Lambda=p$, where $p$ is not a parabolic fixed point. Suppose furthermore that $C$ is contained in the closed disk $D$ bounded by $C'$, a boundary circle of a flat piece. Then $\overline{\Gamma\cdot C}=\cup_{\gamma\in\Gamma}\mathcal{H}(\gamma D)$.
\end{prop}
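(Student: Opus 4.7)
The plan is to prove both inclusions. Throughout, set $H:=\stab_\Gamma(C')$; under the purely atomic hypothesis, $H$ is a non-elementary (geometrically finite, convex cocompact when $X_+$ has no cusp in the flat piece) Fuchsian group acting on $\tilde{P}'\cong\mathbb{H}^2$ with limit set $C'\cap\Lambda$ (a Cantor set on $C'$). I take $\mathcal{H}(\gamma D)$ to mean the set of circles contained in $\overline{\gamma D}$ tangent to $\gamma C'$ at a point of $\gamma C'\cap\Lambda$; by $\Gamma$-equivariance $\mathcal{H}(\gamma D)=\gamma\cdot\mathcal{H}(D)$, so after absorbing a $\Gamma$-translate it suffices to prove the statement relative to $D$ and $H$.

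\textbf{Forward inclusion.} Suppose $\gamma_n C\to C^*$. Since the orbit $\Gamma\cdot C'$ is closed by Proposition~\ref{prop: atomic_support_flat}, after passing to a subsequence $\gamma_n C'=\gamma_0 C'$ for all $n$ and some $\gamma_0\in\Gamma$; replacing $\gamma_n$ by $\gamma_0^{-1}\gamma_n$ (and $C^*$ by $\gamma_0^{-1}C^*$) reduces to $\gamma_n\in H$. Each $\gamma_n C$ is tangent to $C'$ at $\gamma_n p$, and because $H$ acts properly discontinuously off its limit set, along a subsequence $\gamma_n p\to q$ for some $q\in C'\cap\Lambda$. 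The limit $C^*\subset\overline{D}$ is therefore tangent to $C'$ at $q\in C'\cap\Lambda$, so $C^*\in\mathcal{H}(D)$.

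\textbf{Reverse inclusion.} It suffices to show every $C^*\in\mathcal{H}(D)$ lies in $\overline{H\cdot C}$. In the upper half space model with $C'$ the real line, a circle tangent to $C'$ at $q\in\mathbb{R}$ from inside $D$ is precisely a horocycle in $\tilde{P}'\cong\mathbb{H}^2$ based at $q$, together with a positive size parameter; this identification is $\psl(2,\mathbb{R})$-equivariant, so it intertwines the $H$-action on circles with the $H$-action on horocycles. The density of $H\cdot C$ in $\mathcal{H}(D)$ now follows from the classical topological dynamics of horocycle flows on non-elementary geometrically finite hyperbolic surfaces: the $H$-orbit of any horocycle based at a \emph{conical} limit point is dense in the set of horocycles based at conical limit points (Hedlund's theorem in the cocompact case; Dal'bo, Schapira, and Roblin in the geometrically finite extension). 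Since all limit points in the convex cocompact case are conical, $H\cdot C$ is dense in $\mathcal{H}(D)$, and the reverse inclusion follows.

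\textbf{Main obstacle.} The reverse inclusion is the substantive direction, and rests on the density of horocycle orbits at conical limit points for non-elementary Fuchsian groups — a non-trivial ergodic-theoretic input, but a well-established tool in the area. One subtle auxiliary point is the exceptional case in which $p$ is a parabolic fixed point of $H$ (possible only when the flat piece bounded by $C'$ contains a cusp of $X_+$): the horocycle through $p$ descends to a closed loop in the cusp and its $H$-orbit is itself closed rather than dense, so that case would require a separate treatment (or is excluded by the hypothesis that $C$ shadows a flat piece rather than supports a cusp). A more elementary alternative that avoids invoking horocycle dynamics would use a ping-pong argument with two hyperbolic elements of $H$ whose fixed points cluster near the target $q\in C'\cap\Lambda$, constructing sequences $h_n\in H$ with prescribed tangent point and size via careful tracking of $(h_n(p),|h_n'(p)|)$.
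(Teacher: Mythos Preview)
Your approach matches the paper's: the reverse inclusion via Dal'bo's theorem on horocycle orbit closures for non-elementary Fuchsian groups, and the forward inclusion via closedness of $\Gamma\cdot C'$ (Proposition~\ref{prop: atomic_support_flat}) reducing to the action of $H=\stab_\Gamma(C')$. The paper's proof is two sentences invoking exactly these two ingredients; it asserts $H$ is convex cocompact and thereby sidesteps the parabolic subtlety you correctly flag.
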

Here $\mathcal{H}(D):=\cup_{p\in\Lambda\cap D}\{C_0:C_0\text{ tangent to $\partial D$ at $p$ and $C_0\subset\overline{D}$}\}\subset\mathcal{C}$ for any open or closed disk $D$ contained in $\overline{\Omega_+}$.

\begin{proof}
Note that $\stab_\Gamma(D)$ is a geometrically finite nonelementary Fuchsian group of second kind. Applying Dal'bo's theorem \cite{horocycle}, since $p$ is not a parabolic fixed point, we have $\overline{\Gamma\cdot C}\supset\cup_{\gamma\in\Gamma}\mathcal{H}(\gamma D)$. The other inclusion is similar to the proof of Proposition~\ref{prop: atomic_horocycle_geodesic}.
\end{proof}

Propositions~\ref{prop: support_leaf}, \ref{prop:horodisks}, \ref{prop: atomic_support_flat}, \ref{prop: atomic_horocycle_geodesic} and \ref{prop: atomic_horocycle_flat} together with Lemma~\ref{lm: atomic_roof} now give Proposition~\ref{prop: atomic}.

\section{Exotic roofs}\label{sec:special_roofs}
In this section, we study exotic roofs. Whether they exist or not leads to different consequences for the classification of geodesic planes, as we will see in \S\ref{sec:geodesic_planes_irrational}.
\subsection{Exotic roofs and exotic rays: a necessary condition}\label{sec: roof}
In this subsection, we aim to relate the existence of exotic roofs to the existence of exotic rays and prove Theorem~\ref{prop:exotic_circle_necessary}, giving a necessary condition for the existence of exotic roofs.

We start with the following observation:
\begin{lm}\label{lem:finite_bending}
Let $C\subset\overline{\Omega_+}$ so that $C\cap\Lambda=p$. Let $C'$ be any circle passing through $p$ that separates $\Lambda$, and $\tilde P'$ the corresponding geodesic plane in $\mathbb{H}^3$. Let $l'$ be the intersection of $\tilde P'$ with $\partial\hull(\Lambda)$ on this end. Then any ray contained in $l'$ tending to $p$ has finite bending.
\end{lm}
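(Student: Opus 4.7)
The plan is to bound the total bending along the ray in $l'$ via a Gauss--Bonnet argument on a cuspidal region in $\tilde P'$, and then to convert this bound into a bound on the intersection number with the bending lamination $\mathcal{M}$ via spherical trigonometry.

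Work in the upper half plane model of $\tilde P'$ with $p$ placed at a finite point on the real axis (say $p = 0$). Then $l := \tilde P \cap \tilde P'$ is a half-circle having $p$ as one of its two ideal endpoints, and the ray of $l'$ also tends to $p$. Since $C \cap \Lambda = \{p\}$, the circle $C$ is tangent to $\Lambda$ at $p$ from the $\Omega$-side, so $\tilde P$ is asymptotic to $\partial\hull(\Lambda)$ at $p$ and disjoint from the interior of $\hull(\Lambda)$. Thus $l$ misses the interior of the convex region $W := \tilde P' \cap \hull(\Lambda)$ and serves as a support geodesic to $W$ at the ideal vertex $p$; consequently the boundary $l'$ of $W$ approaches $p$ along the same ideal tangent direction as $l$.

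Now form the region $R \subset \tilde P'$ bounded by a segment of $l$, a segment of the $l'$-ray, and a short geodesic $\sigma$ joining their basepoints. The tangency of $l$ and $l'$ at $p$ makes $R$ a hyperbolic cusp of finite area. Gauss--Bonnet applied to $R$ (with ideal vertex $p$, two finite corners on $\sigma$ of angles $\alpha_1, \alpha_2$, and corners at the bending points of $l'$ inside $R$) yields
\[
\sum_j \psi_j \;=\; \operatorname{Area}(R) + \alpha_1 + \alpha_2 - \pi \;<\; \infty,
\]
where $\psi_j$ is the exterior angle of $l'$ measured inside $\tilde P'$ at its $j$-th bending corner.

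It remains to relate each $\psi_j$ to the bending measure $\theta_j$ of $\mathcal{M}$ at that crossing. If $\alpha_{j-1}, \alpha_j$ denote the angles $l'$ makes with the bending line in the flat pieces on either side, the spherical law of cosines applied to the $\mathbb{H}^3$-tangent configuration gives
\[
\cos \psi_j \;=\; \cos\alpha_{j-1}\cos\alpha_j + \sin\alpha_{j-1}\sin\alpha_j\cos\theta_j.
\]
When the transverse angles $\alpha_j$ stay bounded below by some $\alpha_0 > 0$ along the tail of the ray, this forces $\theta_j \leq K \psi_j$ for a uniform $K$, yielding $\sum_j \theta_j < \infty$. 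The main obstacle is ruling out the degeneration $\alpha_j \to 0$, in which case $\psi_j$ could be far smaller than $\theta_j$ and the Gauss--Bonnet bound on $\sum \psi_j$ would not directly control $\sum \theta_j$. I expect to handle this using the convexity of $\hull(\Lambda)$ together with the asymptotic tangency of $\tilde P$ to $\partial\hull(\Lambda)$ at $p$: these force the support planes of the flat pieces visited by $l'$ to tend to $\tilde P$, which keeps the $\alpha_j$ bounded below on the tail. Alternatively, one can bypass the angle-conversion step by a Gauss--Bonnet adapted to the pleated surface $H$ itself, with the bending measure treated as singular curvature, picking up $\sum \theta_j$ directly.
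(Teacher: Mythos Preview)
Your Gauss--Bonnet paragraph is exactly the paper's argument: $l'$ is trapped between the geodesic $l=\tilde P\cap\tilde P'$ and another geodesic through $p$, so the triangle $R$ bounded by $l$, $l'$, and a short transversal has finite area, and Gauss--Bonnet forces $\sum_j\psi_j<\infty$. That already proves the lemma.

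The issue is that you have misread what ``finite bending'' means here. In this lemma it refers only to the total turning $\sum_j\psi_j$ of $l'$ as a piecewise-geodesic curve \emph{inside the plane} $\tilde P'$, not to the transverse measure $\sum_j\theta_j$ with respect to the bending lamination $\mathcal{M}$. The paper says so explicitly in the paragraph immediately following the lemma: ``this does not immediately imply that the ray has finite transverse measure with respect to the bending lamination, as the bending of the ray may be much smaller than the bending measure, depending on the angle the ray intersects the leaves.'' So your entire second half --- the spherical law of cosines, the worry about $\alpha_j\to0$, the proposed alternate Gauss--Bonnet on $H$ --- is addressing a stronger statement than the one you were asked to prove, and should be deleted.

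For context: the passage from $\sum\psi_j<\infty$ to $\sum\theta_j<\infty$ is precisely the content of the \emph{next} result in the paper (Theorem~\ref{prop:exotic_circle_necessary}), and there the paper does not use your spherical-trigonometry route at all. Instead it argues that the support planes along the tail of $l'$ must converge to $\tilde P$ (since $C$ is osculating and $p$ is not a leaf endpoint), and then bounds the transverse measure past a fixed support plane $\tilde Q$ by the single ridge angle between $\tilde Q$ and $\tilde P$, using the definition of bending measure as an infimum over chains. Your sketch of the ``expected'' resolution (support planes tending to $\tilde P$) is pointing in the right direction, but the actual mechanism is this ridge-angle bound rather than a uniform lower bound on the $\alpha_j$.
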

Before giving the proof, we first set up the picture and define what we mean by ``finite bending". Let $\tilde P$ be the geodesic plane corresponding to $C$, and $l=\tilde P'\cap\tilde P$. Since $C'$ separates $\Lambda$, it must intersects $\Lambda$ in a different point $q$ from $p$. The geodesic $l_0$ connecting $p$ and $q$ is then contained in $\tilde P'$ and entirely in $\hull(\Lambda)$.

As in the last section, we can map $\tilde P'$ to the upper half plane $\mathbb{H}^2$ isometrically, with $p$ at the origin and $l$ a half circle to the left of the imaginary axis. Then $l'$ is a piecewise geodesic sandwiched between $l$ and $l_0$ (see Figure~\ref{fig:core_slice}). Each geodesic piece of $l'$ is part of the intersection of $\tilde P'$ and a support plane to a flat piece; we call this intersection a \emph{support line}. In our picture, any support line is a half circle to the left of the imaginary axis as well, with both endpoints on the negative real line. Moreover, two support lines intersect in a point above $l$.

Any finite segment $I$ of $l'$ can be approximated by a sequence $\{S_n\}$ of intersecting chains of support lines, by ``raising the roofs" just as discussed in \S\ref{sec: quasifuchsian}; see Figure~\ref{fig: raising_the_roof}. If $S_n\cap I$ becomes dense in $I$, we similarly define the \emph{bending} of $I$ to be the limit $\lim_{n\to\infty}(\text{bending angle of $S_n$})$. A ray in $l'$ is said to have \emph{finite bending} if any finite portion of it has bending uniformly bounded above.

\begin{proof}[Proof of Lemma~\ref{lem:finite_bending}]
Recall that $l'$ is a path in $\tilde P'$ ending in $p$ sandwiched between $l$ and $l_0$. Let $l''$ be a geodesic segment connecting a point on $l'$ and a point on $l$. Then $l,l',l''$ form a finite area triangle, with two geodesic boundary components and one with bending. By Gauss-Bonnet, the bending has to be finite.
\end{proof}

Note that this does not immediately imply that the ray has finite transverse measure with respect to the bending lamination, as the bending of the ray may be much smaller than the bending measure, depending on the angle the ray intersects the leaves of the lamination. On the other hand, for a purely atomic bending, Lemma~\ref{lm: atomic_roof} implies that the ray $l'$ is either eventually bounded away from any atom, or asymptotic to an atom. In either case, it does have finite transverse measure.

Finally, we use the description of the convex core boundary to finish the proof of Theorem~\ref{prop:exotic_circle_necessary}:

\begin{proof}[Proof of Theorem~\ref{prop:exotic_circle_necessary}]
We adopt the same setup and notations as in the discussion preceding the proof of the previous lemma.

The support lines of $l'$ limit to a half circle to the left of the imaginary axis with one endpoint at the origin. This limiting half circle is either $l$ or a circle lying above $l$. The corresponding support planes limit to planes with boundary circles meeting the limit set at $p$. Since $p$ is not the endpoint of any leaf, all these boundary circles must be tangent to $C$ at $p$. Moreover, as $C$ is an osculating circle, these limit circles must be exactly $C$.

We claim that the ray does have finite transverse measure with respect to the bending lamination $\mathcal{L}$ in $X$. Indeed, fix a support plane $\tilde Q$ so that $\tilde Q\cap\tilde P'$ intersects $l$ to the right of the peak of $l$. Let $x$ be a point on the intersection of this support line and $l'$. The support plane to every point post $x$ on $l'$ towards $p$ intersects $\tilde Q$. In particular, the transverse measure of $l'$ from $x$ to $p$ is bounded above by the bending angle $\theta$ formed by $\tilde Q$ and $\tilde P$.
\begin{figure}[htp]
    \centering
    \includegraphics[width=0.7\linewidth]{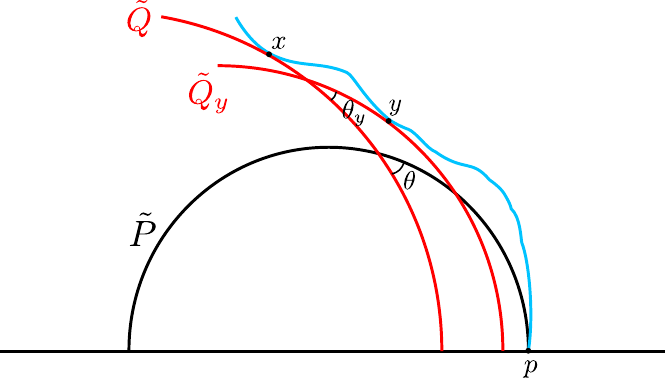}
    \caption{The bending measure between $x$ and $y$ is bounded above by $\theta$}
    \label{fig:angle_approx}
\end{figure}

Indeed, let $y$ be any point post $x$ on $l'$ towards $p$, and let $\tilde Q_y$ be the support plane at $y$. Suppose the ridge angle formed by $\tilde Q$ and $\tilde Q_y$ is $\theta_y$ (see Figure~\ref{fig:angle_approx}). Then by basic hyperbolic geometry, $\theta>\theta_y$. On the other hand, by \cite{bending}, the transverse measure of the segment between $x$ and $y$ with respect to the bending lamination is bounded above by $\theta_y$, and hence by $\theta$. Since $y$ is arbitrary, the total transverse measure of the ray is bounded above by $\theta$ as well.

These discussions then imply that $p$ lies in the halo of the lamination.
\end{proof}

From the second paragraph of the proof above, we extract following lemma, which will be helpful in \S\ref{sec:geodesic_planes_irrational}.
\begin{lm}\label{lm:exotic_support_limit}
    Any exotic circle is a limit of support circles. Equivalently, any exotic roof is a limit of support planes.
\end{lm}

\subsection{Constructing exotic roofs: a sufficient condition}\label{sec:sufficient}
Throughout this section, we assume that the bending lamination on $\partial E_+$ is minimal and irrational. Under this assumption, for any point $x\in\partial E_+$, there exists a unique support plane to $\partial E_+$ at $x$ (see \S\ref{sec: quasifuchsian}). Generally, we remark that the arguments in this subsection apply to a minimal component of the bending lamination. Let $X\cong\partial E_+$ be the hyperbolic surface isometric to $\partial E_+$, and $(\mathcal{L},\mu)$ the corresponding measured geodesic lamination.

Recall that the complement of $\mathcal{L}$ in $X$ consists of finitely many components. For simplicity, assume they are all ideal triangles. All ideal triangles are isomorphic; in Figure~\ref{fig:ideal_triangle}, we put the ideal points at infinity at $0$, $1$, and $\infty$. Consider the horizontal segment crossing from one side of the ideal triangle to the other side which has length $1$ (marked red in the figure). We call such a horizontal segment a \emph{unit crossing}. Note that there are three unit crossings, each for an ideal vertex. We refer to this vertex as the \emph{vertex of the crossing}.
\begin{figure}[htp]
\centering
\includegraphics{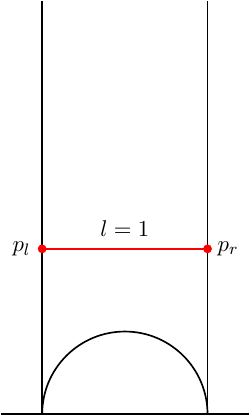}
\caption{An ideal triangle in the upper half plane}
\label{fig:ideal_triangle}
\end{figure}

Fix a complementary ideal triangle $T$ and a unit crossing on $T$ whose left and right endpoints are $p_l,p_r$. A \emph{sequence of leaf approximations} for $(\mathcal{L},\mu)$ consists of leaf segments $L_n$ so that
\begin{enumerate}[topsep=0mm, itemsep=0mm]
    \item Each leaf segment starts at the left endpoint $p_l$, and goes away from the vertex of the crossing;
    \item Each leaf segment $L_n$ ends at a point $p_n$ close to the right endpoint $p_r$;
    \item The length $d_n$ of the geodesic segment $L_n$ goes to infinity as $n\to\infty$;
    \item The transverse measure $\alpha_n$ of the geodesic segment $l_n$ connecting $p_n$ and $p_r$ goes to $0$.
\end{enumerate}
Let $C_n$ be the closed curve formed by concatenating $L_n$, $l_n$, and the unit crossing. Note that $C_n$ has length roughly $d_n$ and transverse measure $\alpha_n$, and in the space of geodesic currents, $C_n/d_n\to\mathcal{L}$. It is often easier to think about these objects in the universal cover.

A sequence of leaf approximations $\{L_n\}$ is called
\begin{itemize}[topsep=0mm, itemsep=0mm]
    \item \emph{exotic} if connecting $C_n$'s at $p_l$ in order and then straightening it gives an exotic ray;
    \item \emph{$f(d)$-separating} for some non-decreasing function $f$ if there exist constants $C>c>0$ so that the hyperbolic length of $l_n$ is between $ce^{-d_n}f(d_n)$ and $Ce^{-d_n}f(d_n)$; for convenience, \emph{at least} (resp.~\emph{at most}) $f(d)$-separating means only the lower bound (resp.~upper bound) holds;
    \item \emph{$g(d)$-good} if the transverse measure $\alpha_n$ is $O(g(d_n))$;
    \item \emph{CESAG} (short for compatibly exotic, separating, and good) if it is exotic, $f(d)$-separating and $g(d)$-good for some $f(d), g(d)$ so that $f(d_n)g(d_n)\to 0$.
\end{itemize}
To conceptualize the separating condition, we note the following lemma:
\begin{lm}
    An exotic sequence of leaf approximations is at most $e^d$-separating.
\end{lm}
\begin{proof}
For the straightened ray to be exotic, we need the total transverse measure to be finite. Thus the transverse measure of $l_n$ goes to zero, and hence the hyperbolic length of $l_n$ goes to zero; at most $e^{d}$-separation follows.
\end{proof}
We remark that it is easy to construct an exotic sequence of leaf approximations. Indeed, in our construction of exotic rays in Part I (see the proof of Theorem~\ref{mainprime}), each inadmissible word can be represented by a leaf segment, followed by a crossing, and then another leaf segment. Gluing these words together may require additional crossings, but clearly we can obtain a sequence of leaf approximations.

We have the following sufficient condition for the existence of an exotic roof:
\begin{thm}\label{thm: sufficient}
Let $(\mathcal{L},\mu)$ be the bending lamination for an end $E_+$ of a quasifuchsian manifold $M$. Suppose $(\mathcal{L},\mu)$ is minimal and irrational, and has a CESAG sequence of leaf approximations, then there exist uncountably many exotic roofs for $E_+$.
\end{thm}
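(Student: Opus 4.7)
The plan is to produce an exotic roof for each of uncountably many subsequences extracted from the given CESAG sequence of leaf approximations $\{L_n\}$. Concretely, I would first re-run the construction in the proof of Theorem~\ref{mainprime}: concatenating a subsequence of the loops $C_n=L_n*l_n*u^{-1}$ along the leaf through $p_l$ produces, after straightening, a geodesic ray $r$ on $X$ whose total intersection with $\mathcal{L}$ is controlled by $\sum \alpha_{n_k}$ and is finite by the good condition. Exoticness of the sequence guarantees this ray is exotic, so its endpoint $p^*\in S^1_\infty$ lies in $h\mathcal{L}$; as in the uncountability argument of Theorem~\ref{mainprime}, distinct subsequences yield distinct $p^*$.

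Next I would transfer to $\mathbb{H}^3$. Fix a lift $\tilde p_l$ of $p_l$ on the pleated convex core boundary. The exotic ray $r$ lifts to a piecewise geodesic path on this pleated surface whose total bending equals $I(\mathcal{L},r)<\infty$; a Gauss–Bonnet / bounded–turning argument (as in the proof of Theorem~\ref{prop:exotic_circle_necessary}) shows this path converges to a single point $\tilde p^*\in\Lambda$, namely the image of $p^*$ under the boundary homeomorphism $S^1\to\Lambda$. Because $r$ is not asymptotic to any leaf of $\mathcal{L}$, the point $\tilde p^*$ is not the endpoint of any lifted leaf.

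The crux is to build an exotic circle at $\tilde p^*$. Let $\tilde\ell_n$ denote the lift in $\mathbb{H}^3$ of the leaf segment obtained by translating $L_n$ to the right position along the concatenated ray (so $\tilde\ell_n$ accumulates on $\tilde p^*$), and let $\tilde Q_n$ be the unique extreme support plane of $\hull(\Lambda)$ containing $\tilde\ell_n$ on the $\Omega_+$-side, with boundary circle $\mathcal C_n\subset\overline{\Omega_+}$. I would normalize to the upper half-space with $\tilde p^*=\infty$ and $\tilde p_l$ at height $1$; then $\tilde\ell_n$ sits at Euclidean height $\asymp e^{-d_n}$, while the $f$-separating hypothesis forces the two endpoints of the relevant lifted leaves at height $\asymp e^{-d_n}$ to be Euclidean distance $\asymp f(d_n)$ apart \emph{after rescaling by $e^{d_n}$}. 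This gives a uniform lower bound on the ``width'' of $\mathcal C_n$ at the relevant scale. The $g$-good bound controls the total rotation between consecutive $\tilde Q_n$ by $O(g(d_n))$, and the hypothesis $f(d_n)g(d_n)\to 0$ ensures these rotations are negligible compared to the widths. Thus after passing to a subsequence, $\mathcal C_n$ converges to a circle $C^*\subset\overline{\Omega_+}$ of positive radius, tangent to $\Lambda$ at $\tilde p^*$.

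Finally, I would verify the roof property. Since $\tilde p^*$ is not the endpoint of any leaf, $C^*$ cannot be a support circle (that would force a leaf ending at $\tilde p^*$), so $C^*\cap\Lambda=\{\tilde p^*\}$ and $C^*$ is asymptotic. Since $C^*$ is the limit of support circles $\mathcal C_n$ approaching $\tilde p^*$ from inside $\overline{\Omega_+}$, no circle in $\overline{\Omega_+}$ strictly larger than $C^*$ can be tangent to $\Lambda$ at $\tilde p^*$, so $C^*$ is an osculating (unshadowed) circle, i.e.\ an exotic roof. Different subsequences yield different $\tilde p^*$ and thus different exotic roofs, giving uncountably many.

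The main obstacle is Step~3: quantitatively showing that the boundary circles $\mathcal C_n$ do not collapse to a point and do not pick up a second intersection with $\Lambda$ in the limit. This is precisely where the CESAG balance between the lower bound $ce^{-d_n}f(d_n)$ on $|l_n|$ (giving nontrivial width) and the upper bound $O(g(d_n))$ on $\alpha_n$ (keeping rotations small) enters, and checking that $f(d_n)g(d_n)\to 0$ really does beat the accumulated error between successive approximating planes will require the detailed geometric estimates.
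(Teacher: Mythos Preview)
Your overall strategy is correct and matches the paper's: limit a carefully chosen sequence of support planes to obtain an asymptotic plane whose touching point lies in the halo, and then argue this limit is an exotic roof. The gap is exactly where you place it, in Step~3, and there are two concrete issues worth flagging.

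First, your normalization is muddled. With $\tilde p^*=\infty$ and the starting point at height $1$, the concatenated path goes \emph{up}, so the $n$-th leaf segment sits at Euclidean height $\asymp e^{d_{n_1}+\cdots+d_{n_{k-1}}}$, not $e^{-d_n}$; the heuristic you sketch after that is accordingly hard to interpret. The paper works in the opposite normalization: it places the initial flat piece at the top of a hemisphere of fixed radius $r_1$ and then tracks the Euclidean radii $r_k$ of the successive support hemispheres, proving via explicit elementary lemmas (radii of nested geodesics in a hemisphere, radius after bending through angle $\beta$) that $r_{k+1}/r_k$ can be made arbitrarily close to $1$. This is where $f(d)g(d)\to 0$ enters: the drop in radius from crossing $l_{n_k}$ is governed by the upper bound $Ce^{-d}f(d)$ on $|l_{n_k}|$, the subsequent bend contributes a factor involving $\sin\beta_{n_k}=O(g(d_{n_k}))$, and the product of the two controls the ratio.

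Second, and more importantly, the paper's construction is \emph{adaptive}: at stage $k$ the starting point is no longer at the peak of the current hemisphere but at some hyperbolic distance (call it $\delta_k$) down the side, and one must choose $n_k$ large enough that $e^{\delta_k}f(d_{n_k})g(d_{n_k})$ is small. Since $\delta_k$ depends on all previous choices, you cannot simply fix an arbitrary exotic-ray subsequence in advance (as your Step~1 suggests) and then hope the radii behave; the subsequence must be built step by step so that the exotic property and the radius bound are secured simultaneously. Uncountability then comes from the freedom at each inductive step, as in the paper.

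A minor point on Step~5: being a limit of support circles does not by itself force $C^*$ to be osculating (support circles can be nested). What saves you is that $\tilde p^*\in h\mathcal{L}$ is not a leaf endpoint, so all circles in $\overline{\Omega_+}$ tangent to $\Lambda$ at $\tilde p^*$ are mutually tangent and the largest one is an asymptotic (not support) circle; that osculating circle is the exotic roof, whether or not it equals your $C^*$.
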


We again remark that a similar result holds if a minimal component of $\mathcal{L}$ has a CESAG sequence of leaf approximations. Since any exotic sequence of leaf approximations is at most $e^{d}-$separating, any exotic and good leaf approximation as defined in the introduction (i.e. taking $\alpha_n=o(e^{-d_n})$) is CESAG.

The exotic roof in the proof of Theorem~\ref{thm: sufficient} will be constructed as the limit of support planes to lifts of leaf approximations. In particular, there is a geodesic ray going to infinity towards the unique point where the exotic roof meets the limit set, constructed by concatenating lifts of leaf approximations. The exotic property guarantees that this gives an exotic ray. As in Part~\ref{part:halo}, the abundance of choices gives uncountability. 

\paragraph{Infinitesimal approximation of bending measures}
For each segment $l_n$ on $X$, consider the corresponding segment on $\partial E_+\cong X$, still denoted by $l_n$. Fix any lift $\tilde T$ of $T$, and let $\tilde l_n$ be the corresponding lift of $l_n$, with endpoints $\tilde p_r$ and $\tilde p_n$. Suppose the support planes containing $\tilde p_r$ and $\tilde p_n$ are $\tilde P_0$ (this also contains $\tilde T$) and $\tilde P_n$, and assume the ridge angle formed by the two support planes is $\beta_n$ (one needs to assume that $\alpha_n$ is small enough so that the two support planes do intersect; but this is always possible by passing to a subsequence). The following lemma implies that $\beta_n/\alpha_n\to1$ as $n\to\infty$.
\begin{lm}
    Let $\partial\tilde E_+$ be the component of $\partial\hull(\Lambda)$ covering $\partial E_+$. For any $x\in\partial\tilde E_+$, let $\tilde P_x$ be the support plane at $x$. For any $x,y\in\partial\tilde E_+$ close enough so that support planes intersect, let $\beta_{x,y}$ be the ridge angle formed by $\tilde P_x$ and $\tilde P_y$, and $\alpha_{x,y}$ be the bending measure of the geodesic connecting $x$ and $y$. Then $\beta_{x,y}/\alpha_{x,y}\to 1$ as $y\to x$ in the induced metric on $\partial E_+$.
\end{lm}
\begin{proof}
    Let $S_i$ be a chain of support planes between $x$ and $y$ with $S_0=\tilde P_x$ and $S_N=\tilde P_y$, obtained via the process of ``raising the roof" as discussed in \S\ref{sec: quasifuchsian}; see Figure~\ref{fig:infinitesimal}. Note that for $i=1,\ldots,N-2$, the three geodesic planes $S_i,S_{i+1},S_N$ pairwisely intersect but do not all intersect at a common point (by \cite[Lemma II.1.9.2]{bending}), so there exists a unique geodesic plane $P_i$ orthogonal to all three (see e.g.\ \cite[Lemma II.1.10.1]{bending}). Then the geodesics $S_i\cap P_i, S_{i+1}\cap P_i, S_N\cap P_i$ form a geodesic triangle, which we denote by $T_i$.
    
    \begin{figure}[htp]
    \centering
    \includegraphics[width=0.7\linewidth]{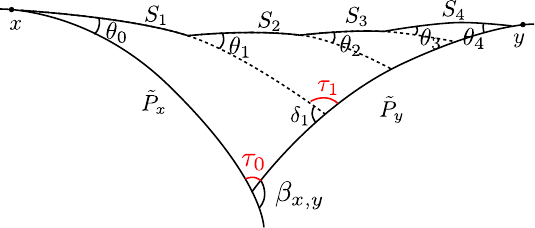}
    \caption{The angles formed by a chain of support planes}
    \label{fig:infinitesimal}
    \end{figure}
    
    Let the bending angle formed by $S_i$ and $S_{i+1}$ be $\theta_i$. In the triangle $T_i$, one of the angle is equal to $\theta_i$. Let the angle formed by $S_i\cap P_i$ and $S_N\cap P_i$ be $\tau_i$, and the angle formed by $S_{i+1}\cap P_i$ and $S_N\cap P_i$ be $\delta_{i+1}$. Then clearly $\delta_{i+1}+\tau_{i+1}=\pi$, since they are the two complementary dihedral angles  formed by $S_{i+1}$ and $S_N$. Moreover, $\theta_{N-1}=\delta_{N-1}$, and $\beta_{x,y}+\tau_0=\pi$.
    
    By elementary hyperbolic geometry $\text{Area}(T_i)=\pi-\theta_i-\tau_i-\delta_{i+1}$. So
    \begin{equation}
        \sum_{i=1}^{N-2}\text{Area}(T_i)=(N-2)\pi-\sum_{i=1}^{N-2}\theta_i-\tau_0-(N-3)\pi-\delta_{N-1}=\beta_{x,y}-\sum_{i=1}^{N-1}\theta_i.\tag{$\ast$}
    \end{equation}

    Let $l_x$ be the length of the side of the triangle $T_0$ lying on $\tilde P_x$, and $l_y$ be the sum of the lengths of the sides of the triangles $T_i$ lying on $\tilde P_y$. Although the triangles $T_i$ do not necessarily lie on the same geodesic plane, since the angles at the same ridge line are complementary, we can map them to a single hyperbolic plane isometrically so that the sides on $\tilde P_y$ match up to a geodesic segment of length $l_y$. That is, we may treat angles and sides in Figure~\ref{fig:infinitesimal} as if they lie on the same plane.
    
    Consider the hyperbolic triangle with an angle $\tau_0$, whose two adjacent sides have lengths $l_x$ and $l_y$. Then by construction $\sum_{i=1}^{N-2}\text{Area}(T_i)$ is bounded above by the area of this triangle, and by ($\ast$) so is $\beta_{x,y}-\sum_{i=1}^{N-1}\theta_i$.

    Taking a sequence of chains of support plane to approximate the bending measure of the segment between $x$ and $y$ (i.e.\ we have $\sum_{i=1}^{N-1}\theta_i\to\alpha_{x,y}$), we have $l_x$ tends to the distance $d_x$ between $x$ and the ridge line $\tilde P_x\cap\tilde P_y$, and $l_y$ is bounded above by the distance $d_y$ between $y$ and the same ridge line. Let $T_{x,y}$ be the triangle with an angle $\tau_0$ whose two adjacent sides have lengths $d_x$ and $d_y$. The by the arguments above,
    $$\beta_{x,y}-\alpha_{x,y}\le\text{Area}(T_{x,y})$$

    When $x,y$ are close enough, we can approximate the area of $T_{x,y}$ by a Euclidean triangle with the same angle $\tau_0$ whose two adjacent sides have lengths $d_x$ and $d_y$. That is, there exists a constant $C>0$ so that $\text{Area}(T_{x,y})\le Cd_xd_y\sin\tau_0=Cd_xd_y\sin\beta_{x,y}$. Hence
    $$1\ge\frac{\alpha_{x,y}}{\beta_{x,y}}=1-\frac{\beta_{x,y}-\alpha_{x,y}}{\beta_{x,y}}\ge 1-Cd_xd_y\frac{\sin\beta_{x,y}}{\beta_{x,y}}.$$
    When $y\to x$ in the induced metric on $\partial E_+$, we have $d_x,d_y,\beta_{x,y}\to0$. So the right-hand side in the inequality above goes to $1$ as well. Thus $\beta_{x,y}/\alpha_{x,y}\to 1$ as $y\to x$, as desired.
\end{proof}

\paragraph{Some lemmas in hyperbolic geometry}
We start with a few lemmas approximating the size of circles and spheres. For this we use the upper half space model throughout the section. Given a totally geodesic plane $\tilde P$ in $\mathbb{H}^3$, let $r(\tilde P)$ be the (Euclidean) radius of the corresponding half sphere. Let $\mathcal{G}$ be a complete geodesic on $\tilde P$, and $r(\mathcal{G})$ the radius of the corresponding half circle. Necessarily $r(\mathcal{G})\le r(\tilde P)$. Let $O_{\mathcal{G}}$ be the highest point on $\mathcal{G}$ with respect to the Euclidean coordinates in the upper half space model. We orient $\mathcal{G}$ so that along the direction of $\mathcal{G}$, the smaller portion of $\tilde P-\mathcal{G}$ is on the right (if $r(\tilde P)=r(\mathcal{G})$, then choose either direction).

Let $x_d$ be the point on $\mathcal{G}$ of hyperbolic distance $d$ from $O_{\mathcal{G}}$ in the chosen direction. We can find a unique ideal triangle $T_{d,k}$ and $k$-crossing\footnote{Recall that we have defined ``unit crossing"; an $k$-crossing is a crossing with length $k$ instead of $1$.} so that $x_d$ is the right endpoint of the crossing, and the vertex of the crossing is the starting point (at infinity) of the oriented complete geodesic $\mathcal{G}$. Suppose the left endpoint lies on a side $\mathcal{G}_{d,k}$ of $T_{d,k}$. We note that $\mathcal{G}_{d,k}=\mathcal{G}_{d-\log k,1}$. We have
\begin{lm}
The radius $r(\mathcal{G}_{d,k})=r(\mathcal{G})/\sqrt{1+2ke^{-d}\sin\theta+k^2e^{-2d}}$, where
$$\sin\theta=\sqrt{1-r(\mathcal{G})^2/r(\tilde P)^2}.$$
In particular,
$$r(\mathcal{G}_{d,k})\ge r(\mathcal{G})/(1+ke^{-d}).$$
\end{lm}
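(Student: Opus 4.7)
The plan is to reduce to the $k=1$ case via the identity $\mathcal{G}_{d,k}=\mathcal{G}_{d-\log k,1}$ recorded in the statement (substituting $d\mapsto d-\log k$ in the $k=1$ formula then recovers the general case), and to compute $r(\mathcal{G}_{d,1})$ in explicit $\mathbb{R}^3$-coordinates by applying an inversion of $\mathbb{H}^3$ that brings the picture into standard form.

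Concretely, I will place $\tilde P$ as the Euclidean hemisphere of radius $R=r(\tilde P)$ centered at the origin, with $\mathcal{G}$ contained in the vertical plane $\{x=R\sin\theta\}$, giving ideal endpoints $a_1, a_2=(R\sin\theta,\mp r(\mathcal{G}),0)$ and $O_{\mathcal{G}}=(R\sin\theta,0,r(\mathcal{G}))$; a routine arclength integration along $\mathcal{G}$ then gives $x_d=(R\sin\theta, r(\mathcal{G})\tanh d, r(\mathcal{G})\operatorname{sech} d)$. After translating so that $a_1=0$, apply the $\mathbb{H}^3$-isometry $\Phi(p)=p/|p|^2$, which sends $a_1$ to $\infty$; this carries $\tilde P$ onto a vertical half-plane, sends $\mathcal{G}$ to the vertical geodesic above $\Phi(a_2)=(0,1/(2r(\mathcal{G})))$, and (using $\operatorname{sech} d/(1+\tanh d)=e^{-d}$) sends $x_d$ to $(0,1/(2r(\mathcal{G})),e^{-d}/(2r(\mathcal{G})))$. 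A short calculation shows that $\Phi(\partial\tilde P)$ is the straight line $r(\mathcal{G})v-R\sin\theta u=1/2$ through $\Phi(a_2)$, with unit tangent vector $\pm(r(\mathcal{G})/R,\sin\theta)$.

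Since the Euclidean height of $\Phi(x_d)$ is $e^{-d}/(2r(\mathcal{G}))$ and a unit crossing at height $z$ in a vertical half-plane has Euclidean length $z$, the image $\Phi(b)$ of the third ideal vertex $b$ of $T_{d,1}$ lies at Euclidean distance $e^{-d}/(2r(\mathcal{G}))$ from $\Phi(a_2)$ along $\Phi(\partial\tilde P)$. The correct sign of the unit tangent is pinned down by the orientation convention on $\mathcal{G}$: $b$ must lie on the smaller-portion side of $\mathcal{G}$ in $\tilde P$ and so has positive first coordinate in the translated $\mathbb{R}^3$-coordinates; hence so does $\Phi(b)$, forcing the sign $+(r(\mathcal{G})/R,\sin\theta)$. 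Expanding $|\Phi(b)|^2$ by Pythagoras and simplifying the resulting $e^{-2d}$-term via the identity $(r(\mathcal{G})/R)^2+\sin^2\theta=1$ yields
$$|\Phi(b)|^2=\frac{1+2e^{-d}\sin\theta+e^{-2d}}{4r(\mathcal{G})^2}.$$
Since $\Phi$ is an involution on $\mathbb{C}$ fixing $a_1=0$, one has $|b-a_1|=|b|=1/|\Phi(b)|$, so $r(\mathcal{G}_{d,1})=|b|/2=r(\mathcal{G})/\sqrt{1+2e^{-d}\sin\theta+e^{-2d}}$, which is the formula at $k=1$. The bound $r(\mathcal{G}_{d,k})\ge r(\mathcal{G})/(1+ke^{-d})$ is then immediate from $\sin\theta\le 1$, since $1+2ke^{-d}\sin\theta+k^2e^{-2d}\le(1+ke^{-d})^2$. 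The only delicate step in the argument is the sign choice for the unit tangent to $\Phi(\partial\tilde P)$, which has to be tracked through the inversion using the orientation convention on $\mathcal{G}$.
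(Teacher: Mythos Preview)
Your argument is correct and is, at bottom, the same idea as the paper's: send the starting endpoint $a_1$ of $\mathcal{G}$ to $\infty$ by a M\"obius map, so that the two sides of $T_{d,k}$ through the crossing vertex become vertical and the radius computation reduces to a Euclidean distance. The paper does this in one stroke on $\hat{\mathbb{C}}$, writing down an explicit M\"obius transformation $\alpha$ taking $\infty,0,-1$ to the three relevant boundary points and reading off $\mathcal{G}_{d,k}$ as the $\alpha$-image of the vertical line over $-ke^{-d}$; you instead work in $\mathbb{H}^3$ with the inversion $\Phi(p)=p/|p|^2$, compute $\Phi(x_d)$ explicitly, and then locate $\Phi(b)$ by the unit-crossing condition before inverting back. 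Two small differences worth noting: (i) the paper treats general $k$ directly rather than reducing to $k=1$ via $\mathcal{G}_{d,k}=\mathcal{G}_{d-\log k,1}$, which is slightly cleaner; (ii) your write-up is more explicit about the sign choice for the unit tangent to $\Phi(\partial\tilde P)$, which the paper absorbs into the phrase ``routine to calculate''. Your check that $b$ lies on the smaller-portion side of $\mathcal{G}$ (hence $\Phi(b)$ has positive first coordinate) is the right way to pin down the $+2ke^{-d}\sin\theta$ term.
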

\begin{proof}
Consider the M\"obius transform $\alpha$ sending $\infty,0,-1$ to $-r(\tilde P)e^{i\theta},r(\tilde P)e^{-i\theta},-r(\tilde P)i$. The desired circle $\mathcal{G}_{d,k}$ is the image via $\alpha$ of the vertical line over $-ke^{-d}$. It is then routine to calculate its radius.
\end{proof}

Now suppose $\mathcal{G}_{d,k}'$ is the side on the right end of an $k$-crossing from $x_d$ (so the orientation of $\mathcal{G}$ needs to be reversed), we have
\begin{lm}\label{lm: hypergeom2}
The radius $r(\mathcal{G}_{d,k}')=r(\mathcal{G})/\sqrt{1+2ke^{d}\sin\theta+ke^{2d}}$. In particular
$$r(\mathcal{G}_{d,k}')\ge r(\mathcal{G})/(1+ke^{d}).$$
\end{lm}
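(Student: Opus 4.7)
The plan is to mirror the computation from the preceding lemma, which handled a $k$-crossing on the side \emph{toward} the endpoint of $\mathcal{G}$ corresponding to the vertex; here the crossing is on the opposite side. The formulas should differ only by swapping $e^{-d}$ for $e^d$, since the hyperbolic distance $d$ is now measured in the direction \emph{away from} the vertex of the crossing rather than toward it.

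Concretely, I would reuse the same M\"obius transformation $\alpha$ as in the previous lemma, normalizing so that $\tilde P$ becomes a hemisphere of radius $r(\tilde P)$ and $\mathcal{G}$ becomes the image of the imaginary axis, with $O_{\mathcal{G}}$ corresponding to $i$. In those coordinates, moving along $\mathcal{G}$ by hyperbolic distance $d$ from $O_{\mathcal{G}}$ scales the imaginary-axis parameter by $e^{\pm d}$, with the sign depending on which endpoint at infinity is declared to be the vertex of the ambient ideal triangle. Reversing orientation interchanges $0 \leftrightarrow \infty$, so the point $x_d$, which was the image of $e^{-d}i$ in the previous setting, is now the image of $e^{d}i$.

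Next, I would trace through the previous proof with this substitution. In that argument, a $k$-crossing starting at $x_d$ lands on the geodesic whose preimage under $\alpha$ is the vertical line over $-k e^{-d}$; in the present setting the analogous crossing lands on the vertical line over $-k e^{d}$. Plugging $t = k e^{d}$ into the Euclidean-radius formula $r(\mathcal{G})/\sqrt{1 + 2t\sin\theta + t^2}$ (derived in the previous proof by direct computation of $\alpha$ applied to a vertical line) yields
\[
r(\mathcal{G}_{d,k}') \;=\; \frac{r(\mathcal{G})}{\sqrt{1 + 2ke^{d}\sin\theta + k^{2}e^{2d}}},
\]
as claimed. The stated lower bound is then immediate from the elementary estimate $\sqrt{1 + 2a\sin\theta + a^{2}} \le 1 + a$ for $a \ge 0$, applied with $a = ke^{d}$.

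The only nontrivial step is bookkeeping: verifying that reversing the orientation of $\mathcal{G}$ really corresponds to the substitution $e^{-d} \leftrightarrow e^{d}$ in the previous formula, and that the value of $\sin\theta = \sqrt{1 - r(\mathcal{G})^{2}/r(\tilde P)^{2}}$ (an invariant of the pair $(\tilde P, \mathcal{G})$) is unchanged. Since $\alpha$ acts on vertical lines analytically in their position, and the construction of $\alpha$ does not depend on a choice of orientation of $\mathcal{G}$, both are immediate; no new geometric ideas beyond those of the previous lemma are required.
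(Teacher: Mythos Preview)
Your proposal is correct and takes the same approach as the paper, whose proof reads in full: ``This follows from the previous lemma and symmetry.'' Note that your formula with $k^{2}e^{2d}$ in fact corrects an evident typo in the statement (which has $ke^{2d}$), since only the squared version factors as $(1+ke^{d})^{2}$ to give the stated lower bound.
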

\begin{proof}
This follows from the previous lemma and symmetry.
\end{proof}

Let $x_d'$ be the right end of the $k$-crossing in the lemma above. Consider a complete geodesic $\mathcal{G}_{d,k}''$ passing through $x_d'$ that does not cross $\mathcal{G}$ (in particular $\mathcal{G}_{d,k}'$ is such a geodesic). Then we have
\begin{lm}\label{lm: hypergeom3}
Suppose $k<1$. Then for all $d$ large enough,
$$r(\mathcal{G}_{d,k}'')\ge r(\mathcal{G}_{d,k}')\ge r(\mathcal{G})/(1+ke^{d}).$$
\end{lm}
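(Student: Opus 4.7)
The second inequality in the statement is immediate from Lemma~\ref{lm: hypergeom2}, so I focus on the first. My plan is to reduce the problem to a one-parameter optimization via the same upper half plane model of $\tilde{P}$ used in the previous two lemmas, and show that the boundary case $\mathcal{G}_{d,k}'$ is the minimum for $d$ large.

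In the upper half plane model of $\tilde{P}$, take $\mathcal{G}$ to be the imaginary axis. Consistent with the setup of Lemmas~\ref{lm: hypergeom2} and after the reversal of orientation, $x_d'$ corresponds to the point $(ke^d, e^d)$, and $\mathcal{G}_{d,k}'$ is the vertical line over $ke^d$. Any other complete geodesic in $\tilde{P}$ through $x_d'$ not crossing $\mathcal{G}$ is a half-circle orthogonal to the real axis with endpoints $(u,0)$ and $(v,0)$ where $0\le u\le ke^d\le v$ and $(ke^d-u)(v-ke^d)=e^{2d}$ (the latter enforcing that the half-circle passes through $x_d'$). Writing the M\"obius transform $\alpha$ of Lemma~\ref{lm: hypergeom2} as $\alpha(z)=(\bolda z+\boldb)/(\mathbf{c}z+\mathbf{d})$, the standard identity $|\alpha(u)-\alpha(v)|=|u-v|\,|\bolda\mathbf{d}-\boldb\mathbf{c}|/(|\mathbf{c}u+\mathbf{d}||\mathbf{c}v+\mathbf{d}|)$ gives
$$r(\mathcal{G}_{d,k}'')^2 \;=\; \frac{(v-u)^2\,|\bolda\mathbf{d}-\boldb\mathbf{c}|^2}{4\,|\mathbf{c}u+\mathbf{d}|^2\,|\mathbf{c}v+\mathbf{d}|^2}.$$

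I would then parametrize the family by $a:=ke^d-u\in(0,ke^d]$, so that $b:=v-ke^d=e^{2d}/a$; the boundary $a\to 0^+$ recovers $\mathcal{G}_{d,k}'$. Writing $-\mathbf{d}/\mathbf{c}=x_1+iy_1$ and $\lambda:=ke^d-x_1$, a direct computation of $\tfrac{d}{da}\log r(\mathcal{G}_{d,k}'')^2$ at $a=0$ yields
$$\left.\frac{d\log r^2}{da}\right|_{a=0}=\frac{2\lambda\bigl(e^{2d}-(\lambda^2+y_1^2)\bigr)}{(\lambda^2+y_1^2)e^{2d}}.$$
Since $\lambda=ke^d+O(1)$ as $d\to\infty$ and $k<1$, one has $\lambda^2+y_1^2\sim k^2e^{2d}<e^{2d}$ for $d$ large, and hence the above derivative is strictly positive. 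This shows $a=0$ is a local minimum, so $r(\mathcal{G}_{d,k}'')\ge r(\mathcal{G}_{d,k}')$ for small $a$.

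The main obstacle is promoting this local analysis to a global statement on the entire interval $(0,ke^d]$. To do this I would compare $r(\mathcal{G}_{d,k}'')^2(a)$ directly with its value at $a=0$: using $ab=e^{2d}$, both $(v-u)^2=(a+e^{2d}/a)^2$ and $|\mathbf{c}v+\mathbf{d}|^2$ grow like $e^{4d}/a^2$ for $a$ small, while $|\mathbf{c}u+\mathbf{d}|^2$ is close to $(\lambda^2+y_1^2)$. The ratio to $r(\mathcal{G}_{d,k}')^2$ is thus of the form $\bigl(1+O(a/e^d)\bigr)\cdot\bigl((\lambda^2+y_1^2)/(\lambda-a)^2+y_1^2)\bigr)$ plus lower-order contributions, and for $k<1$ and $d$ large, all correction terms can be shown to be $\ge 1$. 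Equivalently, one can argue geometrically that any non-crossing geodesic through $x_d'$ other than $\mathcal{G}_{d,k}'$ is forced to have both ideal endpoints on $\partial\tilde{P}$ on the same arc cut off by $\mathcal{G}$, and for $d$ large the two endpoints are ``squeezed'' sufficiently close together on that arc that the corresponding chord length $|\alpha(u)-\alpha(v)|$ is bounded below by $|\alpha(\infty)-\alpha(ke^d)|=2r(\mathcal{G}_{d,k}')$. Combining either approach with the local analysis above completes the proof.
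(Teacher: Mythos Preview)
Your chart is not the one determined by the map $\alpha$ you invoke. With the $\alpha$ from the lemma preceding Lemma~\ref{lm: hypergeom2} (the one sending $\infty,0,-1$ to $-r(\tilde P)e^{i\theta},\,r(\tilde P)e^{-i\theta},\,-r(\tilde P)i$), the vertex of the crossing defining $\mathcal{G}_{d,k}'$ is the starting point of $\mathcal{G}$ in the \emph{reversed} orientation, i.e.\ the ideal point $0$, not $\infty$. Consequently $\alpha^{-1}(\mathcal{G}_{d,k}')$ is the half–circle from $0$ to $-e^{-d}/k$, and a short computation gives
\[
\alpha^{-1}(x_d')=\frac{e^{-d}}{k^2+1}\,(-k,1),
\]
not $(ke^d,e^d)$. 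Your vertical line over $ke^d$ even has the wrong radius under $\alpha$: one gets $r(\mathcal{G})/\sqrt{1-2ke^d\sin\theta+k^2e^{2d}}$, with the sign of the middle term opposite to the formula in Lemma~\ref{lm: hypergeom2}. Your coordinates \emph{would} be correct under the different chart $\beta=\alpha\circ(z\mapsto -1/z)$, but then the radius formula you use (with the coefficients $\mathbf{c},\mathbf{d}$ of $\alpha$) is wrong; the quantity you differentiate is $|\alpha(u)-\alpha(v)|$, which is the radius of a geodesic through the point $\alpha((ke^d,e^d))\neq x_d'$, not of a geodesic through $x_d'$. So the derivative computation, as written, is about the wrong one–parameter family and does not prove the stated inequality. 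Your global step is also only sketched.

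The paper's argument avoids all of this. One checks the purely geometric fact that for $k<1$ and $d$ large, $x_d'$ lies on $\mathcal{G}_{d,k}'$ between its Euclidean peak and the ideal point it shares with $\mathcal{G}$; in the limiting half–plane picture this is the inequality $h<|c|$, equivalent to $k=h/|c|<1$. Once that is known, the conclusion is an elementary statement about chords of the circle $\partial\tilde P$: rotating the chord of $\mathcal{G}_{d,k}'$ about the projection $p'$ of $x_d'$ into the non–crossing region moves the foot of the perpendicular from the center \emph{toward} $p'$, hence decreases its distance to the center and increases the chord length. Since the global minimizer (direction perpendicular to $Op'$) lies in the forbidden crossing region for $d$ large, this local monotonicity is already global, and no separate local–to–global step is needed.
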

\begin{proof}
Indeed, this follows from the fact that when $k<1$ and $d$ large enough, the end of the $k$-crossing is between the ``peak" of $\mathcal{G}_{d,k}'$ and the common endpoint of $\mathcal{G}$ and $\mathcal{G}_{d,k}'$. To see this, note that when $d$ is large enough, the configuration is approximated by Figure~\ref{subfigure: plane} in the upper half plane. Then the claim easily follows from the fact that $k<1$.
\end{proof}

\begin{figure}[htp]
\centering
\begin{subfigure}[t]{0.45\linewidth}
\centering
\includegraphics{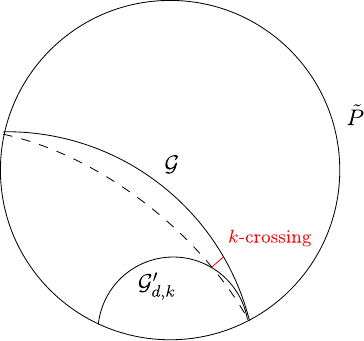}
\caption{The configuration in plane $\tilde P$}
\label{subfig: circle}
\end{subfigure}
\begin{subfigure}[t]{0.45\linewidth}
\captionsetup{width=\linewidth}
\includegraphics{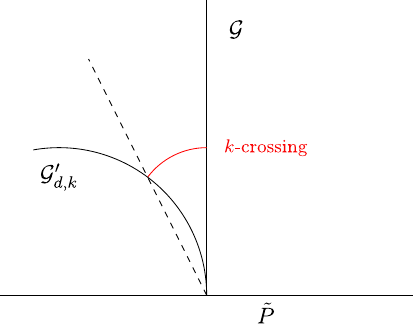}
\caption{When $d$ is large enough, the configuration can be approximated by this picture in upper half plane}
\label{subfigure: plane}
\end{subfigure}
\caption{An illustration of Lemma~\ref{lm: hypergeom3}}
\end{figure}

The next lemma relates the radius of a plane bent from another at an angle $\alpha$. Let $\tilde P$ and $\mathcal{G}$ be the same as above. Let $\tilde P'$ be a geodesic plane containing $\mathcal{G}$, so that the portion of $\tilde P'$ to the right of $\mathcal{G}$ and the portion of $\tilde P$ to the left of $\mathcal{G}$ form an angle of $\pi-\alpha$. Then
\begin{lm}
The radius $r(\tilde P')=\dfrac{r(\mathcal{G})}{\sin\alpha\sin\theta+\cos\alpha\cos\theta}=\dfrac{r(\tilde P)r(\mathcal{G})}{\sqrt{r(\tilde P)^2-r(\mathcal{G})^2}\sin\alpha+r(\mathcal{G})\cos\alpha}$
\end{lm}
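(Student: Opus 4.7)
The plan is a direct calculation in the upper half-space model of $\mathbb{H}^3$, exploiting the fact that angles between totally geodesic planes equal Euclidean angles at any chosen point, thanks to the conformality of the metric. First, normalize coordinates so that $\mathcal{G}$ lies in the $xz$-plane, with top point $O_\mathcal{G}=(0,0,r)$ and endpoints $(\pm r,0,0)$, where $r:=r(\mathcal{G})$. Every geodesic plane containing $\mathcal{G}$ is then a Euclidean hemisphere whose boundary circle passes through $(\pm r,0,0)$ and has center on the $y$-axis; I parametrize such a plane $P_t$ by $t\in(-\pi/2,\pi/2]$, with boundary center $(0,r\tan t,0)$ and Euclidean radius $r\sec t$. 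In this parametrization $\tilde P=P_\theta$ for $\cos\theta=r/r(\tilde P)$, which matches the defining relation $\sin\theta=\sqrt{1-r(\mathcal{G})^2/r(\tilde P)^2}$.

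Next, I would compute the hyperbolic dihedral angle between $P_t$ and $P_{t'}$ at $O_\mathcal{G}$. By conformality this equals the Euclidean angle between their tangent planes, and a direct computation shows the Euclidean outward unit normal to $P_t$ at $O_\mathcal{G}$ is $n(t)=(0,-\sin t,\cos t)$, so $n(t)\cdot n(t')=\cos(t-t')$. Matching this with the orientation conventions fixed just before the lemma (the smaller side of $\tilde P-\mathcal{G}$ sitting to the right, and $\alpha$ being the exterior dihedral angle between the right portion of $\tilde P'$ and the left portion of $\tilde P$), one sees that the bend by $\alpha$ sends $\tilde P=P_\theta$ to $\tilde P'=P_{\theta-\alpha}$.

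The conclusion is then immediate:
\[
r(\tilde P')=r\sec(\theta-\alpha)=\frac{r(\mathcal{G})}{\cos\theta\cos\alpha+\sin\theta\sin\alpha},
\]
which is the first expression in the lemma. Substituting $\cos\theta=r(\mathcal{G})/r(\tilde P)$ and $\sin\theta=\sqrt{r(\tilde P)^2-r(\mathcal{G})^2}/r(\tilde P)$ and clearing $r(\tilde P)$ from the denominator yields the second expression. The only delicate point is ruling out the other sign ($\theta+\alpha$ in place of $\theta-\alpha$), which affects $\sin\theta\sin\alpha$ but not $\cos\theta\cos\alpha$; this is pinned down by the orientation conventions above, together with the sanity check that $\alpha=0$ must give $\tilde P'=\tilde P$ and hence $r(\tilde P')=r(\tilde P)$. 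This orientation bookkeeping is what I expect to be the main care needed in a full write-up; the underlying computation is elementary.
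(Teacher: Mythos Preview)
Your proof is correct and is exactly the ``simple trigonometry'' the paper invokes without detail; you have supplied the computation the paper omits. One minor quibble: the sanity check at $\alpha=0$ does not actually distinguish $\theta-\alpha$ from $\theta+\alpha$, since both reduce to $r(\tilde P')=r(\tilde P)$ in that case. The sign is fixed solely by your orientation analysis (which is correct: with the smaller portion of $\tilde P-\mathcal{G}$ on the right, bending the right half of $\tilde P'$ toward the left half of $\tilde P$ by exterior angle $\alpha$ decreases the parameter to $\theta-\alpha$), so you should drop the reference to the $\alpha=0$ check as part of that determination.
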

\begin{proof}
This is simple trigonometry.
\end{proof}

\paragraph{Construction of exotic roofs}
We now go back to our setting. We will inductively choose a subsequence of the leaf approximations, so that the plane $\tilde P_n$ containing the lift of $L_n$ has radius bounded below by a uniform constant. Then $\tilde P_n\to \tilde P$ a geodesic plane whose boundary at infinity touches the limit set at a point in the halo of the lamination. Finally, as in the case of exotic rays, the abundance of choices gives uncountably many. 

We start by lifting $T$ to an ideal triangle $\tilde T_1$ contained in a plane $\tilde P_1$ of Euclidean radius $r_1$, so that the corresponding lift $\tilde p_l$ of $p_l$ lies at the top of the sphere. Denote the side of $\tilde T_1$ that $\tilde p_l$ lies on by $\mathcal{G}_1$. Let $n_1$ be a number large enough to be determined later. Take the lift $\tilde L_{n_1}$ of $L_{n_1}$ starting at $\tilde p_l$ and let $\tilde l_{n_1}$ be the corresponding lift of $l_{n_1}$ starting at the endpoint of $\tilde L_{n_1}$. Then $\tilde l_{n_1}$ ends at a side $\mathcal{G}_1'$ of an ideal triangle (this is simply another lift of $T$). We claim
\begin{equation}\label{eq: radius_lower_bound}
    r(\mathcal{G}_1')\ge \frac{r(\mathcal{G}_1)}{1+Cf(d_{n_1})}
\end{equation}
if $n_1$ is large enough. Here $C$ is the constant appearing in the definition of $f(d)$-separation. Indeed, this follows from Lemma~\ref{lm: hypergeom3} and the fact that the length of $l_{n_1}$ is bounded above by $Ce^{-d_n}f(d_n)$.

In the convex core boundary, the corresponding lift of $T$ lies in a sphere $\tilde P_2$ bent from $\tilde P_1$ in a ridge line $\mathcal{G}_{1,2}$ between $\mathcal{G}_1$ and $\mathcal{G}_1'$. Note that $r(\mathcal{G}_{1,2})$ satisfies the same lower bound (\ref{eq: radius_lower_bound}). Indeed, it crosses $\tilde l_{n_1}$ but not $\mathcal{G}_1$, so Lemma~\ref{lm: hypergeom3} applies. We have
$$r_2=r(\tilde P_2)=r_1\frac{r(\mathcal{G}_{1,2})}{\sqrt{r_1^2-r(\mathcal{G}_{1,2})^2}\sin\beta_{n_1}+r(\mathcal{G}_{1,2})\cos\beta_{n_1}}$$
Since $\sin(\beta_{n_1})\approx \beta_{n_1}\le D\alpha_{n_1}$, $\alpha_{n_1}=O(g(d_{n_1}))$, and $r(\mathcal{G}_{1,2})$ satisfies lower bound (\ref{eq: radius_lower_bound}), the ratio on the right-hand side is bounded below by a number that goes to $1$ as $n_1\to\infty$ (here we use the assumption $f(d)g(d)\to0$). Thus by choosing $n_1$ even larger, we may arrange $r_2>r_1/2$.

In $\tilde P_2$, denotes the corresponding lift of $T$ by $\tilde T_2$, its left side $\mathcal{G}_2$ and right side $\mathcal{G}_1''$. Note that $r(\mathcal{G}_1'')\approx r(\mathcal{G}_1')$ (actually slightly larger), but we do not need any estimates on its radius.

Note that the starting point is not at the top of the circle, but already to the side. Suppose the distance to the top of the circle is $d_2$. By the same arguments as above, we can choose $n_2$ large enough so that the corresponding lift of $l_{n_2}$ ends at a side $\mathcal{G}_2'$ of an ideal triangle satisfying
$$r(\mathcal{G}_2')\ge\frac{r(\mathcal{G}_2)}{1+Ce^{d_2}f(d_{n_2})}.$$
As in the previous part, we get a sphere $\tilde P_3$ bent from $\tilde P_2$ in a ridge line $\mathcal{G}_{2,3}$ between $\mathcal{G}_2$ and $\mathcal{G}_2'$, and
$$r_3=r(\tilde P_3)=r_2\frac{r(\mathcal{G}_{2,3})}{\sqrt{r_2^2-r(\mathcal{G}_{2,3})^2}\sin\beta_{n_2}+r(\mathcal{G}_{2,3})\cos\beta_{n_2}}$$
Again, by choosing $n_2$ even larger, we can get $r_3>r_1/2$.

It is clear we can do this inductively, so we obtain a sequence of support planes $\tilde P_k$ with radius bounded below. They limit to an exotic roof as explained at the start of the section. This concludes the proof of Theorem~\ref{thm: sufficient}.

\section{Geodesic planes outside convex core II: atom-free bending}\label{sec:geodesic_planes_irrational}
In this section, we turn to the case where $\mathcal{L}$ has no atom, i.e. $\mathcal{L}_a=\varnothing$, and prove Theorem~\ref{thm: minimal_roof}.
We start with the following lemma:
\begin{lm}\label{lm: contains_support}
Let $P$ be a non-parabolic asymptotic plane. Then the closure of $P$ contains a support plane.
\end{lm}
Recall that a geodesic plane is parabolic if its boundary circles meet the limit set exactly at a parabolic fixed point. We remark that in the case of purely atomic bending, we already know this lemma due to the complete classification in \S\ref{sec:geodesic_planes_atomic}.
\begin{proof}
Let $\tilde P$ be any lift of $P$, and $C$ its boundary circle. Let $r_1:[0,\infty)\to\mathbb{H}^3$ be a geodesic ray contained in $\hull(\Lambda)$ whose endpoint is $C\cap\Lambda$, and $r_2:[0,\infty)\to\mathbb{H}^3$ be a geodesic ray on $\tilde P$ ending at $C\cap \Lambda$. Since $r_1$ and $r_2$ ends at the same point, up to reparametrization, we may assume the hyperbolic distance between $r_1(t)$ and $r_2(t)$ goes to zero.

Since $p=C\cap\Lambda$ is not a parabolic fixed point, the projection of the geodesic $r_1(t)$ to $\core(M)$ returns to the thick part of $\core(M)$ infinitely often. Since $M$ is geometrically finite, the thick part of $\core(M)$ is compact. Hence there exists a compact subset $\Pi$ of $\hull(\Lambda)$ and a sequence of numbers $t_i\to\infty$ so that $r_1(t_i)\in\gamma_i\Pi$ for some $\gamma_i\in\Gamma$. Thus $\gamma_i^{-1}r_1(t_i)\in\Pi$ and $\gamma_i^{-1}r_2(t_i)$ limits on $\Pi$. By compactness, passing to a subsequence if necessary, we may assume $\gamma_i^{-1}r_2(t_i)\to x\in\Pi$.

Now $\gamma_i^{-1}\tilde P$ contains $\gamma_i^{-1}r_2(t_i)$, so $\gamma_i^{-1}\tilde P$ limits to a geodesic plane $P_{\lim}$ passing through $x$. This limit plane cannot meet the interior of $\hull(\Lambda)$, as none of $\gamma_i^{-1}\tilde P$ meets the interior. Since $x\in\hull(\Lambda)$, we must have $\varnothing\neq P_{\lim} \cap\hull(\Lambda)\subseteq\partial\hull(\Lambda)$, i.e.\ $P_{\lim}$ is a support plane.
\end{proof}
Thus it is important to understand the closure of support planes. Proposition~\ref{prop: support_flat} states that if the bending lamination has no atom, the closure of any support plane contains all support planes. And Corollary~\ref{cor: support_closure} states that then the closure of any support plane is the entire end $E_+$.

In the remaining part, we describe the closure of the corresponding circle in the space of circles, or equivalently, the closure of frames tangent to the plane in the frame bundle (see Equation (1.1) in \cite{MMO1} and the discussion around it). This provides finer details on the behavior of the closure.

We start with the following lemma.
\begin{lm}\label{lm: enclosed}
Let $C$ be a support circle. Then $\overline{\Gamma\cdot C}$ contains all enclosed asymptotic circles.
\end{lm}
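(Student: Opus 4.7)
The plan is to combine Proposition~\ref{prop: support_flat} with the limiting argument already used in the proof of Theorem~\ref{prop:exotic_circle_necessary}. In our current setting $\mathcal{L}$ is minimal and atom-free, so $X$ is the unique component of $X-\mathcal{L}_a$. Reading the proof of Proposition~\ref{prop: support_flat} at the level of boundary circles, we see that for any other support plane $\tilde Q$ one produces translates $\gamma_i\tilde P\to\tilde Q$ in $\mathbb{H}^3$, equivalently $\gamma_i C\to C_Q$ in $\mathcal{C}$; hence $\overline{\Gamma\cdot C}$ already contains every support circle. It thus suffices to exhibit, for each enclosed asymptotic circle $C'$, a sequence of support circles converging to $C'$ in $\mathcal{C}$.

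Fix an enclosed asymptotic circle $C'$, set $\{p\}=C'\cap\Lambda$, and let $\tilde P$ be the corresponding plane. By hypothesis $p$ is not the endpoint of any leaf. I would choose an auxiliary round circle $C''$ through $p$ that meets $\Lambda$ transversely at $p$ and in at least one further point, with associated plane $\tilde P''$. Then $\tilde P''\cap\hull(\Lambda)$ is bounded on the $E_+$-side by a piecewise geodesic $l'$, each of whose segments lies in a support plane $\tilde Q_n$ with boundary circle $D_n$. Pass to the upper half plane model of $\tilde P''$ with $p$ at the origin, so $l=\tilde P\cap\tilde P''$ is a half circle from $0$ to some $p^*<0$; then $l'$ lies above $l$ and, because $p$ is not the endpoint of any leaf, consists of infinitely many segments accumulating at the origin.

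Next I would extract a subsequential limit $D_n\to D_\infty$ in $\mathcal{C}$. The half circle $\tilde P''\cap\tilde Q_n$ has one endpoint tending to $0$ and a second endpoint $q_n$, and the constraint that $l'$ lies above $l$ forces $|\lim q_n|\ge|p^*|$. Translating back to circles in $S^2$: the limit $D_\infty$ is tangent to $\Lambda$ at $p$ (the second intersection with $\Lambda$ cannot persist, else $D_\infty$ would be a support circle through $p$, contradicting that $p$ is not the endpoint of a leaf); it is contained in $\overline{\Omega_+}$ (as each $D_n$ is); and its second intersection with $C''$ lies at distance at least $|p^*|$ from $p$, so $D_\infty$ is at least as large as $C'$ within the nested family of circles in $\overline{\Omega_+}$ tangent to $\Lambda$ at $p$. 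Since $C'$ is the osculating (maximal) such circle, $D_\infty=C'$. Combined with $D_n\in\overline{\Gamma\cdot C}$ from the first paragraph and closedness, this gives $C'\in\overline{\Gamma\cdot C}$.

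The only delicate point is the identification $D_\infty=C'$: this uses both the enclosed hypothesis (to forbid $D_\infty$ being a support circle through $p$) and the maximality of the osculating circle (to forbid $D_\infty$ being a strictly smaller tangent circle), together with the elementary observation that $l'$ lies above $l$. Everything else is a routine closure/compactness argument.
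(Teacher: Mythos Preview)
Your argument has a genuine gap at the very step you flag as ``delicate'': the identification $D_\infty=C'$. You write ``since $C'$ is the osculating (maximal) such circle, $D_\infty=C'$,'' but an \emph{enclosed} asymptotic circle is by definition shadowed by a support circle $C$, so $C'$ is \emph{not} the osculating circle at $p$; the roof at $p$ is $C$ itself. Your inequality $|\lim q_n|\ge |p^*|$ only shows $D_\infty$ is at least as large as $C'$ in the nested family at $p$, and in fact the argument you are mimicking from Theorem~\ref{prop:exotic_circle_necessary} shows that the support circles limit to the \emph{roof} at $p$, which here is the support circle $C$. Concretely: near $p$, the piecewise geodesic $l'$ repeatedly returns to the flat piece in the plane $\tilde P_C$, so the sequence $(D_n)$ contains $C$ (and circles close to $C$) infinitely often. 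Your limiting procedure therefore recovers $C$, not the smaller tangent circle $C'$, and gives no information about the full one-parameter family of enclosed circles at $p$.

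The paper's proof proceeds quite differently and supplies exactly what is missing: a mechanism to obtain \emph{every} radius between $0$ and that of $C$. It first notes (as you do) that $\overline{\Gamma\cdot C}$ contains a continuous one-parameter family of support circles approaching $C$, and then invokes a dynamical argument \`a la \cite[Cor.~3.2]{MMO1}: the stabilizer $\stab_\Gamma(C)$ (a nonelementary Fuchsian group when enclosed circles exist) acts on this family and, because the family accumulates on $C$, its orbit closure contains every horocycle in the disk bounded by $C$ based at any limit point of $\stab_\Gamma(C)$. That is precisely the statement that all enclosed asymptotic circles lie in $\overline{\Gamma\cdot C}$. Your static limiting argument cannot substitute for this dynamical step.
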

Recall here that an asymptotic circle shadowed by a support plane is stemmed if it meets the limit set at the endpoint of a leaf, and enclosed otherwise.
\begin{proof}
Assume $C'$ is an enclosed asymptotic circle. We may assume $C'$ is shadowed by $C$, as $\overline{\Gamma\cdot C}$ contains all support planes when $\mathcal{L}_a=\varnothing$. The set $\overline{\Gamma\cdot C}$ contains a continuous family of support planes distinct from $C$ and limiting to $C$ (we may construct this continuous family along a path in $\partial\tilde E_+$; see the proof of Corollary~\ref{cor: support_closure}), and by \cite[Cor.~3.2]{MMO1}, if the stabilizer of $C$ in $\Gamma$ is nonelementary, we immediately conclude that $C'$ is contained in $\overline{\Gamma\cdot C}$ as well. As a matter of fact, even when the stabilizer of $C$ is elementary, arguing similarly as \cite[Cor.~3.2]{MMO1}, we can also conclude that $C'$ is contained in $\overline{\Gamma\cdot C}$.
\end{proof}

Whether $\overline{\Gamma\cdot C}$ contains other circles than support and enclosed ones depends on whether $E_+$ contains an exotic roof or not. We analyze the two cases separately.

\paragraph{If exotic roofs exist..}
Assume there exists an exotic roof. To avoid complications, let us also assume that $\mathcal{L}$ is minimal. The case of several minimal components is discussed at the end of this section. Our goal in this part is to prove the following:
\begin{thm}\label{prop: support_closure_frame}
Suppose the bending lamination $\mathcal{L}$ of the end $E_+$ is minimal and atom-free. Suppose also that $E_+$ contains an exotic roof. Let $C$ be either a support circle or a non-parabolic asymptotic circle. Then the closure of $\Gamma\cdot C$ consists of every support and asymptotic circle.
\end{thm}
Note that by Lemma~\ref{lm:exotic_support_limit}, if $C$ is an exotic circle (i.e.~a boundary circle of the exotic roof), then $C$ is the limit of a sequence $\{C_n\}$ of support circles. We have:
\begin{lm}\label{lm:special_roof_support_limit}
Let $C$ be an exotic circle, and $C'$ a circle contained in $\overline{\Omega_+}$ and tangent to $C$ at $p=C\cap\Lambda$. Then $C'$ is a limit of support circles.
\end{lm}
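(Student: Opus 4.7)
The plan is to construct a sequence of support circles converging to $C'$ by perturbing the inductive construction of the exotic roof in Theorem~\ref{thm: sufficient}. First, I would apply a Möbius transformation sending $p$ to $\infty$ in the upper half-space model of $\mathbb{H}^3$. In these coordinates, the geodesic planes $\tilde P$ (corresponding to $C$) and $\tilde P^*$ (corresponding to $C'$) become parallel vertical Euclidean half-planes, differing by a Euclidean translation $\tau$ in the boundary plane $\mathbb{C}$. The support planes $\tilde P_k \to \tilde P$ arising from Section~\ref{sec:sufficient} are Euclidean half-spheres whose radii $r_k$ are bounded below, and which accumulate on $\tilde P$ along a specific direction.

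Next, I would modify this inductive procedure to obtain support planes converging to $\tilde P^*$ instead. The idea is, at a sufficiently late step $K$, to apply an element $\gamma_K\in\Gamma$ whose action on a neighborhood of the current plane $\tilde P_K$ approximates the translation $\tau$, and then to continue the bending procedure of Section~\ref{sec:sufficient} from this shifted state. Because $p\in\Lambda$, there are sequences $\{\gamma_K\}\subset\Gamma$ accumulating at $p$; by tracking their derivative data near $p$, one can select such sequences so that their restriction to a bounded region near $p$ approximates $\tau$ with arbitrary accuracy. A diagonal argument, refining the accuracy as $K\to\infty$, then produces a single sequence $\{\tilde P_k^{(K)}\}$ of support planes whose limit is exactly $\tilde P^*$, giving the desired support circles $\to C'$.

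The main obstacle is twofold. First, because $p$ is an exotic limit point it has no $\Gamma$-stabilizer, so no exact translation fixing $p$ lies in $\Gamma$ and one must work entirely with approximations; controlling the cumulative error of these approximations in the space of circles is delicate. Second, one must verify that the perturbed sequence still satisfies the radius estimates of Lemma~\ref{lm: hypergeom3} together with the CESAG-type bounds, so that the radii of $\tilde P_k^{(K)}$ remain bounded below and the limit plane is non-degenerate (rather than collapsing to a point or a line). Coordinating the rate at which $\gamma_K$ approaches $\tau$ near $p$ with the bending–radius estimates of Section~\ref{sec:sufficient} is the technical crux of the argument.
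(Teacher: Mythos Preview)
Your approach has a genuine gap and also misses a much simpler route. The gap is in the key step where you claim one can choose $\gamma_K\in\Gamma$ whose action near $p$ approximates the fixed Euclidean translation $\tau$. For a discrete group this is not available in general: the $\Gamma$-orbit of any frame is discrete, so there is no reason the set of restrictions of elements of $\Gamma$ to a bounded region should be dense in any useful sense, and you give no mechanism for producing such $\gamma_K$. Even accepting some approximation, you would need $\gamma_K$ to carry a support plane near $\tilde P_K$ to a support plane near $\tilde P_K+\tau$ \emph{with controlled error in the space of circles}, and nothing in the CESAG estimates speaks to that. A second structural issue is that the lemma is stated under the sole hypothesis that an exotic roof exists; you are invoking the specific inductive construction of Theorem~\ref{thm: sufficient}, which is a sufficient but not (a priori) necessary source of exotic roofs, so even a successful version of your argument would prove a weaker statement.

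The paper's proof bypasses all of this. It uses only that $C$ is already a limit of support circles $C_n$, and that support circles vary \emph{continuously} along paths in $\partial\hull(\Lambda)$ (as in the proof of Proposition~\ref{prop: support_closure}). Pick a point $x$ on $C'$ at distance $d$ from $C$. For each $n$, choose a bending-line endpoint $p_n\in C_n\cap\Lambda$ (so $p_n\to p$), and slide $C_n$ continuously through the family of support circles across the leaf $p_np_n'$ until you reach a support circle $C_n''$ passing through a point $x_n$ at distance $d$ from $C_n$. Then $C_n''$ still meets the segment $p_np_n'$, so any subsequential limit is tangent to $C$ at $p$ and passes through $x$; hence $C_n''\to C'$. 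No group elements, no radius bookkeeping, no CESAG input is needed.
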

\begin{proof}
Let $x$ be the point on $C'$ farthest away from $C$, and set $d$ to be the distance between $x$ and $C$ (this distance may be taken as either Euclidean or spherical, as long as the side of $C$ contained in $\Omega_+$ is a small enough spherical cap).

Let $C_n$ be a sequence of support circles so that $C_n\to C$. Choose a point $p_n\in C_n\cap\Lambda$ so that $p_n$ is the endpoint of a bending line. Let $p_n'$ be the other end of the bending line. Clearly $p_n, p_n'\to p$. Let $C'_n$ be the circle contained in $\overline{\Omega_+}$ and tangent to $C_n$ at $p_n$, so that the point $x_n$ on $C'_n$ farthest away from $C_n$ has distance $d$ from $C_n$.

As in the proof of Corollary~\ref{cor: support_closure}, there is a continuous family of support circles starting from $C_n$ and moving away across $p_np_n'$. For example, let $\tilde P_n$ be the corresponding geodesic plane, and $q$ any point on the geodesic $p_np_n'$. Take a path in $\partial\tilde E_+$ starting at $q$ and leaving $\tilde P_n\cap\partial\tilde E_+$. Then the family of support planes along the path gives the desired family.

By choosing this path long enough, eventually the support plane to the point at the end of the path is disjoint from $\tilde P_n$. The corresponding circle is then disjoint from $C_n$. Thus we have a continuous family of circles starting at $C_n$, moving cross $p_np_n'$ and eventually disjoint from $C_n$. By continuity, we can find $C''_n$ in this family that passes through $x_n$ (see Figure~\ref{fig:cont_circles}). Since $C_n''$ intersects $p_np_n'$, its limit is tangent to $C$ at $p$. It also passes through $x$, and thus $C_n''\to C'$, as desired.
\end{proof}
\begin{figure}[htp]
    \centering
    \captionsetup{width=.6\textwidth}
    \includegraphics[width=.4\textwidth]{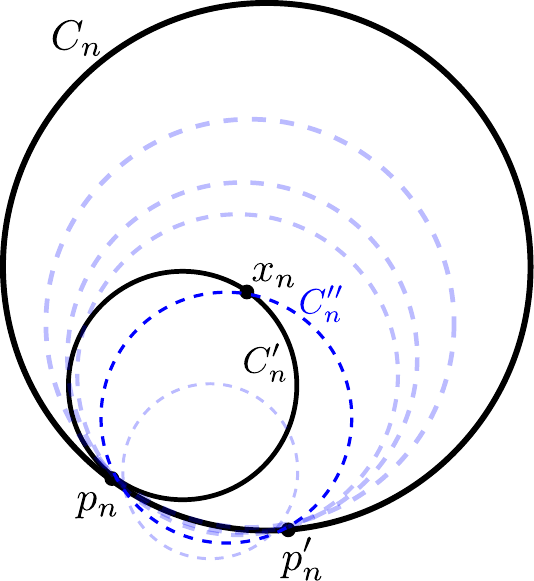}
    \caption{A continuous family of circles sweeping through the interior of $C_n$. One of the must pass through $x_n$.}
    \label{fig:cont_circles}
\end{figure}

\begin{rmk}\label{rmk: no.minimality}
Note that the only property of the exotic circle $C$ we used here is that there exists a sequence of support circles $C_n$ so that all points in $C_n\cap\Lambda$ limit to $C\cap\Lambda$. In particular, minimality of $\mathcal{L}$ is not needed here.
\end{rmk}
Therefore the $\Gamma$-orbit of a support plane of $\partial\tilde E_+$ limits on \emph{every} asymptotic plane shadowed by an exotic roof. Recall that Lemma~\ref{lm: contains_support} states that the orbit $\Gamma\cdot C$ of a non-parabolic asymptotic circle accumulates on a support plane. As every parabolic circle is shadowed by a support circle (see Remark~\ref{rmk:parabolic_support}), an exotic circle $C$ is not parabolic. We claim
\begin{lm}\label{lm:exotic_limit_to_leaf}
Let $C$ be an exotic circle. There exists a sequence $\{C_n\}\subset\Gamma\cdot C$ limiting to a support plane $C'$, so that $p_n=C_n\cap\Lambda$ limits to an endpoint of a leaf.
\end{lm}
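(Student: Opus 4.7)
The claim is equivalent to showing that the orbit closure
$$B := \overline{\Gamma\cdot (C,p)}\subseteq \mathcal{C}\times\Lambda$$
contains a pair $(C',q)$ with $C'$ a support circle and $q$ an endpoint of a leaf; such a pair immediately yields the desired sequence by the definition of closure. Here $p = C\cap\Lambda$, and the hypothesis that $E_+$ contains an exotic roof enters through the associated exotic circle $C_e$ together with the sequence of support circles approximating it produced in \S\ref{sec: roof}.

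I first collect the structural facts about $B$. By Lemma~\ref{lm: contains_support}, $\overline{\Gamma\cdot C}$ contains some support circle, and by Proposition~\ref{prop: support_flat} therefore every support circle; since each exotic circle is a limit of support circles (the remark just preceding Lemma~\ref{lm:special_roof_support_limit}), the exotic circle $C_e$ also lies in $\overline{\Gamma\cdot C}$, and because $C_e\cap\Lambda=\{p_e\}$, the pair $(C_e,p_e)$ belongs to $B$. The projection $\pi_2(B)\subseteq\Lambda$ is closed, $\Gamma$-invariant, and contains $p$, so by minimality of the $\Gamma$-action on the limit set, $\pi_2(B)=\Lambda$. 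Hence for every endpoint of a leaf $q\in\Lambda$ there exists some circle $C^\ast$ with $(C^\ast,q)\in B$; if such a $C^\ast$ happens to be a support circle, we are done.

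The remaining delicate case is when, for every endpoint of a leaf $q\in\Lambda$, all corresponding $(C^\ast,q)\in B$ have $C^\ast$ a stemmed asymptotic circle at $q$, shadowed by an extreme support circle $C'$ through the bending line ending at $q$. To handle this I would adapt the argument in the proof of Lemma~\ref{lm:special_roof_support_limit}, now applied to $(C^\ast,q)$ in place of $(C_e,p_e)$: using Proposition~\ref{prop: support_flat} together with density of endpoints of leaves in the Cantor sets $D\cap\Lambda$ for support circles $D$, produce support circles $\widetilde C_n\to C'$ equipped with endpoints of leaves $\widetilde q_n\in\widetilde C_n\cap\Lambda$ with $\widetilde q_n\to q$; then the continuous family of support planes along the bending line at $\widetilde q_n$ can be swept to approximate $C^\ast$ by support circles $C_n''\to C^\ast$, with the bending vertex $\widetilde q_n$ playing the role of the contact point.

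The main anticipated obstacle is precisely the \emph{pair-level} realization of the previous step: one must show that these circle-level approximations can be realized as limits of pairs $(\gamma_k C,\gamma_k p)$ with $\gamma_k p\to\widetilde q_n$, so that the approximating pairs actually live in $B$. The $\Gamma$-invariance of $B$, combined with the seed $(C_e,p_e)\in B$ and a diagonal argument that patches nearby asymptotic translates of $(C,p)$ to the support-circle approximations produced above, is what I expect to bridge this gap and produce the required pair $(C',q)\in B$ with $C'$ a support circle and $q$ an endpoint of a leaf.
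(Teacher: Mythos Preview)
Your proposal is incomplete, and you yourself identify the gap: the ``pair-level realization'' in the last paragraph is exactly the point at issue, and the diagonal argument you gesture at is not carried out. The circle-level approximations $C_n''\to C^\ast$ you produce via Lemma~\ref{lm:special_roof_support_limit} are built from continuous families of support circles, not from elements of $\Gamma\cdot C$; there is no mechanism offered that forces these support circles (together with chosen limit-set points on them) to lie in $B=\overline{\Gamma\cdot(C,p)}$. The $\Gamma$-invariance of $B$ and the seed $(C_e,p_e)\in B$ do not by themselves furnish this, and indeed the whole difficulty of the lemma is that $\Gamma\cdot C$ is a \emph{discrete} orbit inside the continuous family of asymptotic circles at a given point. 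A secondary issue: in your ``remaining delicate case'' you assume every $(C^\ast,q)\in B$ with $q$ a leaf endpoint has $C^\ast$ a nondegenerate stemmed asymptotic circle, but you have not excluded the possibility that the radii of $\gamma_n C$ shrink to zero along every such approach.

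The paper's argument is quite different and more direct. It never works in $\mathcal{C}\times\Lambda$ abstractly; instead it exploits the defining geometric property of the exotic roof. In the paper $C$ \emph{is} the exotic circle. One fixes a geodesic ray $l\subset P$ tending to $p=C\cap\Lambda$; since $P$ accumulates on $\partial E_+$, translates $\gamma_n l$ can be brought close to a fixed fundamental domain $F\subset\partial\tilde E_+$. The crucial input is that $l$ closely approximates an \emph{exotic ray} on $\partial\tilde E_+$, which by definition crosses the lamination recurrently; hence one can choose the $\gamma_n$ so that the nearest point of $F$ to $\gamma_n l$ approaches a leaf. This simultaneously forces $\gamma_n C$ to converge to a support circle and $\gamma_n p$ to converge to an endpoint of a leaf, while the recurrence across $\mathcal{L}$ rules out the only alternative (that $\gamma_n p$ limits to a boundary fixed point of the subsurface carrying $\mathcal{L}$). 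The exotic property of the ray is precisely what replaces the missing ``pair-level'' argument in your outline.
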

\begin{proof}
The proof is a careful analysis supplementing Lemma~\ref{lm: contains_support}. Fix a geodesic ray $l$ on the exotic roof $P$ with boundary circle $C$ tending to the unique point $p=C\cap \Lambda$. Fix also a fundamental domain $F$ of the action of $\Gamma$ in $\partial\tilde E_+$. 

Let $r:[0,\infty)\to\partial E_+$ be an exotic ray in $\partial E_+$ ending at $p$. Then by Lemma~\ref{lem:angle}, we can find $t_n\to\infty$ so that $r(t_n)$ lies on a leaf, and the tangent vectors to the ray $r$ and the leaf at $r(t_n)$ form an angle $\theta(t_n)\to0$ as $t_n\to\infty$. Moreover, we can find $x_n\in l$ so that $d_{\mathbb{H}^3}(x_n,r(t_n))\to 0$, as $l$ and $r$ are asymptotic.

Then we can choose $\gamma_n\in\Gamma$, so that $\gamma_n x_n\in F$. Taking $n\to\infty$, we have $\gamma_n x_n$ tends to a point $x$ on the leaf, and the tangent vector along $\gamma_nl$ pointing towards $\gamma_np$ at $\gamma_nx_n$ tends to the tangent vector along the leaf at $x$. It then follows that $\gamma_np$ limits to the endpoint of the leaf, as desired.
\end{proof}

We are now ready to prove Theorem~\ref{prop: support_closure_frame}.
\begin{proof}[Proof of Theorem~\ref{prop: support_closure_frame}]
Since the orbit of every non-parabolic asymptotic circle accumulates on a support plane by Lemma~\ref{lm: contains_support}, we may assume $C$ is a support plane. Note that all enclosed asymptotic circles lie in the closure by Lemma~\ref{lm: enclosed}. So it suffices to show the same for any stemmed asymptotic circle.

By Lemma~\ref{lm:special_roof_support_limit}, $\overline{\Gamma\cdot C}$ contains every asymptotic circle shadowed by an exotic circle $C_e$. These circles form a one-dimensional family, all tangent to each other at the unique point $p_e$ in $C_e\cap\Lambda$. By the previous lemma, a sequence $\gamma_nC_e$ limits to a support circle $C'$ so that $\gamma_np_e$ tends to the endpoint $p_0$ of a leaf $l_0$. In particular, by considering the one-dimensional family of circles at $\gamma_np_e$, we conclude that all asymptotic circles at $p_0$ lie in the orbit closure as well. 

First assume the bending lamination $\mathcal{L}$ is nonorientable. Then for any leaf $l$ of $\tilde{\mathcal{L}}$ with one of its endpoints $p$, there exists $\gamma_n\in \Gamma$ so that $\gamma_nl_0\to l$ (in the Hausdorff topology on $\mathbb{H}^3\cup S^2$) and $\gamma_np_0\to p$. Indeed, fix $x\in l$, and a unit tangent vector $v_x$ of $l$ at $x$ pointing towards $p$. Since the lamination is minimal and nonorientable, every leaf in $\mathcal{L}$ is dense, and comes arbitrarily close to a fixed point in both directions. Thus there exists $x_n\in l_0$ and unit tangent vector $v_{x_n}$ of $l_0$ at $x_n$ pointing towards $p_0$, so that under the universal covering $\pi:\mathbb{H}^3\to M$, $\pi(x_n)\to\pi(x)$, and $\pi_*v_{x_n}\to\pi_*v_x$. Hence in the universal cover, we can find $\gamma_n$ so that $\gamma_nx_n\to x$ and $(\gamma_n)_*v_{x_n}\to v_x$. Therefore the forward endpoint $\gamma_np_0$ of $\gamma_nl_0$ tends to that of $l$, i.e.\ $p$; the same is true for the backward endpoint. This in turn implies that $\gamma_nl_0\to l$.

A similar argument as before then implies that all asymptotic circles at $p$ lie in the orbit closure, by considering the one-dimensional family of circles at $\gamma_np_0$.

When the lamination is orientable, the argument above gives asymptotic circles at only one endpoint for each leaf. For simplicity, call such endpoints \emph{marked}. On the other hand, the complement of the lamination $\mathcal{L}$ consists of even-sided ideal polygons, even-sided punctured ideal polygons, and subsurfaces decorated with crowns with even spikes. Let $\tilde Q$ be a lift of any such complementary region $Q$, and choose two leaves $l_1,l_2$ bounding $\tilde Q$ with a shared marked endpoint $p_0$, and let $p_1,p_2$ be their unmarked endpoints respectively. Let $q_0$ be another marked endpoint of $\tilde Q$ distinct from $p_0$. 

For any leaf $l$ with marked endpoint $p$ and unmarked endpoint $p'$, fix $x\in l$, and let $v_x$ be the unit tangent vector at $x$ pointing toward $p$. By minimality and orientability, there exists $x_n\in l_1$, and unit tangent vector $v_{x_n}$ pointing toward $p_0$ so that $\pi(x_n)\to x$ and $\pi_*v_{x_n}\to v_x$. In fact, since any half leaf is already dense in $\mathcal{L}$, the points $x_n$ may be chosen so that $x_n\to p_0$. Since $l_1,l_2$ share the endpoint $p_0$, we can then find points $y_n$ on $l_2$ so that $d(x_n,y_n)\to0$. Hence in the universal cover, we can find $\gamma_n$ so that $\gamma_nx_n\to x$, $\gamma_ny_n\to x$, and $(\gamma_n)_*v_{x_n}\to v_x$. Thus as before $\gamma_nl_1\to l$ and $\gamma_nl_2\to l$ (in the Hausdorff topology on $\mathbb{H}^3\cup S^2$). In particular the unmarked endpoints $\gamma_np_1\to p'$ and $\gamma_np_2\to p'$. Notice that $\gamma_nq_0$ is sandwiched between $\gamma_np_1$ and $\gamma_np_2$, so $\gamma_nq_0\to p'$ as well. 

As before, all asymptotic circles at $p'$ lie in the orbit closure. So we finally conclude that all asymptotic circles at the unmarked endpoints are also included in the orbit closure.
\end{proof}

Notice that the minimal condition is crucial here.

\paragraph{If exotic roofs do not exist..}
We now turn to the case where exotic roofs do not exist. Again we assume that the bending lamination is minimal. The goal is to show
\begin{thm}\label{thm: no_exotic_roof_closures}
Suppose the bending lamination $\mathcal{L}$ of the end $E_+$ is minimal and atom-free. Suppose also that $E_+$ contains no exotic roof. Then
\begin{enumerate}[label=\normalfont{(\arabic*)}, topsep=0mm, itemsep=0mm]
\item The closure of any support plane is the union $\mathcal{S}$ of all support planes and all enclosed asymptotic planes;
\item The closure of any non-parabolic asymptotic plane $C$ is $\Gamma\cdot C\cup\mathcal{S}$.
\end{enumerate}
\end{thm}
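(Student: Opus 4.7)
The plan is to prove (1) first; part (2) then follows by adding $\Gamma\cdot C$ to the closure, since the escaping-sequence analysis used in (1) produces only elements of $\mathcal{S}$.

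For $\mathcal{S}\subseteq\overline{\Gamma\cdot C}$, note that $\mathcal{L}_a=\varnothing$ forces $X-\mathcal{L}_a=X$ to have a single connected component. Proposition~\ref{prop: support_flat} then places every support plane into the closure of any support plane, and Lemma~\ref{lm: enclosed} adjoins every enclosed asymptotic circle. If $C$ is asymptotic, Lemma~\ref{lm: contains_support} produces a support circle in $\overline{\Gamma\cdot C}$, reducing to the support case. For the reverse inclusion, consider a limit $\gamma_nC\to C'$ with $\gamma_n$ escaping every finite union of cosets of $\stab_\Gamma(C)$. The limit $C'\subset\overline{\Omega_+}$ meets $\Lambda$, since $\gamma_n(C\cap\Lambda)$ accumulates on some point of $C'\cap\Lambda$. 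If $|C'\cap\Lambda|\geq 2$, then $C'$ is a support circle, hence in $\mathcal{S}$. If $|C'\cap\Lambda|=1$, then $C'$ is asymptotic and by hypothesis not exotic, so it is either enclosed (hence in $\mathcal{S}$) or stemmed at the endpoint $p$ of some leaf $l$.

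The main obstacle is excluding stemmed limits $C'$. Since $\mathcal{L}$ is atom-free, the support plane $\tilde P_l$ through a lift of $l$ is unique with boundary circle $C_l$, and any stemmed circle at $p$ is tangent to $C_l$ at $p$ from inside the disk bounded by $C_l$ on the $\Omega_+$ side; it therefore suffices to show $C'=C_l$. The plan is to argue by contradiction. If $C'\ne C_l$, passing to the universal cover $\partial\tilde E_+$, the flat pieces or bending lines corresponding to $\gamma_n\tilde P$ escape to the boundary point $p'\in\partial_\infty\tilde E_+$ that maps to $p$. Connecting a base flat piece $F$ to $\gamma_nF$ by a chain of flat pieces and transverse arcs produces a sequence of leaf approximations in the sense of \S\ref{sec:sufficient}, with leaf lengths $d_n\to\infty$. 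The deviation $C'\ne C_l$ should quantify that $\gamma_nF$ approaches $l$ slower than the hyperbolic rate, translating into a lower bound on $\alpha_n/e^{-d_n}$ (the separating condition); finiteness of the total bending between $\gamma_n\tilde P$ and $\tilde P_l$ forces $\alpha_n\to 0$ (the good condition); and the true escape of $\gamma_n$ modulo $\stab_\Gamma(C)$ prevents the concatenated ray from being asymptotic to any leaf of $\mathcal{L}$ or eventually disjoint from it (the exotic condition). The CESAG sequence thus produced would, by Theorem~\ref{thm: sufficient}, yield an exotic roof, contradicting the standing hypothesis. The most delicate step is to translate the $\hat{\mathbb{C}}$-level inequality $C'\ne C_l$ into precise quantitative estimates satisfying $f(d_n)g(d_n)\to 0$, by inverting the correspondence between bending-chain geometry in $\partial\tilde E_+$ and circle convergence in $\hat{\mathbb{C}}$ developed in \S\ref{sec:sufficient}.
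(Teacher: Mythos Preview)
Your overall structure is right: the heart of the theorem is to show that a stemmed asymptotic circle cannot be a limit of $\Gamma$-translates of a support (or asymptotic) circle, and the paper also argues this by contradiction, producing an exotic roof from such a sequence. But the mechanism you propose for the contradiction has a real gap.

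You try to feed the \emph{given} sequence $\gamma_n$ directly into the CESAG machinery and invoke Theorem~\ref{thm: sufficient}. The problem is twofold. First, the compatibility condition $f(d_n)g(d_n)\to 0$ is a genuine balance that need not hold for an arbitrary sequence: the hypothesis $C'\ne C_l$ gives at best an $f$ that is of order $e^{d_n}$ (the generic at-most-$e^d$ separation), while the bending argument gives only $g(d_n)\to 0$ with no rate; you yourself flag this as ``the most delicate step'' and leave it unproved. Second, your exotic condition is suspect: all of your $\gamma_n$ send the base flat piece toward the \emph{same} leaf endpoint $p$, so a ray obtained by concatenating those segments is exactly the kind of ray that risks being asymptotic to the leaf through $p$, not exotic.

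The paper avoids both issues by inserting an amplification step you are missing. From the single stemmed limit, it first deduces (via the argument of Lemma~\ref{lm:special_roof_support_limit}) that \emph{every} asymptotic circle is a limit of support circles. This gives complete freedom in the subsequent construction: one can now choose flat pieces $\tilde T'_{i,k}$ whose supporting circles are close to asymptotic circles of prescribed (bounded-below) radii, and at each inductive stage pick $i$ and $k$ to keep the radii bounded and to make the connecting words inadmissible. This freedom is what substitutes for checking CESAG. Your approach, by binding itself to the original sequence $\gamma_n$, forfeits that freedom.

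Finally, Part~(2) is not a corollary of Part~(1) in the way you suggest. When $C$ is asymptotic, a stemmed limit of $\gamma_iC$ is handled in the paper by tracking the shadowing support circles $\gamma_iC'$ as well: their convex hulls $\gamma_iK$ must collapse onto a single boundary leaf of $K_0$, forcing either $C_0=C_0'$ or the radius of $\gamma_iC$ to shrink to $0$. Your escaping-sequence analysis, which is phrased for flat pieces of support planes, does not cover this case without that extra argument.
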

We start with the following proposition, which can be viewed as the converse to Theorem~\ref{prop: support_closure_frame}.
\begin{prop}\label{prop:exotic_plane_contrad}
Suppose the bending lamination $\mathcal{L}$ of the end $E_+$ is minimal and atom-free. Let $C_i$ be a sequence of support circles for $E_+$, and assume $C_i\to C$ where $C$ is a stemmed asymptotic circle. Then $E_+$ contains an exotic roof.
\end{prop}
\begin{proof}
Since $C$ is a limit of support circles, following the argument of Lemma~\ref{lm:special_roof_support_limit} we conclude that every asymptotic circle at $C\cap\Lambda$ is a limit of support circles (see Remark~\ref{rmk: no.minimality}). As $C$ is stemmed, the conclusion of Lemma~\ref{lm:exotic_limit_to_leaf} is automatically true. Then following the argument of Theorem~\ref{prop: support_closure_frame}, we then conclude that every asymptotic circle is the limit of a sequence of support circles. In the remainder of the proof, this is the key fact we will use.

We construct an exotic circle in a way similar to what we have done in the last section, as the limit of a sequence of support circles. The key difficulty of this strategy lies in bounding the radii of the support circles from below. We overcame this difficulty in the last section by adding the CESAG condition. Here we have an abundance of circles in the limit of support circles. We can extract a sequence of support circles with radii bounded below simply by the fact that they limit to asymptotic circles with definite radii. We will now explain the construction in detail.

For simplicity, assume the lamination $\mathcal{L}$ is also maximal, so the complement of $\mathcal{L}$ consists of ideal triangles. The general case is essentially the same, with $T$ below replaced by a complementary component.

Fix an ideal triangle $T$, and take one of its lifts $\tilde T$. Let $C_0$ be the boundary circle of the support plane for $\tilde T$. Choose one point $p$ of $C_0\cap \Lambda$, and let $C'_i$ be a sequence of circles contained in the closed disk bounded by $C_0$ and tangent to $C_0$ at $p$ so that $C'_i\to C_0$. There exists a sequence of support circles $C_{i,k}$ so that $C_{i,k}\to C'_i$. Passing to a subsequence if necessary, we may assume the corresponding flat pieces $\tilde T_{i,k}$ lie on the same side of $\tilde T$. As a matter of fact, we may assume these pieces lie on the left of $\tilde T$ when we view $p$ as lying on the left in reference to a unit crossing $l$ on $\tilde T$ (see definition in the previous section); see Figure~\ref{fig:noexotic}.
\begin{figure}[htp]
    \centering
\begin{subfigure}[t]{0.45\linewidth}
\centering
    \includegraphics[width=\linewidth]{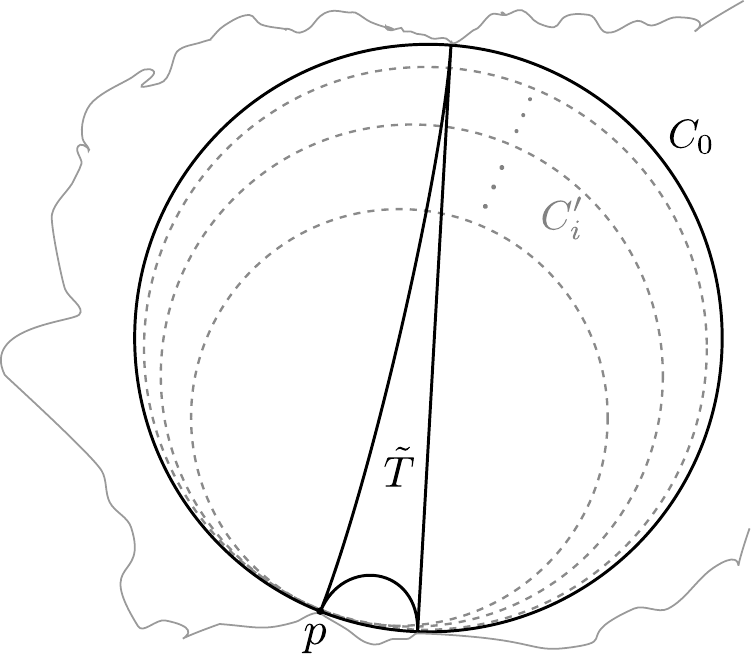}
    \caption{A sequence of asymptotic circles $C_i$ at $p$}
\end{subfigure}
\begin{subfigure}[t]{0.45\linewidth}
\centering
    \includegraphics[width=\linewidth]{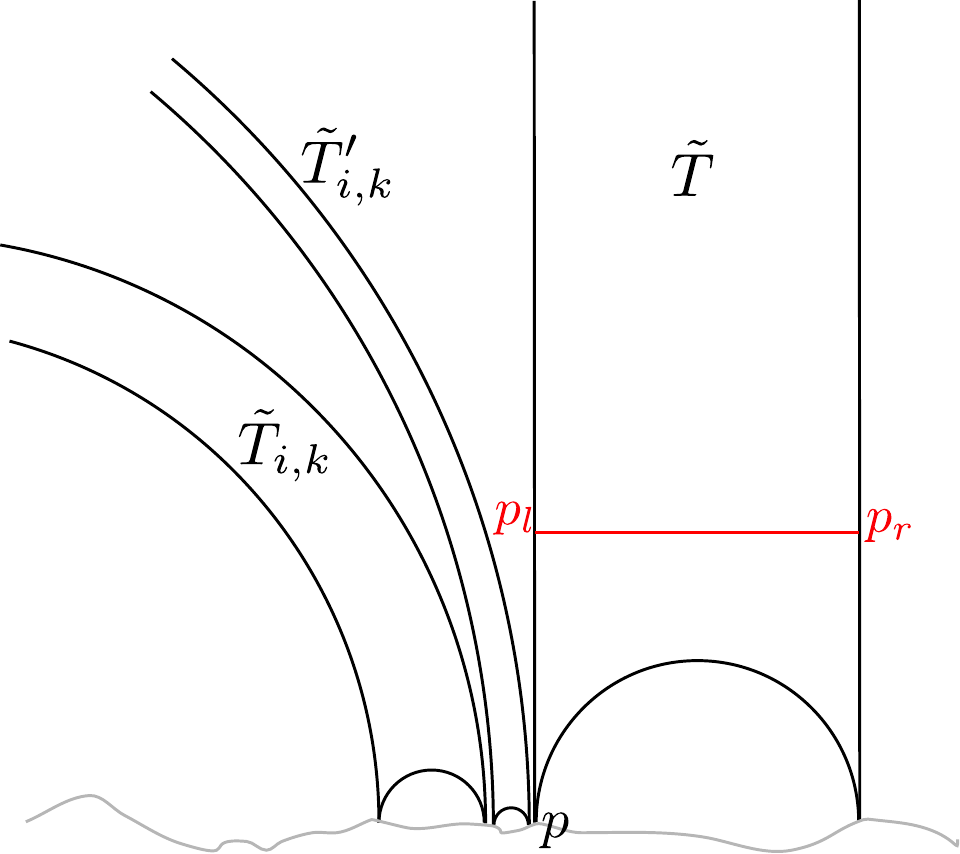}
    \caption{Arrangement of ideal triangles}
    \label{fig:noexotic2}
\end{subfigure}
    \caption{Illustration of the construction}
    \label{fig:noexotic}
\end{figure}

For a fixed $i$, since the angle between $C_{i,k}$ and $C_0$ goes to zero, the shortest geodesic on the convex core boundary between $\tilde T$ and $\tilde T_{i,k}$ goes to zero, whose bending measure goes to zero as well.

There exists another flat piece $\tilde T'_{i,k}$ between $\tilde T$ and $\tilde T_{i,k}$ so that $\tilde T'_{i,k}$ is an orbit of $\tilde T$ under $\Gamma$, and is oriented the same way as $\tilde T$. We may choose $\tilde T'_{i,k}$ as close to $\tilde T_{i,k}$. Indeed, by minimality, a geodesic ray contained in the leaf on the left side of $\tilde T$ (as in Figure~\ref{fig:noexotic2}) going away from $p$ is dense in $\mathcal{L}$ under projection to the convex core boundary; so we can find an orbit of $\tilde T$ in the universal cover as close to $\tilde{T}_{i,k}$ as we want. The corresponding support circle $C_{i,k}'$ of $\tilde{T}'_{i,k}$ thus has radius as close to $C_{i,k}$ as we want.

Let $p_l$ and $p_r$ be the left and right endpoints of the unit crossing $l$ on $\tilde T$ (see Figure~\ref{fig:noexotic2}). For a fixed $i$, The distance of $p_l$ to $\tilde T_{i,k}$ must go to infinity as $k\to\infty$. Indeed, otherwise we can find points $x_k\in\tilde T_{i,k}$ limiting to a point $x$ on the convex core boundary. The support planes at $x_k$, and the corresponding support circles $C_{i,k}$, then limit to the support plane at $x$ and the corresponding support circle respectively, contradicting the fact that $C_{i,k}$ limits to an asymptotic circle $C_i$. Thus by ignoring finitely many circles for each $i$ we may assume this distance is always $>d$ for every $k$, where $d$ is a fixed positive constant. In particular, the thin part of $\tilde T$ above the unit crossing $l$ in Figure~\ref{fig:noexotic2} have distance at least $d$ from the corresponding thin part of $\tilde T_{i,k}$ as well.

We choose a sequence of circles $\hat C_j$ inductively as follows. Suppose the radius of $C_0$ is $R$. We can choose $i_1$ large enough so that the radius of $C_{i_1}$ is $>R/2$. We can then choose $k_1$ large enough so that the radius of $C_{i_1,k_1}$ is $>R/2$, and thus we can choose $C_{i_1,k_1}'$ with radius $>R/2$. Let $\hat C_1=C_{i_1,k_1}'$. Recall that $\hat C_1=C_{i_1,k_1}'$ is in the $\Gamma$-orbit of $C_0$. Set $\hat C_1=\gamma_1C_0$. Inductively, given $\hat C_j=\gamma_jC_0$ with radius $>R/2$, we can choose $i_{j+1},k_{j+1}$ large enough so that $\gamma_jC_{i_{j+1},k_{j+1}}$ has radius $>R/2$, and then choose $C_{i_{j+1},k_{j+1}}'$ so that $\gamma_jC'_{i_{j+1},k_{j+1}}$ has radius $>R/2$. We set $\hat C_{j+1}=\gamma_jC'_{i_{j+1},k_{j+1}}$, which is again in the $\Gamma$-orbit of $C_0$, so $\hat C_{j+1}=\gamma_{j+1}C_0$ for some $\gamma_{j+1}$.

Passing to a subsequence if necessary, we may assume $\hat C_j\to \hat C$. We claim that $\hat C$ is (shadowed by) an exotic circle, i.e. $\hat C\cap\Lambda$ is contained in the halo $h\mathcal{L}$. Let $l_{i,k}$ be the shortest curve on the convex core boundary between $\tilde T$ and $\tilde T_{i,k}$. This curve crosses $\tilde T'_{i,k}$, and suppose it intersects the left side of $\tilde T_{i,k}$ at $p_{i,k}$. We construct a ray as follows. It starts at $p_l$, going along the left side of $\tilde T$ towards $p$ until it reaches the right endpoint of $l_{i_1,k_1}$; then it goes along $l_{i_1,k_1}$ until it reaches $p_{i_1,k_1}$. Recall that $\tilde T'_{i_1,k_1}=\gamma_1\tilde T$. The ray then goes along the left side of $\tilde T'_{i_1,k_1}$ towards $\gamma_1p$ until it passes $\gamma_1p_l$ and reaches the right endpoint of $\gamma_1l_{i_2,k_2}$. It then goes along $\gamma_1l_{i_2,k_2}$ until it reaches $\gamma_1p_{i_2,k_2}$ and thus arrives at the left side of $\gamma_1\tilde{T}'_{i_2,k_2}=\gamma_2\tilde T$. It is clear how we can continue this construction inductively. See Figure~\ref{fig:exotic_ray_path} for an illustration.
\begin{figure}[htp]
    \captionsetup{width=.6\textwidth}
    \centering
    \includegraphics[width=0.45\linewidth]{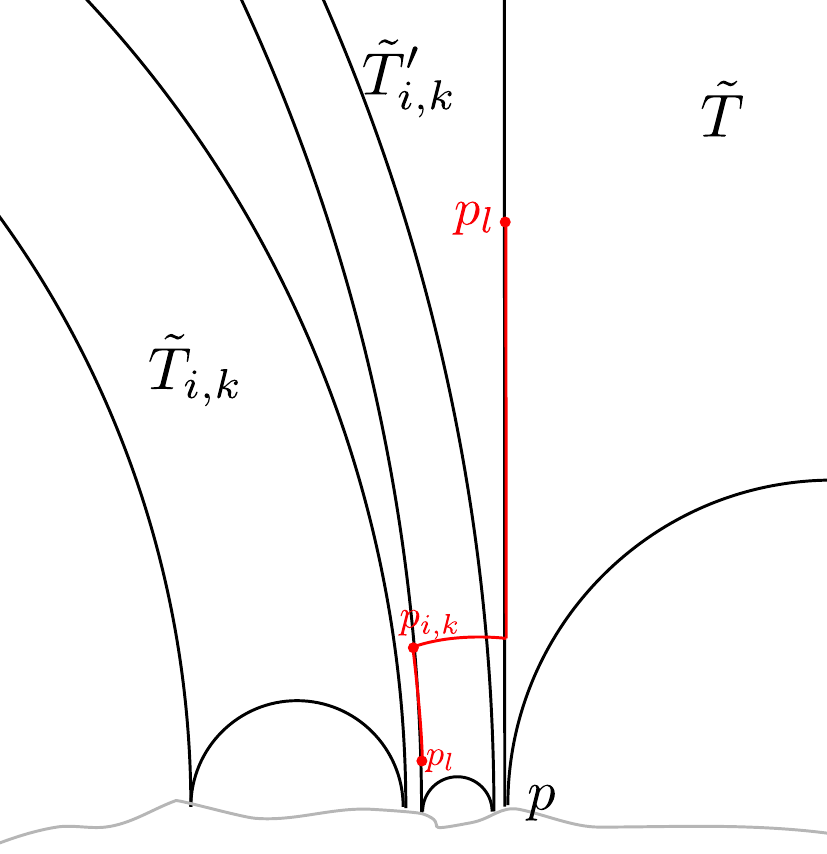}
    \caption{Construction of a ray limiting to $\hat C\cap\Lambda$. Note that the lower point labeled by $p_l$ is the image of the higher one by some $\gamma\in\Gamma$, so we can continue this construction inductively.}
    \label{fig:exotic_ray_path}
\end{figure}

Since $\hat C_j\to\hat C$, this ray limits at infinity to a point in $\hat C\cap\Lambda$. So to finish the proof, it suffices to show the ray is exotic. Suppose to the contrary that it limits at infinity to the endpoint $q$ of a leaf $F$. Note that by construction, the ray above starts from a flat piece $\tilde T$ and crosses flat pieces $\gamma_j\tilde T$ for all $j\ge1$. For any $j$, the flat piece $\gamma_j\tilde T$ cannot be in a position as in Figure~\ref{fig:notleafa}, since it that case the endpoint of the ray is bounded away from $q$ by an ideal point of $\gamma_j\tilde T$. So all of these pieces must all be aligned as in Figure~\ref{fig:notleafb}. This implies when $j$ is large enough, the spikes of $\gamma_j\tilde T$ and $\gamma_{j+1}\tilde T$ aligned along the leaf $F$ (in Figure~\ref{fig:notleafb}, these are the parts of ideal triangles stretching upwards) have distance going to zero. This contradicts our assumption that this distance is bounded below by a positive constant $d>0$.
\begin{figure}[htp]
    \centering
    \begin{subfigure}[t]{0.48\linewidth}
    \centering
         \includegraphics[width=\linewidth]{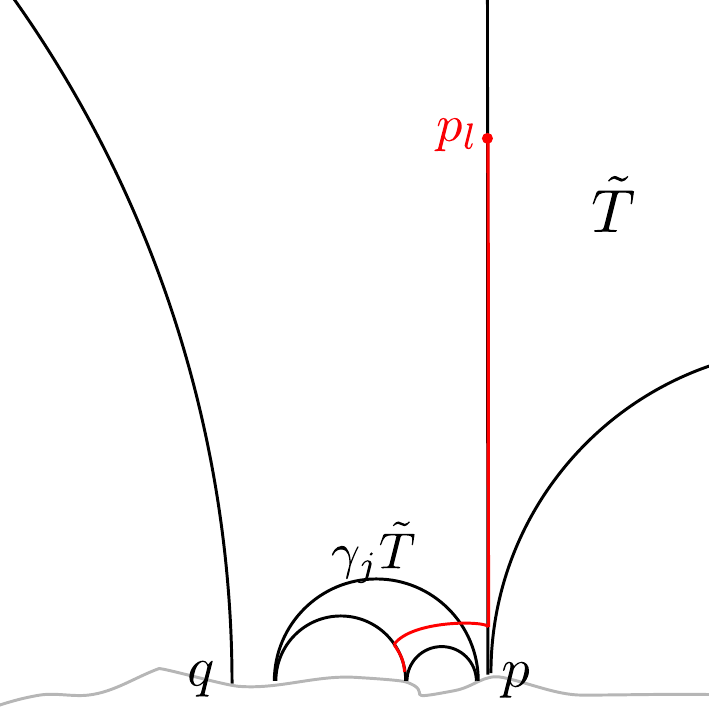}
    \caption{}
    \label{fig:notleafa}
    \end{subfigure}
    \begin{subfigure}[t]{0.48\linewidth}
    \centering
         \includegraphics[width=\linewidth]{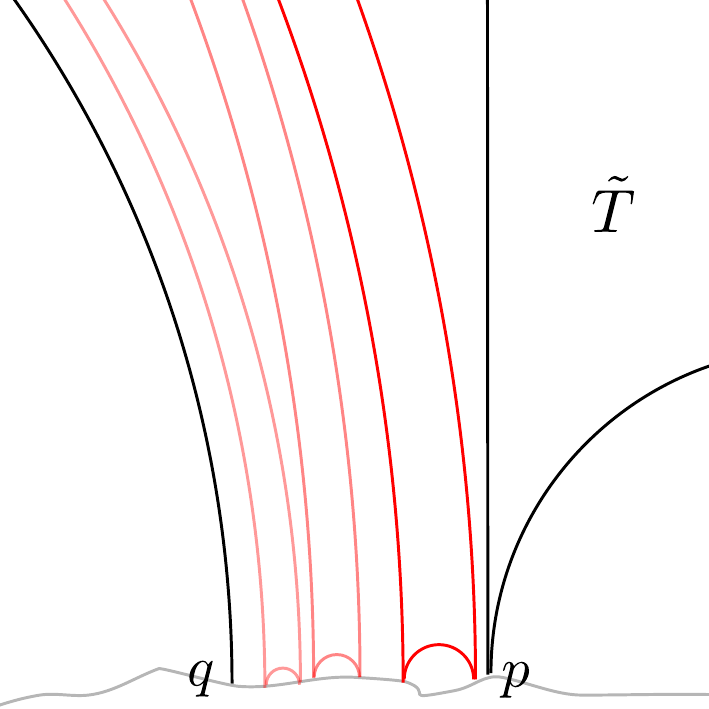}
    \caption{}
    \label{fig:notleafb}
    \end{subfigure}
    \caption{The configuration on the left cannot happen if the ray limits to the endpoint $q$ of a leaf, so all flat pieces $\gamma_j\tilde T$ must be aligned as on the right}
    \label{fig:notleaf}
\end{figure}
\end{proof}

With this proposition in hand, we are ready to prove Theorem~\ref{thm: no_exotic_roof_closures}:
\begin{proof}[Proof of Theorem~\ref{thm: no_exotic_roof_closures}]
For Part (1), any enclosed asymptotic circle lies in the closure by Lemma~\ref{lm: enclosed}. By the previous proposition, we immediately have that the union of all support planes and all enclosed asymptotic planes is closed.

For Part (2), it suffices to show that the set on the right-hand side is closed. Let $C_i=\gamma_iC$ be a sequence of circles in $\Gamma\cdot C$ limiting to $C_0$. Since $E_+$ contains no exotic roof, either $C_0$ is a support circle (and we are done), or $C_0$ is shadowed by a support circle $C_0'$. If $C_0$ is enclosed, then we are also done, so we may assume $C_0$ is stemmed. Let $C'$ be the support circle shadowing $C$. We must have $C_i'=\gamma_iC'\to C_0'$.

Let $K, K_0$ be the convex hull of $C'\cap\Lambda, C_0'\cap\Lambda$ in $\mathbb{H}^3$, i.e.\ they are flat pieces in the corresponding support planes. Since the Hausdorff limit of the sets $\gamma_iC'\cap\Lambda$ is contained in $C_0'\cap\Lambda$, we have $\gamma_iK$ either shrinks to a point, or limits on $K_0$. Note that $\gamma_iK$ is disjoint from $K_0$ so in the latter case, $\gamma_iK$ limits on the boundary of $K_0$.

If $\gamma_iK$ shrinks to a point, then $\gamma_iC'\cap\Lambda$ limits to a single point. Noting the remark after the proof of Lemma~\ref{lm:special_roof_support_limit}, we can follow the arguments of Lemma~\ref{lm:special_roof_support_limit} and Theorem~\ref{prop: support_closure_frame} to conclude that any asymptotic circle is the limit of a sequence of support circles. Hence the construction in the proof of Proposition~\ref{prop:exotic_plane_contrad} produces an exotic circle, a contradiction.
Thus $\gamma_iK$ limits on the boundary of $K_0$, since the Hausdorff limit of $\gamma_iC'\cap\Lambda$ in particular contains the unique point $p$ in $C_0\cap\Lambda$, $\gamma_iK$ only limits on the two boundary components of $K_0$ which end at $p$. Passing to a subsequence if necessary, we may assume $\gamma_iK$ limits on one single boundary component $l$ of $K_0$.

As each $\gamma_iK$ is isometric to $K$, this is only possible when the thick part of $\gamma_iK$ goes to infinity. So for one point $q$ in $C'\cap\Lambda$, $\gamma_iq$ limits to one endpoint of $l$, while the orbit of all other points in $C'\cap\Lambda$ limits to the other endpoint of $l$. It is then clear that either $\gamma_i C\to C_0'$ and so $C_0=C_0'$ (when $C\cap C'=q$), or the radius of $\gamma_i C$ goes to 0 (when $C\cap C'\neq q$). This gives the desired result.
\end{proof}
Combining Theorems~\ref{prop: support_closure_frame} and \ref{thm: no_exotic_roof_closures}, we have the remaining parts of Theorem~\ref{thm: minimal_roof}.

\paragraph{Several minimal components}
In general, write $\mathcal{L}=\sqcup_\alpha\mathcal{L}^\alpha$ into minimal components. let $X^\alpha$ be the smallest closed subsurface containing $\mathcal{L}^\alpha$. If $X^\alpha$ has any boundary component $\gamma$, there exists a connected component of $X^\alpha\backslash\mathcal{L}^\alpha$ containing $\gamma$. Geometrically, this connected component is a crown, bounded by leaves of $\mathcal{L}^\alpha$ other than $\gamma$.

For simplicity, by \emph{a support circle of $X^\alpha$} we mean a circle whose corresponding geodesic plane supports a lift $\tilde{X}^\alpha$ of $X^\alpha$.

We say a geodesic ray $r:[0,\infty)\to X$ \emph{involves} $\mathcal{L}^\alpha$ if it intersects $\mathcal{L}^\alpha$ recurrently, that is, $r([T,\infty))\cap\mathcal{L}^\alpha\neq\varnothing$ for all $T>0$. We also say a point $p$ in the halo $h\mathcal{L}$ \emph{involves} $\mathcal{L}^\alpha$ if any exotic ray ending at $p$ involves $\mathcal{L}^\alpha$. In Part I, we focused on constructing exotic rays that only involve one minimal component. Similarly, we say an exotic plane $P$ (and the corresponding exotic circle $C$) \emph{involves} $\mathcal{L}^\alpha$ if the unique point in $C\cap\Lambda$ involves $\mathcal{L}^\alpha$. We claim
\begin{prop}
    Let $C$ be an exotic circle involving $\mathcal{L}^\alpha$ (but possibly more). Then there exists an exotic circle involving only $\mathcal{L}^\alpha$.
\end{prop}
\begin{proof}
    The key ideas of the proof are already contained in previous discussions. Here we explain what needs to be modified.

    First note that by Proposition~\ref{prop: support_flat}, the closure of support planes to $X_\alpha$ contains all support planes. Therefore in the proof, whenever we choose a convergent sequence of circles, they can be chosen as support circles of $X_\alpha$. From Remark \ref{rmk: no.minimality}, we know that any asymptotic circle shadowed by $C$ is a limit of support circles of $X_{\alpha}$. As a result, we can see that the proof of Lemma~\ref{lm:exotic_limit_to_leaf} goes through without the minimality assumption, and we can actually make sure $p_n=C_n\cap\Lambda$ (as chosen in the proof of Lemma~\ref{lm:exotic_limit_to_leaf}) limits to an endpoint of a leaf in a lift $\tilde{\mathcal{L}}^\alpha$ of $\mathcal{L}^\alpha$. Indeed, since $C$ involves $\mathcal{L}^\alpha$, in the proof of Lemma~\ref{lm:exotic_limit_to_leaf}, we can choose $t_n$ such that $r(t_n)$ lies on a leaf of $\tilde{\mathcal{L}}^\alpha$.

    Now similarly to the proof of Theorem~\ref{prop: support_closure_frame}, we conclude that the closure of $\Gamma$-orbit of any support circle of $X_\alpha$ contains every asymptotic circle based at the endpoint of a leaf of any lift $\tilde{\mathcal{L}}^\alpha$ of $\mathcal{L}^\alpha$.

    Finally, arguing as in the proof of Proposition~\ref{prop:exotic_plane_contrad}, the construction there gives an exotic plane involving only $\mathcal{L}^\alpha$. The key here is to choose the distinguished point $p$ (described in the proof of Proposition~\ref{prop:exotic_plane_contrad}) as the endpoint of some lift $\tilde{\mathcal{L}}^\alpha$ to make sure the constructed ray only involves $\mathcal{L}^\alpha$.
\end{proof}
This means that we can check if exotic planes exist for each minimal component separately. If there are no exotic planes involving only $\mathcal{L}^\alpha$, then no exotic plane involves $\mathcal{L}^\alpha$ at all.

Finally, recall that if $X^\alpha$ has a boundary component, that component is part of a crown in $X^\alpha-\mathcal{L}^\alpha$. Note that any stemmed asymptotic plane for a lift of the crown can only be accessed as a limit of support or asymptotic planes from the side of a lift of $X^\alpha$ (see Figure~\ref{fig:qf_limit}).
\begin{figure}[htp]
    \centering
    \captionsetup{width=.8\linewidth}
   \includegraphics{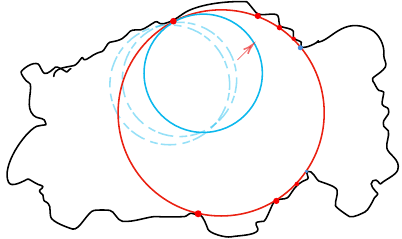}
\caption{A stemmed asymptotic circle as a limit of asymptotic circles. Note that if the bending lamination has more than one minimal component, stemmed asymptotic circles for boundary components of the subsurface containing a minimal component can only be accessed from the subsurface.}
\label{fig:qf_limit}
\end{figure}

These observations give Proposition~\ref{prop:several_comp}.

\section{Leaf approximations for punctured tori}\label{sec:tori}
In this section, we prove Theorem~\ref{thm:sewa}, i.e.\ every well-approximated irrational lamination on a punctured torus exhibits a CESAG sequence of leaf approximations. Combined with Theorem~\ref{thm: sufficient}, exotic roofs exist for quasifuchsian manifolds with such a lamination. The construction is given via a somewhat different symbolic coding scheme from \S\ref{sec:entropy}, based on flat structures instead of train tracks. We remark that this version of symbolic coding can be generalized to higher genera as well.

\paragraph{Measured laminations on a punctured torus}
We first review some facts about laminations on punctured tori. Given a hyperbolic torus $X$ with one cusp, let $\bar X$ be the torus with the cusp filled in (and denote the new point by $p$). Write $(\bar X,p)\cong(\mathbb{C},\Lambda)/\Lambda$ where $\Lambda=\mathbb{Z}\oplus\mathbb{Z}\tau$ for some $\tau\in\mathbb{H}$. For simplicity let $\tau=i$.

Any measured lamination on $X$ comes from a measured foliation on $\bar X$. More precisely, fix a real number $\theta$ and consider all lines in $\mathbb{C}$ of slope $\theta$. This gives a foliation on $\bar X$, and the transverse measure is given by (a constant multiple of) the length measure in the perpendicular direction. Straightening the lines with respect to the hyperbolic metric on $X$ gives a geodesic lamination, which is a simple closed curve if and only if $\theta$ is rational. For simplicity, we call the leaf of a lamination/foliation with slope $\theta$ a \emph{$\theta$-leaf}.

\paragraph{Symbolic coding of rays}
We can present $X$ as $(\mathbb{C}-\mathbb{Z}^2)/\mathbb{Z}^2$, where $\mathbb{C}-\mathbb{Z}^2$ is endowed with a unique complete hyperbolic metric. By symmetry, horizontal and vertical segments connecting neighboring integral points are complete geodesics in this metric. The unit square $S=[0,1]\times[0,1]$ minus the four vertices is isometric to an ideal quadrilateral. We refer to the left, bottom, right, top sides of $S$ as $I_l, I_b, I_r, I_t$.

Set $\bolda$ as the map $z\mapsto z+1$ and $\boldb$ as $z\mapsto z+i$. We consider $I_l$, $I_b$, $I_r$, $I_t$ as labelled by $\bar\bolda(:=\bolda^{-1})$, $\bar\boldb$, $\bolda$, $\boldb$, as say e.g. the square $\bolda S$ shares the side $I_r$ with $S$. Given a geodesic ray $r$ in $X$, its preimage via the map $S\to X$ consists of countably many geodesic segments $r_1,r_2,r_3,\ldots$, and each segment $r_i$ ends on one of the sides of $S$. Recording the label of the side $r_i$ lands on, we obtain a reduced infinite word $w_r$. Two geodesic rays are asymptotic if and only if the corresponding words have the same tail.

We may of course assign a finite word to a finite geodesic segment in the same way. If we restrict to geodesic segments contained in the thick part of $X$ (i.e. the complement of a small enough cuspidal neighborhood in $X$), each segment $r_i$ from one side to another side of $S$ has length bounded above and below (with the possible exception of the first and last segments). Thus there exist constants $c,C>0$ so that for any geodesic segment $r$,
$$c(l_w(w_r)-1)\le l_h(r)\le C(l_w(w_r)+1)$$
where $l_h$ denotes the hyperbolic length, and $l_w$ denotes the word length.

In the next few sections, we want to identify finite (and infinite) words that come from $
\theta$-leaf segments, which we call $\theta$-words. This is covered by the well-studied theory of Sturmian words (see e.g.\ \cite{sturmian1}, \cite{sturmian2}, and for an exposition, \cite{sturmian}). To better incorporate them into our discussions, however, we will give a self-contained exposition.

\paragraph{Continued fractions and rational approximations}
The correspondence between (ir)rational numbers and (ir)rational laminations on $X$ described above, and the end goal of constructing leaf approximations, naturally lead to consideration of rational approximations of irrational numbers. This is best phrased in terms of continued fractions, which we briefly recall now. For details, see e.g.\ \cite{continued_fraction}.

Let $\theta$ be a positive real number, and $[c_0;c_1,c_2,\ldots]$ its continued fraction, which has finite length if and only if $\theta$ is rational. Let $p_k/q_k:=[c_0;c_1,\ldots,c_k]$ be the $k$-th \emph{convergent} of $\theta$. Clearly $p_k/q_k\le\theta$ when $k$ is even, and $p_k/q_k\ge\theta$ when $k$ is odd. They are, in some sense, best approximations to $\theta$ (see, for example, \cite[\S II.6]{continued_fraction}):
\begin{thm}
For any integer $k\ge0$, we have $|q_k\theta-p_k|<1/q_{k+1}$. Moreover, $\min_{\substack{p,q\in\mathbb{Z}\\0<q\le q_k}}|q\theta-p|=|q_k\theta-p_k|$.
\end{thm}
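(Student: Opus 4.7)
The plan is to prove both statements via the standard Gauss-type identity
$$\theta = \frac{\theta_{k+1}p_k + p_{k-1}}{\theta_{k+1}q_k + q_{k-1}}, \qquad \theta_{k+1} := [c_{k+1};c_{k+2},\ldots],$$
combined with the determinant identity $p_k q_{k-1} - p_{k-1}q_k = (-1)^{k-1}$ (which follows inductively from the recurrences $p_{k+1} = c_{k+1}p_k + p_{k-1}$, $q_{k+1} = c_{k+1}q_k + q_{k-1}$). Substituting the first display into $q_k\theta - p_k$ and simplifying using the second yields
$$q_k\theta - p_k = \frac{(-1)^k}{\theta_{k+1}q_k + q_{k-1}}.$$
Since $\theta_{k+1} > c_{k+1}$ for irrational $\theta$, the denominator strictly exceeds $c_{k+1}q_k + q_{k-1} = q_{k+1}$, which gives the claimed bound $|q_k\theta - p_k| < 1/q_{k+1}$. (For rational $\theta$, the analogous inequality is proved directly from the terminating expansion.)

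For the second (best-approximation) part, I would argue by contradiction: assume there exist integers $p,q$ with $0<q\le q_k$ and $|q\theta-p| < |q_k\theta-p_k|$. Since $\det\begin{pmatrix}p_k & p_{k-1}\\ q_k & q_{k-1}\end{pmatrix} = \pm 1$, the linear system $p = ap_k + bp_{k-1}$, $q = aq_k + bq_{k-1}$ has a unique integer solution $(a,b)$. The plan is then to do a case analysis on the signs of $a$ and $b$ to reach a contradiction, using:
\begin{itemize}[topsep=0mm,itemsep=0mm]
\item the previous display implies $q_k\theta - p_k$ and $q_{k-1}\theta - p_{k-1}$ have opposite signs, and
\item $|q_{k-1}\theta - p_{k-1}| > |q_k\theta - p_k|$, obtained by comparing the two analogous denominators $\theta_{k+1}q_k + q_{k-1}$ and $\theta_k q_{k-1} + q_{k-2} = q_k + q_{k-1}/\theta_{k+1}$.
\end{itemize}
The case $b=0$ forces $a=1$ and $(p,q)=(p_k,q_k)$, contradicting strict inequality. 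The case $a=0$ gives $|q\theta-p| = |b|\,|q_{k-1}\theta-p_{k-1}| \ge |q_{k-1}\theta-p_{k-1}|$, already too large. If $a,b$ are both positive, then $q \ge q_k+q_{k-1}>q_k$, contradicting $q \le q_k$. If $a,b$ are both negative, $q<0$. In the remaining mixed-sign case, the two summands $a(q_k\theta-p_k)$ and $b(q_{k-1}\theta-p_{k-1})$ turn out to have the \emph{same} sign (as the signs of $a,b$ and of $q_k\theta-p_k$, $q_{k-1}\theta-p_{k-1}$ both alternate), so the absolute values add and $|q\theta-p| \ge |q_{k-1}\theta-p_{k-1}| > |q_k\theta-p_k|$, the desired contradiction.

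The only genuinely delicate step is the sign bookkeeping in the last case: one must track carefully how the parity of $k$ interacts with the signs of $a$ and $b$, and confirm that the mixed-sign assumption is incompatible with the constraint $0<q\le q_k$ unless the two error terms reinforce rather than cancel. Everything else is bookkeeping from the recurrences, so I expect the total proof to be quite short once these ingredients are assembled.
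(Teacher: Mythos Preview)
Your proof is correct and is essentially the standard textbook argument. However, the paper does not actually prove this theorem at all: it is stated as a classical fact about continued fractions and simply attributed to a reference (\cite[\S II.6]{continued_fraction}). So there is no ``paper's own proof'' to compare against; you have supplied the proof the paper omits.

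One small remark on your write-up: in the final paragraph you frame the mixed-sign case as needing to check compatibility with $0<q\le q_k$. In fact that constraint plays no role there---once $a$ and $b$ have opposite signs, the two error terms $a(q_k\theta-p_k)$ and $b(q_{k-1}\theta-p_{k-1})$ automatically have the same sign (because the errors $q_j\theta-p_j$ alternate in sign with $j$), and the triangle inequality immediately gives $|q\theta-p|\ge |q_{k-1}\theta-p_{k-1}|$. The range constraint on $q$ is only used to rule out the same-sign cases. So the ``delicate step'' you flag is less delicate than you suggest, and the sign bookkeeping is just a one-line parity check.
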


As an application, we prove the following lemma, which suggests that a suitable $\theta$-leaf segment can be homotoped to a $p_k/q_k$-leaf segment when $k$ is large enough (and thus a finite $\theta$-word is always a $p_k/q_k$-word for $k$ sufficiently large):
\begin{lm}
Let $s\in(0,1)$ and set $\epsilon_k=\begin{cases}s&\text{$k$ is odd}\\1-s&\text{$k$ is even}\end{cases}$. If $1/q_{k+1}<\epsilon_k$, then there exists $s'\in(0,1)$ so that no integer lattice points lie in the region bounded by the lines $y=\theta x+s$, $y=(p_k/q_k)x+s'$, $x=0$, and $x=q_k$.
\end{lm}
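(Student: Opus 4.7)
The plan is to reduce the lemma to an algebraic inequality on the quantities
\[
a_m \;:=\; \lfloor m\theta + s\rfloor - (p_k/q_k)\,m \qquad (m \in \{0,1,\dots,q_k\}).
\]
First I would observe that the strip is lattice-point-free iff at every integer $m \in [0,q_k]$ the two lines share the same integer floor, i.e.\ $\lfloor (p_k/q_k)m + s'\rfloor = \lfloor m\theta + s\rfloor$; this is equivalent to $s' \in (a_m,\, a_m + 1)$ for every such $m$. So it will suffice to produce $s' \in (0,1)\cap(\max_m a_m,\, \min_m a_m + 1)$, which exists provided $\max_m a_m - \min_m a_m < 1$ (together with $\max_m a_m < 1$ and $\min_m a_m > -1$).

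To unpack $a_m$, I will exploit the residue $j(m) := m p_k \bmod q_k$, which since $\gcd(p_k,q_k) = 1$ restricts to a bijection on $\{0,1,\dots,q_k-1\}$ and satisfies $j(q_k) = j(0) = 0$. Writing $\eta := p_k/q_k - \theta$ (positive for $k$ odd; the $k$ even case follows by the symmetry $s \leftrightarrow 1-s$), the identity $(p_k/q_k)m = \lfloor (p_k/q_k)m\rfloor + j(m)/q_k$ will yield
\[
a_m \;=\; \lfloor j(m)/q_k + s - m\eta\rfloor - j(m)/q_k.
\]
The hypothesis $\epsilon_k = s > 1/q_{k+1} > q_k\eta$ forces $s - m\eta \in (0,1)$ for all $m \in [0,q_k]$, so the inner floor takes value $0$ (\emph{Case 1}, $a_m = -j(m)/q_k$) or $1$ (\emph{Case 2}, $a_m = 1 - j(m)/q_k$) according as $j(m)/q_k + s - m\eta$ lies in $[0,1)$ or $[1,2)$.

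The split between the two cases is controlled by comparing $j(m)$ to the threshold $q_k(1-s) + q_k m\eta$. As $m$ varies over $[0,q_k]$, this threshold sweeps an interval of length at most $q_k^2\eta < q_k/q_{k+1} < 1$, so it contains at most one integer $J := \lceil q_k(1-s)\rceil$. Hence every $j < J$ will uniformly fall in Case 1 and every $j > J$ in Case 2; only $j = J$ (at its unique preimage under $m \mapsto j(m)$) may be ambiguous. A short case check, over whether $J$ itself sits in Case 1 or Case 2 (plus the edge $J = q_k$), should then establish that in every scenario
\[
\max_m a_m - \min_m a_m \;=\; 1 - 1/q_k \;<\; 1,
\]
together with $\max_m a_m \le 1 - 1/q_k$ and $\min_m a_m \ge -(1 - 1/q_k)$, delivering the desired $s' \in (0,1)$.

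\textbf{Main obstacle.} The hardest point will be the range estimate: the a priori bound $a_m \in (s - 1 - q_k\eta,\, s]$ only controls the range by $1 + q_k\eta$, overshooting $1$ by the small but nonzero amount $q_k\eta$. The saving grace is that the $a_m$ live on the lattice $(1/q_k)\mathbb{Z}$ and realize each residue class modulo $1$ exactly once, which forces the range to be either $\le 1 - 1/q_k$ or $\ge 1$; the hypothesis $\epsilon_k > 1/q_{k+1}$ is precisely what narrows the threshold window enough (to length $< 1$) to rule out the wraparound scenario.
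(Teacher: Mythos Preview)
Your argument is correct and complete. The reduction to the interval condition on $s'$, the residue decomposition via $j(m)=mp_k\bmod q_k$, and the threshold analysis all go through; the key observation that the ambiguous window has length $q_k^2|\eta|<q_k/q_{k+1}<1$ and so captures at most one integer $j$ is exactly what forces the contiguous split and the range $1-1/q_k$. One minor cosmetic point: the floor condition gives $s'\in[a_m,a_m+1)$ rather than the open interval, but since you are only seeking an interior point this is harmless.

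That said, your route is genuinely different from the paper's. The paper argues the even case directly and \emph{constructs} $s'$ explicitly: it looks at the fractional parts $y_l=s+(p_k/q_k)l\bmod 1$ for $0\le l\le q_k-1$, notes they are equally spaced at gap $1/q_k$, picks the index $l_0$ where $y_{l_0}$ is maximal, and sets $s'=s+l_0(\theta-p_k/q_k)$ so that the two lines coincide at $x=l_0$. Then at every other $l$ one has $1-y_l>1/q_k$, while both $\theta l+s$ and $(p_k/q_k)l+s'$ lie within $1/q_k$ above $(p_k/q_k)l+s$, so they share the same integer part. This is a two-line verification once $s'$ is named. Your approach instead shows that the set of admissible $s'$ is an interval of length exactly $1/q_k$; this is more work but yields more information, and makes transparent why the hypothesis $\epsilon_k>1/q_{k+1}$ is sharp (it is precisely what keeps the threshold window shorter than one lattice step). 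Both arguments rest on the same arithmetic fact---the equidistribution of $mp_k\bmod q_k$---but the paper exploits it by naming the extremal index, whereas you exploit it by counting residues on each side of the threshold.
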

\begin{proof}
We shall consider the case where $k$ is even; the other case is similar. In particular $p_k/q_k\le\theta$. For each integer $0\le l\le q_k-1$, let $y_l=s+(p_k/q_k)l \mod 1$. Then $y_0,\ldots,y_{q_k-1}$ are $q_k$ points of equal distance $1/q_k$ on $(0,1)$. Suppose $y_{l_0}$ is the largest of them. Let $s'=s+l_0(\theta-p_k/q_k)\in(0,1)$. Then when $x=l_0$, $y=\theta x+s=(p_k/q_k)x+s'$. Otherwise, when $l\neq l_0$, $1-y_l>1/q_k$. Since $\theta l+s$ and $(p_k/q_k)l+s'$ are both at most $1/q_k$ above $(p_k/q_k)l+s$, they have the same integral part.
\end{proof}

\paragraph{An algorithm for rational words}
It is thus useful to know rational words, i.e.\ words of simple closed geodesics. For this we quote \cite{simple_word}:

\begin{thm}\label{thm:simple_word}
Up to permutations of generators which interchange $\bolda$ and $\boldb$, $\bolda$ and $\bar\bolda$, or $\boldb$ and $\bar\boldb$, a simple word in $\pi_1(X)$ is up to cyclic permutations either $\bolda$ or $\bolda\bar\boldb\bar\bolda\boldb$ or
$$\bolda^{n_1}\boldb\bolda^{n_2}\boldb\cdots\bolda^{n_k}\boldb$$
where $\{n_1,\ldots,n_k\}\subset\{n,n+1\}$ for some $n\in\mathbb{Z}^+$.
\end{thm}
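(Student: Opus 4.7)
The plan is to exploit the flat structure on $\bar X\cong\mathbb{C}/\Lambda$ together with the classical identification of isotopy classes of essential simple closed curves on a once-punctured torus with $\mathbb{QP}^1\cup\{\mathrm{bdry}\}$. For the two exceptional cases: $\bolda$ is already a simple closed curve by inspection, and $\bolda\bar\boldb\bar\bolda\boldb$ is the commutator of the standard generators, which is peripheral to the puncture. Every other simple closed curve $\gamma$ is non-peripheral; after applying a suitable permutation of generators (which is why the statement is phrased ``up to'' such permutations), its primitive homology class can be taken to be $(q,p)\in\mathbb{Z}_{>0}^2$ with $p\le q$.

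Next, realize $\gamma$ as the straight segment on the flat torus from $(0,0)$ to $(q,p)$ of slope $p/q\le 1$, translated by a small generic amount to avoid the lattice points. Lift it to a straight line in $\mathbb{C}$ and record, in order, the vertical grid lines (each contributing $\bolda$, since crossing $I_r$ corresponds to the generator $\bolda$) and horizontal grid lines (each contributing $\boldb$) it crosses. This produces a word of length $p+q$ containing exactly $q$ letters $\bolda$ and $p$ letters $\boldb$. Grouping between consecutive $\boldb$'s yields the block decomposition $\bolda^{n_1}\boldb\bolda^{n_2}\boldb\cdots\bolda^{n_k}\boldb$ with $k=p$ and $\sum_i n_i=q$.

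The crux is to show $\{n_1,\ldots,n_k\}\subset\{n,n+1\}$ with $n=\lfloor q/p\rfloor$. This is a consequence of the three-distance theorem applied to the orbit $\{j(p/q)\bmod 1\}_{j=0}^{q-1}$: the gaps between consecutive horizontal crossings on a line of slope $p/q$ take only two adjacent integer values in the $x$-direction. Equivalently, with the natural starting offset, $n_i=\lfloor iq/p\rfloor-\lfloor(i-1)q/p\rfloor\in\{\lfloor q/p\rfloor,\lceil q/p\rceil\}$. A secondary point is to check that the cutting sequence of this Euclidean line coincides with the word of the hyperbolic geodesic representative of $\gamma$ with respect to the generators attached to $S$; this is straightforward because each ideal quadrilateral tile is traversed from one side to another in both pictures, and straightening within a cusped surface does not change the sequence of sides crossed in the thick part.

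Finally, one must verify exhaustiveness. The cleanest route is to invoke that the mapping class group $\mathrm{SL}_2(\mathbb{Z})$ acts transitively on primitive classes in $H_1(\bar X,\mathbb{Z})$ and hence on non-peripheral isotopy classes of simple closed curves, so that the parametrization by $(q,p)$ up to symmetry is a bijection onto non-exceptional simple words. The main technical hurdle throughout is the coordination between the three descriptions of $\gamma$ (topological isotopy class, Euclidean straight line, hyperbolic geodesic) and checking that the ``balanced'' block structure of the word is preserved under all three viewpoints as well as under the cyclic reordering permitted in the statement.
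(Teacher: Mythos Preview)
The paper does not prove this theorem at all: it is quoted from the reference \texttt{[simple\_word]} (introduced with ``For this we quote \cite{simple_word}''), so there is no proof in the paper to compare against.

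Your argument is essentially the standard one and is correct in outline. A few comments. First, the invocation of the three-distance theorem is harmless but unnecessary: once you write $n_i=\lfloor i q/p + s\rfloor-\lfloor (i-1)q/p+s\rfloor$ with $q/p\in(n,n+1)$, the identity $\lfloor x+a\rfloor=\lfloor x\rfloor+\lfloor a+\{x\}\rfloor$ immediately gives $n_i\in\{n,n+1\}$. Second, the step ``straightening within a cusped surface does not change the sequence of sides crossed'' deserves one more sentence: the point is simply that the cutting sequence with respect to the ideal-quadrilateral tiling is a free-homotopy invariant (the tiles are disks), so the flat line and its hyperbolic geodesic representative have the same word up to cyclic permutation. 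Third, for exhaustiveness you do not need transitivity of the mapping class group; it suffices that every non-peripheral simple closed curve on the once-punctured torus is isotopic to a unique straight line of rational slope (equivalently, is determined by its primitive homology class), which you already use implicitly when you pass to the $(q,p)$ parametrization.
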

Note that the word $w=\bolda^{n_1}\boldb\bolda^{n_2}\boldb\cdots\bolda^{n_k}\boldb$ corresponds to $\bolda^{\sum_{j=1}^kn_j}\boldb^k$ in $\pi_1(\bar X)$, which gives a simple closed curve of slope $k/\sum_{j=1}^kn_j\in[1/(n+1),1/n]$. For simplicity, we write $w=(n_1,\ldots,n_k)$ (or $(n_1,\ldots,n_k)_{\bolda,\boldb}$ if we want to specify the letters), and call each segment $\bolda^{n_j}\boldb$ an \emph{$n_j$-block} of $w$.

We now describe an algorithm to produce (and exhaust) $r$-words for a rational number $r=p/q$. Without loss of generality, we may assume $r>1$; the case of $r<1$ can be obtained from that of $1/r$ by exchanging $\bolda$ and $\boldb$.

For any leaf of slope $r$ not passing through the lattice points, choose a start point on $I_l$, the left side of unit square $S$. The first few words must be $\boldb$, followed by an $\bolda$, which brings the segment back to $I_l$. Thus the word for this simple closed geodesic has the form described in Theorem~\ref{thm:simple_word} with $\bolda$, $\boldb$ interchanged. The integer $n$ specified in the theorem can be chosen as $\lfloor r\rfloor$.

It is easy to see that the number of $n$-blocks is given by $s=(n+1)q-p$, and the number of $(n+1)$-blocks is given by $t=p-nq$. Each block represents a sequence of segments in $S$, with the first segment starting on $I_l$ and the last ending on $I_r$. We have:
\begin{lm}
\begin{enumerate}[label=\normalfont{(\arabic*)}, topsep=0mm, itemsep=0mm]
    \item The starting points of the $(n+1)$-blocks are on top of those of the $n$-blocks, while the endpoints are switched.
    \item The relative position of the starting points of $n$-blocks is the same as the relative position of the endpoints; the same is true for $(n+1)$-blocks.
\end{enumerate}
\end{lm}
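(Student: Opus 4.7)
The plan is to unpack the combinatorial data of a single block as an explicit trajectory in the unit square $S$. Write $r = n + \alpha$ with $\alpha = \{r\} \in [0,1)$. A block corresponds to a leaf segment entering $S$ through $I_l$ at some height $y_0 \in (0,1)$, crossing the identification $I_t \sim I_b$ (i.e.\ applying $\boldb$) some number of times going upward, and finally leaving through $I_r$ at a height $y_1 \in (0,1)$ (applying $\bolda$). Since the slope is $r$, increasing $x$ by $1$ increases $y$ by $r$, so the number of horizontal crossings inside $S$ equals $\lfloor y_0 + r \rfloor$: it is $n$ if $y_0 < 1 - \alpha$ and $n+1$ if $y_0 > 1 - \alpha$ (the leaves avoid the lattice, so equality never occurs). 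Correspondingly, $y_1 = y_0 + \alpha$ for an $n$-block and $y_1 = y_0 + \alpha - 1$ for an $(n+1)$-block.

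Part (1) should then follow from a direct comparison of these intervals. The starting heights of $n$-blocks fill $(0, 1-\alpha)$ and their ending heights fill $(\alpha, 1)$, while the starting heights of $(n+1)$-blocks fill $(1-\alpha, 1)$ and their ending heights fill $(0, \alpha)$. Hence every $(n+1)$-block starts strictly above every $n$-block on $I_l$, whereas on $I_r$ the opposite order holds: the starts and ends are ``switched'' between the two block types. For Part (2), I will observe that on each type of block the map $y_0 \mapsto y_1$ is a translation (by $\alpha$ or by $\alpha - 1$), hence order-preserving, so the relative order of starting points on $I_l$ agrees with the relative order of endpoints on $I_r$ within each class.

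No step should present a serious obstacle; the argument is essentially bookkeeping once coordinates are fixed. The only care needed is to match the conventions from the algorithm set up just above, namely that for $r > 1$ each block has the form $\boldb^{n_j}\bolda$ (so that $I_t \leftrightarrow \boldb$ on each upward crossing and $I_r \leftrightarrow \bolda$ at exit), and to use the avoidance of the lattice to keep the dichotomy $y_0 + \alpha \lessgtr 1$ strict.
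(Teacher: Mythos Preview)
Your argument is correct and is precisely the explicit computation underlying the paper's one-line ``proof'': the paper simply declares the statement trivial and points to Figure~\ref{fig:blocks}. Your parametrization $r = n + \alpha$, the dichotomy $y_0 \lessgtr 1-\alpha$ determining the block type, and the observation that $y_0 \mapsto y_1$ is a translation on each type are exactly the content that makes the claim evident, so there is nothing to add.
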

\begin{figure}[htp]
    \centering
    \includegraphics{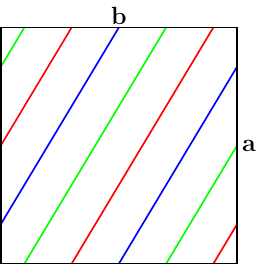}
    \caption{$1$- and $2$-blocks in $S$}
    \label{fig:blocks}
\end{figure}
The proof is trivial; see Figure~\ref{fig:blocks} for an example. This suggests the following algorithm to produce $r$-words for $r=p/q>1$:
\begin{alg}\label{alg:simple_word}
\begin{enumerate}[topsep=0mm, itemsep=0mm]
    \item Set $s=(n+1)q-p$ and $t=p-nq$. Suppose $sp:\{1,\ldots,q\}\to\{n,n+1\}$ and $ep:\{1,\ldots,q\}\to\{n,n+1\}$ record the ordered list of blocks at the starting points and the ending points respectively. That is, $sp(j)=n+1=ep(s+j)$ for $1\le j\le t$ and $sp(k+t)=n=ep(k)$ for $1\le k\le s$. Set $w$ to be the empty word. Let $l=l_1$ be any integer between $1$ and $s+t$.
    \item Set $w=w\boldb^{sp(l)}\bolda$. If $1\le l\le t$, set $l=s+l$; otherwise, set $l=l-t$.
    \item If $l=l_1$, output $w$; otherwise repeat Step 2.\qed
    \end{enumerate}
\end{alg}
As a matter of fact, different choices of $l_1$ simply yield cyclic rearrangements of blocks.

\paragraph{Irrational words}
The discussions above on rational words, together with the idea of rational approximations, immediately give the following proposition on irrational words:
\begin{prop}
Let $\theta=[c_0;c_1,c_2,\ldots]$ be a irrational number $>1$, $p_k/q_k$ its $k$-th convergent, and $n=\lfloor\theta\rfloor$. Let $w$ be the word of a half leaf of slope $\theta$ starting at a point on $I_l$ and not passing through any lattice points. Then
\begin{enumerate}[label=\normalfont{(\arabic*)}, topsep=0mm, itemsep=0mm]
    \item $w=\boldb^{n_1}\bolda\boldb^{n_2}\bolda\cdots=(n_1,n_2,\dots)$, where $n_j\in\{n,n+1\}$;
    \item For all $k$ large enough, $w(k):=(n_1,n_2,\ldots,n_{q_k})$ gives a $p_k/q_k$-word.
\end{enumerate}
Furthermore, any $p_k/q_k$-word can be extended to a $\theta$-word.
\end{prop}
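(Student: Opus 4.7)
The proof would proceed in three parts corresponding to the three claims. For part (1), I would argue by direct counting in the flat structure on $\bar X \cong \mathbb{C}/(\mathbb{Z}\oplus\mathbb{Z}i)$. Parametrize the $\theta$-leaf by its $x$-coordinate; since $\theta > 1$, each unit increase in $x$ corresponds to one $\bolda$-crossing, and during this increase $y$ advances by exactly $\theta$, sweeping past either $n = \lfloor\theta\rfloor$ or $n+1$ horizontal integer lines, hence producing $n$ or $n+1$ $\boldb$-crossings. The non-lattice assumption rules out boundary cases, so $n_j \in \{n, n+1\}$ for every $j \ge 1$; the initial block before the first $\bolda$ is handled the same way using the fractional height $s \in (0,1)$ of the starting point.

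For part (2), I would apply the Lemma preceding Theorem~\ref{thm:simple_word} directly. For $k$ large enough that $1/q_{k+1} < \epsilon_k$ (which holds eventually, as $q_{k+1} \to \infty$ while $\epsilon_k$ is a fixed positive constant once $s$ is chosen), there exists $s' \in (0,1)$ so that no lattice point lies in the region bounded by $y = \theta x + s$, $y = (p_k/q_k)x + s'$, $x = 0$, and $x = q_k$. Consequently the $\theta$-leaf from $(0, s)$ and the $p_k/q_k$-leaf from $(0, s')$ record the identical pattern of vertical and horizontal lattice-line crossings over $x \in [0, q_k]$, hence produce the same word, which is $w(k)$. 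The $p_k/q_k$-leaf ends at $(q_k, s' + p_k) \equiv (0, s') \pmod{\mathbb{Z}^2}$, so it closes up to a simple closed geodesic of slope $p_k/q_k$, and its word is exactly $w(k)$.

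For part (3), I would reverse this argument using continuity. Given a $p_k/q_k$-word $v$, realize it as the word of a $p_k/q_k$-leaf starting at some $(0, s') \in I_l$, where $s'$ avoids the finitely many values that would force the leaf through a lattice point. Over $x \in [0, q_k]$ this leaf stays a positive distance $d > 0$ from every lattice point. Choose $k$ large enough that $q_k|\theta - p_k/q_k| < d/2$, which is possible since $|q_k\theta - p_k| < 1/q_{k+1} \to 0$. Then for any $s$ with $|s - s'| < d/2$ (and avoiding lattice crossings), the $\theta$-leaf $y = \theta x + s$ stays within distance $d$ of the $p_k/q_k$-leaf over $x \in [0, q_k]$, hence avoids the same lattice points and records the same word $v$ there. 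Extending this $\theta$-leaf indefinitely gives a $\theta$-word whose first $q_k$ blocks are $v$.

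The main obstacle is the bookkeeping in part (2): one must verify that the block structure produced by the $\theta$-leaf matches the output of Algorithm~\ref{alg:simple_word}, in particular that the counts of $n$-blocks and $(n+1)$-blocks in $w(k)$ equal $s = (n+1)q_k - p_k$ and $t = p_k - nq_k$ respectively. This reduces to computing $\sum_{j=1}^{q_k} n_j = \lfloor \theta q_k + s \rfloor - \lfloor s \rfloor = p_k$, which follows from $|q_k \theta - p_k| < 1/q_{k+1} < \epsilon_k$ together with the definition of $\epsilon_k$; the remaining arithmetic is routine.
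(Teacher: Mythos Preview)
Your approach matches the paper's exactly: the paper gives no detailed proof, stating only that the proposition follows ``immediately'' from the preceding discussion of rational words together with the rational-approximation lemma. Your parts (1) and (2) flesh out precisely that sketch.

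There is one slip in part (3). You write ``Choose $k$ large enough that $q_k|\theta - p_k/q_k| < d/2$,'' but $k$ is fixed by the hypothesis (you are handed a $p_k/q_k$-word). What you can choose is $s'$: the word $v$ is realized by any $s'$ in an open interval of length $1/q_k$, and by taking $s'$ at the center of that interval you get $d = 1/(2q_k)$. The needed inequality then becomes $1/q_{k+1} < 1/(2q_k)$, which holds whenever $c_{k+1}\ge 2$ but can fail when $c_{k+1}=1$. To cover that case cleanly, it is easier to mirror the proof of the lattice-point lemma: given $s'$, pick $s$ so that the two lines agree at the integer $x$-coordinate where the $p_k/q_k$-leaf comes closest to a lattice point (rather than at $x=0$). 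Then the maximal vertical gap between the two lines over $[0,q_k]$ is at most $|q_k\theta - p_k| < 1/q_{k+1} \le 1/q_k$, while the $p_k/q_k$-leaf is at distance at least $1/q_k$ from any \emph{other} lattice line at the remaining integer $x$-values, and the argument goes through for every $k$.
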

We remark that similar to rational words, in the case $\theta\in(0,1)$, we can obtain $\theta$-words by exchanging $\bolda$ and $\boldb$ in $1/\theta$-words.

\paragraph{Inadmissible words}
A finite word is said to be ($\theta$-)\emph{inadmissible} if it does not appear in any $\theta$-word; otherwise, it is said to be ($\theta$-)\emph{admissible}. A word may be inadmissible even if it consists of the same number of blocks as a $p_k/q_k$-word. For example, if $\theta\in(n,n+1/2)$, no $p_k/q_k$-word contains two consecutive $(n+1)$-blocks. We have
\begin{lm}\label{lem:inadmissible}
There exists a $\theta$-inadmissible word consisting of $q_k$ blocks for all $k\ge 2$, so that by changing the last block from $n$ to $n+1$, or $n+1$ to $n$ it becomes admissible.
\end{lm}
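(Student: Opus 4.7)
The idea is to start from a $\theta$-admissible $q_k$-block word $W^*$ that records a cyclic rotation of the $p_k/q_k$-simple word, and then flip the last block to obtain the desired inadmissible word.

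\emph{Step 1: Constraining the $\boldb$-count in admissible $q_k$-block words.} Lift a $\theta$-half-leaf to $\mathbb{R}^2$ as a line of slope $\theta$ starting at $(0,y_0)$ with $y_0\in(0,1)$ not a lattice $y$-coordinate. Each $\bolda$ in the word records a crossing of a vertical line $x\in\mathbb{Z}$, and each $\boldb$ records a crossing of a horizontal line $y\in\mathbb{Z}$. After reading $q_k$ blocks the line has advanced from $(0,y_0)$ to $(q_k,y_0+q_k\theta)$, so the total number of $\boldb$'s equals
$$N \;=\; \lfloor y_0+q_k\theta\rfloor-\lfloor y_0\rfloor \;=\; p_k + \lfloor y_0+\delta_k\rfloor, \quad \delta_k := q_k\theta - p_k.$$
For $k\ge 2$ we have $|\delta_k|<1/q_{k+1}\le 1/3$, so an immediate case-check gives $N\in\{p_k,p_k+1\}$ when $k$ is even (so $\delta_k>0$), while $N\in\{p_k-1,p_k\}$ when $k$ is odd (so $\delta_k<0$). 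Every $\theta$-admissible $q_k$-block word arises as a $q_k$-block prefix of some $\theta$-word (a substring of a $\theta$-word is a prefix of another $\theta$-word after shifting the initial point), so the excluded count $p_k-1$ (even case) or $p_k+1$ (odd case) never appears in any admissible $q_k$-block word.

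\emph{Step 2: Producing $W^*$.} For $k\ge 2$ the convergent $p_k/q_k$ lies strictly between $n$ and $n+1$ (since $\theta$ is irrational and the convergents past the first oscillate inside this open interval), so $s_k=(n+1)q_k-p_k\ge 1$ and $t_k=p_k-nq_k\ge 1$. By Algorithm~\ref{alg:simple_word} the $p_k/q_k$-simple word is a cyclic arrangement of $s_k$ blocks of type $n$ and $t_k$ blocks of type $n+1$. The extension property stated just before the lemma ensures that every cyclic rotation of the $p_k/q_k$-simple word extends to a $\theta$-word, and is therefore $\theta$-admissible, with $\boldb$-count exactly $p_k$. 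Choose $W^*$ to be the cyclic rotation whose final block is of type $n+1$ if $k$ is even and of type $n$ if $k$ is odd; both choices are available because the cycle contains blocks of both types.

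\emph{Step 3: Flipping the last block.} Let $W$ be obtained from $W^*$ by replacing its final block with the opposite type. The $\boldb$-count of $W$ is then $p_k-1$ when $k$ is even and $p_k+1$ when $k$ is odd, which by Step 1 is not the count of any $\theta$-admissible $q_k$-block word. Hence $W$ is $\theta$-inadmissible, while reverting the flip recovers $W^*$, which is $\theta$-admissible. This $W$ is the word claimed in the lemma.

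\emph{Main obstacle.} The only nontrivial ingredient is the count bound of Step 1, which relies on the standard continued-fraction estimate $|q_k\theta-p_k|<1/q_{k+1}$ together with the flat-torus interpretation of the $\theta$-word as the integer lattice-crossing sequence of a line of slope $\theta$. All other steps are routine combinatorial checks.
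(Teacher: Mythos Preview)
Your proof is correct and takes a genuinely different route from the paper's. Both arguments flip the last block of a $p_k/q_k$-word, but they establish inadmissibility by different mechanisms.

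The paper picks the specific $p_k/q_k$-word $w_k$ produced by Algorithm~\ref{alg:simple_word} with $l_1=q_k$ (for $k$ even) or $l_1=1$ (for $k$ odd), flips its last block to obtain $w_k'$, and argues geometrically: any $p_l/q_l$-word (for $l$ large) that begins with the same initial run of $n$-blocks as $w_k'$ and is preceded by an $\bolda$ must start near a prescribed height on $I_l$, which forces its first $q_k$ blocks to coincide with $w_k$ rather than $w_k'$. Your argument is purely combinatorial: you observe that the total $\boldb$-count of any admissible $q_k$-block word is pinned to $\{p_k,p_k+1\}$ (even $k$) or $\{p_k-1,p_k\}$ (odd $k$) by the floor computation $N=p_k+\lfloor y_0+\delta_k\rfloor$ with $y_0\in(0,1)$ and $\delta_k=q_k\theta-p_k$, and then simply produce a word with the forbidden count by a single flip. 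This bypasses the positional analysis entirely and works for any cyclic rotation ending in the appropriate block type, not just the one the paper singles out. The paper's approach, on the other hand, is tied to a concrete location on $I_l$, which is the information actually used downstream when building the CESAG leaf approximations.
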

\begin{proof}
We start with the case where $k$ is even. Let $w_k$ be the $p_k/q_k$-word starting at the lowest point on $I_l$; that is, the word we obtain from Algorithm~\ref{alg:simple_word} with $r=p_k/q_k$ and $l_1=q_k$. This word starts with an $n$-block and ends with an $n+1$-block. Let $w'_k$ be the word obtained from $w_k$ by changing the last block to an $n$-block.

We claim that $\bolda w_k'$ is not $\theta$-admissible. For this, it suffices to show that it is not $p_l/q_l$-admissible for all large $l$. Indeed, since $k$ is even, $p_k/q_k<p_l/q_l$ for all $l$ large. Moreover, $|q_k\frac{p_l}{q_l}-p_k|<1/q_{k+1}$.

Suppose $\theta\in(n+1/(m+1),n+1/m)$. Then $w_k$ and $w_k'$ start with $m$ pieces of $n$-blocks. Any $p_l/q_l$-word that starts with $m$ pieces of $n$-blocks and is preceded by an `$\bolda$' must start near the bottom of $I_l$. In particular, the first $q_k$ blocks of such a word is necessarily a $p_k/q_k$-word. Thus $w_k'$ cannot be part of a $p_l/q_l$ word, as desired.

When $k$ is odd, we set $w_k$ to be the $p_k/q_k$-word starting at the highest point on $I_l$ (that is, set $l_1=1$ in the algorithm). Then the word $w_k'$ obtained from $w_k$ by changing the last $n$-block to an $(n+1)$-block is inadmissible, by an entirely analogous argument.
\end{proof}

\paragraph{CESAG leaf approximations}
Let $y_\theta=-\theta\mod 1 \in (0,1)$. Then, any $\theta$-leaf starting at a point on $I_l$ above $iy_\theta$ begins with an $(n+1)$-block in its word, and any leaf starting below begins with an $n$-block. In the hyperbolic metric, the leaf $L_{y_\theta}$ starting at $y_\theta$ is straightened to a geodesic going into the cusp, and is contained in a complementary region of the corresponding measured lamination. This region is bounded by two complete geodesics, one from the limit of all leaves starting above $y_\theta$, and the other from the limit of all leaves starting below $y_\theta$. We denote them by $L_{\theta,+}$ and $L_{\theta,-}$ respectively.

Similarly, the leaf $L_0$ starting at $0$ is also straightened to a geodesic coming out of the cusp and leaves above (resp. below) it limit to $L_{\theta,+}$ (resp. $L_{\theta,-}$) as well.

Let $p_r$ (resp. $p_l$) be the lowest (resp. highest) intersection of $L_{\theta,+}$ (resp. $L_{\theta,-}$) with the complete geodesic $g_v$ given by the interval $(0,i)$ on the imaginary axis. Let $p_-$ be the first intersection (starting from $p_l$) of $L_{\theta,-}$ and the complete geodesic $g_h$ given by the interval $(0,1)$ on the real axis. 
\begin{figure}[htp]
    \centering
    \captionsetup{width=0.8\linewidth}
    \includegraphics[width=0.4\linewidth]{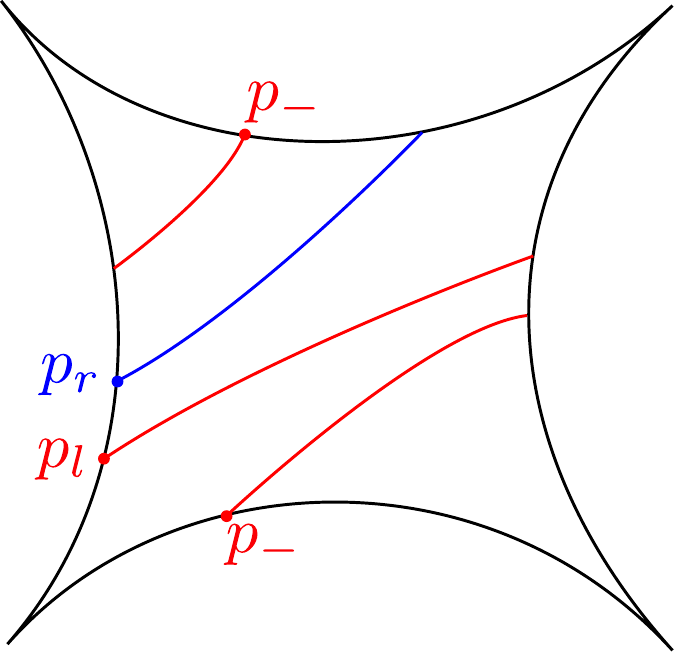}
    \caption{In the punctured torus, each leaf straighten to complete geodesics. Here we draw the hyperbolic picture of a fundamental domain. The blue segment is part of $L_{\theta,+}$ and the red segments are consecutive parts of $L_{\theta,-}$.}
\end{figure}

For each even number $k=2j\ge2$, let $w_k$ be the word constructed in the proof of Lemma~\ref{lem:inadmissible}. Let $L'_j$ be the segment on $L_{\theta,-}$ starting at $p_-$ and ending at a point $p_j'$ on $g_v$ whose word is exactly $w_k$, and $L_j''$ be the segment on $L_{\theta,-}$ following $L'_j$ ending at a point $p_j''$ on $g_v$ whose word is exactly first $q_k-1$ blocks of $w_k$. Let $L_c$ be the segment on $L_{\theta,-}$ starting at $p_l$ and ending at $p_-$. Finally, set $L_j$ to be $L_c$ followed by $L'_j$ and then $L_j''$.

Note that $p_j''$ and $p_r$ can be connected by a segment of transverse measure bounded above by
$$2cq_k(\theta-p_k/q_k)<2c/q_{k+1}.$$
where $c$ is a constant only depending on $\theta$. (Technically, for everything to work, we need to make sure that the words can actually be realized on the leaf; a sufficient condition is that $2q_k(\theta-p_k/q_k)<1/q_k$, which is always true if $c_{k+1}\ge2$ in the continued fraction.)

This gives a sequence of leaf approximations, with leaf length $d_j$ satisfying
$$cq_k\le d_j=l(L_j)\le Cq_k$$
where $c,C$ are constants depending only on $\theta$ and the hyperbolic metric of $X$.

It is not hard to see that the leaf approximation is $e^{Cd}$-separating, so to get a CESAG leaf approximation, we need $\exp(Cq_k)/q_{k+1}\to0$, for some fixed but unspecified constant $C$. This can be guaranteed by making $c_k$ much larger than $q_{k-1}$, e.g.\ we may take $c_k=\lceil e^{q_{k-1}^2}\rceil$ inductively.

In general, we only need a sequence of even (or odd) numbers $k_j\to\infty$ so that $\exp(Cq_{k_j})/q_{k_j+1}\to0$ for any fixed constant $C>0$ to make the construction work. This gives Theorem~\ref{thm:sewa}.

\paragraph{Some final remarks}
In the construction above we are only able to obtain exponentially separating leaf approximations, which is the main cause of restriction to well-approximated irrational numbers. One natural question is the following: does there exist a lamination with a CESAG leaf approximation that is sub-exponentially separating?

Finally, we want to remark on Corollary~\ref{cor:higher_genus}. In the construction above we approximated the length of a leaf segment using its word length, which only requires that in a fundamental domain, each letter represents a segment whose length is bounded above and below. This remains true in the cases considered in Corollary~\ref{cor:higher_genus}, so we have desired leaf approximations there as well.

\appendix
\section{Appendix: well-approximated irrational numbers are generic}\label{app:generic}
In this appendix, we show
\begin{thm}\label{thm:gdelta}
Well-approximated irrational numbers are generic in the sense of Baire category, i.e.\ they contain a dense $G_\delta$-set.
\end{thm}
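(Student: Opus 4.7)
The plan is a routine application of the Baire category theorem once we reformulate the well-approximation condition in a useful way. First, I would observe that an irrational $r$ is well-approximated in the sense of the paper if and only if
\[
\limsup_{k\to\infty}\frac{\log q_{k+1}(r)}{q_k(r)}=\infty.
\]
Indeed, if the $\limsup$ equals $\infty$ along a subsequence $k_n$, then for any $C>0$ one has $\log q_{k_n+1}-Cq_{k_n}=q_{k_n}\bigl(\log q_{k_n+1}/q_{k_n}-C\bigr)\to\infty$, which gives $\exp(Cq_{k_n})/q_{k_n+1}\to 0$; the converse is immediate. Denote by $W$ the set of such $r$; using the reformulation we can write
\[
W=\bigcap_{M=1}^{\infty}\bigcap_{N=1}^{\infty}\bigcup_{k\ge N}A_{M,k},\qquad A_{M,k}=\bigl\{r\text{ irrational}: q_{k+1}(r)>\exp(Mq_k(r))\bigr\}.
\]

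Second, I would show that each $A_{M,k}$ is open in the space of irrationals. The continued fraction coefficients $c_0,c_1,\dots,c_{k+1}$ are locally constant functions of $r$ on the cylinder sets $\{r:c_j(r)=a_j,\,0\le j\le k+1\}$, and each such cylinder is an open half-interval of irrationals. Since $q_k$ is determined by $c_0,\dots,c_k$, the set $A_{M,k}$ is a union of such cylinders (those for which the recurrence $q_{k+1}=c_{k+1}q_k+q_{k-1}$ forces the desired inequality), hence open.

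Third, and this is the key step, I would prove that for every $M,N$ the union $U_{M,N}:=\bigcup_{k\ge N}A_{M,k}$ is dense in the irrationals. Given any nonempty open interval $I\subset\mathbb{R}$ and any irrational $r_0\in I$ with expansion $[c_0;c_1,c_2,\dots]$, pick $k\ge N$ large enough that the cylinder
\[
Z:=\{r:c_j(r)=c_j\text{ for }0\le j\le k\}
\]
is contained in $I$; this is possible because the diameters of these cylinders shrink to $0$. Now any irrational $r'\in Z$ is free to have an arbitrary value of $c_{k+1}(r')$, and choosing $c_{k+1}(r')$ to be any integer exceeding $\exp(Mq_k)/q_k$ places $r'$ inside $A_{M,k}\cap I$. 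Thus $U_{M,N}\cap I\ne\varnothing$, proving density.

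Finally, by the Baire category theorem (applied in the completely metrizable space of irrationals, which is itself a dense $G_\delta$ subset of $\mathbb{R}$), the countable intersection $W=\bigcap_{M,N}U_{M,N}$ is a dense $G_\delta$ in the irrationals, hence a dense $G_\delta$ in $\mathbb{R}$. The only point requiring a little care is making sure that cylinders of the Gauss map really shrink to points and are open in the irrational subspace topology, but both facts are standard. There is no genuine obstacle here; the content is entirely in the flexibility of the continued fraction expansion to absorb arbitrarily large partial quotients at arbitrarily large indices.
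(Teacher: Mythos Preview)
Your proof is correct and follows the same underlying strategy as the paper: write the target set as a countable intersection of sets that are open (because continued-fraction cylinder sets are open in the irrationals) and dense (because one can always insert an arbitrarily large partial quotient at an arbitrarily deep index), then invoke Baire.

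The difference is in the decomposition. You first give the clean equivalence $r\in\mathcal{SE}\iff\limsup_k \log q_{k+1}/q_k=\infty$, and then exhibit $\mathcal{SE}$ itself as the $G_\delta$ set $\bigcap_{M,N}\bigcup_{k\ge N}\{q_{k+1}>\exp(Mq_k)\}$. The paper instead builds a strictly smaller auxiliary family $\mathcal{SE}_d$ (requiring $d$ indices of the \emph{same parity} with $c_{k_i}\ge\exp(q_{k_i-1}^2)$), shows each $\mathcal{SE}_d$ is open and dense, and checks $\bigcap_d\mathcal{SE}_d\subset\mathcal{SE}$. The parity clause and the quadratic exponent are vestiges of the application in \S\ref{sec:tori}, where the construction of leaf approximations needs a sequence of convergents of fixed parity; they are not needed for the bare genericity statement. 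Your version is more direct and in fact proves slightly more (that $\mathcal{SE}$ is itself a dense $G_\delta$, not merely that it contains one).
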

Let $\mathcal{SE}$ be the set of well-approximated irrational numbers. We start with the lemma
\begin{lm}
The set $\mathcal{SE}$ is dense in $\mathbb{R}$.
\end{lm}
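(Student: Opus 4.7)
The plan is to show $\mathcal{SE} \cap (a,b) \neq \varnothing$ for an arbitrary open interval $(a,b) \subset \mathbb{R}$. The idea is to construct a well-approximated irrational $r \in (a,b)$ by prescribing the initial segment of its continued fraction to control its location, and then choosing the tail partial quotients to grow fast enough that the well-approximation condition is automatic. The construction is essentially the explicit one already in the paper, adapted so that the result falls in a given interval.

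First, I would pick a finite continued fraction $[c_0; c_1, \ldots, c_N]$ whose value $p_N/q_N$ lies strictly inside $(a,b)$, with $N$ chosen large enough that $1/q_N^2$ is smaller than $\min(p_N/q_N - a,\, b - p_N/q_N)$. Such a choice exists since rationals with continued-fraction expansions of any prescribed length and arbitrarily large denominators are dense in $\mathbb{R}$ (one can for instance take any rational in $(a,b)$ and lengthen its expansion by inserting a large partial quotient near the end).

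Next, I would extend by inductively defining $c_k = \lceil \exp(q_{k-1}^2)\rceil$ for $k > N$, matching the canonical example given in the introduction. The resulting infinite continued fraction $r = [c_0; c_1, c_2, \ldots]$ converges to an irrational number, and since $|r - p_N/q_N| < 1/(q_N q_{N+1}) < 1/q_N^2$, the choice of $N$ guarantees $r \in (a,b)$. To verify well-approximation, observe that $q_{k+1} \ge c_{k+1} q_k \ge q_k \exp(q_k^2)$, so for any fixed $C > 0$,
$$\frac{\exp(Cq_k)}{q_{k+1}} \le \frac{\exp(Cq_k - q_k^2)}{q_k} \longrightarrow 0$$
as $k \to \infty$, since $q_k \to \infty$. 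Thus taking $k_n = N + n$ witnesses $r \in \mathcal{SE}$.

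The only real obstacle is the mild bookkeeping of ensuring the initial rational $p_N/q_N$ is buried deeply enough inside $(a,b)$ that the tail perturbation does not push $r$ outside. There is no conceptual difficulty here because the tail partial quotients grow super-exponentially in $q_N$ and so the correction to $p_N/q_N$ is tiny compared to the margin $1/q_N^2$ we have reserved.
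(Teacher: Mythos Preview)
Your proof is correct and takes essentially the same approach as the paper: fix an initial segment of a continued fraction to control the location, then append the super-fast-growing tail $c_k=\lceil e^{q_{k-1}^2}\rceil$ to force well-approximation. The paper phrases this as approximating an arbitrary real $r$ by truncating its expansion and appending the tail, while you phrase it as landing in an arbitrary interval $(a,b)$; the content is the same.
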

\begin{proof}
Let $r=[c_0;c_1,\ldots]\in\mathbb{R}$. Define $r_t=[c_0^{(t)};c_1^{(t)},\ldots]$ as follows. For $0\le k\le t$, set $c_k^{(t)}=c_k$ and for $k\ge t+1$, set $c_k^{(t)}=\lceil e^{(q^{(t)}_{k-1})^2}\rceil$, where $q^{(t)}_{k-1}$ is the denominator of the $(k-1)$-th convergent of $r_t$. Clearly, $r_t$ is well-approximated, and $r_t\to r$.
\end{proof}
Let $\mathcal{SE}_d$ be the set of real numbers $r=[c_0;c_1,\ldots]$ so that there exists an increasing sequence $1\le k_1<k_2<\cdots<k_d$ of integers of the same parity satisfying $c_{k_i}\ge \exp(q_{k_i-1}^2)$. Clearly
$$\mathcal{SE}_d\supset\mathcal{SE}_{d+1},\quad \bigcap_{d\ge1}\mathcal{SE}_d\subset\mathcal{SE}$$
Note that the well-approximated numbers $r_t$ chosen in the proof of the lemma above actually lie in $\bigcap_{d\ge1}\mathcal{SE}_d$, so each $\mathcal{SE}_d$ is dense. We have
\begin{lm}
The set $\mathcal{SE}_d$ is open.
\end{lm}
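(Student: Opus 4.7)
The plan is to exploit a standard continuity property of the continued-fraction expansion on the irrationals: for every irrational $r=[c_0;c_1,c_2,\ldots]$ and every integer $N\ge 0$, there is an open interval $J=J(r,N)\subset\mathbb{R}$ containing $r$ such that every irrational $r'\in J$ satisfies $c_i(r')=c_i(r)$ for all $0\le i\le N$. Concretely, $J$ is the interior of the cylinder $\{r'\in\mathbb{R}:c_i(r')=c_i(r),\ 0\le i\le N\}$, whose two boundary points are rational (they are the endpoints $[c_0;\ldots,c_N]$ and $[c_0;\ldots,c_N+1]$), so an irrational point of the cylinder like $r$ automatically lies in the interior.

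To prove the lemma, I would fix $r\in\mathcal{SE}_d$ with witnesses $1\le k_1<k_2<\cdots<k_d$ of the same parity satisfying $c_{k_i}(r)\ge\exp\bigl(q_{k_i-1}(r)^2\bigr)$, and apply the observation above with $N=k_d$. Because the convergent denominators obey the recursion $q_{j+1}=c_{j+1}q_j+q_{j-1}$, each $q_j(r')$ depends only on $c_0(r'),\ldots,c_j(r')$; hence every irrational $r'\in J(r,k_d)$ satisfies $c_{k_i}(r')=c_{k_i}(r)$ and $q_{k_i-1}(r')=q_{k_i-1}(r)$ for $1\le i\le d$. The inequality
\[
c_{k_i}(r')=c_{k_i}(r)\ge\exp\bigl(q_{k_i-1}(r)^2\bigr)=\exp\bigl(q_{k_i-1}(r')^2\bigr)
\]
then shows that the same witnesses place $r'$ in $\mathcal{SE}_d$. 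Since $\mathcal{SE}_d$ is itself contained in the irrationals (its elements have an infinite continued fraction), this is precisely openness in the subspace topology on $\mathbb{R}\setminus\mathbb{Q}$.

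The only real obstacle is justifying the cylinder statement in the first paragraph, and this is standard: the map sending an irrational to the sequence of its partial quotients is a homeomorphism from $\mathbb{R}\setminus\mathbb{Q}$ (restricted to a fundamental interval for $x\mapsto x+1$) onto $\mathbb{N}^{\mathbb{N}}$ with the product topology, so finite cylinders pull back to open sets. Once this ingredient is in hand, the verification of openness is a one-line calculation from the recursion defining the $q_j$. Combined with density---whose proof in the preceding lemma actually produces elements of every $\mathcal{SE}_d$ arbitrarily close to any prescribed real---this yields Theorem~\ref{thm:gdelta} by Baire's theorem applied to the Polish space $\mathbb{R}\setminus\mathbb{Q}$, since a dense $G_\delta$ in $\mathbb{R}\setminus\mathbb{Q}$ remains a dense $G_\delta$ in $\mathbb{R}$.
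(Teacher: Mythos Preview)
Your proof is correct and follows the same idea as the paper's: numbers sufficiently close to $r$ share its initial continued-fraction digits, so the same witnesses $k_1,\ldots,k_d$ certify membership in $\mathcal{SE}_d$. You are simply more explicit than the paper---spelling out the cylinder structure, the recursion for $q_j$, and the subtlety of whether ``open'' means open in $\mathbb{R}$ or in $\mathbb{R}\setminus\mathbb{Q}$---and your resolution via the subspace topology on the irrationals (then Baire on the Polish space $\mathbb{R}\setminus\mathbb{Q}$) is a clean way to handle a point the paper leaves implicit.
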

\begin{proof}
Indeed, for any $r\in\mathcal{SE}_d$, any number close enough shares the same first few digits in the continued fraction expansion. In particular, close enough numbers also share the same sequence of $d$ integers in the definition of $\mathcal{SE}_d$.
\end{proof}
These discussions immediately imply Theorem~\ref{thm:gdelta}.
\bibliographystyle{math}
\bibliography{biblio}
\end{document}